\DeclareFontFamily{OT1}{pzc}{}
\DeclareFontShape{OT1}{pzc}{m}{it}{<-> s * [1.10] pzcmi7t}{}
\DeclareMathAlphabet{\mathpzc}{OT1}{pzc}{m}{it}
\crefname{cor}{Corollary}{Corollaries}
\crefname{defin}{Definition}{Definitions}
\crefname{eg}{Example}{Examples}
\crefname{lem}{Lemma}{Lemmas}
\crefname{prop}{Proposition}{Propositions}
\crefname{theo}{Theorem}{Theorems}
\crefname{equation}{}{}
\crefname{enumi}{}{}
\newcommand\C{\mathbb{C}}
\newcommand\N{\mathbb{N}}
\newcommand\Z{\mathbb{Z}}
\newcommand\kk{\Bbbk}
\newcommand\one{\mathbbm{1}}
\newcommand\bC{\mathbf{C}}
\newcommand\bFh{\widehat{\bF}}
\newcommand\cA{\mathcal{A}}
\newcommand\cB{\mathcal{B}}
\newcommand\cC{\mathcal{C}}
\newcommand\fq{\mathfrak{q}}
\newcommand\gl{\mathfrak{gl}}
\newcommand\op{\mathrm{op}}
\newcommand\Rep{\mathrm{\underline{Re}p}}
\newcommand\rev{\mathrm{rev}}
\newcommand\tI{\mathtt{I}}
\newcommand{\md}{\textup{-mod}}
\newcommand\Sym{\textup{Sym}}                       
\newcommand{\smod}{\textup{-smod}}
\newcommand\odd{\textup{odd}}
\newcommand\cg{\pi}                                 
\newcommand{\xrightarrowdbl}[2][]{
  \xrightarrow[#1]{#2}\mathrel{\mkern-14mu}\rightarrow
}
\newcommand\AOB{\mathpzc{AOB}}          
\newcommand\AOBC{\mathpzc{AOBC}}        
\newcommand\AOS{\mathpzc{AOS}}          
\newcommand\AQcat{\mathpzc{AQ}}         
\newcommand\cEnd{\mathpzc{End}}         
\newcommand\OB{\mathpzc{OB}}            
\newcommand\OBC{\mathpzc{OBC}}          
\newcommand\OS{\mathpzc{OS}}            
\newcommand\Qcat{\mathpzc{Q}}           
\newcommand\SEnd{\mathpzc{SEnd}}        
\newcommand\SVec{\mathpzc{SVec}}        
\newcommand\Wcat{\mathpzc{W}}
\newcommand\bF{\mathbf{F}}
\newcommand\redFunc{\mathbf{F}^\bullet}
\newcommand\AHC{\mathrm{AHC}}           
\newcommand\BC{\mathrm{BC}}             
\newcommand\HC{\mathrm{HC}}             
\DeclareMathOperator{\coev}{coev}
\DeclareMathOperator{\End}{End}
\DeclareMathOperator{\ev}{ev}
\DeclareMathOperator{\Ev}{Ev}
\DeclareMathOperator{\flip}{flip}
\DeclareMathOperator{\Hom}{Hom}
\DeclareMathOperator{\id}{id}
\DeclareMathOperator{\Mat}{Mat}
\DeclareMathOperator{\sgn}{sgn}
\newcommand{\strandlabel}[1]{$\scriptstyle{#1}$}
\newcommand{\braidup}{to[out=up,in=down]}
\newcommand{\braiddown}{to[out=down,in=up]}
\newcommand{\coupon}[3][0.15]{
    \filldraw[draw=black,fill=white] (#2) circle (#1);
    \node at (#2) {$\scriptscriptstyle{#3}$}
}
\newcommand{\token}[1]{
    \filldraw[black] (#1) circle (1.5pt)
}
\newcommand{\opendot}[1]{
    \filldraw[fill=white,draw=black] (#1) circle (1.5pt)
}
\newcommand{\zebra}[1]{
    \filldraw[draw=Mahogany,fill=white] (#1)+(0.075,0) arc(0:-180:0.075);
    \filldraw[Mahogany] (#1)+(0.075,0) arc(0:180:0.075)
}
\newcommand{\zebras}[3]{
    \zebra{#1};
    \draw (#1) node[anchor=#2] {\color{Mahogany} $\scriptstyle{#3}$}
}
\newcommand{\bubright}[1]{
    \draw[->] (#1)+(0.2,0) arc(360:0:0.2)
}
\newcommand{\bubleft}[1]{
    \draw[->] (#1)+(-0.2,0) arc(-180:180:0.2)
}
\newcommand{\bubun}[1]{
    \draw (#1)+(0.2,0) arc(0:360:0.2)
}
\newcommand{\posupcross}{
    \begin{tikzpicture}[centerzero]
        \draw[->] (0.2,-0.2) -- (-0.2,0.2);
        \draw[wipe] (-0.2,-0.2) -- (0.2,0.2);
        \draw[->] (-0.2,-0.2) -- (0.2,0.2);
    \end{tikzpicture}
}
\newcommand{\negupcross}{
    \begin{tikzpicture}[centerzero]
        \draw[->] (-0.2,-0.2) -- (0.2,0.2);
        \draw[wipe] (0.2,-0.2) -- (-0.2,0.2);
        \draw[->] (0.2,-0.2) -- (-0.2,0.2);
    \end{tikzpicture}
}
\newcommand{\posrightcross}{
    \begin{tikzpicture}[centerzero]
        \draw[->] (-0.2,-0.2) -- (0.2,0.2);
        \draw[wipe] (0.2,-0.2) -- (-0.2,0.2);
        \draw[<-] (0.2,-0.2) -- (-0.2,0.2);
    \end{tikzpicture}
}
\newcommand{\negrightcross}{
    \begin{tikzpicture}[centerzero]
        \draw[<-] (0.2,-0.2) -- (-0.2,0.2);
        \draw[wipe] (-0.2,-0.2) -- (0.2,0.2);
        \draw[->] (-0.2,-0.2) -- (0.2,0.2);
    \end{tikzpicture}
}
\newcommand{\posdowncross}{
    \begin{tikzpicture}[centerzero]
        \draw[<-] (0.2,-0.2) -- (-0.2,0.2);
        \draw[wipe] (-0.2,-0.2) -- (0.2,0.2);
        \draw[<-] (-0.2,-0.2) -- (0.2,0.2);
    \end{tikzpicture}
}
\newcommand{\negdowncross}{
    \begin{tikzpicture}[centerzero]
        \draw[<-] (-0.2,-0.2) -- (0.2,0.2);
        \draw[wipe] (0.2,-0.2) -- (-0.2,0.2);
        \draw[<-] (0.2,-0.2) -- (-0.2,0.2);
    \end{tikzpicture}
}
\newcommand{\negleftcross}{
    \begin{tikzpicture}[centerzero]
        \draw[->] (0.2,-0.2) -- (-0.2,0.2);
        \draw[wipe] (-0.2,-0.2) -- (0.2,0.2);
        \draw[<-] (-0.2,-0.2) -- (0.2,0.2);
    \end{tikzpicture}
}
\newcommand{\posleftcross}{
    \begin{tikzpicture}[centerzero]
        \draw[<-] (-0.2,-0.2) -- (0.2,0.2);
        \draw[wipe] (0.2,-0.2) -- (-0.2,0.2);
        \draw[->] (0.2,-0.2) -- (-0.2,0.2);
    \end{tikzpicture}
}
\newcommand{\symcross}{
    \begin{tikzpicture}[centerzero]
        \draw[->] (-0.2,-0.2) -- (0.2,0.2);
        \draw[->] (0.2,-0.2) -- (-0.2,0.2);
    \end{tikzpicture}
}
\newcommand{\posupredcross}{
    \begin{tikzpicture}[centerzero]
        \draw[->] (0.2,-0.2) -- (-0.2,0.2);
        \draw[wipe]  (-0.2,-0.2) -- (0.2,0.2);
        \draw[thick,red]  (-0.2,-0.2) -- (0.2,0.2);
    \end{tikzpicture}
}
\newcommand{\negupredcross}{
    \begin{tikzpicture}[centerzero]
        \draw[->] (-0.2,-0.2) -- (0.2,0.2);
        \draw[wipe]  (0.2,-0.2) -- (-0.2,0.2);
        \draw[thick,red] (0.2,-0.2) -- (-0.2,0.2);
    \end{tikzpicture}
}
\newcommand{\posdownredcross}{
    \begin{tikzpicture}[centerzero]
        \draw[<-] (0.2,-0.2) -- (-0.2,0.2);
        \draw[wipe]  (-0.2,-0.2) -- (0.2,0.2);
        \draw[thick,red]  (-0.2,-0.2) -- (0.2,0.2);
    \end{tikzpicture}
}
\newcommand{\negdownredcross}{
    \begin{tikzpicture}[centerzero]
        \draw[<-] (-0.2,-0.2) -- (0.2,0.2);
        \draw[wipe]  (0.2,-0.2) -- (-0.2,0.2);
        \draw[thick,red] (0.2,-0.2) -- (-0.2,0.2);
    \end{tikzpicture}
}
\newcommand{\tokup}{
    \begin{tikzpicture}[centerzero]
        \draw[->] (0,-0.2) -- (0,0.2);
        \token{0,0};
    \end{tikzpicture}
}
\newcommand{\tokdown}{
    \begin{tikzpicture}[centerzero]
        \draw[<-] (0,-0.2) -- (0,0.2);
        \token{0,0};
    \end{tikzpicture}
}
\newcommand{\upstrand}{
    \begin{tikzpicture}[centerzero]
        \draw[->] (0,-0.2) -- (0,0.2);
    \end{tikzpicture}
}
\newcommand{\downstrand}{
    \begin{tikzpicture}[centerzero]
        \draw[<-] (0,-0.2) -- (0,0.2);
    \end{tikzpicture}
}
\newcommand{\redobject}{
    \begin{tikzpicture}[centerzero]
        \draw[thick,red] (0,-0.17) -- (0,0.17);
    \end{tikzpicture}
}
\newcommand{\dotup}{
    \begin{tikzpicture}[centerzero]
        \draw[->] (0,-0.2) -- (0,0.2);
        \opendot{0,0};
    \end{tikzpicture}
}
\newcommand{\dotdown}{
    \begin{tikzpicture}[centerzero]
        \draw[->] (0,0.2) -- (0,-0.2);
        \opendot{0,0};
    \end{tikzpicture}
}
\newcommand{\zebralone}{
    \begin{tikzpicture}[centerzero]
        \zebra{0,0};
    \end{tikzpicture}
}
\newcommand{\zebraup}{
    \begin{tikzpicture}[centerzero]
        \draw[->] (0,-0.2) -- (0,0.2);
        \zebra{0,-0.02};
    \end{tikzpicture}
}
\newcommand{\zebramultup}[1]{
    \begin{tikzpicture}[centerzero]
        \draw[->] (0,-0.2) -- (0,0.2);
        \zebras{0,-0.02}{west}{#1};
    \end{tikzpicture}
}
\newcommand{\rightcup}{
    \begin{tikzpicture}[anchorbase]
        \draw[->] (-0.15,0.15) -- (-0.15,0) arc(180:360:0.15) -- (0.15,0.15);
    \end{tikzpicture}
}
\newcommand{\leftcup}{
    \begin{tikzpicture}[anchorbase]
        \draw[<-] (-0.15,0.15) -- (-0.15,0) arc(180:360:0.15) -- (0.15,0.15);
    \end{tikzpicture}
}
\newcommand{\rightcap}{
    \begin{tikzpicture}[anchorbase]
        \draw[->] (-0.15,-0.15) -- (-0.15,0) arc(180:0:0.15) -- (0.15,-0.15);
    \end{tikzpicture}
}
\newcommand{\leftcap}{
    \begin{tikzpicture}[anchorbase]
        \draw[<-] (-0.15,-0.15) -- (-0.15,0) arc(180:0:0.15) -- (0.15,-0.15);
    \end{tikzpicture}
}
\newcommand{\rightbub}{
    \begin{tikzpicture}[centerzero]
        \bubright{0,0};
    \end{tikzpicture}
}
\newcommand{\unbub}{
    \begin{tikzpicture}[centerzero]
        \bubun{0,0};
    \end{tikzpicture}
}
\newcommand{\rightzebrabub}[1]{
    \begin{tikzpicture}[centerzero]
        \draw[->] (0.2,0) arc(360:0:0.2);
        \zebras{-0.2,0}{east}{#1};
    \end{tikzpicture}
}
\newcommand{\leftzebrabub}[1]{
    \begin{tikzpicture}[centerzero]
        \draw[->] (-0.2,0) arc(-180:180:0.2);
        \zebras{0.2,0}{west}{#1};
    \end{tikzpicture}
}
\tikzset{anchorbase/.style={>=To,baseline={([yshift=-0.5ex]current bounding box.center)}}}
\tikzset{ 
    centerzero/.style={>=To,baseline={([yshift=-0.5ex](#1))}},
    centerzero/.default={0,0}
}
\tikzset{wipe/.style={white,line width=4pt}}
\newtheorem{theo}{Theorem}[section]
\newtheorem{prop}[theo]{Proposition}
\newtheorem{lem}[theo]{Lemma}
\newtheorem{cor}[theo]{Corollary}
\newtheorem{conj}[theo]{Conjecture}
\theoremstyle{definition}
\newtheorem{defin}[theo]{Definition}
\newtheorem{rem}[theo]{Remark}
\numberwithin{equation}{section}
    \newcommand{\acomments}[1]{
        \ \\
        {\color{red}
            \textbf{AS:} #1
        }
        \ \\
    }
    \newcommand{\question}[1]{
        \ \\
        {\color{blue}
            \textbf{Question:} #1
        }
        \ \\
    }
        \newcommand{\acomments}[1]{}
        \newcommand{\question}[1]{}
    \newcommand{\details}[1]{
        \ \\
        {\color{OliveGreen}
            \textbf{Details:} #1
        }
        \\
    }
    \newcommand{\details}[1]{}
\begin{document}

\title{The quantum isomeric supercategory}

\author{Alistair Savage}
\address{
  Department of Mathematics and Statistics \\
  University of Ottawa \\
  Ottawa, ON K1N 6N5, Canada
}
\urladdr{\href{https://alistairsavage.ca}{alistairsavage.ca}, \textrm{\textit{ORCiD}:} \href{https://orcid.org/0000-0002-2859-0239}{orcid.org/0000-0002-2859-0239}}
\email{alistair.savage@uottawa.ca}

\begin{abstract}
    We introduce the quantum isomeric supercategory and the quantum affine isomeric supercategory.  These diagrammatically defined supercategories, which can be viewed as isomeric analogues of the HOMFLYPT skein category and its affinization, provide powerful categorical tools for studying the representation theory of the quantum isomeric superalgebras (commonly known as quantum queer superalgebras).
\end{abstract}

\subjclass[2020]{17B37, 18M05, 18M30}

\keywords{Queer Lie superalgebra, quantized enveloping superalgebra, quantum queer superalgebra, monoidal supercategory, string diagrams, graphical calculus, interpolating category, Hecke--Clifford superalgebra}

\ifboolexpr{togl{comments} or togl{details}}{%
  {\color{magenta}DETAILS OR COMMENTS ON}
}{%
}

\maketitle
\thispagestyle{empty}


\section{Introduction}

One of the most fundamental facts in representation theory is Schur's lemma, which implies that if $V$ is a finite-dimensional simple module over an associative $\kk$-algebra $A$, where $\kk$ is an algebraically closed field, then $\End_A(V) \cong \kk$.  On the other hand, if $A$ is an associative $\kk$-\emph{super}algebra, then there are two possibilities: we can have $\End_A(V) \cong \kk$ or we can have that $\End_A(V)$ is a two-dimensional Clifford superalgebra generated by the parity shift.  In the theory of Lie superalgebras, this phenomenon underlies the fact that the general linear Lie algebra $\gl_n$ has two natural analogues in the super setting: the general linear Lie superalgebra $\gl_{m|n}$ and the isomeric Lie superalgebra $\fq_n$.  (Following \cite{NSS21}, we use the term \emph{isomeric} instead of the more traditional term \emph{queer}.)  The purpose of the current paper is to develop diagrammatic tools for studying the representation theory of the quantum analogue of $\fq_n$.  Our hope is that this is the starting point of the development of isomeric analogues of much of the rich mathematics that has emerged from connections between low-dimensional topology,  representation theory, and categorification.

Before describing our results, we begin with an overview of the situation for the better-understood case of $\gl_{m|n}$.  The finite-dimensional complex representation theory of $\gl_{m|n}$ is controlled by the \emph{oriented Brauer category} $\OB(m-n)$.  More precisely, $\OB(t)$ is a diagrammatic symmetric monoidal category depending on a dimension parameter $t$.  When $t=m-n$, there is a full monoidal functor
\[
    \OB(m-n) \to \gl_{m|n}\smod
\]
to the category of $\gl_{m|n}$-supermodules, sending the generating object of $\OB(m-n)$ to the natural supermodule of $\gl_{m|n}$; see \cite[\S8.3]{CW12}.  (Throughout this introduction we work with finite-dimensional supermodules.)  The additive Karoubi envelope (i.e.\ the idempotent completion of the additive envelope) of $\OB(t)$ is Deligne's interpolating category $\Rep(\mathrm{GL}_t)$.  Similar statements hold in the orthosymplectic case, where $\OB(t)$ is replaced by the \emph{Brauer category} (no longer oriented, due to the fact that the natural supermodule is self-dual); see \cite[Th.~5.6]{LZ17}.

Any monoidal category acts on itself via the tensor product.  In particular, \emph{translation functors}, given by tensoring with a given supermodule, are key tools for studying the representation theory of Lie superalgebras.  In the case of $\gl_{m|n}$, this action by tensoring can be enlarged to a monoidal functor
\[
    \AOB(m-n) \to \cEnd(\gl_{m|n}\smod),
\]
where $\AOB(t)$ is the \emph{affine oriented Brauer category} of \cite{BCNR17} and $\cEnd(\cC)$ denotes the monoidal category of endofunctors of a category $\cC$.  The category $\AOB(t)$ allows one to study natural transformations between translation functors, provides tools to study cyclotomic Hecke algebras, and yields natural elements in the center of $U(\gl_{m|n})$.  Again, a similar picture exists for the orthosymplectic Lie superalgebras, where $\AOB(t)$ is replaced by the \emph{affine Brauer category} of \cite{RS19}.

Quantum analogues of the above pictures play a particularly important role in connections to link invariants and integrable models in statistical mechanics.  The quantum analogue of the oriented Brauer category is the \emph{HOMFLYPT skein category} $\OS(z,t)$, originally introduced in \cite[\S5.2]{Tur89}, where it was called the Hecke category.  The affine version $\AOS(z,t)$ was introduced in \cite{Bru17}, and there are monoidal functors
\[
    \OS(q-q^{-1},q^n) \to U_q(\gl_n)\md,\qquad
    \AOS(q-q^{-1},q^n) \to \cEnd(U_q(\gl_n)\md),
\]
with many of the properties mentioned above for the non-quantum case.  We expect that these functors can be generalized to the super setting of $U_q(\gl_{m|n})$.  The generalization of the first functor should follow from the results \cite{LZZ20}, and then the affine case follows from the general affinization procedure of \cite{MS21}.  Once again, analogues exist in the orthosymplectic case, where the relevant categories are the \emph{Kauffman skein category}, together with its affine analogue introduced in \cite{GRS20}.

The isomeric analogues of the oriented Brauer category and its affine version are the \emph{oriented Brauer--Clifford supercategory} $\OBC$ and the \emph{degenerate affine oriented Brauer--Clifford supercategory} $\AOBC$ introduced in \cite{BCK19}.  In analogy with the above, one has monoidal superfunctors
\[
    \OBC \to \fq_n\smod,\qquad
    \AOBC \to \SEnd(\fq_n\smod),
\]
where $\SEnd(\cC)$ denotes the monoidal supercategory of endosuperfunctors of a supercategory $\cC$.  The need to move to the setting of \emph{super}categories here arises from the super version of Schur's lemma mentioned earlier.  There is an \emph{odd} endomorphism of the natural representation of $\fq_n$ that corresponds to an odd morphism in $\OBC$ and $\AOBC$.  (The category of vector superspaces, with parity preserving linear maps, is a monoidal category, with no need to introduce the notion of a monoidal \emph{super}category.)  Note also the absence of the parameter $t$ that appears for the oriented Brauer category.  This is because the natural representation of $\fq_n$ always has superdimension zero.

In the current paper we develop analogues of the above results for the \emph{quantum isomeric superalgebra} $U_q(\fq_n)$.  As we will explain below, this case requires several new techniques.  We begin by defining the \emph{quantum isomeric supercategory} $\Qcat(z)$ depending on a parameter $z$ in the ground ring.  This is a strict monoidal supercategory generated by two objects, $\uparrow$ and $\downarrow$, and morphisms
\begin{gather*}
    \posupcross, \negupcross \colon \uparrow \otimes \uparrow\ \to\ \uparrow \otimes \uparrow,
    \qquad
    \negrightcross \colon \uparrow \otimes \downarrow\ \to\ \downarrow \otimes \uparrow,
    \\
    \leftcap \colon \downarrow \otimes \uparrow\ \to \one,
    \qquad
    \leftcup \colon \one \to\ \uparrow \otimes \downarrow,
    \qquad
    \tokup \colon \uparrow\ \to\ \uparrow,
\end{gather*}
subject to certain relations; see \cref{Qdef}.  The supercategory $\Qcat(z)$ should be viewed as a quantization of $\OBC$.  In particular, $\Qcat(0)$ is isomorphic to $\OBC$; see \cref{degenerate}.  From the definition of $\Qcat(z)$, we deduce further relations, showing, in particular, that this supercategory is pivotal.  We also prove a basis theorem (\cref{basisthm}) showing that the morphism spaces have bases given by tangle-like diagrams where strands can carry the odd \emph{Clifford token} $\tokup$ corresponding to the odd endomorphism appearing in the super version of Schur's lemma.  We define, in \cref{bread}, a monoidal superfunctor
\begin{equation} \label{spore}
    \Qcat(q-q^{-1}) \to U_q(\fq_n)\smod,
\end{equation}
which we call the \emph{incarnation superfunctor}.  This superfunctor is full and asymptotically faithful, in the sense that the induced map on any morphism space in $\Qcat(q-q^{-1})$ is an isomorphism for sufficiently large $n$ (\cref{full}).  This can be viewed as a categorical version of the \emph{first fundamental theorem} for $U_q(\fq_n)$-invariants.

The endomorphism superalgebras $\End_{\Qcat(z)}(\uparrow^{\otimes r})$ are \emph{Hecke--Clifford superalgebras}, which appear in quantum Sergeev duality (the quantum isomeric analogue of Schur--Weyl duality).  More generally, the endomorphism superalgebras in $\Qcat(z)$ are isomorphic to the \emph{quantum walled Brauer--Clifford superalgebras} introduced in \cite{BGJKW16}; see \cref{window}.  However, the category $\Qcat(z)$ contains more information, since it also involves morphism spaces between \emph{different} objects.  Consideration of the entire monoidal category $\Qcat(z)$, as opposed to the more traditional approach (e.g.\ taken in \cite{BGJKW16}) of treating the endomorphisms superalgebras individually, as associative superalgebras, offers significant advantages.  In particular, the added structure of cups and caps, arising from the duality between $V$ and $V^*$, allows us to translate between general morphism spaces and ones of the form $\End_{\Qcat(z)}(\uparrow^{\otimes r})$.  This allows us to recover some of the results of \cite{BGJKW16} with simplified arguments.

The additive Karoubi envelope of $\Qcat(z)$ should be viewed as an interpolating category $\Rep(U_q(\fq))$ for the quantum isomeric superalgebras.  However, since the supercategory $\Qcat(z)$ does not depend on $n$, we have an ``interpolating'' category without a dimension parameter.  The same is true for the additive Karoubi envelope $\Rep(Q)$ of $\OBC$, which is the isomeric ``interpolating'' category in the non-quantum setting.  Of course, the kernel of the incarnation superfunctor \cref{spore} \emph{does} depend on $n$; see, for example, \cref{full}.  The semisimplification of $\Qcat(z)$, which is the quotient by the tensor ideal of negligible morphisms, is the trivial supercategory with one object, since the identity morphisms of the generating objects $\uparrow$ and $\downarrow$ are negligible.  Similar phenomena occur for the periplectic Lie superalgebras.  For a discussion of the Deligne interpolating category in that case, we refer the reader to \cite[\S4.5]{Ser14}, \cite[\S5]{KT17}, \cite[\S3.1]{CE21}, and \cite{ES21}.

In the second half of the current paper, we define and study the \emph{quantum affine isomeric supercategory} $\AQcat(z)$.  One important difference between the quantum isomeric supercategory and the HOMFLYPT skein category is that the category $\Qcat(z)$ is \emph{not} braided.  This corresponds to the fact that $U_q(\fq_n)$ is \emph{not} a quasi-triangular Hopf superalgebra.  Diagrammatically, this is manifested in the fact (see \cref{nickelfull}) that
\[
    \begin{tikzpicture}[centerzero]
        \draw[->] (0.3,-0.3) -- (-0.3,0.3);
        \draw[wipe] (-0.3,-0.3) -- (0.3,0.3);
        \draw[->] (-0.3,-0.3) -- (0.3,0.3);
        \token{-0.15,-0.15};
    \end{tikzpicture}
    =
    \begin{tikzpicture}[centerzero]
        \draw[->] (0.3,-0.3) -- (-0.3,0.3);
        \draw[wipe] (-0.3,-0.3) -- (0.3,0.3);
        \draw[->] (-0.3,-0.3) -- (0.3,0.3);
        \token{0.15,0.15};
    \end{tikzpicture}
    \qquad \text{but} \qquad
    \begin{tikzpicture}[centerzero]
        \draw[->] (-0.3,-0.3) -- (0.3,0.3);
        \draw[wipe] (0.3,-0.3) -- (-0.3,0.3);
        \draw[->] (0.3,-0.3) -- (-0.3,0.3);
        \token{-0.18,-0.18};
    \end{tikzpicture}
    \ne
    \begin{tikzpicture}[centerzero]
        \draw[->] (-0.3,-0.3) -- (0.3,0.3);
        \draw[wipe] (0.3,-0.3) -- (-0.3,0.3);
        \draw[->] (0.3,-0.3) -- (-0.3,0.3);
        \token{0.15,0.15};
    \end{tikzpicture}
    \ .
\]
That is, Clifford tokens slide over crossings, but not under them.  Since $\Qcat(z)$ is not braided, the usual affinization procedure, which corresponds to considering string diagrams on a cylinder (see \cite{MS21}) is not appropriate.  Instead, we must develop a new approach.  To pass from $\Qcat(z)$ to the affine version $\AQcat(z)$, we adjoin an odd morphism $\dotup \colon \uparrow\ \to\ \uparrow$ satisfying
\[
    \begin{tikzpicture}[centerzero]
        \draw[->] (0.3,-0.3) -- (-0.3,0.3);
        \draw[wipe] (-0.3,-0.3) -- (0.3,0.3);
        \draw[->] (-0.3,-0.3) -- (0.3,0.3);
        \opendot{0.17,-0.17};
    \end{tikzpicture}
    =
    \begin{tikzpicture}[centerzero]
        \draw[->] (0.3,-0.3) -- (-0.3,0.3);
        \draw[wipe] (-0.3,-0.3) -- (0.3,0.3);
        \draw[->] (-0.3,-0.3) -- (0.3,0.3);
        \opendot{-0.15,0.15};
    \end{tikzpicture}
    \ ,
\]
among other relations; see \cref{AQdef}.  This procedure of \emph{odd affinization} (see \cref{oddaffinization}) makes apparent a symmetry of $\AQcat(z)$ that interchanges $\tokup$ and $\dotup$ and flips all crossings.  There does not seem to be any analogous symmetry of the affine HOMFLYPT skein category.  The supercategory $\Qcat(z)$ is naturally a sub-supercategory of $\AQcat(z)$ (\cref{feline}).

We define, in \cref{butter}, a monoidal superfunctor
\begin{equation} \label{affinespore}
    \AQcat(q-q^{-1}) \to \SEnd(U_q(\fq_n)\smod),
\end{equation}
which we call the \emph{affine action superfunctor}.  As for the case of the affine HOMFLYPT skein category, the superfunctor \cref{affinespore} contains information about supernatural transformations between translation superfunctors acting on $U_q(\fq_n\smod)$.  However, in the HOMFLYPT setting, the affine action comes from the braiding in the category.  Intuitively, it arises from an action of $\AOS(z,t)$ on $\OS(z,t)$ corresponding to placing string diagrams representing morphisms of $\OS(z,t)$ inside the cylinders representing morphisms of $\AOS(z,t)$.  We refer the reader to \cite[\S3]{MS21} for further details of this interpretation.  The fact that $\Qcat(z)$ is not braided means that we cannot simply apply this general framework and we must formulate new methods.  As a replacement, we develop in \cref{sec:chiral} the concept of a \emph{chiral braiding}, which is similar to a braiding, but is only natural in one argument.

The endomorphism superalgebras $\End_{\AQcat(z)}(\uparrow^{\otimes r})$ are related to the \emph{affine Hecke--Clifford superalgebras} introduced in \cite{JN99}, where they are called \emph{affine Sergeev algebras}; see \cref{sec:affineEnd}.  These have played an important role in representation theory and categorification; see, for example, \cite{BK01}.  However, our presentation of these superalgebras is different from the original one appearing in \cite{JN99}.  There, the affine Hecke--Clifford superalgebra is obtained from the Hecke--Clifford superalgebra by adding a set of pairwise-commuting even elements.  In our presentation, we add pairwise-supercommuting \emph{odd} elements, corresponding to the odd generator $\dotup$ appearing on various strands.   While the translation between the two presentations is straightforward, the new approach yields a simpler description of the affine Hecke--Clifford superalgebras with an obvious symmetry, corresponding to the symmetry of $\AQcat(z)$ that interchanges $\tokup$ and $\dotup$ and flips crossings. The more general endomorphism superalgebras $\End_{\AQcat(z)}(\uparrow^{\otimes r} \otimes \downarrow^{\otimes s})$ are affine versions of quantum walled Brauer--Clifford superalgebras which do not seem to have appeared in the literature.

As a final application of our approach to the representation theory of the quantum isomeric superalgebra, we use the affine action superfunctor \cref{affinespore} to compute an infinite sequence of elements \cref{dark} in the center of $U_q(\fq_n)$.  These elements arise from ``bubbles'' in $\AQcat(z)$, which are closed diagrams corresponding to endomorphisms of the unit object.  We expect these elements will be useful in a computation of the center of $U_q(\fq_n)$, which has yet to appear in the literature.  Typically one uses the Harish--Chandra homomorphism to compute centers.  This homomorphism has recently been studied for basic classical Lie superalgebras in \cite{LWY21}, but the quantum isomeric case remains open.  It is often not difficult to show that the Harish--Chandra homomorphism is injective.  The difficulty lies in showing that its image is as large as expected.  By analogy with the $U_q(\gl_n)$ case, we expect that the central elements \cref{dark} computed here, together with some obviously central elements, generate the center of $U_q(\fq_n)$.

\subsection*{Further directions and open problems}

The quantum affine isomeric supercategory $\AQcat(z)$ should be thought of as an isomeric analogue of the affine HOMFLYPT skein category from \cite[\S4]{Bru17}.  The latter is the central charge zero special case of the \emph{quantum Heisenberg category} of \cite{BSW-qheis}.  A suitable modification of the approach of \cite{BSW-qheis} should lead to the definition of a \emph{quantum isomeric Heisenberg supercategory} depending on a central charge $k \in \Z$.  Taking $k=0$ would recover $\AQcat(z)$.  On the other hand, for nonzero $k$, this supercategory should act on supercategories of supermodules over cyclotomic Hecke--Clifford superalgebras.  Furthermore, we expect that one can adapt the categorical comultiplication technique of \cite{BSW-qheis} to prove a basis theorem, yielding a proof of \cref{affinebasis} (giving a conjectural basis for each morphism space in $\AQcat(z)$) as a special case.

An even more general \emph{quantum Frobenius Heisenberg category} was defined in \cite{BSW-qFrobHeis}.  This is a monoidal supercategory depending on a central charge $k \in \Z$ and a Frobenius superalgebra $A$.  Taking $A = \kk$ recovers the usual quantum Heisenberg category.  It should be possible to define a \emph{quantum isomeric Frobenius Heisenberg supercategory} such that specializing $A=\kk$ yields the quantum isomeric Heisenberg category.

The quantum webs of type $Q$ introduced in \cite{BDK20} should be related to a partial idempotent completion of supercategory $\Qcat(z)$.  It would be interesting to work out this precise connection, and then use it to define affine versions of quantum webs of type $Q$, based on the supercategory $\AQcat(z)$.

Finally, in \cite{BCK19}, the authors studied cyclotomic quotients of the degenerate affine oriented Brauer--Clifford supercategory.  It would be natural to investigate the quantum analogue, namely cyclotomic quotients of $\AQcat(z)$.  These could also be thought of as isomeric analogues of the central charge zero case of the cyclotomic quotients considered in \cite[\S9]{BSW-qheis}.

\iftoggle{detailsnote}{
\subsection*{Hidden details} For the interested reader, the tex file of the \href{https://arxiv.org/abs/2207.03254}{arXiv version} of this paper includes hidden details of some straightforward computations and arguments that are omitted in the pdf file.  These details can be displayed by switching the \texttt{details} toggle to true in the tex file and recompiling.
}{}

\subsection*{Acknowledgements}

This research was supported by Discovery Grant RGPIN-2017-03854 from the Natural Sciences and Engineering Research Council of Canada.  The author would like to thank Jon Brundan and Dimitar Grantcharov for helpful conversations, and the referee for useful comments.

\section{The quantum isomeric supercategory\label{sec:Qdef}}

Throughout the paper we work over a commutative ring $\kk$ whose characteristic is not equal to two, and we fix an element $z \in \kk$.  Statements about abstract categories will typically be at this level of generality.  When making statements involving supermodules over the quantum isomeric superalgebra, we will specialize to $\kk = \C(q)$ and $z=q-q^{-1}$.  We let $\N$ denote the set of nonnegative integers.

All vector spaces, algebras, categories, and functors will be assumed to be linear over $\kk$ unless otherwise specified.  Almost everything in the paper will be enriched over the category $\SVec$ of vector superspaces with parity-preserving morphisms.  We write $\bar{v}$ for the parity of a homogeneous vector $v$ in a vector superspace.  When we write formulae involving parities, we assume the elements in question are homogeneous; we then extend by linearity.

For associative superalgebras $A$ and $B$, multiplication in the superalgebra $A \otimes B$ is defined by
\begin{equation}
    (a' \otimes b) (a \otimes b') = (-1)^{\bar a \bar b} a'a \otimes bb'
\end{equation}
for homogeneous $a,a' \in A$, $b,b' \in B$.  For $A$-supermodules $M$ and $N$, we let $\Hom_A(M,N)$ denote the $\kk$-supermodule of \emph{all} (i.e.\ not necessarily parity-preserving) $A$-linear maps from $M$ to $N$.  The \emph{opposite} superalgebra $A^\op$ is a copy $\{a^\op : a \in A\}$ of the vector superspace $A$ with multiplication defined from
\begin{equation}\label{mups}
    a^\op  b^\op := (-1)^{\bar a \bar b} (ba)^{\op}.
\end{equation}
A superalgebra homomorphism $A \to B^\op$ is equivalent to an antihomomorphism of superalgebras $A \to B$.  When viewing it in this way, we will often omit the superscript `$\op$' on elements of $B$.

Throughout this paper we will work with \emph{strict monoidal supercategories}, in the sense of \cite{BE17}.  We summarize here a few crucial properties that play an important role in the present paper.  A \emph{supercategory} means a category enriched in $\SVec$.  Thus, its morphism spaces are vector superspaces and composition is parity-preserving.  A \emph{superfunctor} between supercategories induces a parity-preserving linear map between morphism superspaces.  For superfunctors $F,G \colon \cA \to \cB$, a \emph{supernatural transformation} $\alpha \colon F \Rightarrow G$ of \emph{parity $r\in\Z/2$} is the data of morphisms $\alpha_X\in \Hom_{\cB}(FX, GX)$ of parity $r$, for each $X \in \cA$, such that $Gf \circ \alpha_X = (-1)^{r \bar f}\alpha_Y\circ Ff$ for each homogeneous $f \in \Hom_{\cA}(X, Y)$.  Note when $r$ is odd that $\alpha$ is \emph{not} a natural transformation in the usual sense due to the sign. A \emph{supernatural transformation} $\alpha \colon F \Rightarrow G$ is of the form $\alpha = \alpha_0 + \alpha_1$, with each $\alpha_r$ being a supernatural transformation of parity $r$.

In a \emph{strict monoidal supercategory}, morphisms satisfy the \emph{super interchange law}:
\begin{equation}\label{interchange}
    (f' \otimes g) \circ (f \otimes g')
    = (-1)^{\bar f \bar g} (f' \circ f) \otimes (g \circ g').
\end{equation}
We denote the unit object by $\one$ and the identity morphism of an object $X$ by $1_X$.  We will use the usual calculus of string diagrams, representing the horizontal composition $f \otimes g$ (resp.\ vertical composition $f \circ g$) of morphisms $f$ and $g$ diagrammatically by drawing $f$ to the left of $g$ (resp.\ drawing $f$ above $g$).  Care is needed with horizontal levels in such diagrams due to the signs arising from the super interchange law:
\begin{equation}\label{intlaw}
    \begin{tikzpicture}[anchorbase]
        \draw (-0.5,-0.5) -- (-0.5,0.5);
        \draw (0.5,-0.5) -- (0.5,0.5);
        \filldraw[fill=white,draw=black] (-0.5,0.15) circle (5pt);
        \filldraw[fill=white,draw=black] (0.5,-0.15) circle (5pt);
        \node at (-0.5,0.15) {$\scriptstyle{f}$};
        \node at (0.5,-0.15) {$\scriptstyle{g}$};
    \end{tikzpicture}
    \quad=\quad
    \begin{tikzpicture}[anchorbase]
        \draw (-0.5,-0.5) -- (-0.5,0.5);
        \draw (0.5,-0.5) -- (0.5,0.5);
        \filldraw[fill=white,draw=black] (-0.5,0) circle (5pt);
        \filldraw[fill=white,draw=black] (0.5,0) circle (5pt);
        \node at (-0.5,0) {$\scriptstyle{f}$};
        \node at (0.5,0) {$\scriptstyle{g}$};
    \end{tikzpicture}
    \quad=\quad
    (-1)^{\bar f\bar g}\
    \begin{tikzpicture}[anchorbase]
        \draw (-0.5,-0.5) -- (-0.5,0.5);
        \draw (0.5,-0.5) -- (0.5,0.5);
        \filldraw[fill=white,draw=black] (-0.5,-0.15) circle (5pt);
        \filldraw[fill=white,draw=black] (0.5,0.15) circle (5pt);
        \node at (-0.5,-0.15) {$\scriptstyle{f}$};
        \node at (0.5,0.15) {$\scriptstyle{g}$};
    \end{tikzpicture}
    \ .
\end{equation}
If $\cA$ is a supercategory, the category $\SEnd(\cA)$ of superfunctors $\cA \to \cA$ and supernatural transformations is a strict monoidal supercategory.  The notation $\cA^\op$ denotes the opposite supercategory and, if $\cA$ is also monoidal, $\cA^\rev$ denotes the reverse monoidal supercategory (changing the order of the tensor product); these are defined as for categories, but with appropriate signs.

\begin{defin} \label{Qdef}
    We define the \emph{quantum isomeric supercategory} $\Qcat(z)$ to be the strict monoidal supercategory generated by objects $\uparrow$ and $\downarrow$ and morphisms
    \begin{gather} \label{Qgen1}
        \posupcross, \negupcross \colon \uparrow \otimes \uparrow\ \to\ \uparrow \otimes \uparrow,
        \quad
        \negrightcross \colon \uparrow \otimes \downarrow\ \to\ \downarrow \otimes \uparrow,
        \\ \label{Qgen2}
        \leftcap \colon \downarrow \otimes \uparrow\ \to \one,
        \quad
        \leftcup \colon \one \to\ \uparrow \otimes \downarrow,
        \quad
        \tokup \colon \uparrow\ \to\ \uparrow,
    \end{gather}
    subject to the relations
    \begin{gather} \label{braid}
        \begin{tikzpicture}[centerzero]
            \draw[->] (0.2,-0.4) to[out=135,in=down] (-0.15,0) to[out=up,in=225] (0.2,0.4);
            \draw[wipe] (-0.2,-0.4) to[out=45,in=down] (0.15,0) to[out=up,in=-45] (-0.2,0.4);
            \draw[->] (-0.2,-0.4) to[out=45,in=down] (0.15,0) to[out=up,in=-45] (-0.2,0.4);
        \end{tikzpicture}
        \ =\
        \begin{tikzpicture}[centerzero]
            \draw[->] (-0.2,-0.4) -- (-0.2,0.4);
            \draw[->] (0.2,-0.4) -- (0.2,0.4);
        \end{tikzpicture}
        \ =\
        \begin{tikzpicture}[centerzero]
            \draw[->] (-0.2,-0.4) to[out=45,in=down] (0.15,0) to[out=up,in=-45] (-0.2,0.4);
            \draw[wipe] (0.2,-0.4) to[out=135,in=down] (-0.15,0) to[out=up,in=225] (0.2,0.4);
            \draw[->] (0.2,-0.4) to[out=135,in=down] (-0.15,0) to[out=up,in=225] (0.2,0.4);
        \end{tikzpicture}
        \ ,\quad
        \begin{tikzpicture}[centerzero]
            \draw[<-] (0.2,-0.4) to[out=135,in=down] (-0.15,0) to[out=up,in=225] (0.2,0.4);
            \draw[wipe] (-0.2,-0.4) to[out=45,in=down] (0.15,0) to[out=up,in=-45] (-0.2,0.4);
            \draw[->] (-0.2,-0.4) to[out=45,in=down] (0.15,0) to[out=up,in=-45] (-0.2,0.4);
        \end{tikzpicture}
        \ =\
        \begin{tikzpicture}[centerzero]
            \draw[->] (-0.2,-0.4) -- (-0.2,0.4);
            \draw[<-] (0.2,-0.4) -- (0.2,0.4);
        \end{tikzpicture}
        \ ,\quad
        \begin{tikzpicture}[centerzero]
            \draw[<-] (-0.2,-0.4) to[out=45,in=down] (0.15,0) to[out=up,in=-45] (-0.2,0.4);
            \draw[wipe] (0.2,-0.4) to[out=135,in=down] (-0.15,0) to[out=up,in=225] (0.2,0.4);
            \draw[->] (0.2,-0.4) to[out=135,in=down] (-0.15,0) to[out=up,in=225] (0.2,0.4);
        \end{tikzpicture}
        \ =\
        \begin{tikzpicture}[centerzero]
            \draw[<-] (-0.2,-0.4) -- (-0.2,0.4);
            \draw[->] (0.2,-0.4) -- (0.2,0.4);
        \end{tikzpicture}
        \ ,\quad
        \begin{tikzpicture}[centerzero]
            \draw[->] (0.4,-0.4) -- (-0.4,0.4);
            \draw[wipe] (0,-0.4) to[out=135,in=down] (-0.32,0) to[out=up,in=225] (0,0.4);
            \draw[->] (0,-0.4) to[out=135,in=down] (-0.32,0) to[out=up,in=225] (0,0.4);
            \draw[wipe] (-0.4,-0.4) -- (0.4,0.4);
            \draw[->] (-0.4,-0.4) -- (0.4,0.4);
        \end{tikzpicture}
        \ =\
        \begin{tikzpicture}[centerzero]
            \draw[->] (0.4,-0.4) -- (-0.4,0.4);
            \draw[wipe] (0,-0.4) to[out=45,in=down] (0.32,0) to[out=up,in=-45] (0,0.4);
            \draw[->] (0,-0.4) to[out=45,in=down] (0.32,0) to[out=up,in=-45] (0,0.4);
            \draw[wipe] (-0.4,-0.4) -- (0.4,0.4);
            \draw[->] (-0.4,-0.4) -- (0.4,0.4);
        \end{tikzpicture}
        \ ,
        \\ \label{skein}
        \posupcross - \negupcross
        = z\
        \begin{tikzpicture}[centerzero]
            \draw[->] (-0.15,-0.2) -- (-0.15,0.2);
            \draw[->] (0.15,-0.2) -- (0.15,0.2);
        \end{tikzpicture}
        \ ,\quad
        \\ \label{tokrel}
        \begin{tikzpicture}[centerzero]
            \draw[->] (0,-0.3) -- (0,0.3);
            \token{0,-0.1};
            \token{0,0.1};
        \end{tikzpicture}
        = -
        \begin{tikzpicture}[centerzero]
            \draw[->] (0,-0.3) -- (0,0.3);
        \end{tikzpicture}
        \ ,\quad
        \begin{tikzpicture}[centerzero]
            \draw[->] (0.3,-0.3) -- (-0.3,0.3);
            \draw[wipe] (-0.3,-0.3) -- (0.3,0.3);
            \draw[->] (-0.3,-0.3) -- (0.3,0.3);
            \token{-0.15,-0.15};
        \end{tikzpicture}
        =
        \begin{tikzpicture}[centerzero]
            \draw[->] (0.3,-0.3) -- (-0.3,0.3);
            \draw[wipe] (-0.3,-0.3) -- (0.3,0.3);
            \draw[->] (-0.3,-0.3) -- (0.3,0.3);
            \token{0.15,0.15};
        \end{tikzpicture}
        \ ,\quad
        \begin{tikzpicture}[centerzero]
            \bubright{0,0};
            \token{-0.2,0};
        \end{tikzpicture}
        = 0 =
        \begin{tikzpicture}[centerzero]
            \bubright{0,0};
        \end{tikzpicture}
        \ ,
        \\ \label{leftadj}
        \begin{tikzpicture}[centerzero]
            \draw[<-] (-0.3,0.4) -- (-0.3,0) arc(180:360:0.15) arc(180:0:0.15) -- (0.3,-0.4);
        \end{tikzpicture}
        =
        \begin{tikzpicture}[centerzero]
            \draw[->] (0,-0.4) -- (0,0.4);
        \end{tikzpicture}
        \ ,\qquad
        \begin{tikzpicture}[centerzero]
            \draw[<-] (-0.3,-0.4) -- (-0.3,0) arc(180:0:0.15) arc(180:360:0.15) -- (0.3,0.4);
        \end{tikzpicture}
        =
        \begin{tikzpicture}[centerzero]
            \draw[<-] (0,-0.4) -- (0,0.4);
        \end{tikzpicture}
        \ .
    \end{gather}
    In the above, we have used left crossings and a right cap defined by
    \begin{equation} \label{lego}
        \posleftcross :=
        \begin{tikzpicture}[centerzero]
            \draw[->] (0.4,0.3) -- (0.4,0.1) to[out=down,in=right] (0.2,-0.2) to[out=left,in=right] (-0.2,0.2) to[out=left,in=up] (-0.4,-0.1) -- (-0.4,-0.3);
            \draw[wipe] (-0.2,-0.3) \braidup (0.2,0.3);
            \draw[->] (-0.2,-0.3) \braidup (0.2,0.3);
        \end{tikzpicture}
        \ ,\qquad
        \rightcap
        :=
        \begin{tikzpicture}[anchorbase]
            \draw[->] (0.15,0) arc(0:180:0.15) to[out=-90,in=120] (0.15,-0.4);
            \draw[wipe] (-0.15,-0.4) to[out=60,in=-90] (0.15,0);
            \draw (-0.15,-0.4) to[out=60,in=-90] (0.15,0);
        \end{tikzpicture}
        \ .
    \end{equation}
    The parity of $\tokup$ is odd, and all the other generating morphisms are even.  We refer to $\tokup$ as a \emph{Clifford token}.  (Later we will refer to this as a \emph{closed Clifford token}; see \cref{AQdef}.)
\end{defin}

In addition to the left crossing and right cap defined in \cref{lego}, we define
\begin{equation} \label{wolverine}
    \negleftcross :=
    \begin{tikzpicture}[centerzero]
        \draw[->] (-0.2,-0.3) \braidup (0.2,0.3);
        \draw[wipe] (0.4,0.3) -- (0.4,0.1) to[out=down,in=right] (0.2,-0.2) to[out=left,in=right] (-0.2,0.2) to[out=left,in=up] (-0.4,-0.1) -- (-0.4,-0.3);
        \draw[->] (0.4,0.3) -- (0.4,0.1) to[out=down,in=right] (0.2,-0.2) to[out=left,in=right] (-0.2,0.2) to[out=left,in=up] (-0.4,-0.1) -- (-0.4,-0.3);
    \end{tikzpicture}
    \ ,\qquad
    \posdowncross :=
    \begin{tikzpicture}[centerzero]
        \draw[<-] (-0.2,-0.3) \braidup (0.2,0.3);
        \draw[wipe] (0.4,0.3) -- (0.4,0.1) to[out=down,in=right] (0.2,-0.2) to[out=left,in=right] (-0.2,0.2) to[out=left,in=up] (-0.4,-0.1) -- (-0.4,-0.3);
        \draw[->] (0.4,0.3) -- (0.4,0.1) to[out=down,in=right] (0.2,-0.2) to[out=left,in=right] (-0.2,0.2) to[out=left,in=up] (-0.4,-0.1) -- (-0.4,-0.3);
    \end{tikzpicture}
    \ ,\qquad
    \negdowncross :=
    \begin{tikzpicture}[centerzero]
        \draw[->] (0.4,0.3) -- (0.4,0.1) to[out=down,in=right] (0.2,-0.2) to[out=left,in=right] (-0.2,0.2) to[out=left,in=up] (-0.4,-0.1) -- (-0.4,-0.3);
        \draw[wipe] (-0.2,-0.3) \braidup (0.2,0.3);
        \draw[<-] (-0.2,-0.3) \braidup (0.2,0.3);
    \end{tikzpicture}
    \ ,\qquad
    \rightcup
    :=
    \begin{tikzpicture}[anchorbase]
        \draw (-0.15,0.4) to[out=-60,in=90] (0.15,0);
        \draw[wipe] (0.15,0) arc(360:180:0.15) to[out=90,in=240] (0.15,0.4);
        \draw[->] (0.15,0) arc(360:180:0.15) to[out=90,in=240] (0.15,0.4);
    \end{tikzpicture}
    \ .
\end{equation}
It follows that we have left and down analogues of the \emph{skein relation} \cref{skein}:
\begin{equation} \label{ldskein}
    \posleftcross - \negleftcross
    = z\
    \begin{tikzpicture}[centerzero]
        \draw[<-] (-0.15,0.25) -- (-0.15,0.2) arc(180:360:0.15) -- (0.15,0.25);
        \draw[<-] (-0.15,-0.25) -- (-0.15,-0.2) arc(180:0:0.15) -- (0.15,-0.25);
    \end{tikzpicture}
    \ ,\qquad
    \posdowncross - \negdowncross
    = z\
    \begin{tikzpicture}[centerzero]
        \draw[<-] (-0.15,-0.2) -- (-0.15,0.2);
        \draw[<-] (0.15,-0.2) -- (0.15,0.2);
    \end{tikzpicture}
    \ .
\end{equation}
We then define the other right crossing so that the right skein relation also holds:
\begin{equation} \label{rskein}
    \posrightcross
    := \negrightcross + z\
    \begin{tikzpicture}[centerzero]
        \draw[->] (-0.15,0.25) -- (-0.15,0.2) arc(180:360:0.15) -- (0.15,0.25);
        \draw[->] (-0.15,-0.25) -- (-0.15,-0.2) arc(180:0:0.15) -- (0.15,-0.25);
    \end{tikzpicture}
    \ .
\end{equation}
We call $\posupcross$, $\posrightcross$, $\posdowncross$, and $\posleftcross$ \emph{positive crossings} and we call $\negupcross$, $\negrightcross$, $\negdowncross$, and $\negleftcross$ \emph{negative crossings}.

\begin{rem} \label{HOMFLYPT}
    Given $z,t \in \kk^\times$, the \emph{HOMFLYPT skein category} $\OS(z,t)$ is the quotient of the category of framed oriented tangles by the Conway skein relation \cref{skein} and the relations
    \[
        \begin{tikzpicture}[centerzero]
            \draw[->] (0.4,0) to[out=down,in=0] (0.25,-0.15) to[out=180,in=down] (0,0.4);
            \draw[wipe] (0,-0.4) to[out=up,in=180] (0.25,0.15);
            \draw (0,-0.4) to[out=up,in=180] (0.25,0.15) to[out=0,in=up] (0.4,0);
        \end{tikzpicture}
        = t\
        \begin{tikzpicture}[centerzero]
            \draw[->] (0,-0.4) -- (0,0.4);
        \end{tikzpicture}
        \ ,\qquad
        \rightbub = \frac{t-t^{-1}}{z} 1_\one.
    \]
    This category was first introduced in \cite[\S5.2]{Tur89}, where it was called the \emph{Hecke category} (not to be confused with the more modern use of this term, which is related to the category of Soergel bimodules).  We borrow the notation $\OS(z,t)$, which comes \emph{oriented skein}, from \cite{Bru17}.  It follows from \cite[Th.~1.1]{Bru17}, which gives a presentation of $\OS(z,t)$, that all of the relations in $\OS(z,1)$ hold in $\Qcat(z)$.  More precisely, reflecting diagrams in the vertical axis and flipping crossings (i.e.\ interchanging positive and negative crossings), we see that \cref{braid}, \cref{skein}, \cref{leftadj}, and the last equality in \cref{tokrel} correspond to the relations given in \cite[Th.~1.1]{Bru17} with $t=1$.  Thus, by that result, all relations in $\OS(z,1)$ hold in $\Qcat(z)$ after reflecting in the vertical axis and flipping crossings.  But $\OS(z,1)$ is invariant under this transformation, and so all its relations hold in $\Qcat(z)$.  In fact, $\Qcat(z)$ is the strict monoidal supercategory obtained from $\OS(z,1)$ by adjoining the Clifford token, subject to the relations \cref{tokrel} involving the Clifford token.  Note that the condition $t=1$ is essentially forced by the skein relation and the last relation in \cref{tokrel}, since
    \[
        t\
        \begin{tikzpicture}[centerzero]
            \draw[->] (0,-0.4) -- (0,0.4);
        \end{tikzpicture}
        =
        \begin{tikzpicture}[centerzero]
            \draw[->] (0.4,0) to[out=down,in=0] (0.25,-0.15) to[out=180,in=down] (0,0.4);
            \draw[wipe] (0,-0.4) to[out=up,in=180] (0.25,0.15);
            \draw (0,-0.4) to[out=up,in=180] (0.25,0.15) to[out=0,in=up] (0.4,0);
        \end{tikzpicture}
        \overset{\cref{skein}}{=}
        \begin{tikzpicture}[centerzero]
            \draw (0,-0.4) to[out=up,in=180] (0.25,0.15) to[out=0,in=up] (0.4,0);
            \draw[wipe] (0.25,-0.15) to[out=180,in=down] (0,0.4);
            \draw[->] (0.4,0) to[out=down,in=0] (0.25,-0.15) to[out=180,in=down] (0,0.4);
        \end{tikzpicture}
        + z\
        \begin{tikzpicture}[centerzero]
            \draw[->] (0,-0.4) -- (0,0.4);
            \bubright{0.3,0};
        \end{tikzpicture}
        \overset{\cref{tokrel}}{=}
        \begin{tikzpicture}[centerzero]
            \draw (0,-0.4) to[out=up,in=180] (0.25,0.15) to[out=0,in=up] (0.4,0);
            \draw[wipe] (0.25,-0.15) to[out=180,in=down] (0,0.4);
            \draw[->] (0.4,0) to[out=down,in=0] (0.25,-0.15) to[out=180,in=down] (0,0.4);
        \end{tikzpicture}
        = t^{-1}\
        \begin{tikzpicture}[centerzero]
            \draw[->] (0,-0.4) -- (0,0.4);
        \end{tikzpicture}
        \ .
    \]
    Hence $t = \pm 1$.  If $t=-1$, we can rescale the crossings by $-1$ and replace $z$ by $-z$ to reduce to the case $t=1$.  This explains why the category $\Qcat(z)$ depends on only one parameter $z \in \kk$.
\end{rem}

\begin{lem} \label{venom}
    The following relations hold in $\Qcat(z)$ for all orientations of the strands:
    \begin{gather} \label{venom1}
        \begin{tikzpicture}[centerzero]
            \draw (-0.2,0.3) -- (-0.2,0.1) arc(180:360:0.2) -- (0.2,0.3);
            \draw[wipe] (-0.3,-0.3) \braidup (0,0.3);
            \draw (-0.3,-0.3) \braidup (0,0.3);
        \end{tikzpicture}
        =
        \begin{tikzpicture}[centerzero]
            \draw (-0.2,0.3) -- (-0.2,0.1) arc(180:360:0.2) -- (0.2,0.3);
            \draw[wipe] (0.3,-0.3) \braidup (0,0.3);
            \draw (0.3,-0.3) \braidup (0,0.3);
        \end{tikzpicture}
        \ ,\quad
        \begin{tikzpicture}[centerzero]
            \draw (-0.2,-0.3) -- (-0.2,-0.1) arc(180:0:0.2) -- (0.2,-0.3);
            \draw[wipe] (-0.3,0.3) \braiddown (0,-0.3);
            \draw (-0.3,0.3) \braiddown (0,-0.3);
        \end{tikzpicture}
        =
        \begin{tikzpicture}[centerzero]
            \draw (-0.2,-0.3) -- (-0.2,-0.1) arc(180:0:0.2) -- (0.2,-0.3);
            \draw[wipe] (0.3,0.3) \braiddown (0,-0.3);
            \draw (0.3,0.3) \braiddown (0,-0.3);
        \end{tikzpicture}
        \ ,\quad
        \begin{tikzpicture}[centerzero]
            \draw (-0.3,-0.3) \braidup (0,0.3);
            \draw[wipe] (-0.2,0.3) -- (-0.2,0.1) arc(180:360:0.2) -- (0.2,0.3);
            \draw (-0.2,0.3) -- (-0.2,0.1) arc(180:360:0.2) -- (0.2,0.3);
        \end{tikzpicture}
        =
        \begin{tikzpicture}[centerzero]
            \draw (0.3,-0.3) \braidup (0,0.3);
            \draw[wipe] (-0.2,0.3) -- (-0.2,0.1) arc(180:360:0.2) -- (0.2,0.3);
            \draw (-0.2,0.3) -- (-0.2,0.1) arc(180:360:0.2) -- (0.2,0.3);
        \end{tikzpicture}
        \ ,\quad
        \begin{tikzpicture}[centerzero]
            \draw (-0.3,0.3) \braiddown (0,-0.3);
            \draw[wipe] (-0.2,-0.3) -- (-0.2,-0.1) arc(180:0:0.2) -- (0.2,-0.3);
            \draw (-0.2,-0.3) -- (-0.2,-0.1) arc(180:0:0.2) -- (0.2,-0.3);
        \end{tikzpicture}
        =
        \begin{tikzpicture}[centerzero]
            \draw (0.3,0.3) \braiddown (0,-0.3);
            \draw[wipe] (-0.2,-0.3) -- (-0.2,-0.1) arc(180:0:0.2) -- (0.2,-0.3);
            \draw (-0.2,-0.3) -- (-0.2,-0.1) arc(180:0:0.2) -- (0.2,-0.3);
        \end{tikzpicture}
        \ ,\quad
        \begin{tikzpicture}[centerzero]
            \draw (-0.3,-0.4) -- (-0.3,0) arc(180:0:0.15) arc(180:360:0.15) -- (0.3,0.4);
        \end{tikzpicture}
        =
        \begin{tikzpicture}[centerzero]
            \draw (0,-0.4) -- (0,0.4);
        \end{tikzpicture}
        =
        \begin{tikzpicture}[centerzero]
            \draw (-0.3,0.4) -- (-0.3,0) arc(180:360:0.15) arc(180:0:0.15) -- (0.3,-0.4);
        \end{tikzpicture}
        \ ,
        \\ \label{venom2}
        \begin{tikzpicture}[centerzero]
            \draw (0.2,-0.4) to[out=135,in=down] (-0.15,0) to[out=up,in=225] (0.2,0.4);
            \draw[wipe] (-0.2,-0.4) to[out=45,in=down] (0.15,0) to[out=up,in=-45] (-0.2,0.4);
            \draw (-0.2,-0.4) to[out=45,in=down] (0.15,0) to[out=up,in=-45] (-0.2,0.4);
        \end{tikzpicture}
        \ =\
        \begin{tikzpicture}[centerzero]
            \draw (-0.2,-0.4) -- (-0.2,0.4);
            \draw (0.2,-0.4) -- (0.2,0.4);
        \end{tikzpicture}
        \ =\
        \begin{tikzpicture}[centerzero]
            \draw (-0.2,-0.4) to[out=45,in=down] (0.15,0) to[out=up,in=-45] (-0.2,0.4);
            \draw[wipe] (0.2,-0.4) to[out=135,in=down] (-0.15,0) to[out=up,in=225] (0.2,0.4);
            \draw (0.2,-0.4) to[out=135,in=down] (-0.15,0) to[out=up,in=225] (0.2,0.4);
        \end{tikzpicture}
        \ ,\quad
        \begin{tikzpicture}[centerzero]
            \draw (0.4,-0.4) -- (-0.4,0.4);
            \draw[wipe] (0,-0.4) to[out=135,in=down] (-0.32,0) to[out=up,in=225] (0,0.4);
            \draw (0,-0.4) to[out=135,in=down] (-0.32,0) to[out=up,in=225] (0,0.4);
            \draw[wipe] (-0.4,-0.4) -- (0.4,0.4);
            \draw (-0.4,-0.4) -- (0.4,0.4);
        \end{tikzpicture}
        \ =\
        \begin{tikzpicture}[centerzero]
            \draw (0.4,-0.4) -- (-0.4,0.4);
            \draw[wipe] (0,-0.4) to[out=45,in=down] (0.32,0) to[out=up,in=-45] (0,0.4);
            \draw (0,-0.4) to[out=45,in=down] (0.32,0) to[out=up,in=-45] (0,0.4);
            \draw[wipe] (-0.4,-0.4) -- (0.4,0.4);
            \draw (-0.4,-0.4) -- (0.4,0.4);
        \end{tikzpicture}
        \ ,\quad
        \\ \label{venom3}
        \begin{tikzpicture}[anchorbase]
            \draw (0.15,0) arc(360:180:0.15) to[out=90,in=240] (0.15,0.4);
            \draw[wipe] (-0.15,0.4) to[out=-60,in=90] (0.15,0);
            \draw (-0.15,0.4) to[out=-60,in=90] (0.15,0);
        \end{tikzpicture}
        =
        \begin{tikzpicture}[anchorbase]
            \draw (-0.15,0.15) -- (-0.15,0) arc(180:360:0.15) -- (0.15,0.15);
        \end{tikzpicture}
        \ ,\qquad
        \begin{tikzpicture}[anchorbase]
            \draw (-0.15,-0.4) to[out=60,in=-90] (0.15,0);
            \draw[wipe] (0.15,0) arc(0:180:0.15) to[out=-90,in=120] (0.15,-0.4);
            \draw (0.15,0) arc(0:180:0.15) to[out=-90,in=120] (0.15,-0.4);
        \end{tikzpicture}
        =
        \begin{tikzpicture}[anchorbase]
            \draw (-0.15,-0.15) -- (-0.15,0) arc(180:0:0.15) -- (0.15,-0.15);
        \end{tikzpicture}
        \ ,\qquad
        \unbub = 0.
    \end{gather}
\end{lem}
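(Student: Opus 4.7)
The plan is to derive all of the listed relations from the defining relations of $\Qcat(z)$ together with the definitions of the derived morphisms in \cref{lego}, \cref{wolverine}, and \cref{rskein}. The key overarching observation, noted in \cref{HOMFLYPT}, is that $\Qcat(z)$ contains the HOMFLYPT skein category $\OS(z,1)$ as a (non-super) monoidal subcategory. Each of the relations in \cref{venom1}--\cref{venom3} is one of the standard topological relations in $\OS(z,1)$ expressing that the category is pivotal and that crossings can be slid past cups and caps, subject only to the Conway skein relation. Thus, in principle, the lemma follows by quoting \cite[Th.~1.1]{Bru17}; the task is simply to verify that nothing in these relations requires the Clifford token or a parameter $t \neq 1$.

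For a self-contained derivation, I would proceed in the following order. First, I would verify the braid relations in \cref{venom2}: the upward version is among the defining relations in \cref{braid}, while the other orientations (two or three downward strands) are obtained by bending strands using cups and caps via the definitions in \cref{lego}, \cref{wolverine}, and then reducing to the already-known mixed braid relation from \cref{braid} using the leftward zigzag identities \cref{leftadj}. Next, I would handle the pitchfork identities (first four in \cref{venom1}) by the same bending strategy: post- or pre-composing the defining mixed braid relation in \cref{braid} with a left cup or cap produces each pitchfork, after which further cups and caps propagate it to the remaining orientations.

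The downward zigzag identity (the last in \cref{venom1}) follows by expanding $\rightcup$ and $\rightcap$ using \cref{wolverine} and \cref{lego}, then applying the leftward zigzag identities of \cref{leftadj} together with the braid relations established above. The pivotal relations constituting the first two equations of \cref{venom3} are then obtained by combining the pitchforks with the zigzag identities to slide a cap around a crossing. Finally, for $\unbub = 0$: the rightward oriented bubble is precisely the morphism set to zero in the last equation of \cref{tokrel}, while the leftward bubble can be rewritten using \cref{wolverine} as a composite built from $\rightcup$, $\rightcap$, and an identity, and the pivotal identities just established reduce it to the rightward bubble, hence to zero.

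The main obstacle is purely bookkeeping: each relation demands checking several orientations, and each check is a short string-diagram manipulation using a prior step. Conceptually nothing beyond the pivotal and braided structure of the HOMFLYPT skein category enters, which is consistent with the fact that no Clifford tokens appear in any of these identities.
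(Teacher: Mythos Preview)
Your proposal is correct and takes essentially the same approach as the paper: the paper's proof simply invokes \cref{HOMFLYPT} to conclude that all these relations hold in $\OS(z,1)$ and hence in $\Qcat(z)$. Your additional sketch of a self-contained derivation is more detailed than what the paper provides, but the underlying idea is identical.
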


\begin{proof}
    This follows from \cref{HOMFLYPT}, since all these relations holds in $\OS(z,1)$.
\end{proof}

We define
\begin{equation} \label{tokdown}
    \begin{tikzpicture}[centerzero]
        \draw[<-] (0,-0.4) -- (0,0.4);
        \token{0,0};
    \end{tikzpicture}
    :=
    \begin{tikzpicture}[centerzero]
        \draw[->] (0.3,0.4) -- (0.3,0) arc(360:180:0.15) arc(0:180:0.15) -- (-0.3,-0.4);
        \token{0,0};
    \end{tikzpicture}
    \ .
\end{equation}
It follows that
\begin{equation} \label{torch}
    \begin{tikzpicture}[centerzero]
        \draw[<-] (0,-0.3) -- (0,0.3);
        \token{0,-0.1};
        \token{0,0.1};
    \end{tikzpicture}
    =
    \begin{tikzpicture}[anchorbase]
        \draw[->] (0.9,0.7) -- (0.9,0.3) arc (360:180:0.15) arc(0:180:0.15) -- (0.3,0) arc(360:180:0.15) arc(0:180:0.15) -- (-0.3,-0.4);
        \token{0,0};
        \token{0.6,0.3};
    \end{tikzpicture}
    \overset{\cref{intlaw}}{=} -
    \begin{tikzpicture}[anchorbase]
        \draw[->] (0.9,0.7) -- (0.9,-0.1) arc (360:180:0.15) -- (0.6,0.1) arc(0:180:0.15) arc(360:180:0.15) -- (0,0.4) arc(0:180:0.15) -- (-0.3,-0.4);
        \token{0.6,-0.1};
        \token{0,0.4};
    \end{tikzpicture}
    \overset{\cref{leftadj}}{=} -
    \begin{tikzpicture}[anchorbase]
        \draw[->] (0.6,0.55) -- (0.6,-0.2) arc (360:180:0.15) -- (0.3,0.2) arc(0:180:0.15) -- (0,-0.55);
        \token{0.3,-0.1};
        \token{0.3,0.1};
    \end{tikzpicture}
    \overset{\cref{tokrel}}{=}
    \begin{tikzpicture}[anchorbase]
        \draw[->] (0.6,0.55) -- (0.6,-0.2) arc (360:180:0.15) -- (0.3,0.2) arc(0:180:0.15) -- (0,-0.55);
    \end{tikzpicture}
    \overset{\cref{leftadj}}{=}
    \begin{tikzpicture}[centerzero]
        \draw[<-] (0,-0.3) -- (0,0.3);
    \end{tikzpicture}
    \ .
\end{equation}

\begin{lem} \label{ironfull}
    The following relations hold in $\Qcat(z)$ for all orientations of the strands:
    \begin{equation} \label{iron}
        \begin{tikzpicture}[centerzero]
            \draw (0.3,-0.3) -- (-0.3,0.3);
            \draw[wipe] (-0.3,-0.3) -- (0.3,0.3);
            \draw (-0.3,-0.3) -- (0.3,0.3);
            \token{-0.18,-0.18};
        \end{tikzpicture}
        =
        \begin{tikzpicture}[centerzero]
            \draw (0.3,-0.3) -- (-0.3,0.3);
            \draw[wipe] (-0.3,-0.3) -- (0.3,0.3);
            \draw (-0.3,-0.3) -- (0.3,0.3);
            \token{0.18,0.18};
        \end{tikzpicture}
        \ ,\quad
        \begin{tikzpicture}[centerzero]
            \draw (-0.3,-0.3) -- (0.3,0.3);
            \draw[wipe] (0.3,-0.3) -- (-0.3,0.3);
            \draw (0.3,-0.3) -- (-0.3,0.3);
            \token{-0.18,0.18};
        \end{tikzpicture}
        =
        \begin{tikzpicture}[centerzero]
            \draw (-0.3,-0.3) -- (0.3,0.3);
            \draw[wipe] (0.3,-0.3) -- (-0.3,0.3);
            \draw (0.3,-0.3) -- (-0.3,0.3);
            \token{0.18,-0.18};
        \end{tikzpicture}
        \ ,\quad
        \begin{tikzpicture}[anchorbase]
            \draw (-0.2,0.2) -- (-0.2,0) arc (180:360:0.2) -- (0.2,0.2);
            \token{-0.2,0};
        \end{tikzpicture}
        \ =\
        \begin{tikzpicture}[anchorbase]
            \draw (-0.2,0.2) -- (-0.2,0) arc (180:360:0.2) -- (0.2,0.2);
            \token{0.2,0};
        \end{tikzpicture}
        \ ,\quad
        \begin{tikzpicture}[anchorbase]
            \draw (-0.2,-0.2) -- (-0.2,0) arc (180:0:0.2) -- (0.2,-0.2);
            \token{-0.2,0};
        \end{tikzpicture}
        \ =\
        \begin{tikzpicture}[anchorbase]
            \draw (-0.2,-0.2) -- (-0.2,0) arc (180:0:0.2) -- (0.2,-0.2);
            \token{0.2,0};
        \end{tikzpicture}
        \ ,\quad
        \begin{tikzpicture}[centerzero]
            \bubun{0,0};
            \token{-0.2,0};
        \end{tikzpicture}
        = 0.
    \end{equation}
\end{lem}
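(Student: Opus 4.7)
The plan is to derive the five equations from the defining token relations \cref{tokrel}, the zigzag identities \cref{leftadj}, the pivotal structure already provided by \cref{venom}, the definition \cref{tokdown} of $\tokdown$, and the definitions of the remaining crossings and cups/caps in \cref{lego}, \cref{wolverine}, \cref{rskein}.

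I would first dispatch the up-up \emph{negative} crossing slide. Since $\posupcross \circ \negupcross = \id_\uparrow \otimes \id_\uparrow$ by the first relation in \cref{braid}, pre-composing the desired equation with $\posupcross$ and invoking the positive slide from \cref{tokrel} (in the form $\posupcross \circ (\tokup \otimes \id_\uparrow) = (\id_\uparrow \otimes \tokup) \circ \posupcross$) reduces both sides to $(\id_\uparrow \otimes \tokup)$. Invertibility of $\posupcross$ then yields the slide along the over-strand of $\negupcross$.

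The heart of the argument is the cup/cap slide. For $\leftcup$, I substitute the factored form $\tokdown = (\leftcap \otimes \id_\downarrow) \circ (\id_\downarrow \otimes \tokup \otimes \id_\downarrow) \circ (\id_\downarrow \otimes \leftcup)$ coming from \cref{tokdown} into $(\id_\uparrow \otimes \tokdown) \circ \leftcup$. Using the super interchange law \cref{interchange} (no signs appear, since $\leftcup$ and $\leftcap$ are even), this rewrites as $(\id_\uparrow \otimes \leftcap \otimes \id_\downarrow) \circ \bigl(\leftcup \otimes \bigl[(\tokup \otimes \id_\downarrow) \circ \leftcup\bigr]\bigr)$; the zigzag identity $(\id_\uparrow \otimes \leftcap) \circ (\leftcup \otimes \id_\uparrow) = \id_\uparrow$ from \cref{leftadj}, tensored with $\id_\downarrow$ on the right, collapses this expression to $(\tokup \otimes \id_\downarrow) \circ \leftcup$, which is the desired LHS. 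The slide across $\leftcap$ is dual. The slides across $\rightcup$ and $\rightcap$ are then obtained from their definitions in \cref{lego}, \cref{wolverine}, which express them as compositions of $\leftcup$, $\leftcap$ with up-up crossings; invoking the left cup/cap slides just proved together with the up-up crossing slides from the first step completes these cases. The crossing slides for the remaining orientations (down-down and the two mixed orientations) follow in the same way from the up-up slides, propagating the token through each piece of the rotated crossing in \cref{wolverine} and \cref{rskein}.

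The bubble relation follows once all the previous slides are available: decomposing $\leftbub = \leftcap \circ \rightcup$, using the cup slide for $\rightcup$, the identity $\rightcup = \negrightcross \circ \leftcup$ implicit in \cref{wolverine}, the negative right crossing slide, and the identity $\leftcap \circ \negrightcross = \rightcap$ built into the definition of $\rightcap$ in \cref{lego}, one rewrites $\leftbub$ with a token as $\rightcap \circ (\tokup \otimes \id_\downarrow) \circ \leftcup$, which is precisely $\rightbub$ with a token and hence vanishes by \cref{tokrel}. The main obstacle is the cup/cap slide, since establishing it for \emph{all} four cup/cap orientations unlocks both the remaining crossing-slide cases and the bubble relation; once this step is in hand, everything reduces to composites of the defining relations and the pivotal identities in \cref{venom}.
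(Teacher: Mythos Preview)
Your argument contains a genuine gap at the step where you claim the right cup and cap slides. You assert that the definitions in \cref{lego}, \cref{wolverine} express $\rightcup$ and $\rightcap$ ``as compositions of $\leftcup$, $\leftcap$ with up-up crossings,'' but this is false: those definitions are
\[
    \rightcap = \leftcap \circ \negrightcross
    \qquad\text{and}\qquad
    \rightcup = \negrightcross \circ \leftcup,
\]
which involve the \emph{mixed} crossing $\negrightcross$, not an up-up crossing. Even if you first establish the slide of $\tokup$ along the over-strand of $\negrightcross$ (which does follow from the $\posleftcross$ slide by invertibility, and that in turn from \cref{tokrel}), the na\"ive propagation still fails. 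Starting from $(\id_\downarrow\otimes\tokup)\circ\rightcup$ and sliding through the over-strand of $\negrightcross$ and then through $\leftcup$ leaves the token on the $\downarrow$ side \emph{below} the crossing, and to reach $(\tokdown\otimes\id_\uparrow)\circ\rightcup$ you would now need to slide it along the \emph{under}-strand of $\negrightcross$, which is exactly what does \emph{not} hold.

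The paper's proof avoids this by first establishing the crossing slides for \emph{all} orientations and then, for the right cup, invoking the skein relation \cref{rskein} to replace $\negrightcross$ by $\posrightcross$; the extra $z$-term produces a clockwise bubble with a Clifford token, which vanishes by \cref{tokrel}. After the swap, the token sits on the over-strand of $\posrightcross$ and can be slid up to the top-left. This skein trick is the missing idea in your outline, and without it the right cup/cap slides (and hence your bubble argument, which depends on them) do not go through.
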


\begin{proof}
    Composing the second relation in \cref{tokrel} on the top and bottom with $\negupcross$, we see that the first two relations in \cref{iron} hold when both strands are oriented up.  Attaching a left cup to the bottom of \cref{tokdown} and using \cref{leftadj}, we see that the third relation in \cref{iron} holds for the strand oriented to the left.  Similarly, attaching a left cap to the top of \cref{tokdown}, we see that the fourth relation in \cref{iron} also holds for the strand oriented to the left.  Then, using the definitions \cref{lego,wolverine} of the left and down crossings, we see that the first two relations in \cref{iron} hold for the strands oriented to the left or oriented down.  Next, taking the second relation in \cref{iron} for the strands oriented to the left, and composing on the top and bottom with $\negrightcross$, we see that the first relation in \cref{iron} holds for the strands oriented to the right.  Similarly, taking the first relation in \cref{iron} for the strands oriented to the left, and composing on the top and bottom with $\posrightcross$,  we see that the second relation in \cref{iron} holds for the strands oriented to the right.

    So we have now proved the first two relations in \cref{iron} for all orientations of the strands, and the third and fourth relations for the strands oriented to the left.  Next we compute
    \[
        \begin{tikzpicture}[centerzero]
            \bubleft{0,0};
            \token{0.2,0};
        \end{tikzpicture}
        \overset{\cref{wolverine}}{=}
        \begin{tikzpicture}[centerzero]
            \draw (-0.2,-0.2) arc(180:360:0.2) \braidup (-0.2,0.2) arc(180:90:0.2);
            \draw[wipe] (0,0.4) arc(90:0:0.2) \braiddown (-0.2,-0.2);
            \draw[<-] (0,0.4) arc(90:0:0.2) \braiddown (-0.2,-0.2);
            \token{0.2,0.2};
        \end{tikzpicture}
        =
        \begin{tikzpicture}[centerzero]
            \draw (-0.2,-0.2) arc(180:360:0.2) \braidup (-0.2,0.2) arc(180:90:0.2);
            \draw[wipe] (0,0.4) arc(90:0:0.2) \braiddown (-0.2,-0.2);
            \draw[<-] (0,0.4) arc(90:0:0.2) \braiddown (-0.2,-0.2);
            \token{-0.2,-0.2};
        \end{tikzpicture}
        \overset{\cref{lego}}{=}
        \begin{tikzpicture}[centerzero]
            \bubright{0,0};
            \token{-0.2,0};
        \end{tikzpicture}
        \overset{\cref{tokrel}}{=}
        0.
    \]
    So the last equality in \cref{iron} holds for both orientations of the strand.  We also have
    \[
        \begin{tikzpicture}[anchorbase]
            \draw[->] (-0.2,0.2) -- (-0.2,0) arc (180:360:0.2) -- (0.2,0.2);
            \token{0.2,0};
        \end{tikzpicture}
        \overset{\cref{wolverine}}{=}
        \begin{tikzpicture}[anchorbase]
            \draw (-0.15,0.6) -- (-0.15,0.4) to[out=-90,in=90] (0.15,0);
            \draw[wipe] (-0.15,0) to[out=90,in=-90] (0.15,0.4);
            \draw[->] (0.15,0) arc(360:180:0.15) to[out=90,in=-90] (0.15,0.4) -- (0.15,0.6);
            \token{0.15,0.4};
        \end{tikzpicture}
        \overset{\cref{tokrel}}{=}
        \begin{tikzpicture}[anchorbase]
            \draw (-0.15,0.6) -- (-0.15,0.4) to[out=-90,in=90] (0.15,0);
            \draw[wipe] (-0.15,0) to[out=90,in=-90] (0.15,0.4);
            \draw[->] (0.15,0) arc(360:180:0.15) to[out=90,in=-90] (0.15,0.4) -- (0.15,0.6);
            \token{-0.15,0};
        \end{tikzpicture}
        \overset{\cref{rskein}}{\underset{\cref{tokrel}}{=}}
        \begin{tikzpicture}[anchorbase]
            \draw[->] (0.15,0) arc(360:180:0.15) to[out=90,in=-90] (0.15,0.4) -- (0.15,0.6);
            \draw[wipe] (-0.15,0.6) -- (-0.15,0.4) to[out=-90,in=90] (0.15,0);
            \draw (-0.15,0.6) -- (-0.15,0.4) to[out=-90,in=90] (0.15,0);
            \token{-0.15,0};
        \end{tikzpicture}
        =
        \begin{tikzpicture}[anchorbase]
            \draw[->] (0.15,0) arc(360:180:0.15) to[out=90,in=-90] (0.15,0.4) -- (0.15,0.6);
            \draw[wipe] (-0.15,0.6) -- (-0.15,0.4) to[out=-90,in=90] (0.15,0);
            \draw (-0.15,0.6) -- (-0.15,0.4) to[out=-90,in=90] (0.15,0);
            \token{0.15,0};
        \end{tikzpicture}
        =
        \begin{tikzpicture}[anchorbase]
            \draw[->] (0.15,0) arc(360:180:0.15) to[out=90,in=-90] (0.15,0.4) -- (0.15,0.6);
            \draw[wipe] (-0.15,0.6) -- (-0.15,0.4) to[out=-90,in=90] (0.15,0);
            \draw (-0.15,0.6) -- (-0.15,0.4) to[out=-90,in=90] (0.15,0);
            \token{-0.15,0.4};
        \end{tikzpicture}
        \overset{\cref{rskein}}{\underset{\cref{tokrel}}{=}}
        \begin{tikzpicture}[anchorbase]
            \draw (-0.15,0.6) -- (-0.15,0.4) to[out=-90,in=90] (0.15,0);
            \draw[wipe] (-0.15,0) to[out=90,in=-90] (0.15,0.4);
            \draw[->] (0.15,0) arc(360:180:0.15) to[out=90,in=-90] (0.15,0.4) -- (0.15,0.6);
            \token{-0.15,0.4};
        \end{tikzpicture}
        \overset{\cref{wolverine}}{=}
        \begin{tikzpicture}[anchorbase]
            \draw[->] (-0.2,0.2) -- (-0.2,0) arc (180:360:0.2) -- (0.2,0.2);
            \token{-0.2,0};
        \end{tikzpicture}
        \ .
    \]
    An analogous argument shows that the fourth relation in \cref{iron} holds for the strands oriented to the right.
\end{proof}

It follows from \cref{iron} that Clifford tokens slide \emph{over} all crossings.  However, they do not slide \emph{under} crossings.  In fact, we have the following result.

\begin{lem} \label{nickelfull}
    The following relations hold in $\Qcat(z)$:
    \begin{equation} \label{nickel}
        \begin{tikzpicture}[centerzero]
            \draw[->] (-0.3,-0.3) -- (0.3,0.3);
            \draw[wipe] (0.3,-0.3) -- (-0.3,0.3);
            \draw[->] (0.3,-0.3) -- (-0.3,0.3);
            \token{-0.18,-0.18};
        \end{tikzpicture}
        =
        \begin{tikzpicture}[centerzero]
            \draw[->] (-0.3,-0.3) -- (0.3,0.3);
            \draw[wipe] (0.3,-0.3) -- (-0.3,0.3);
            \draw[->] (0.3,-0.3) -- (-0.3,0.3);
            \token{0.15,0.15};
        \end{tikzpicture}
        + z\
        \left(
            \begin{tikzpicture}[centerzero]
                \draw[->] (-0.2,-0.3) -- (-0.2,0.3);
                \draw[->] (0.2,-0.3) -- (0.2,0.3);
                \token{0.2,0};
            \end{tikzpicture}
            -
            \begin{tikzpicture}[centerzero]
                \draw[->] (-0.2,-0.3) -- (-0.2,0.3);
                \draw[->] (0.2,-0.3) -- (0.2,0.3);
                \token{-0.2,0};
            \end{tikzpicture}
        \right),
        \qquad
        \begin{tikzpicture}[centerzero]
            \draw[->] (0.3,-0.3) -- (-0.3,0.3);
            \draw[wipe] (-0.3,-0.3) -- (0.3,0.3);
            \draw[->] (-0.3,-0.3) -- (0.3,0.3);
            \token{-0.15,0.15};
        \end{tikzpicture}
        =
        \begin{tikzpicture}[centerzero]
            \draw[->] (0.3,-0.3) -- (-0.3,0.3);
            \draw[wipe] (-0.3,-0.3) -- (0.3,0.3);
            \draw[->] (-0.3,-0.3) -- (0.3,0.3);
            \token{0.17,-0.17};
        \end{tikzpicture}
        + z\
        \left(
            \begin{tikzpicture}[centerzero]
                \draw[->] (-0.2,-0.3) -- (-0.2,0.3);
                \draw[->] (0.2,-0.3) -- (0.2,0.3);
                \token{-0.2,0};
            \end{tikzpicture}
            -
            \begin{tikzpicture}[centerzero]
                \draw[->] (-0.2,-0.3) -- (-0.2,0.3);
                \draw[->] (0.2,-0.3) -- (0.2,0.3);
                \token{0.2,0};
            \end{tikzpicture}
        \right).
    \end{equation}
\end{lem}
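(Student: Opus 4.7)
The plan is to deduce both relations in \eqref{nickel} as short consequences of the skein relation \eqref{skein} combined with \eqref{iron}, which states that a Clifford token slides freely along the \emph{over}-strand at any crossing. The only obstruction to moving a token past a crossing is therefore the case where the token lives on the \emph{under}-strand, and the skein relation supplies a controlled way to exchange over and under: whenever the token is on an under-strand, we convert that crossing to its opposite sign (at the cost of a $z$-correction), slide the token along what is now the over-strand, and then convert back (at the cost of a second $z$-correction).

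For the first equation, I would start with the LHS and apply \eqref{skein} at the crossing to rewrite the negative crossing as a positive crossing minus $z$ times the pair of vertical strands, with the token still on the bottom-left strand. In the resulting positive crossing the bottom-left to top-right strand is on top, so \eqref{iron} slides the token along it to the top-right. A second application of \eqref{skein} then converts the positive crossing back into a negative crossing, producing a second $z$-correction in which the token now sits on the right. The two correction terms combine to exactly $z$ times (token on right minus token on left), which is the RHS. The second equation is proved by the mirror argument: the LHS is a positive crossing with the token on the top-left end of the under-strand; \eqref{skein} trades the positive crossing for a negative one (with a $z$-correction in which the token sits on the left strand), in which the relevant strand is now on top so \eqref{iron} slides the token to the bottom-right, and one final application of \eqref{skein} reintroduces the positive crossing with a second $z$-correction on the right strand. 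The signs combine to give $z$ times (token on left minus token on right).

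There is no real obstacle: the argument is a controlled two-step exchange of over and under strands, with the two $z$-correction terms arising from the two applications of \eqref{skein}. The only care needed is in tracking which strand is on top after each step, and in checking that the two corrections combine (rather than cancel) into the stated linear combination; this happens automatically because in each equation the first correction is generated with the token at the bottom of a vertical strand and the second with the token at the top, sitting on opposite strands.
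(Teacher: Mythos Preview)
Your proposal is correct and is essentially identical to the paper's proof: apply \eqref{skein} to swap the crossing type, slide the token along what is now the over-strand, then apply \eqref{skein} again to swap back, collecting the two $z$-corrections. The only cosmetic difference is that the paper cites the second relation in \eqref{tokrel} directly for the slide step, whereas you invoke the more general \eqref{iron}; either reference is valid here since both strands are upward.
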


\begin{proof}
    We have
    \begin{equation}
        \begin{tikzpicture}[centerzero]
            \draw[->] (-0.3,-0.3) -- (0.3,0.3);
            \draw[wipe] (0.3,-0.3) -- (-0.3,0.3);
            \draw[->] (0.3,-0.3) -- (-0.3,0.3);
            \token{-0.18,-0.18};
        \end{tikzpicture}
        \overset{\cref{skein}}{=}
        \begin{tikzpicture}[centerzero]
            \draw[->] (0.3,-0.3) -- (-0.3,0.3);
            \draw[wipe] (-0.3,-0.3) -- (0.3,0.3);
            \draw[->] (-0.3,-0.3) -- (0.3,0.3);
            \token{-0.15,-0.15};
        \end{tikzpicture}
        - z\
        \begin{tikzpicture}[centerzero]
            \draw[->] (-0.2,-0.3) -- (-0.2,0.3);
            \draw[->] (0.2,-0.3) -- (0.2,0.3);
            \token{-0.2,0};
        \end{tikzpicture}
        \overset{\cref{tokrel}}{=}
        \begin{tikzpicture}[centerzero]
            \draw[->] (0.3,-0.3) -- (-0.3,0.3);
            \draw[wipe] (-0.3,-0.3) -- (0.3,0.3);
            \draw[->] (-0.3,-0.3) -- (0.3,0.3);
            \token{0.15,0.15};
        \end{tikzpicture}
        - z\
        \begin{tikzpicture}[centerzero]
            \draw[->] (-0.2,-0.3) -- (-0.2,0.3);
            \draw[->] (0.2,-0.3) -- (0.2,0.3);
            \token{-0.2,0};
        \end{tikzpicture}
        \overset{\cref{skein}}{=}
        \begin{tikzpicture}[centerzero]
            \draw[->] (-0.3,-0.3) -- (0.3,0.3);
            \draw[wipe] (0.3,-0.3) -- (-0.3,0.3);
            \draw[->] (0.3,-0.3) -- (-0.3,0.3);
            \token{0.15,0.15};
        \end{tikzpicture}
        + z\
        \left(
            \begin{tikzpicture}[centerzero]
                \draw[->] (-0.2,-0.3) -- (-0.2,0.3);
                \draw[->] (0.2,-0.3) -- (0.2,0.3);
                \token{0.2,0};
            \end{tikzpicture}
            -
            \begin{tikzpicture}[centerzero]
                \draw[->] (-0.2,-0.3) -- (-0.2,0.3);
                \draw[->] (0.2,-0.3) -- (0.2,0.3);
                \token{-0.2,0};
            \end{tikzpicture}
        \right).
    \end{equation}
    The proof of the second relation is analogous.
\end{proof}

We now describe several symmetries of the category $\Qcat(z)$.  First note that we have an isomorphism of monoidal supercategories
\[
    \Omega_- \colon \Qcat(z) \xrightarrow{\cong} \Qcat(-z)
\]
that is the identity objects and, on morphisms, multiplies all crossings by $-1$.

\begin{prop} \label{storm}
    There is a unique isomorphism of monoidal supercategories
    \[
        \Omega_\updownarrow \colon \Qcat(z) \to \Qcat(z)^\op
    \]
    determined on objects by $\uparrow\ \mapsto\ \downarrow$, $\downarrow\ \mapsto\ \uparrow$ and sending
    \[
        \tokup \mapsto \tokdown,\quad
        \posupcross \mapsto - \negdowncross,\quad
        \leftcap \mapsto \leftcup,\quad
        \leftcup \mapsto \leftcap.
    \]
    The superfunctor $\Omega_{\updownarrow}$ acts on the other crossings, cups, caps, and Clifford tokens as follows:
    \begin{gather*}
        \tokdown \mapsto \tokup,\quad
        \negupcross \mapsto - \posdowncross,\quad
        \posrightcross \mapsto - \negrightcross,\quad
        \negrightcross \mapsto - \posrightcross,\quad
        \posdowncross \mapsto - \negupcross,\quad
        \negdowncross \mapsto - \posupcross,\quad
        \\
        \negleftcross \mapsto - \posleftcross,\quad
        \posleftcross \mapsto - \negleftcross,\quad
        \rightcap \mapsto -\, \rightcup\, ,\quad
        \rightcup \mapsto -\, \rightcap\, .
    \end{gather*}
\end{prop}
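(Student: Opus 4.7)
The plan is to verify that the stated assignment on the four named generating morphisms—together with the assignment on objects—extends to a well-defined strict monoidal superfunctor $\Qcat(z) \to \Qcat(z)^\op$, then to compute the images of the remaining morphisms by following through their defining equations, and finally to establish invertibility. Uniqueness is automatic: the skein relation \cref{skein} determines $\Omega_\updownarrow(\negupcross)$ from $\Omega_\updownarrow(\posupcross)$, and duality expresses $\negrightcross$ in terms of $\posupcross$, $\leftcap$, and $\leftcup$, so the four named images determine everything.

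The first task is to check that each defining relation \cref{braid,skein,tokrel,leftadj} of $\Qcat(z)$ is preserved. The subtlety throughout is that composition in $\Qcat(z)^\op$ obeys the super sign rule $f^\op \circ_\op g^\op = (-1)^{\bar f \bar g} (g \circ f)^\op$. Applying $\Omega_\updownarrow$ to \cref{skein} and using the down-oriented skein \cref{ldskein} both forces $\Omega_\updownarrow(\negupcross) = -\posdowncross$ and verifies the relation. The Clifford involution $\tokup \circ \tokup = -1_\uparrow$ is preserved because opposite composition of two odd morphisms introduces an extra sign: $\tokdown \circ_\op \tokdown = -\tokdown \circ \tokdown = -1_\downarrow$ by \cref{torch}. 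The token-slides-over-crossing relation in \cref{tokrel} maps to the analogous sliding relation for the over strand of $\negdowncross$, which holds by \cref{iron}. The braid identities \cref{braid} are preserved because the two minus signs on the mapped crossings cancel, reducing to the all-orientation braid relations of \cref{venom2}. The bubble vanishings in \cref{tokrel} map to the corresponding vanishings in \cref{iron} and \cref{venom3}. Finally, the left adjunction \cref{leftadj} maps to the zigzag identities of \cref{venom1} under the swap $\leftcap \leftrightarrow \leftcup$.

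The images of the remaining morphisms stated in the proposition are then computed directly from their defining equations \cref{lego,wolverine,rskein,tokdown}. For instance, applying $\Omega_\updownarrow$ to \cref{lego} and substituting the known images of $\posupcross$, $\leftcap$, $\leftcup$ yields $\Omega_\updownarrow(\posleftcross) = -\negleftcross$; the other crossing images follow similarly. The signs $\rightcap \mapsto -\rightcup$ and $\rightcup \mapsto -\rightcap$ arise in the same way from the definitions in \cref{lego,wolverine}, and $\tokdown \mapsto \tokup$ follows from \cref{tokdown}.

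For invertibility, I would apply the analogous construction starting from $\Qcat(z)^\op$ to produce a superfunctor $\Psi \colon \Qcat(z)^\op \to \Qcat(z)$, and verify that $\Psi \circ \Omega_\updownarrow$ fixes every generator—for example $\posupcross \mapsto -\negdowncross \mapsto -(-\posupcross) = \posupcross$ and $\tokup \mapsto \tokdown \mapsto \tokup$—so it must equal the identity. The main obstacle throughout is sign bookkeeping, arising from the interaction of the odd parity of the Clifford token, the super sign in opposite composition, and the explicit minus signs placed on the mapped crossings. The most delicate check is the token-sliding relation in \cref{tokrel}, which succeeds because \cref{iron} provides sliding relations in every orientation.
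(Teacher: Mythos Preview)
Your approach is correct and is essentially the same as the paper's: the paper's proof is the single sentence ``This follows from \cref{ldskein,venom1,venom2,venom3,torch,iron},'' and what you have written is an accurate expansion of precisely that argument, citing the same relations to verify well-definedness and compute the remaining images.
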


\begin{proof}
    This follows from \cref{ldskein,venom1,venom2,venom3,torch,iron}.
\end{proof}

\begin{prop} \label{thor}
    There is a unique isomorphism of monoidal supercategories
    \[
        \Omega_\leftrightarrow \colon \Qcat(z) \to \Qcat(z)^\rev
    \]
    determined on objects by $\uparrow\ \mapsto\ \uparrow$, $\downarrow\ \mapsto\ \downarrow$ and sending
    \[
        \tokup \mapsto \tokup,\quad
        \posupcross \mapsto - \negupcross,\quad
        \leftcup \mapsto \rightcup,\quad
        \leftcap \mapsto \rightcap.
    \]
    The superfunctor $\Omega_\leftrightarrow$ acts on the other crossings, cups, caps, and Clifford tokens as follows:
    \begin{gather*}
        \tokdown \mapsto \tokdown,\quad
        \negupcross \mapsto - \posupcross,\quad
        \posrightcross \mapsto - \negleftcross,\quad
        \negrightcross \mapsto - \posleftcross,\quad
        \posdowncross \mapsto - \negdowncross,\quad
        \negdowncross \mapsto - \posdowncross,\quad
        \\
        \negleftcross \mapsto - \posrightcross,\quad
        \posleftcross \mapsto - \negrightcross,\quad
        \rightcup \mapsto - \leftcup\, ,\quad
        \rightcap \mapsto - \leftcap\, .
    \end{gather*}
\end{prop}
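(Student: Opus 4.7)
The plan is to mirror the template of \cref{storm}: define $\Omega_\leftrightarrow$ on the generating data of $\Qcat(z)$ exactly as specified in the statement, and then check that its images satisfy the defining relations \cref{braid,skein,tokrel,leftadj} of $\Qcat(z)$, now interpreted in $\Qcat(z)^\rev$. Uniqueness is then automatic since these data generate the supercategory as a strict monoidal supercategory. Intuitively, $\Omega_\leftrightarrow$ sends each string diagram to its horizontal reflection multiplied by $(-1)^c$, where $c$ is the number of crossings; the sign on crossings is forced by the Conway form of the skein relation.

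First I would verify each defining relation in turn. The skein relation \cref{skein}, after substitution, reads $(-\negupcross) - (-\posupcross) = z\,1_{\uparrow \otimes \uparrow}$, which is just the original skein relation rearranged. The braid-like relations \cref{braid} reflect to the same set of relations (with the roles of positive and negative crossings interchanged), so they hold. The Clifford token relations in \cref{tokrel} are preserved because $\tokup$ is fixed and the crossing-token relation transforms into the corresponding identity for the negative crossing, which is recorded in the first equality of \cref{iron}. The left adjunction relations \cref{leftadj} transform into the analogous right adjunction relations, which appear in the last line of \cref{venom1}.

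Next I would deduce the action of $\Omega_\leftrightarrow$ on the derived morphisms by applying it to the defining formulas \cref{lego,wolverine,tokdown}, tracking the reversal of tensor order in $\Qcat(z)^\rev$, and simplifying using the already-established identities in \cref{venom,ironfull,nickelfull}. For example, applying $\Omega_\leftrightarrow$ to the definition of $\posrightcross$ in \cref{lego} yields, after a short computation using \cref{venom1,iron,venom3}, the identity $\Omega_\leftrightarrow(\posrightcross) = -\negleftcross$; the remaining crossings, cups, caps, and the image of $\tokdown$ are handled by analogous diagrammatic manipulations. Finally, to see that $\Omega_\leftrightarrow$ is an isomorphism, I would construct its inverse $\Omega_\leftrightarrow^{-1}\colon \Qcat(z)^\rev \to \Qcat(z)$ by prescribing its values on generators so that $\Omega_\leftrightarrow^{-1} \circ \Omega_\leftrightarrow$ is the identity on each generator of $\Qcat(z)$, checking well-definedness by exactly the same relation-by-relation verification.

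The main obstacle I anticipate is the careful bookkeeping of signs, since three distinct sign sources interact: the reversal of tensor order in $\Qcat(z)^\rev$, the super interchange law \cref{intlaw}, and the factor of $-1$ attached to each crossing. Once the sign convention is pinned down by the skein relation and cross-checked on the adjunction relations, however, everything else reduces to routine diagrammatic manipulations using the structural lemmas \cref{venom,ironfull,nickelfull}.
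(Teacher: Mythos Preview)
Your proposal is correct and takes essentially the same approach as the paper: both arguments verify that the assignment on generators respects the defining relations by invoking the structural identities in \cref{venom1,venom3,iron}, with the paper's proof consisting of nothing more than a citation of those three lemmas. Your write-up simply expands on this with a relation-by-relation breakdown and an explicit discussion of the inverse; the reference to \cref{nickelfull} is superfluous, and note that $\posrightcross$ is defined in \cref{rskein} rather than \cref{lego}.
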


\begin{proof}
    This follows from \cref{venom1,venom3,iron}.
\end{proof}

\begin{rem} \label{numbering}
    In many instances, when we wish to number strands in diagrams, it is most natural to number them from right to left.  For instance, we will do so when discussing Jucys--Murphy elements in \cref{sec:affineEnd}.  However, at other times, when we want to discuss relationships to superalgebras appearing in the literature, it is useful to number strands from left to right to better match conventions in other papers.  The isomorphism $\Omega_\leftrightarrow$ allows us to move back and forth between these two conventions.
\end{rem}

It follows from \cref{storm,thor} that $\Qcat(z)$ is strictly pivotal, with duality superfunctor
\begin{equation} \label{swivel}
    \Omega_\leftrightarrow \circ \Omega_\updownarrow \colon \Qcat(z) \xrightarrow{\cong} (\Qcat(z)^\op)^\rev
\end{equation}
defined by rotating diagrams through $180\degree$ and multiplying by $(-1)^{\binom{y}{2}}$, where $y$ is the number of Clifford tokens in the diagram.  Intuitively, this means that morphisms are invariant under isotopy fixing the endpoints, multiplying by the appropriate sign when odd elements change height.  Thus, for example, we have rightward, leftward, and downward versions of the relations \cref{nickel}.

\begin{lem} \label{degenerate}
    When $z = 0$, reversing orientation of strands gives an isomorphism of monoidal supercategories from $\Qcat(0)$ to the oriented Brauer--Clifford supercategory of \cite[Def.~3.2]{BCK19}.
\end{lem}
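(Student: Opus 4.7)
The plan is to check that the two presentations match up under orientation reversal, which is essentially a bookkeeping exercise once we note the crucial simplification at $z=0$: the skein relation \cref{skein} collapses to $\posupcross = \negupcross$, so the two generating crossings become identified. After reversing orientation the resulting single crossing is symmetric, matching the symmetric crossing in the oriented Brauer--Clifford supercategory of \cite[Def.~3.2]{BCK19}.

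First I would write down the assignment on generators. Define a candidate superfunctor $\Phi \colon \Qcat(0) \to \OBC$ by $\uparrow \mapsto\, \downarrow$, $\downarrow \mapsto\, \uparrow$, sending each of $\posupcross, \negupcross$ to the (symmetric) downward crossing in $\OBC$, sending $\negrightcross$ to the corresponding leftward crossing, sending $\leftcap$ and $\leftcup$ to the corresponding right cap and cup in $\OBC$ (since cups/caps with leftward orientation become rightward upon reversal), and sending the Clifford token $\tokup$ to the Clifford token on a downward strand in $\OBC$. Conversely, construct $\Psi \colon \OBC \to \Qcat(0)$ by the mirror assignment, noting that the downward Clifford token in $\Qcat(0)$ is already available via \cref{tokdown}.

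Next I would verify that $\Phi$ respects the defining relations \cref{braid,skein,tokrel,leftadj} of $\Qcat(0)$. At $z=0$, the skein relation \cref{skein} becomes the equality $\posupcross = \negupcross$, which maps under $\Phi$ to a tautology in $\OBC$ (both sides become the single symmetric crossing). The braid-type relations \cref{braid} then become the symmetric-group and naturality relations of $\OBC$. The adjunction relations \cref{leftadj} map directly to the zig-zag (duality) relations in $\OBC$. The first relation in \cref{tokrel} (tokens squaring to $-1$) matches the Clifford relation in $\OBC$; the second relation (tokens sliding over crossings) matches the corresponding slide relation; the third (vanishing of the right bubble with a token, and of the right bubble itself) matches the analogous relations in $\OBC$. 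To check that $\Psi$ is well-defined, I would run the same verification in the opposite direction, using each relation in \cite[Def.~3.2]{BCK19} and checking it holds in $\Qcat(0)$ --- but the bulk of this is already supplied by \cref{venom,ironfull} together with \cref{skein} at $z=0$. Finally, I would observe that $\Phi$ and $\Psi$ are mutually inverse on generators, hence on all morphisms.

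The main obstacle, such as it is, is purely a matter of matching conventions: one has to reconcile the over/under data of $\Qcat(z)$ (which is forced to be trivial at $z=0$) with the purely symmetric crossings of $\OBC$, and track the orientation reversal carefully so that cups, caps, and the left-versus-right crossings in \cref{lego,wolverine} line up with the corresponding generators in \cite[Def.~3.2]{BCK19}. No deep computation is required beyond invoking \cref{HOMFLYPT}, \cref{venom}, and \cref{ironfull} to confirm that all of the orientation-rotated analogues of the relations already hold in $\Qcat(0)$.
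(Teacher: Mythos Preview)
Your approach is essentially the same as the paper's: both reduce the skein relation at $z=0$ to $\posupcross = \negupcross$ and then match the remaining relations of \cref{Qdef} against those of \cite[Def.~3.2]{BCK19} under orientation reversal. One small point worth sharpening: the Clifford relation in \cite[Def.~3.2]{BCK19} has the token on an upward strand squaring to $+1$, not $-1$ (this is exactly the reason orientation reversal is needed---see \cref{imagine}); under your $\Phi$, $\tokup$ lands on a \emph{downward} strand in $\OBC$, where the rotated token does square to $-1$, so the check goes through, but your phrasing ``matches the Clifford relation in $\OBC$'' obscures this. The paper also notes that the vanishing bubble relations (the last equalities in \cref{tokrel}) are not defining relations of $\OBC$ but consequences recorded in \cite[(3.16)]{BCK19}, which is what makes the relation count line up and $\Psi$ automatic.
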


\begin{proof}
    When $z=0$, \cref{skein} implies that
    \[
        \symcross := \posupcross = \negupcross.
    \]
    It is then straightforward to verify that the relations of \cref{Qdef}, without the last relation in \cref{tokrel}, become the relations in \cite[Def.~3.2]{BCK19} with the orientations of strands reversed.  The last relation in \cref{tokrel} also holds in the oriented Brauer--Clifford supercategory by \cite[(3.16)]{BCK19}.
\end{proof}

\begin{rem} \label{imagine}
    The reason we need to reverse orientation in \cref{degenerate} is that \cite[Def.~3.2]{BCK19} includes the relation
    \[
        \begin{tikzpicture}[centerzero]
            \draw[->] (0,-0.3) -- (0,0.3);
            \token{0,-0.1};
            \token{0,0.1};
        \end{tikzpicture}
        =
        \begin{tikzpicture}[centerzero]
            \draw[->] (0,-0.3) -- (0,0.3);
        \end{tikzpicture}
        \ ,
    \]
    which matches the sign in \cref{torch}, but not in the first relation in \cref{tokrel}.  If $\sqrt{-1} \in \kk$, then we have an automorphism of $\Qcat(z)$ that reverses orientation of strands and multiplies Clifford tokens by $\sqrt{-1}$.  In this case, there is an isomorphism from $\Qcat(0)$ to the oriented Brauer--Clifford category that multiplies Clifford tokens by $\sqrt{-1}$, with no need to reverse orientation.
\end{rem}

Let $X = X_1 \otimes \dotsb \otimes X_r$ and $Y = Y_1 \otimes \dotsb \otimes Y_s$ be objects of $\Qcat(z)$ for $X_i,Y_j \in \{\uparrow, \downarrow\}$.  An \emph{$(X,Y)$-matching} is a bijection between the sets
\begin{equation} \label{chain}
    \{i : X_i =\, \uparrow\} \sqcup \{j : Y_j =\, \downarrow\}
    \quad \text{and} \quad
    \{i : X_i =\, \downarrow\} \sqcup \{j : Y_j =\, \uparrow\}.
\end{equation}
A \emph{positive reduced lift} of an $(X,Y)$-matching is a string diagram representing a morphism $X \to Y$ such that
\begin{itemize}
    \item the endpoints of each string are points that correspond under the given matching;
    \item there are no Clifford tokens on any string and no closed strings (i.e.\ strings with no endpoints);
    \item there are no self-intersections of strings and no two strings cross each other more than once;
    \item all crossings are positive.
\end{itemize}
It follows from \cref{venom2} that any two positive reduced lifts of a given $(X,Y)$-matching are equal as morphisms in $\Qcat(z)$.

For each $(X,Y)$, fix a set $B(X,Y)$ consisting of a choice of positive reduced lift for each $(X,Y)$-matching.  Then let $B_\bullet(X,Y)$ denote the set of all morphisms that can be obtained from elements of $B(X,Y)$ by adding at most one (and possibly zero) Clifford token near the terminus of each string.  We require that all Clifford tokens occurring on strands whose terminus is at the top of the diagram to be at the same height; similarly we require that all Clifford tokens occurring on strands whose terminus is at the bottom of the diagram to be at the same height, and below those Clifford tokens on strands whose terminus is at the top of the diagram.

\begin{prop} \label{nonaffinespan}
    For any objects $X,Y$ of $\Qcat(z)$, the set $B_\bullet(X,Y)$ spans the $\kk$-supermodule\linebreak $\Hom_{\Qcat(z)}(X,Y)$ over $\kk$.
\end{prop}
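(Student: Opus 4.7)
The plan is to represent an arbitrary morphism as a $\kk$-linear combination of string diagrams built from the generators \cref{Qgen1}--\cref{Qgen2}, and then reduce each such diagram $D$ to a $\kk$-linear combination of elements of $B_\bullet(X,Y)$ by induction on the total number of crossings in $D$.

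For the base case (zero crossings), the diagram consists of disjoint cups, caps, and straight strands, possibly decorated with Clifford tokens. Using the first relation in \cref{tokrel} together with \cref{torch}, we reduce to at most one Clifford token per strand. Using \cref{iron} (in the form established by \cref{ironfull}), we slide each remaining token along its strand to near a terminus. The super interchange law \cref{intlaw} then lets us rearrange heights to match the convention fixed for $B_\bullet(X,Y)$ at the cost of at most an overall sign, putting $D$ into $B_\bullet(X,Y)$.

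For the inductive step, assume the claim holds for diagrams with fewer than $n$ crossings, and let $D$ have $n$ crossings. Using \cref{iron} to slide Clifford tokens freely over crossings and \cref{nickel} to slide them under crossings (with rotated analogues that follow from the pivotal structure \cref{swivel}), we migrate every token to a terminus; each application of \cref{nickel} introduces correction terms with strictly fewer crossings, which lie in the span of $B_\bullet(X,Y)$ by the inductive hypothesis. Once the interior of $D$ is token-free, it is a morphism in the monoidal subcategory generated by the crossings, cups, and caps, in which all relations of $\OS(z,1)$ hold by \cref{HOMFLYPT}. A standard basis argument for the HOMFLYPT skein category --- provable directly using \cref{braid}, \cref{venom1}, \cref{venom2}, \cref{venom3}, the skein relations \cref{skein}, \cref{ldskein}, \cref{rskein}, and the vanishing of bubbles --- rewrites this token-free interior as a $\kk$-linear combination of positive reduced lifts plus tangles with strictly fewer crossings. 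The fewer-crossing terms are absorbed by induction, while any resulting positive reduced lift coincides with the chosen representative in $B(X,Y)$ by the remark following \cref{venom2}. Reattaching the Clifford tokens and normalizing heights as in the base case yields an element of $B_\bullet(X,Y)$, completing the induction.

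The main obstacle is that Clifford tokens do not slide freely under crossings; the non-trivial correction terms from \cref{nickel} are precisely what make the induction on crossing number the natural device, since those terms always have strictly fewer crossings than $D$. A secondary technicality is that we invoke a basis theorem for $\OS(z,1)$ which, although not stated as a named result in the excerpt, is a routine consequence of the Reidemeister-type relations \cref{braid} and \cref{venom1}--\cref{venom3} together with the skein relation, all of which are already available in $\Qcat(z)$ via \cref{HOMFLYPT}.
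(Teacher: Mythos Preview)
Your proposal is correct and follows essentially the same approach as the paper: slide Clifford tokens to termini using \cref{iron} and \cref{nickel}, reduce to at most one token per strand via \cref{tokrel} and \cref{torch}, and then invoke the HOMFLYPT straightening algorithm (available by \cref{HOMFLYPT}) on the token-free part. The only minor omission is that your base case does not explicitly dispose of closed components---a zero-crossing diagram may contain undecorated or decorated bubbles, which vanish by \cref{venom3} and \cref{iron}; the paper notes this explicitly, and you should too.
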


\begin{proof}
    Let $X$ and $Y$ be two objects of $\Qcat(z)$.  Using \cref{iron,nickel}, Clifford tokens can be moved near the termini of strings.  Next, using \cref{tokrel,torch}, we can reduce the number of Clifford tokens to at most one on each string.  Then, since all the relations in the HOMFLYPT skein category hold (see \cref{HOMFLYPT}), we have a straightening algorithm to rewrite any diagram representing a morphism $X \to Y$ as a $\kk$-linear combination of the ones in $B_\bullet(X,Y)$.  Here we also use \cref{venom3,iron} to see that any string diagram with a closed component is equal to zero.
\end{proof}

We will prove later, in \cref{basisthm}, that the sets $B_\bullet(X,Y)$ are actually \emph{bases} of the morphism spaces.

\begin{defin}[{\cite[Def.~3.4]{BGJKW16}}] \label{hawkeye}
    For $r,s \in \Z_{> 0}$ and $z \in \kk$, the \emph{quantum walled Brauer--Clifford superalgebra} $\BC_{r,s}(z)$ is the associative superalgebra generated by
    \[
        \text{even elements } t_1,\dotsc,t_{r-1}, t_1^*,\dotsc,t_{s-1}^*, e
        \text{ and odd elements } \cg_1,\dotsc,\cg_r,\cg_1^*,\dotsc \cg_s^*
    \]
    satisfying the following relations (for $i,j$ in the allowable range)
    \begin{align*}
        t_i^2 &= z t_i + 1, & (t_i^*)^2 &= z t_i^* + 1, \\
        t_i t_{i+1} t_i &= t_{i+1} t_i t_{i+1}, & t_i^* t_{i+1}^* t_i^* &= t_{i+1}^* t_i^* t_{i+1}^*, \\
        t_i t_j &= t_j t_i \text{ for } |i-j| > 1, & t_i^* t_j^* &= t_j^* t_i^* \text{ for } |i-j| > 1, \\
        \cg_i^2 = -1,&\ \cg_i \cg_j = - \cg_j \cg_i \text{ for } i \ne j, & (\cg_i^*)^2 = 1,&\ \cg_i^* \cg_j^* = - \cg_j^* \cg_i^* \text{ for } i \ne j, \\
        t_i \cg_i &= \cg_{i+1} t_i, & t_i^* \cg_i^* &= \cg_{i+1}^* t_i^*, \\
        t_i \cg_j &= \cg_j t_i \text{ for } j \ne i,i+1, & t_i^* \cg_j^* &= \cg_j^* t_i^* \text{ for } j \ne i,i+1, \\
        e^2 = 0,\ et_{r-1}e = e,&\ et_j = t_j e \text{ for } j \ne r-1, & e t_1^* e = e,&\ e t_j^* = t_j^* e \text{ for } j \ne 1, \\
        t_i \cg_j^* = \cg_j^* t_i,&\ \cg_r e = \cg_1^* e,& t_i^* \cg_j = \cg_j t_i^*,&\ e \cg_r = e \cg_1^*,\\
        \cg_j e &= e\cg_j \text{ for } j \ne r,& \cg_j^* e &= e \cg_j^* \text{ for } j \ne 1, \\
        e t_{r-1}^{-1} t_1^* e t_1^* t_{r-1}^{-1} &= t_{r-1}^{-1} t_1^* e t_1^* t_{r-1}^{-1} e,&
        e \cg_r e &= 0.
    \end{align*}
    We define $\BC_{r,0}(z)$ to be the associative superalgebra generated by even elements $t_1,\dotsc,t_{r-1}$ and odd elements $\cg_1,\dots,\cg_r$ subject to the above relations involving only these elements.  We define $\BC_{0,s}(z)$ similarly.  Finally, we define $\BC_{0,0}(z) = \kk$.
\end{defin}

The relations in the first line in \cref{hawkeye} imply that $t_i$ and $t_i^*$ are invertible, with $t_i^{-1} = t_i - z$ and $(t_i^*)^{-1} = t_i^* - z$.  Then, multiplying both sides of the relation $t_i \cg_i = \cg_{i+1} t_i$ on the left and right by $t_i^{-1}$ gives the relation
\begin{equation} \label{upside}
    \cg_i t_i = t_i \cg_{i+1} + z(\cg_i-\cg_{i+1}).
\end{equation}
A straightforward computation shows that we have an isomorphism of superalgebras
\begin{equation} \label{wheel}
    \BC_{r,s}(z) \xrightarrow{\cong} \BC_{s,r}(z)^\op,\qquad
    t_i \mapsto t^*_{r-i},\
    t_i^* \mapsto t_{s-i},\
    \cg_i \mapsto \cg_{r+1-i},\
    \cg_i^* \mapsto \cg^*_{s+1-i},\
    e \mapsto e.
\end{equation}
We will soon see a diagrammatic interpretation of this isomorphism.

The superalgebra
\[
    \HC_r(z) := \BC_{r,0}(z)
\]
is the \emph{Hecke--Clifford superalgebra}, which first appeared in \cite[Def.~5.1]{Ols92}.  It follows from \cref{wheel} that we have an isomorphism of superalgebras $\BC_{0,s}(z) \cong \HC_s(z)^\op$.

\begin{prop} \label{walled}
    For $r,s \in \N$, we have a surjective homomorphism of associative superalgebras
    \[
        \BC_{r,s}(z) \twoheadrightarrow \End_{\Qcat(z)}(\uparrow^{\otimes r} \otimes \downarrow^{\otimes s})
    \]
    given by
    \begin{align*}
        t_i &\mapsto\, \uparrow^{\otimes (i-1)} \otimes \posupcross \otimes \uparrow^{\otimes (r-i-1)} \otimes \downarrow^{\otimes s},& 1 \le i \le r-1,
        \\
        t_i^* &\mapsto\, \uparrow^{\otimes r} \otimes \downarrow^{\otimes (i-1)} \otimes \posdowncross \otimes \downarrow^{\otimes (s-i-1)},& 1 \le i \le s-1,
        \\
        e &\mapsto\, \uparrow^{\otimes (r-1)} \otimes
        \begin{tikzpicture}[centerzero]
            \draw[->] (-0.15,-0.25) -- (-0.15,-0.2) arc(180:0:0.15) -- (0.15,-0.25);
            \draw[<-] (-0.15,0.25) -- (-0.15,0.2) arc(180:360:0.15) -- (0.15,0.25);
        \end{tikzpicture}
        \otimes \downarrow^{\otimes (s-1)},& \text{ if } r,s>0,
        \\
        \cg_i &\mapsto\, \uparrow^{\otimes (i-1)} \otimes\, \tokup\, \otimes \uparrow^{\otimes (r-i)} \otimes \downarrow^{\otimes s},& 1 \le i \le r,
        \\
        \cg_i^* &\mapsto\, \uparrow^{\otimes r} \otimes \downarrow^{\otimes (i-1)} \otimes\, \tokdown\, \otimes \downarrow^{\otimes (s-i)},& 1 \le i \le s.
    \end{align*}
\end{prop}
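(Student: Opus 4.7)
The plan is to verify that the assignment is a well-defined superalgebra homomorphism by checking each defining relation of $\BC_{r,s}(z)$ holds after applying the proposed map to $\End_{\Qcat(z)}(\uparrow^{\otimes r} \otimes \downarrow^{\otimes s})$, and then to establish surjectivity using the spanning set from \cref{nonaffinespan}.

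For the quadratic and braid relations on the $t_i$'s and $t_i^*$'s, I would invoke the skein relation \cref{skein} (which yields $t_i^2 = zt_i + 1$ directly) together with its downward analogue \cref{ldskein}, the braid relations \cref{braid}, and the super interchange law \cref{interchange} for distant commutations. The Clifford-type relations $\cg_i^2 = -1$ and $(\cg_i^*)^2 = 1$ follow from the first relation in \cref{tokrel} and its down-strand analogue \cref{torch}, respectively, while the odd anticommutations $\cg_i \cg_j = -\cg_j \cg_i$ (for $i \neq j$) are immediate from the super interchange law \cref{intlaw} applied to two odd generators on different strands. The relation $t_i \cg_i = \cg_{i+1} t_i$ and its down analogue $t_i^* \cg_i^* = \cg_{i+1}^* t_i^*$ translate diagrammatically to the statement that a Clifford token slides over a positive crossing, which is exactly the second relation in \cref{tokrel} (together with the pivotal structure \cref{swivel} for downward strands).

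The relations involving $e$ require slightly more care. The relation $e^2 = 0$ follows since its image contains a closed circle, which is zero by the last equality in \cref{venom3}. The relations $et_{r-1}e = e$, $e t_1^* e = e$ follow from the straightening relations in \cref{venom1} applied to the cup/cap combinations. The commutation of $e$ with distant $t_j$, $\cg_j$, $t_j^*$, $\cg_j^*$ is immediate from the super interchange law. For the mixed Clifford relations $\cg_r e = \cg_1^* e$ and $e \cg_r = e \cg_1^*$, one unfolds the definition \cref{tokdown} of $\tokdown$ and applies the sliding relations \cref{iron} for Clifford tokens across cups and caps. The vanishing $e \cg_r e = 0$ comes from closing up into a bubble carrying a single Clifford token, which is zero by the last relation in \cref{tokrel} (or equivalently \cref{iron}). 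The most intricate relation is the ``braided'' entanglement $e t_{r-1}^{-1} t_1^* e t_1^* t_{r-1}^{-1} = t_{r-1}^{-1} t_1^* e t_1^* t_{r-1}^{-1} e$, which I expect to be the main obstacle; it should be handled by repeatedly applying the HOMFLYPT-type isotopy relations from \cref{HOMFLYPT} and \cref{venom1,venom2,venom3}, translating both sides into isotopic tangle diagrams (with no Clifford tokens), where they become equal by planar isotopy.

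For surjectivity, I would invoke \cref{nonaffinespan}: the morphism space is spanned by the set $B_\bullet(X,X)$ for $X = \uparrow^{\otimes r} \otimes \downarrow^{\otimes s}$, consisting of positive reduced lifts of $(X,X)$-matchings decorated with at most one Clifford token at the terminus of each strand. The generators $t_i$ and $t_i^*$ produce all positive crossings among same-type strands; the generator $e$, conjugated by suitable products of $t_i$ and $t_i^*$, produces a cup-cap pair at any desired pair of adjacent oppositely-oriented strands; and conjugating further by crossings realizes any $(X,X)$-matching as a positive reduced lift. Finally, the Clifford token generators $\cg_i, \cg_i^*$, again conjugated by crossings, place a token at the terminus of any strand. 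Thus every element of $B_\bullet(X,X)$ lies in the image, completing the argument.
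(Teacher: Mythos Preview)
Your proposal is correct and follows essentially the same approach as the paper's own proof: verify the defining relations of $\BC_{r,s}(z)$ using the diagrammatic relations of $\Qcat(z)$, then deduce surjectivity from the spanning set $B_\bullet(X,X)$ of \cref{nonaffinespan}. The paper merely asserts that the relation check is a ``straightforward computation'' and that the spanning elements are ``clearly'' compositions of the images of the generators, whereas you have helpfully spelled out which diagrammatic relation handles each algebraic relation and sketched how to reach every element of $B_\bullet(X,X)$.
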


\begin{proof}
    It is a straightforward computation to verify that the given map is well-defined, i.e.\ that it respects the relations in \cref{hawkeye}.  Since all elements of $B_\bullet(\uparrow^{\otimes r} \otimes \downarrow^{\otimes s}, \uparrow^{\otimes r} \otimes \downarrow^{\otimes s})$ can clearly be written as compositions of the given images of the generators of $\BC_{r,s}(z)$, it follows from \cref{nonaffinespan} that the map is also surjective.
\end{proof}

We will show in \cref{walleder} that the homomorphism of \cref{walled} is actually an isomorphism.

\section{The quantum isomeric superalgebra}

In this section we recall the definition of the quantum isomeric superalgebra and prove some results about it that will be used in the sequel.  (Recall, as mentioned in the introduction, that this superalgebra is traditionally called the quantum queer superalgebra.)  Throughout this section we work over the field $\kk = \C(q)$ and we set $z := q - q^{-1}$.  To simplify the expressions to follow, we first introduce some notation and conventions.  Fix an index set
\[
    \tI := \{1,2,\dotsc,n,-1,-2,\dotsc,-n\}.
\]
We will use $a,b,c,d$ to denote elements of $\{1,2,\dotsc,n\}$ and $i,j,k,l$ to denote elements of $\tI$.  For $i,j \in \tI$, we define
\begin{align} \label{signage1}
    p(i) &:=
    \begin{cases}
        0 & \text{if } i > 0, \\
        1 & \text{if } i < 0,
    \end{cases}
    & p(i,j) &:= p(i)+p(j),
    \\ \label{signage2}
    \quad \sgn(i) &:= (-1)^{p(i)} = 1-2p(i),&
    \quad \varphi(i,j) &:= \delta_{|i|,|j|} \sgn(j).
\end{align}
If $C$ is some condition, we define $\delta_C = 1$ if the condition is satisfied, and $\delta_C = 0$ otherwise.  Then, for $i,j \in \tI$, $\delta_{ij} := \delta_{i=j}$ is the usual Kronecker delta.

Let $V$ denote the $\kk$-supermodule with basis $v_i$, $i \in \tI$, where the parity of $v_i$ is given by
\[
    \overline{v_i} = p(i).
\]
Using this basis, we will identify $V$ with $\kk^{n|n}$ as $\kk$-supermodules, and $\End_\kk(V)$ with $\Mat_{n|n}(\kk)$ as associative superalgebras.  Let $E_{ij} \in \Mat_{n|n}(\kk)$ denote the matrix with a $1$ in the $(i,j)$-position and a $0$ in all other positions.  Then the parity of $E_{ij}$ is $p(i,j)$.  The general linear Lie superalgebra $\gl_{n|n}$ is equal to $\End_\kk(V)$ as a $\kk$-supermodule, with bracket given by the supercommutator
\[
    [X,Y] = XY - (-1)^{\bar{X} \bar{Y}} YX.
\]

Let
\[
    J := \sum_{i \in \tI} (-1)^{p(i)} E_{-i,i} =
    \begin{pmatrix}
        0 & -I_n \\
        I_n & 0
    \end{pmatrix}
    \in \Mat_{n|n}(\kk),
\]
where $I_n$ is the $n \times n$ identity matrix.  Multiplication by $J$ is an odd linear automorphism of $V$, and $J^2 = -1$.  The \emph{isomeric Lie superalgebra} $\fq_n$ is the Lie superalgebra equal to the centralizer of $J$ in $\gl_{n|n}$:
\[
    \fq_n := \left\{ X \in \gl_{n|n} : JX = (-1)^{\bar{X}} X J \right\}.
\]
The elements
\[
    e^0_{ab} := E_{ab} + E_{-a,-b},\qquad
    e^1_{ab} := E_{a,-b} + E_{-a,b},\qquad
    a,b \in \{1,2,\dotsc,n\},
\]
give a $\kk$-basis of $\fq_n$.  The parities of these elements are indicated by their superscripts.

Define
\begin{equation} \label{Thetadef}
    \Theta = \sum_{i,j \in \tI} \Theta_{ij} \otimes E_{ij} \in \End_\kk(V)^{\otimes 2} = \Mat_{n|n}(\kk)^{\otimes 2},
\end{equation}
by
\begin{equation} \label{krypton}
    \begin{aligned}
        \Theta &= \sum_{a,j} q^{\delta_{a,j} - \delta_{a,-j}} e^0_{aa} \otimes E_{jj}
        + z \sum_{a<b} e^0_{ba} \otimes E_{ab}
        - z \sum_{a>b} e^0_{ba} \otimes E_{-a,-b}
        - z \sum_{a,b} e^1_{ba} \otimes E_{-a,b}
        \\
        &= \sum_{i,j} q^{\varphi(i,j)} E_{ii} \otimes E_{jj}
        + z \sum_{i<j} (-1)^{p(i)} (E_{ji} + E_{-j,-i}) \otimes E_{ij}.
    \end{aligned}
\end{equation}
The definition of $\Theta$ first appeared in \cite[\S4]{Ols92}, where it is denoted $S$.  We use the notation $\Theta$ to reserve the notation $S$ for the antipode, which will play an important role in the current paper.  It follows immediately from the definition that
\begin{equation} \label{cardinal}
    \Theta(J \otimes 1) = (J \otimes 1)\Theta.
\end{equation}
One can also verify that $\Theta$ satisfies the Yang--Baxter equation:
\begin{equation} \label{YBE}
    \Theta^{12} \Theta^{13} \Theta^{23} = \Theta^{23} \Theta^{13} \Theta^{12},
\end{equation}
where
\[
    \Theta^{12} = \Theta \otimes 1,\qquad
    \Theta^{23} = 1 \otimes \Theta,\qquad
    \Theta^{13} = \sum_{i,j \in \tI} \Theta_{ij} \otimes 1 \otimes E_{ij}.
\]
It follows from \cref{krypton} that
\begin{equation} \label{tavern}
    \Theta_{ii} = \sum_a q^{\delta_{a,i} - \delta_{a,-i}} e^0_{aa}
    \qquad \text{for all } i \in \tI,
\end{equation}
and so
\begin{equation} \label{mango}
    \Theta_{ii} \Theta_{-i,-i} = 1 = \Theta_{-i,-i} \Theta_{i,i}
    \qquad \text{for all } i \in \tI.
\end{equation}
When $q=1$, we have $\Theta = 1 \otimes 1$, and so $\Theta_{ij} = \delta_{ij} 1_V$.

Note that all the second tensor factors appearing in \cref{krypton} are upper triangular elements of $\Mat_{n|n}(\kk)$.  In addition, $\Theta$ is invertible with
\begin{equation}
    \begin{aligned}
        \Theta^{-1} &= \sum_{a,j} q^{\delta_{a,-j} - \delta_{a,j}} e^0_{aa} \otimes E_{jj}
        - z \sum_{a<b} e^0_{ba} \otimes E_{ab}
        + z \sum_{a>b} e^0_{ba} \otimes E_{-a,-b}
        + z \sum_{a,b} e^1_{ba} \otimes E_{-a,b}
        \\
        &= \sum_{i,j} q^{-\varphi(i,j)} E_{ii} \otimes E_{jj}
        - z \sum_{i<j} (-1)^{p(i)} (E_{ji} + E_{-j,-i}) \otimes E_{ij}.
    \end{aligned}
\end{equation}
Note that $\Theta^{-1}$ is obtained from $\Theta$ by replacing $q$ by $q^{-1}$.
\details{
    Setting
    \[
        A(q) = \sum_{i,j} q^{\varphi(i,j)} E_{ii} \otimes E_{jj}
        \quad \text{and} \quad
        B = \sum_{i<j} (-1)^{p(i)} (E_{ji} + E_{-j,-i}) \otimes E_{ij},
    \]
    we have $\Theta = A(q) + zB$.  We want to verify that $\Theta^{-1} = A(q^{-1}) - zB$.  First note that $A(q^{-1}) = A(q)^{-1}$ and
    \[
        B^2 = - \sum_{-a < j < a} (-1)^{p(j)} e^1_{|j|,|j|} \otimes E_{-a,a} = 0.
    \]
    Next we compute
    \begin{align*}
        A(q) B
        &= \sum_{i<j} (-1)^{p(i)} q^{\delta_{|i|,|j|}(1-2p(i))} (E_{ji} + E_{-j,-i}) \otimes E_{ij},
        \\
        B A(q^{-1})
        &= \sum_{i<j} (-1)^{p(i)} q^{\delta_{|i|,|j|}(2p(j)-1)} (E_{ji} + E_{-j,-i}) \otimes E_{ij}.
    \end{align*}
    Thus
    \[
        A(q)B - BA(q^{-1})
        = 0.
    \]
    Hence
    \[
        \left( A(q) + zB \right) \left( A(q)^{-1} - zB \right)
        = 1 + z \left( B A(q)^{-1} - A(q) B \right) - z^2 B
        = 1.
    \]
}

\begin{defin}
    The \emph{quantum isomeric superalgebra} $U_q = U_q(\mathfrak{q}_n)$ is the unital associative superalgebra over $\kk$ generated by elements $u_{ij}$, $i,j \in \tI$, $i \le j$, subject to the relations
    \begin{equation} \label{QQdef}
        u_{ii} u_{-i,-i} = 1 = u_{-i,-i} u_{ii},\qquad
        L^{12} L^{13} \Theta^{23} = \Theta^{23} L^{13} L^{12},
    \end{equation}
    where
    \begin{equation} \label{Ldef}
        L := \sum_{\substack{i,j \in \tI \\ i \le j}} u_{ij} \otimes E_{ij},\qquad
        L^{12} = L \otimes 1,\qquad
        L^{13} = \sum_{\substack{i,j \in \tI \\ i \le j}} u_{ij} \otimes 1 \otimes E_{ij},
    \end{equation}
    and the last equality in \cref{QQdef} takes place in $U_q \otimes \End_\kk(V)^{\otimes 2}$.  The parity of $u_{ij}$ is $p(i,j)$.
\end{defin}

The quantum isomeric Lie superalgebra was first defined in \cite[Def.~4.2]{Ols92}.  It is a Hopf superalgebra with comultiplication determined by
\begin{align} \label{dive}
    \Delta(L) :=& \sum_{\substack{i,j \in \tI \\ i \le j}} \Delta(u_{ij}) \otimes E_{ij} = L^{13} L^{23},
    \quad \text{or, more explicitly,} \\ \label{snooze}
    \Delta(u_{ij}) =& \sum_{\substack{k \in \tI \\ i \le k \le j}} (-1)^{p(i,k)p(k,j)} u_{ik} \otimes u_{kj}
    = \sum_{\substack{k \in \tI \\ i \le k \le j}} u_{ik} \otimes u_{kj}
\end{align}
(where the final equality holds since, for $i \le k \le j$, we must have $p(k)=p(i)$ or $p(k)=p(j)$), counit determined by
\begin{equation} \label{counit}
    \varepsilon(L) := \sum_{\substack{i,j \in \tI \\ i \le j}} \varepsilon(u_{ij}) E_{ij} = 1
    \quad \text{or, more explicitly,} \quad
    \varepsilon(u_{ij}) = \delta_{ij},
\end{equation}
and antipode $S$ determined by
\begin{equation} \label{antipode}
    \sum_{\substack{i,j \in \tI \\ i \le j}} S(u_{ij}) \otimes E_{ij}
    = L^{-1}.
\end{equation}
Note that, viewing $L$ as an element of $\Mat_{n|n}(U_q)$, it follows from its definition and \cref{QQdef} that it is triangular with invertible diagonal entries.  Thus $L$ is indeed invertible.  Since $U_q$ is a Hopf superalgebra, the supercategory $U_q\smod$ of finite-dimensional $U_q$-supermodules is naturally a rigid monoidal supercategory.

For $\kk$-supermodules $U$ and $W$, define
\[
    \flip_{U,W} \colon U \otimes W \to W \otimes U,\quad \flip_{U,W}(u \otimes w) = (-1)^{\bar{u} \bar{w}} w \otimes u.
\]
When $U$ and $W$ are clear from the context, we will sometimes write $\flip$ instead of $\flip_{U,W}$.  Note that
\[
    \flip_{V,V} = \sum_{i,j} (-1)^{p(j)} E_{ij} \otimes E_{ji}.
\]
Consider the opposite comultiplication
\[
    \Delta^\op = \flip \circ \Delta.
\]

\begin{lem}
    We have
    \begin{equation} \label{gummy}
        \Delta^\op(L)
        := \sum_{\substack{i,j \in \tI \\ i \le j}} \Delta^\op(u_{ij}) \otimes E_{ij} = L^{23} L^{13}.
    \end{equation}
\end{lem}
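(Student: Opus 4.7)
The plan is to verify \cref{gummy} by direct computation, expanding both sides and comparing. The key observation is a sign bookkeeping fact already noted in the excerpt after \cref{snooze}: for $i \le k \le j$ in $\tI$, one always has $p(k) = p(i)$ or $p(k) = p(j)$, so $p(i,k) p(k,j) = 0$.

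First I would expand the right-hand side. Since each summand $u_{kl} \otimes E_{kl}$ in $L$ has total parity $p(k,l) + p(k,l) = 0$, the tensors $L^{23}$ and $L^{13}$ are even in $U_q \otimes U_q \otimes \End_\kk(V)$. Applying the super interchange law to
\[
    L^{23} L^{13}
    = \bigg(\sum_{k \le l} 1 \otimes u_{kl} \otimes E_{kl}\bigg) \bigg(\sum_{i \le m} u_{im} \otimes 1 \otimes E_{im}\bigg),
\]
the sign incurred by passing $u_{im}$ past $u_{kl} \otimes E_{kl}$ is $(-1)^{(p(k,l)+p(k,l)) p(i,m)} = 1$. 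Using $E_{kl} E_{im} = \delta_{li} E_{km}$ and relabeling $k \rightsquigarrow i$, $l \rightsquigarrow k$, $m \rightsquigarrow j$, this collapses to
\[
    L^{23} L^{13} = \sum_{i \le k \le j} u_{kj} \otimes u_{ik} \otimes E_{ij}.
\]

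Next I would expand the left-hand side using $\Delta^\op = \flip \circ \Delta$ and \cref{snooze}. Since $\flip(u_{ik} \otimes u_{kj}) = (-1)^{p(i,k) p(k,j)} u_{kj} \otimes u_{ik}$,
\[
    \Delta^\op(L)
    = \sum_{i \le j} \sum_{i \le k \le j} (-1)^{p(i,k) p(k,j)} u_{kj} \otimes u_{ik} \otimes E_{ij}.
\]
Finally, for each term with $i \le k \le j$, the parenthetical remark after \cref{snooze} gives $p(k) \in \{p(i), p(j)\}$, hence $p(i,k) = 0$ or $p(k,j) = 0$; in either case $(-1)^{p(i,k) p(k,j)} = 1$. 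Comparing with the expression for $L^{23} L^{13}$ derived above yields the claim.

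This is essentially a bookkeeping exercise; the only ``obstacle'' is being careful with the signs from the super tensor-product multiplication and from $\flip$, but both simplify to $1$ for reasons that are already present in the setup, so no genuine difficulty arises.
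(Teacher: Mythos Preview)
Your proof is correct and follows essentially the same approach as the paper: expand $\Delta^{\op}(u_{ij})$ using \cref{snooze} and the definition of $\flip$, expand $L^{23}L^{13}$ directly, and compare. The paper's version is terser (it lets the two sign factors from \cref{snooze} and from $\flip$ cancel as $(-1)^{2p(i,k)p(k,j)}=1$ rather than invoking the parity observation twice), but the substance is identical.
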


\begin{proof}
    We have
    \[
        \Delta^\op(u_{ij}) = \flip \sum_{\substack{k \in \tI \\ i \le k \le j}} (-1)^{p(i,k)p(k,j)} u_{ik} \otimes u_{kj}
        = \sum_{\substack{k \in \tI \\ i \le k \le j}} u_{kj} \otimes u_{ik}.
    \]
    Since
    \[
        L^{23} L^{13}
        = \left( \sum_{i \le k} 1 \otimes u_{ik} \otimes E_{ik} \right) \left( \sum_{l \le j} u_{lj} \otimes 1 \otimes E_{lj} \right)
        = \sum_{\substack{k \in \tI \\ i \le k \le j}} u_{kj} \otimes u_{ik} \otimes E_{ij},
    \]
    the result follows.
\end{proof}

The following result is stated in \cite[Th.~6.1]{Ols92} without proof.

\begin{prop} \label{clown}
    The quantum isomeric superalgebra $U_q$ is isomorphic is the unital associative superalgebra over $\kk$ generated by the elements $u_{ij}$, $i,j \in \tI$, $i \le j$, subject to the relations
    \begin{equation} \label{smri}
        u_{ii} u_{-i,-i} = 1 = u_{-i,-i} u_{ii},
        \qquad i \in \tI,
    \end{equation}
    and
    \begin{equation} \label{silly}
        \begin{multlined}
            (-1)^{p(i,j)p(k,l)} q^{\varphi(j,l)} u_{ij} u_{kl}
            + z \delta_{i \le l} \delta_{k \le j < l} \theta(i,j,k) u_{il} u_{kj}
            + z \delta_{i \le -l < j \le -k} \theta(-i,-j,k) u_{i,-l} u_{k,-j}
            \\
            = q^{\varphi(i,k)} u_{kl} u_{ij}
            + z \delta_{k < i \le l} \delta_{k \le j} \theta(i,j,k) u_{il} u_{kj}
            + z \delta_{-l \le i < -k \le j} \theta(-i,-j,k) u_{-i,l} u_{-k,j},
        \end{multlined}
    \end{equation}
    for all $i,j,k,l \in \tI$, $i \le j$, $k \le l$, where $\theta(i,j,k) = (-1)^{p(i)p(j) + p(j)p(k) + p(i)p(k)}$.
\end{prop}

\begin{proof}
    It suffices to prove that the relations \cref{silly} are equivalent to the second relation in \cref{QQdef}.  Direct computation shows that
    \begin{multline*}
        L^{12} L^{13} \Theta^{23}
        = \sum_{i \le j,\, k \le l} (-1)^{p(i,j)p(k,l)} q^{\varphi(j,l)} u_{ij} u_{kl} \otimes E_{ij} \otimes E_{kl}
        \\
        + z \sum_{i \le l,\, k \le j < l} \theta(i,j,k) u_{il} u_{kj} \otimes E_{ij} \otimes E_{kl}
        + z \sum_{i \le -l < j \le -k} \theta(-i,-j,k) u_{i,-l} u_{k,-j} \otimes E_{ij} \otimes E_{kl}
    \end{multline*}
    and
    \begin{multline*}
        \Theta^{23} L^{13} L^{12}
        = \sum_{i \le j,\, k \le l} q^{\varphi(i,k)} u_{kl} u_{ij} \otimes E_{ij} \otimes E_{kl}
        \\
        + z \sum_{k < i \le l,\, k \le j} \theta(i,j,k) u_{il} u_{kj} \otimes E_{ij} \otimes E_{kl}
        + z \sum_{-l \le i < -k \le j} \theta(-i,-j,k) u_{-i,l} u_{-k,j} \otimes E_{ij} \otimes E_{kl}.
    \end{multline*}
    The result follows.
    \details{
        We have
        \[
            L^{12}L^{13}
            = \sum_{\substack{i,j,k,l \in \tI \\ i \le j,\, k \le l}} (-1)^{p(i,j)p(k,l)} u_{ij} u_{kl} \otimes E_{ij} \otimes E_{kl},
        \]
        and so
        \begin{align*}
            L^{12} L^{13} \Theta^{23}
            &= \sum_{i \le j,\, k \le l} (-1)^{p(i,j)p(k,l)} q^{\varphi(j,l)} u_{ij} u_{kl} \otimes E_{ij} \otimes E_{kl}
            \\ &\qquad \qquad
            + z \sum_{i \le j,\, k \le l < j} (-1)^{p(i,j)p(k,l)+p(j,l)p(k,l)+p(l)} u_{ij} u_{kl} \otimes E_{il} \otimes E_{kj}
            \\ &\qquad \qquad
            + z \sum_{i \le j,\, k \le l < -j} (-1)^{p(i,j)p(k,l)+p(j,-l)p(k,l)+p(l)} u_{ij} u_{kl} \otimes E_{i,-l} \otimes E_{k,-j}
            \\
            &= \sum_{i \le j,\, k \le l} (-1)^{p(i,j)p(k,l)} q^{\varphi(j,l)} u_{ij} u_{kl} \otimes E_{ij} \otimes E_{kl}
            \\ &\qquad \qquad
            + z \sum_{i \le l,\, k \le j < l} \theta(i,j,k) u_{il} u_{kj} \otimes E_{ij} \otimes E_{kl}
            \\ &\qquad \qquad
            + z \sum_{i \le -l < j \le -k} \theta(-i,-j,k) u_{i,-l} u_{k,-j} \otimes E_{ij} \otimes E_{kl}.
        \end{align*}
        Similarly,
        \[
            L^{13} L^{12}
            = \sum_{\substack{i,j,k,l \in \tI \\ i \le j,\, k \le l}} u_{ij} u_{kl} \otimes E_{kl} \otimes E_{ij},
        \]
        and so
        \begin{align*}
            \Theta^{23} L^{13} L^{12}
            &= \sum_{i \le j,\, k \le l} q^{\varphi(k,i)} u_{ij} u_{kl} \otimes E_{kl} \otimes E_{ij}
            \\ &\qquad \qquad
            + z \sum_{k < i \le j,\, k \le l} (-1)^{p(i,k)p(k,l)+p(k)} u_{ij} u_{kl} \otimes E_{il} \otimes E_{kj}
            \\ &\qquad \qquad
            + z \sum_{i \le j,\, k \le l,\ -k<i} (-1)^{p(i,-k)p(k,l)+p(-k)} u_{ij} u_{kl} \otimes E_{-i,l} \otimes E_{-k,j}
            \\
            &= \sum_{i \le j,\, k \le l} q^{\varphi(i,k)} u_{kl} u_{ij} \otimes E_{ij} \otimes E_{kl}
            \\ &\qquad \qquad
            + z \sum_{k < i \le l,\, k \le j} \theta(i,j,k) u_{il} u_{kj} \otimes E_{ij} \otimes E_{kl}
            \\ &\qquad \qquad
            + z \sum_{-l \le i < -k \le j} \theta(-i,-j,k) u_{-i,l} u_{-k,j} \otimes E_{ij} \otimes E_{kl}.
        \end{align*}
    }
\end{proof}

\begin{cor} \label{donut}
    \begin{enumerate}
        \item \label{donut1} We have $u_{aa} u_{kl} = q^{\delta_{a,|l|} - \delta_{a,|k|}} u_{kl} u_{aa}$ for all $a \in \{1,2,\dotsc,n\}$ and $k,l \in \tI$, $k \le l$.

        \item \label{donut2} The element $u_{11} u_{22} \dotsm u_{nn}$ lies in the center of $U_q$.
    \end{enumerate}
\end{cor}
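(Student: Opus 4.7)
The plan is to derive both statements from the specialized form of the defining relation \eqref{silly} furnished by \cref{clown}.

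For part (a), set $i = j = a$ in \eqref{silly}. Since $p(a) = 0$ and $\theta(a,a,k) = 1$, the identity simplifies dramatically: the parity signs trivialize and the ``exotic'' terms involving $u_{i,-l}u_{k,-j}$ on the left and $u_{-i,l}u_{-k,j}$ on the right both vanish, because their defining inequalities ($a \le -l$ together with $-l < a$, respectively $a < -k$ together with $-k \le a$) have no solutions. What remains is
\[
q^{\varphi(a,l)}\, u_{aa} u_{kl} + z\,\delta_{k \le a < l}\, u_{al} u_{ka}
\;=\; q^{\varphi(a,k)}\, u_{kl} u_{aa} + z\,\delta_{k < a \le l}\, u_{al} u_{ka}.
\]
A short case analysis on the positions of $k$ and $l$ relative to $\pm a$ now yields the claimed commutation. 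The two indicator functions $\delta_{k \le a < l}$ and $\delta_{k < a \le l}$ agree except on the boundary cases $k = a$ and $l = a$, where exactly one $z$-correction survives. In each such boundary case, the identity $q - z = q^{-1}$ (which holds because $z = q - q^{-1}$) absorbs the surviving correction into the appropriate $q$-factor, producing the claimed power of $q$.

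For part (b), write $\zeta := u_{11} u_{22} \cdots u_{nn}$. Iterating (a) past each factor gives, for any generator $u_{kl}$ with $k \le l$,
\[
\zeta \cdot u_{kl} \;=\; q^{\sum_{a=1}^n (\delta_{la} - \delta_{ka})}\, u_{kl} \cdot \zeta.
\]
For every $k, l \in \tI$ the values $|k|$ and $|l|$ lie in $\{1, 2, \dotsc, n\}$, so each of the two partial sums equals $1$ and the total exponent vanishes. Thus $\zeta$ commutes with every generator $u_{kl}$; since $u_{-a,-a} = u_{aa}^{-1}$ automatically commutes with $\zeta$ once $u_{aa}$ does, this establishes that $\zeta$ is central in $U_q$.

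The main obstacle is the bookkeeping in the boundary cases of (a), where one must carefully match the $z$-correction terms with the correct instance of $q - z = q^{-1}$ to produce the right exponent; the rest is routine specialization and finite case analysis.
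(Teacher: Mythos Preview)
Your approach is the same as the paper's—specialize \eqref{silly} at $i=j=a$ and do a case analysis—but there is a genuine gap in the case analysis, one that the paper's own sketch also glosses over.

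The reduced relation is
\[
q^{\varphi(a,l)}\, u_{aa} u_{kl} + z\,\delta_{k \le a < l}\, u_{al} u_{ka}
= q^{\varphi(a,k)}\, u_{kl} u_{aa} + z\,\delta_{k < a \le l}\, u_{al} u_{ka},
\]
and you correctly note that the two indicators differ only at $k=a$ or $l=a$. But you then tacitly assume that, away from those two boundary cases, the prefactors $q^{\varphi(a,l)}$ and $q^{\varphi(a,k)}$ are both $1$. This fails when $k=-a$ or $l=-a$, since $\varphi(a,-a)=-1$. For example, with $k=-a$ and $l>a$ both indicators equal $1$, the $z$-terms cancel, and the relation gives $u_{aa}u_{-a,l}=q^{-1}u_{-a,l}u_{aa}$, whereas the exponent $\delta_{la}-\delta_{ka}$ in the stated formula equals $0$. (One can confirm this in the representation $\rho$: $\Theta_{aa}\Theta_{-a,l}=q^{-1}\Theta_{-a,l}\Theta_{aa}$.) The correct exponent throughout is $\delta_{|l|,a}-\delta_{|k|,a}$; the statement of (a) as printed, and the paper's three-case proof, both miss the $\pm a$ distinction. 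This also breaks your derivation of (b): you assert that each of $\sum_{a=1}^{n}\delta_{la}$ and $\sum_{a=1}^{n}\delta_{ka}$ equals $1$ ``because $|k|,|l|\in\{1,\dotsc,n\}$'', but $\sum_a\delta_{la}=0$ when $l<0$. With the corrected exponent $\delta_{|l|,a}-\delta_{|k|,a}$ your argument for (b) is exactly right; as written, it does not establish centrality.
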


\begin{proof}
    \begin{enumerate}
        \item Setting $i=j=a$ in \cref{silly} gives
            \[
                q^{\varphi(a,l)} u_{aa} u_{kl} + z \delta_{k \le a < l} u_{al} u_{ka}
                = q^{\varphi(a,k)} u_{kl} u_{aa} + z \delta_{k < a \le l} u_{al} u_{ka},
            \]
            which implies
            \[
                (q^{\varphi(a,l)} - z \delta_{k<a=l}) u_{aa} u_{kl}
                = (q^{\varphi(a,k)} - z \delta_{k=a<l}) u_{kl} u_{aa}.
            \]
            When $k=l$, this becomes $u_{aa} u_{kl} = u_{kl} u_{aa}$, as desired.  When $k=-l$, it becomes
            \[
                (q^{\delta_{a,l}} - z \delta_{a,l}) u_{aa} u_{kl} = q^{-\delta_{a,l}} u_{kl} u_{aa}
                \implies u_{aa} u_{kl} = u_{kl} u_{aa},
            \]
            as desired.

            Now suppose $|k| \ne |l|$.  If $a \notin \{k,l\}$, then
            \[
                u_{aa} u_{kl} = q^{\varphi(a,k)-\varphi(a,l)} u_{kl} u_{aa}
                = q^{\delta_{a,|l|}-\delta_{a,|k|}} u_{kl} u_{aa}.
            \]
            If $a=k$, then
            \[
                u_{aa} u_{kl} = (q-z) u_{kl} u_{aa} = q^{-1} u_{kl} u_{aa}.
            \]
            Finally, if $a=l$, then
            \[
                (q-z) u_{aa} u_{kl} = u_{kl} u_{aa}
                \implies u_{aa} u_{kl} = q u_{kl} u_{aa}.
            \]

        \item It follows from \cref{donut1} that $u_{11} u_{22} \dotsm u_{nn}$ commutes with all $u_{kl}$, $k \le l$. \qedhere
    \end{enumerate}
\end{proof}

\begin{lem} \label{wario}
    As a unital associative superalgebra, $U_q$ is generated by
    \begin{equation} \label{gem}
        u_{a,a+1},\quad u_{-a-1,-a},\quad u_{ii},\quad u_{-1,1},\qquad 1 \le a \le n-1,\quad i \in \tI.
    \end{equation}
\end{lem}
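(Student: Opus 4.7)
The plan is to use the defining relation \cref{silly} to express $u_{ij}$ for non-adjacent $i < j$ in $\tI$ in terms of $u_{ik}$ and $u_{kj}$ for some $k$ strictly between $i$ and $j$ in the total order $-n < \cdots < -1 < 1 < \cdots < n$. Induction on the length of the interval $[i,j]$ in this order then reduces the whole lemma to finitely many carefully chosen specializations of \cref{silly}. Note that \cref{gem} contains exactly the diagonal entries together with the $u_{ij}$ for which $j$ is the immediate successor of $i$ in this order, so the inductive base case is immediate.

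I would first treat the \emph{positive--positive block}: for $1 \le a < b \le n$ with $b - a \ge 2$, specialize \cref{silly} to $(i,j,k,l) = (a, b-1, b-1, b)$. All parity signs and $\varphi$-factors vanish, and only the $\delta_{k \le j < l}$ term survives on the left side, yielding
\[
    u_{a,b-1}\, u_{b-1,b} + z\, u_{a,b}\, u_{b-1,b-1} = u_{b-1,b}\, u_{a,b-1}.
\]
Since $u_{b-1,b-1}$ is invertible with inverse $u_{1-b,1-b}$ by the first equation in \cref{QQdef}, this solves for $u_{a,b}$, and induction on $b-a$ produces the whole block. An analogous specialization $(i,j,k,l) = (-b,-b+1,-b+1,-a)$ handles the negative--negative block, using the generator $u_{-b,-b+1}$. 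For the mixed block $u_{-b,a}$ with $a, b \ge 1$, I would proceed in two stages. The specialization $(i,j,k,l) = (-1,1,1,a)$ for $a \ge 2$, noting $\varphi(-1,1) = 1$, gives
\[
    u_{-1,1}\, u_{1,a} + z\, u_{-1,a}\, u_{1,1} = q\, u_{1,a}\, u_{-1,1},
\]
producing $u_{-1,a}$ from $u_{-1,1}$, the positive-block element $u_{1,a}$, and diagonals. Then, by induction on $b \ge 2$, the specialization $(i,j,k,l) = (-b,-b+1,-b+1,a)$ recursively yields $u_{-b,a}$ from $u_{-b,-b+1}$ and $u_{-b+1,a}$.

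The main obstacle is the combinatorial bookkeeping in the mixed case: one must track which of the six summands in \cref{silly} is nonzero for each choice of indices, and organize the induction to avoid forward references. The cases $a = b - 1$ and $a = b$ in the mixed recursion are special. When $a = b - 1$ one picks up an extra factor of $q$ from $\varphi(-b+1, b-1) = 1$, causing no real difficulty. When $a = b$, the $\delta_{i \le -l < j \le -k}$ summand switches on and contributes a term involving $u_{-b,-b}\, u_{-b+1, b-1}$; since $u_{-b+1, b-1} = u_{-(b-1), b-1}$ is the diagonal-mixed element at the previous stage of the outer induction on $b$, the scheme still closes provided one processes the case $a = b$ last at each level. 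All other specializations of \cref{silly} that must be invoked follow the same template.
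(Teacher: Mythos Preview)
Your proposal is correct and takes a genuinely different route from the paper. The paper's proof invokes \cite[Th.~2.1]{GJKK10}, which already gives a generating set containing \cref{gem} together with the additional elements $u_{-a-1,a}$, $u_{-a,a+1}$, $u_{-b,b}$; the paper then only needs to express these extra elements in terms of \cref{gem}, which it does via three particular specializations of \cref{silly} (with $(i,j,k,l)$ equal to $(-a,a,-a-1,-a)$, $(-a,a,a,a+1)$, and $(-a-1,-a,-a,a+1)$), organized as an induction on $a$. Your argument instead proves from scratch that \emph{every} $u_{ij}$ lies in the subalgebra, by induction on the interval length in the positive and negative blocks, and by induction on $b$ in the mixed block, using different specializations of \cref{silly}. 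Your approach is self-contained and avoids the external reference; the paper's is shorter because \cite{GJKK10} absorbs most of the work. Both arguments ultimately rely on the invertibility of $z=q-q^{-1}$ and of the diagonal generators $u_{ii}$. One minor remark: the comment that the case $a=b$ must be processed ``last at each level'' is unnecessary, since your induction is on $b$, and $u_{-(b-1),b-1}$ is already available from the previous stage regardless of the order in which the values of $a$ are treated.
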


\begin{proof}
    Let $\tilde{U}_q$ be the unital associative sub-superalgebra of $U_q$ generated by the elements \cref{gem}.  It is shown in \cite[Th.~2.1]{GJKK10} that $U_q$ is generated by
    \[
        u_{a,a+1},\quad u_{-a-1,-a},\quad u_{ii},\quad u_{-a-1,a},\quad u_{-a,a+1},\quad u_{-b,b},\qquad 1 \le a \le n-1,\ 1 \le b \le n,\ i \in \tI.
    \]
    Thus it suffices to show that
    \begin{equation} \label{mario}
        u_{-a-1,a},\ u_{-a,a+1},\ u_{-b,b} \in \tilde{U}_q
    \end{equation}
    for $1 \le a \le n-1$ and $1 \le b \le n$.  We prove this by induction on $a$.

    First note that, for $1 \le a \le n-1$, taking $i=-a$, $j=a$, $k=-a-1$, $l=-a$ in \cref{silly} gives
    \[
        q^{-1} u_{-a,a} u_{-a-1,-a}
        = u_{-a-1,-a} u_{-a,a} - z u_{-a,-a} u_{-a-1,a}.
    \]
    Taking $i=-a$, $j=k=a$, and $l=a+1$ in \cref{silly} gives
    \[
        u_{-a,a} u_{a,a+1} + z u_{-a,a+1} u_{aa}
        = q u_{a,a+1} u_{-a,a}.
    \]
    Taking $i=-a-1$, $j=k=-a$, $l=a+1$ in \cref{silly} gives
    \[
        u_{-a-1,-a} u_{-a,a+1} - z u_{-a-1,a+1} u_{-a,-a} + z u_{-a-1,-a-1} u_{-a, a}
        = u_{-a,a+1} u_{-a-1,-a}.
    \]
    So we have
    \begin{align} \label{luigi1}
        u_{-a-1,a} &= z^{-1} u_{aa} u_{-a-1,-a} u_{-a,a} - q^{-1}z^{-1} u_{aa} u_{-a,a} u_{-a-1,-a},
        \\ \label{luigi2}
        u_{-a,a+1} &= qz^{-1} u_{a,a+1} u_{-a,a} u_{-a,-a} - z^{-1} u_{-a,a} u_{a,a+1} u_{-a,-a},
        \\ \label{luigi3}
        u_{-a-1,a+1} &= z^{-1} u_{-a-1,-a} u_{-a,a+1} u_{aa} + u_{-a-1,-a-1} u_{-a,a} u_{aa} - z^{-1} u_{-a,a+1} u_{-a-1,-a} u_{aa}.
    \end{align}

    Taking $a=1$ in \cref{luigi1,luigi2} shows that $u_{-2,1}, u_{-1,2} \in \tilde{U}_q$.  Thus \cref{mario} holds for $a=b=1$.  Now suppose that $1 \le c \le n-2$ and that \cref{mario} holds for $1 \le a, b \le c$.  Then, replacing $a$ by $c$ in \cref{luigi3} shows that $u_{-c-1,c+1} \in \tilde{U}_q$.  Replacing $a$ by $c+1$ in \cref{luigi1,luigi2} then shows that $u_{-c-2,c+1}, u_{-c-1,c+2} \in \tilde{U}_q$.  Hence \cref{mario} holds for $1 \le a,b \le c+1$.  Thus, by induction, \cref{mario} holds for $1 \le a,b \le n-1$.  Finally, taking $a=n-1$ in \cref{luigi3} shows that $u_{-n,n} \in U_q$.
\end{proof}

It will be useful for future arguments to compute the square of the antipode.

\begin{prop}
    The square of the antipode of $U_q$ is given by $S^2(u_{ij}) = q^{2|j|-2|i|} u_{ij}$, $i,j \in \tI$.
\end{prop}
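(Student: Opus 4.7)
My plan is to exhibit an even algebra automorphism $\sigma$ of $U_q$ agreeing with the desired formula on all matrix coefficients, namely $\sigma(u_{ij}) = q^{2|j|-2|i|} u_{ij}$, and then verify $S^2 = \sigma$ on a generating set. Since $S^2$ is necessarily an even algebra automorphism ($S$ being a super anti-automorphism), and by \cref{wario} $U_q$ is generated by $\{u_{ii} \mid i \in \tI\} \cup \{u_{a,a+1}, u_{-a-1,-a} \mid 1 \le a \le n-1\} \cup \{u_{-1,1}\}$, it suffices to check $S^2 = \sigma$ on these generators.

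I would first establish well-definedness of $\sigma$. Set $D := \sum_{i \in \tI} q^{-2|i|} E_{ii} \in \End_\kk(V)$, so that $D E_{ij} D^{-1} = q^{2|j|-2|i|} E_{ij}$ and $\sigma(L) = (1 \otimes D) L (1 \otimes D^{-1})$. The relation $u_{ii} u_{-i,-i} = 1$ is preserved trivially since $|i| = |-i|$. For the RTT-type relation $L^{12} L^{13} \Theta^{23} = \Theta^{23} L^{13} L^{12}$, a direct manipulation gives $\sigma(L)^{12} \sigma(L)^{13} = (1 \otimes D \otimes D) L^{12} L^{13} (1 \otimes D \otimes D)^{-1}$ and analogously for the reversed product, so preservation reduces to $(D \otimes D) \Theta = \Theta (D \otimes D)$. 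This last commutation is immediate from \cref{krypton}: the diagonal summands $E_{ii} \otimes E_{jj}$ are fixed by conjugation by any diagonal matrix, while for each off-diagonal summand $(E_{ji} + E_{-j,-i}) \otimes E_{ij}$ the combined exponent under conjugation by $D \otimes D$ is $(2|i|-2|j|) + (2|j|-2|i|) = 0$.

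For the verification, the diagonal case is immediate: the $(i,i)$-entry of $L L^{-1} = 1 \otimes 1$ gives $S(u_{ii}) = u_{-i,-i}$, hence $S^2(u_{ii}) = u_{ii} = \sigma(u_{ii})$. For $u_{a,a+1}$ and $u_{-a-1,-a}$, the relevant off-diagonal entry of $L L^{-1} = 1 \otimes 1$ yields two-term recursions $S(u_{a,a+1}) = -u_{aa}^{-1} u_{a,a+1} u_{a+1,a+1}^{-1}$ and $S(u_{-a-1,-a}) = -u_{-a-1,-a-1}^{-1} u_{-a-1,-a} u_{-a,-a}^{-1}$; applying $S$ a second time (no super signs appear as all factors are even) and simplifying via commutations between the diagonal $u_{bb}$'s and the off-diagonal generator --- extracted from \cref{silly} by setting $i = j \in \{a, a+1\}$ and absorbing the $z$-correction term using $q - z = q^{-1}$ --- yields $q^2 u_{a,a+1}$ and $q^{-2} u_{-a-1,-a}$, matching $\sigma$. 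The main obstacle is the odd generator $u_{-1,1}$: the $(-1,1)$-entry of $L L^{-1} = 1 \otimes 1$ involves only $j = \pm 1$ (given the ordering on $\tI$) and produces $S(u_{-1,1}) = -u_{-1,-1}^{-1} u_{-1,1} u_{-1,-1}$; the super anti-homomorphism identity applied to this product has trivial sign (the three parities being $0, 1, 0$), giving $S^2(u_{-1,1}) = u_{11}^2 u_{-1,1} u_{11}^{-2}$, and the required commutation $u_{11} u_{-1,1} = u_{-1,1} u_{11}$ follows from \cref{silly} with $i = j = a = 1$, $k = -1$, $l = 1$ where a nontrivial $z$-correction term on the right-hand side is reabsorbed via $q - z = q^{-1}$, yielding $S^2(u_{-1,1}) = u_{-1,1} = \sigma(u_{-1,1})$.
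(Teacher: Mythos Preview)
Your proposal is correct and follows essentially the same route as the paper: both arguments reduce to the generating set of \cref{wario}, compute $S$ explicitly on each generator from the triangular inversion $L L^{-1} = 1$, and then simplify $S^2$ using commutations between diagonal and off-diagonal generators extracted from \cref{silly}. The only cosmetic difference is that the paper phrases the well-definedness of $\sigma$ as the observation that $U_q$ is $\Z$-graded with $\deg u_{ij} = 2|j|-2|i|$, whereas you verify the equivalent fact $(D \otimes D)\Theta = \Theta(D \otimes D)$ directly.
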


\begin{proof}
    It follows from the defining relations that $U_q$ is a $\Z$-graded Hopf superalgebra, where we define the degree of $u_{ij}$ to be $2|j|-2|i|$. Thus the map $u_{ij} \mapsto q^{2|j|-2|i|} u_{ij}$ is a homomorphism of superalgebras.  Since the antipode is an antihomomorphism of superalgebras, its square is a homomorphism of superalgebras.  Thus, by \cref{wario}, it suffices to prove that
    \[
        S^2(u_{a,a+1}) = q^2 u_{a,a+1},\quad
        S^2(u_{-a-1,-a}) = q^{-2} u_{-a-1,-a},\quad
        S^2(u_{ii}) = u_{ii},\quad
        S^2(u_{-1,1}) = u_{-1,1},
    \]
    for $1 \le a \le n$, $i \in \tI$.

    Using the definition \cref{antipode} of the antipode, which involves inverting an upper triangular matrix, we see that
    \begin{gather*}
        S(u_{a,a+1}) = - u_{-a,-a} u_{a,a+1} u_{-a-1,-a-1},\qquad
        S(u_{-a-1,-a}) = - u_{a+1,a+1} u_{-a-1,-a} u_{aa},
        \\
        S(u_{ii}) = u_{ii}^{-1} = u_{-i,-i},\qquad
        S(u_{-1,1}) = - u_{11} u_{-1,1} u_{-1,-1}.
    \end{gather*}
    By \cref{donut}\ref{donut1} and \cref{smri}, we have
    \begin{equation} \label{ramen}
        u_{aa} u_{kl} = q^{\delta_{a,|l|} - \delta_{a,|k|}} u_{kl}u_{aa}
        \qquad \text{and} \qquad
        u_{-a,-a} u_{kl} = q^{\delta_{a,|k|} - \delta_{a,|l|}} u_{kl} u_{-a,-a},
    \end{equation}
    for all $a \in \{1,2,\dotsc,n\}$ and $k,l \in \tI$, $k \le l$.  In particular,
    \[
        u_{-a,-a} u_{a,a+1} = q u_{a,a+1} u_{-a,-a},\quad
        u_{a+1,a+1} u_{a,a+1} = q u_{a,a+1} u_{a+1,a+1},\quad
        u_{ii} u_{kk} = u_{kk} u_{ii},
    \]
    for all $a \in \{1,2,\dotsc,n\}$ and $i,k \in \tI$.  Thus,
    \[
        S^2(u_{a,a+1})
        = - S(u_{-a-1,-a-1}) S(u_{a,a+1}) S(u_{-a,-a})
        = u_{a+1,a+1} u_{-a,-a} u_{a,a+1} u_{-a-1,-a-1} u_{aa}
        = q^2 u_{a,a+1}.
    \]
    The proof that $S^2(u_{-a-1,-a}) = q^{-2} u_{-a-1,-a}$ is similar.
    \details{
        It follows from \cref{ramen} that
        \[
            u_{a+1,a+1} u_{-a-1,-a} = q^{-1} u_{-a-1,-a} u_{a+1,a+1}
            \quad \text{and} \quad
            u_{-a,-a} u_{-a-1,-a} = q^{-1} u_{-a-1,-a} u_{-a,-a}.
        \]
        Thus,
        \begin{multline*}
            S^2(u_{-a-1,-a})
            = - S(u_{aa}) S(u_{-a-1,-a}) S(u_{a+1,a+1})
            \\
            = u_{-a,-a} u_{a+1,a+1} u_{-a-1,-a} u_{aa} u_{-a-1,-a-1}
            = q^{-2} u_{-a-1,-a}.
        \end{multline*}
    }

    Next, we have
    \[
        S^2(u_{ii}) = S(u_{-i,-i}) = u_{ii}.
    \]
    Finally, \cref{ramen} implies that
    \[
        u_{11} u_{-1,1} = u_{-1,1} u_{11}
        \qquad \text{and} \qquad
        u_{-1,-1} u_{-1,1} = u_{-1,1} u_{-1,-1}.
    \]
    Thus
    \[
        S^2(u_{-1,1})
        = -S \left( u_{11} u_{-1,1} u_{-1,-1} \right)
        = u_{11}^2 u_{-1,1} u_{-1,-1}^2
        = u_{-1,1}.
        \qedhere
    \]
\end{proof}

\begin{cor}
    The antipode $S$ is invertible and
    \begin{equation} \label{Sinv}
        S^{-1}(u_{ij}) = q^{2|i|-2|j|} S(u_{ij})
        \qquad i,j \in \tI.
    \end{equation}
\end{cor}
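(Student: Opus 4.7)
The plan is to leverage the preceding proposition in a purely formal way. That proposition identifies $S^2$ with the $\kk$-linear superalgebra endomorphism $\phi \colon U_q \to U_q$ determined on generators by $\phi(u_{ij}) = q^{2|j|-2|i|}\, u_{ij}$. This $\phi$ is the grading automorphism scaling the degree-$d$ component of the $\Z$-grading on $U_q$ (used in the proof of the previous proposition) by $q^d$, and in particular it is manifestly invertible, with $\phi^{-1}(u_{ij}) = q^{2|i|-2|j|}\, u_{ij}$. From $S^2 = \phi$ being a bijection it follows that $S$ itself is a bijection, so $S^{-1}$ exists as a $\kk$-linear map, and (being the inverse of an antihomomorphism) is automatically an antihomomorphism of superalgebras.

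To pin down $S^{-1}$ explicitly, I would first note that $S$ commutes with $\phi$, since
\[
    S \circ \phi = S \circ S^2 = S^3 = S^2 \circ S = \phi \circ S.
\]
Consequently the map $S \circ \phi^{-1}$ (equivalently $\phi^{-1} \circ S$) satisfies
\[
    S \circ (S \circ \phi^{-1}) = S^2 \circ \phi^{-1} = \phi \circ \phi^{-1} = \id,
    \qquad
    (S \circ \phi^{-1}) \circ S = \phi^{-1} \circ S^2 = \id,
\]
so $S^{-1} = S \circ \phi^{-1}$. Applying this to $u_{ij}$ and using $\kk$-linearity of $S$ gives
\[
    S^{-1}(u_{ij}) = S\bigl(\phi^{-1}(u_{ij})\bigr) = S\bigl(q^{2|i|-2|j|}\, u_{ij}\bigr) = q^{2|i|-2|j|}\, S(u_{ij}),
\]
which is precisely \cref{Sinv}.

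There is no real obstacle here: the result is a direct formal consequence of $S^2 = \phi$ together with the invertibility of $\phi$. The only mild point to be careful about is that one should not confuse $\phi$ with $S^{-1}$; the commuting relation $S\phi = \phi S$ is what makes the one-line identification of $S^{-1}$ work cleanly, and the $\kk$-linearity of $S$ is what allows the scalar $q^{2|i|-2|j|}$ to pass through in the final step.
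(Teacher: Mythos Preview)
Your argument is correct and is exactly the intended one: the paper states this corollary without proof, treating it as an immediate consequence of the preceding proposition $S^2(u_{ij}) = q^{2|j|-2|i|} u_{ij}$, and your verification via $S^{-1} = S \circ \phi^{-1}$ is the natural way to make that consequence explicit.
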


It follows from \cref{YBE,mango} that
\begin{equation} \label{rho}
    \rho \colon U_q \to \End_\kk(V),\qquad
    u_{ij} \mapsto \Theta_{ij},\qquad i,j \in \tI,
\end{equation}
defines a representation of $U_q$ on $V$. The $U_q$-supermodule structure on the dual space $V^* := \Hom_\kk(V,\kk)$ is given by
\[
    (xf)(v) = (-1)^{\bar{x}\bar{f}} f(S(x)v),\qquad x \in U_q,\ f \in V^*,\ v \in V.
\]
We have the natural evaluation map
\begin{equation}
    \ev \colon V^* \otimes V \to \kk,\qquad f \otimes v \mapsto f(v).
\end{equation}
Let $v_i^*$, $i \in \tI$, be the basis of $V^*$ dual to the basis $v_i$, $i \in \tI$, of $V$, so that
\[
    v_i^*(v_j) = \delta_{ij},\qquad i,j \in \tI.
\]
Then we have the coevaluation  map
\begin{equation}
    \coev \colon \kk \to V \otimes V^*,\qquad 1 \mapsto \sum_{i \in \tI} v_i \otimes v_i^*.
\end{equation}
It is a straightforward exercise, using only the properties of Hopf superalgebras, to verify that $\ev$ and $\coev$ are both homomorphisms of $U_q$-supermodules, where $\kk$ is the trivial $U_q$-supermodule, with action given by the counit $\varepsilon$.
\details{
    For $x \in U_q$, $v \in V$, and $f \in V^*$, we have, using Sweedler notation,
    \begin{multline*}
        \ev(x (f \otimes v))
        = \ev \left( \sum_{(x)} (-1)^{\bar{f} \, \overline{x_{(2)}}} x_{(1)} f \otimes x_{(2)} v  \right)
        \\
        = (-1)^{\bar{x}\bar{f}} f \left( \sum_{(x)} S(x_{(1)}) x_{(2)} v \right)
        = \varepsilon(x) f(v)
        = \varepsilon(x) \ev(f \otimes v).
    \end{multline*}
    Now note that
    \[
        x \coev(1)
        = x \sum_{i \in \tI} v_i \otimes v_i^*
        = \sum_{(x)} \sum_{i \in \tI} x_{(1)} (-1)^{\overline{x_{(2)}}\, \overline{v_i}} v_i \otimes x_{(2)} v_i^*
        \\
        = \sum_{(x)}  x_{(1)} \circ \left( \sum_{i \in I} v_i \otimes v_i^* \right) \circ S(x_{(2)}),
    \]
    where we are using the natural identification of $V \otimes V^*$ with $\End_\kk(V)$.  Under this identification, $\sum_{i \in \tI} v_i \otimes v_i^*$ is the identity map, and so the above becomes
    \[
        \sum_{(x)} x_{(1)} S \left( x_{(2)} \right)
        = \varepsilon(x)
        = \varepsilon(x) \sum_{i \in \tI} v_i \otimes v_i^*
        = \coev(\varepsilon(x)),
    \]
    as desired.
}

\begin{lem}
    The map $J \in \End_\kk(V)$ is an odd isomorphism of $U_q$-supermodules.
\end{lem}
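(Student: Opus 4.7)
The plan is to verify two things: that $J$ is odd and invertible (essentially by definition), and that $J$ super-commutes with the $U_q$-action on $V$, meaning $\rho(x) \circ J = (-1)^{\bar{x}} J \circ \rho(x)$ for every homogeneous $x \in U_q$.

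First, I would observe that $J = \sum_{i \in \tI} (-1)^{p(i)} E_{-i,i}$ interchanges $v_i$ and $(-1)^{p(i)} v_{-i}$ (up to a sign), so $J$ is homogeneous of odd parity. Invertibility is immediate since $J^2 = -1$, giving $J^{-1} = -J$. So the only real content is the super-commutation property. Since the property ``$J\rho(x) = (-1)^{\bar{x}}\rho(x)J$'' is multiplicative in $x$, it suffices to check it on a generating set of $U_q$, in particular on the generators $u_{ij}$, $i \le j$.

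For these generators, I would extract the required identity from \cref{cardinal}, namely $\Theta(J \otimes 1) = (J \otimes 1)\Theta$. Writing $\Theta = \sum_{i,j} \Theta_{ij} \otimes E_{ij}$ and using the sign rule \cref{mups} for multiplication in the tensor product of superalgebras, the left-hand side expands as
\begin{equation*}
    \Theta(J \otimes 1) = \sum_{i,j} (\Theta_{ij} \otimes E_{ij})(J \otimes 1) = \sum_{i,j} (-1)^{p(i,j)} \Theta_{ij} J \otimes E_{ij},
\end{equation*}
since $J$ is odd and $E_{ij}$ has parity $p(i,j)$; the right-hand side expands as $\sum_{i,j} J \Theta_{ij} \otimes E_{ij}$ with no sign. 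Comparing coefficients of the linearly independent $E_{ij}$ yields $J \Theta_{ij} = (-1)^{p(i,j)} \Theta_{ij} J$, which is exactly the odd supermodule-homomorphism property $J\rho(u_{ij}) = (-1)^{\overline{u_{ij}}} \rho(u_{ij}) J$ for the generators $u_{ij}$.

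The main (and only) obstacle is bookkeeping the signs correctly when extracting a tensor identity involving the odd element $J$. There is no representation-theoretic difficulty here: the invariance of $\Theta$ under $J \otimes 1$ is the single input, and all that remains is to propagate it from generators to the whole superalgebra, which is automatic because both sides of $J\rho(x) = (-1)^{\bar{x}}\rho(x)J$ are multiplicative in $x$ with the correct signs.
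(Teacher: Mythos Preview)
Your proof is correct and follows essentially the same approach as the paper: extract the identity $J\Theta_{ij} = (-1)^{p(i,j)}\Theta_{ij}J$ from \cref{cardinal} by comparing coefficients of $E_{ij}$, note that $u_{ij}$ acts on $V$ as $\Theta_{ij}$, and conclude via $J^2=-1$. The paper simply asserts the coefficient identity follows from \cref{Thetadef,cardinal}, while you have carefully unpacked the sign bookkeeping in the tensor product; otherwise the arguments are identical.
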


\begin{proof}
    It follows from \cref{Thetadef,cardinal} that
    \[
        \Theta_{ij} J = (-1)^{p(i,j)} J \Theta_{ij}
        \qquad \text{for all } i,j \in \tI.
    \]
    Since $u_{ij}$ acts on $V$ as $\Theta_{ij}$, it follows that $J$ is an odd endomorphism of $U_q$-supermodules.  Since $J^2 = -1$, it is an isomorphism.
\end{proof}

\section{The incarnation superfunctor}

In this section we prove some of our main results.  We describe a full monoidal superfunctor from $\Qcat(z)$ to the category of $U_q$-supermodules, give explicit bases for the morphism spaces in $\Qcat(z)$, and identify the endomorphism superalgebras of $\Qcat(z)$ with walled Brauer--Clifford superalgebras.

Until further notice later in this section, we assume that $\kk = \C(q)$ and $z = q-q^{-1}$.  Recalling the definition \cref{krypton} of $\Theta$, define
\begin{equation} \label{Tdef}
    T := \flip \circ \Theta,
    \qquad \text{so that} \qquad
    T^{-1} = \Theta^{-1} \circ \flip.
\end{equation}
Thus
\begin{align*}
    T
    &= \sum_{i,j} (-1)^{p(i)} q^{\varphi(i,j)} E_{ji} \otimes E_{ij}
    + z \sum_{i<j} E_{ii} \otimes E_{jj}
    - z \sum_{i<j} (-1)^{p(i,j)} E_{i,-i} \otimes E_{-j,j},
    \\
    T^{-1}
    &= \sum_{i,j} (-1)^{p(i)} q^{-\varphi(j,i)} E_{ji} \otimes E_{ij}
    - z \sum_{i>j} E_{ii} \otimes E_{jj}
    - z \sum_{i<j} (-1)^{p(i,j)} E_{i,-i} \otimes E_{-j,j}.
\end{align*}
\details{
    Since $\flip = \sum_{k,l} (-1)^{p(k)} E_{lk} \otimes E_{kl}$, we have
    \begin{align*}
        T &= \left( \sum_{k,l} (-1)^{p(k)} E_{lk} \otimes E_{kl} \right)
        \left( \sum_{i,j} q^{\varphi(i,j)} E_{ii} \otimes E_{jj}
        + z \sum_{i<j} (-1)^{p(i)} (E_{ji} + E_{-j,-i}) \otimes E_{ij} \right)
        \\
        &= \sum_{i,j} (-1)^{p(i)} q^{\varphi(i,j)} E_{ji} \otimes E_{ij}
        + z \sum_{i<j} E_{ii} \otimes E_{jj}
        - z \sum_{i<j} (-1)^{p(i,j)} E_{i,-i} \otimes E_{-j,j}
    \end{align*}
    and
    \begin{align*}
        T^{-1} &=
        \left(
            \sum_{i,j} q^{-\varphi(i,j)} E_{ii} \otimes E_{jj}
            - z \sum_{i<j} (-1)^{p(i)} (E_{ji} + E_{-j,-i}) \otimes E_{ij}
        \right)
        \left( \sum_{k,l} (-1)^{p(k)} E_{lk} \otimes E_{kl} \right)
        \\
        &= \sum_{i,j} (-1)^{p(j)} q^{-\varphi(i,j)} E_{ij} \otimes E_{ji}
        - z \sum_{i<j} E_{jj} \otimes E_{ii}
        - z \sum_{i<j} (-1)^{p(i,j)} E_{-j,j} \otimes E_{i,-i}
        \\
        &= \sum_{i,j} (-1)^{p(i)} q^{-\varphi(j,i)} E_{ji} \otimes E_{ij}
        - z \sum_{i>j} E_{ii} \otimes E_{jj}
        - z \sum_{i<j} (-1)^{p(i,j)} E_{i,-i} \otimes E_{-j,j}.
    \end{align*}
}
Therefore, we have
\begin{align*}
    T(v_i \otimes v_j)
    &= (-1)^{p(i)p(j)} q^{\varphi(i,j)} v_j \otimes v_i
    + z \delta_{i<j} v_i \otimes v_j
    + z \delta_{i+j>0} (-1)^{p(j)} v_{-i} \otimes v_{-j},
    \\
    T^{-1}(v_i \otimes v_j)
    &= (-1)^{p(i)p(j)} q^{-\varphi(j,i)} v_j \otimes v_i
    - z \delta_{i > j} v_i \otimes v_j
    + z \delta_{i+j>0} (-1)^{p(j)} v_{-i} \otimes v_{-j},
\end{align*}
and
\begin{equation} \label{dream}
    T - T^{-1} = z 1_{V \otimes V}.
\end{equation}
\details{
    We have
    \[
        T - T^{-1}
        = \sum_{i,j} (-1)^{p(i)} \left( q^{\varphi(i,j)} - q^{-\varphi(j,i)} \right) E_{ji} \otimes E_{ij} + z \sum_{i \ne j} E_{ii} \otimes E_{jj}
        = z 1_V \otimes 1_V.
    \]
}

\begin{lem} \label{coffee}
    The map $T$ is an isomorphism of $U_q$-supermodules.
\end{lem}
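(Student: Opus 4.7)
The plan is to reduce the $U_q$-linearity of $T$ to a single matrix identity in $\End_\kk(V)^{\otimes 3}$, and then deduce that identity from the Yang--Baxter equation \eqref{YBE}. Since $T$ is already known to be invertible (with inverse $\Theta^{-1} \circ \flip$), once linearity is established the lemma follows.

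The action of $U_q$ on $V \otimes V$ is given by $(\rho \otimes \rho) \circ \Delta$. By \eqref{dive}, we have $\Delta(L) = L^{13} L^{23}$, and applying $\rho \otimes \rho \otimes 1$ to this identity in $U_q^{\otimes 2} \otimes \End_\kk(V)$ yields $\Theta^{13} \Theta^{23}$ in $\End_\kk(V)^{\otimes 3}$; the coefficient of $E_{ij}$ in $\Theta^{13} \Theta^{23}$ is precisely the endomorphism of $V \otimes V$ through which $u_{ij}$ acts. Since $U_q$ is generated by the $u_{ij}$, the $U_q$-linearity of $T$ is equivalent to the single packaged equation
\[
    T^{12} \cdot \Theta^{13} \Theta^{23} = \Theta^{13} \Theta^{23} \cdot T^{12}
\]
in $\End_\kk(V)^{\otimes 3}$, where $T^{12} := T \otimes 1$.

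To prove this equation, I would substitute $T^{12} = \flip^{12} \circ \Theta^{12}$ and apply the Yang--Baxter equation \eqref{YBE} to rewrite the left-hand side:
\[
    T^{12} \Theta^{13} \Theta^{23} = \flip^{12} \Theta^{12} \Theta^{13} \Theta^{23} \stackrel{\eqref{YBE}}{=} \flip^{12} \Theta^{23} \Theta^{13} \Theta^{12}.
\]
The remaining task is to slide $\flip^{12}$ past the pair $\Theta^{23} \Theta^{13}$, which swaps them into $\Theta^{13} \Theta^{23}$. Concretely, because $\Theta$ is an even element of $\End_\kk(V)^{\otimes 2}$ (so each homogeneous summand $\Theta_{ij} \otimes E_{ij}$ has matching parities), a direct check confirms
\[
    \flip^{12} \Theta^{23} = \Theta^{13} \flip^{12}, \qquad \flip^{12} \Theta^{13} = \Theta^{23} \flip^{12},
\]
and applying these in succession gives $\flip^{12} \Theta^{23} \Theta^{13} \Theta^{12} = \Theta^{13} \Theta^{23} \flip^{12} \Theta^{12} = \Theta^{13} \Theta^{23} T^{12}$, as required.

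The main point of care is the last bookkeeping step: in the super setting one must verify that the parity signs from $\flip^{12}$ commuting past the constituents of $\Theta^{13}$ and $\Theta^{23}$ actually cancel. This is where the hypothesis that $\Theta$ is parity-preserving (each term $\Theta_{ij} \otimes E_{ij}$ has $\overline{\Theta_{ij}} = \overline{E_{ij}} = p(i,j)$) is essential: the individual sign contributions from the two factors of a homogeneous summand of $\Theta$ combine to an even total, leaving the braiding-style relations above free of signs. Granted this, the argument above is the entire proof.
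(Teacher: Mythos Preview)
Your proof is correct and follows essentially the same route as the paper's: both reduce $U_q$-linearity of $T$ to the Yang--Baxter equation \eqref{YBE} by packaging the action of all $u_{ij}$ into the matrix $\Theta^{13}\Theta^{23}$. The only cosmetic difference is that the paper first composes with $\flip^{12}$ to rewrite the condition as $\Theta\,\Delta(u_{ij}) = \Delta^{\op}(u_{ij})\,\Theta$ and then invokes the already-proved identity $\Delta^{\op}(L) = L^{23}L^{13}$ (Lemma labeled \eqref{gummy}), whereas you keep $T^{12}$ intact and commute $\flip^{12}$ past $\Theta^{23}\Theta^{13}$ directly; these are the same manipulation.
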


\begin{proof}
    Since it is invertible, it remains to show that it is a homomorphism of $U_q$-supermodules.  To do this, it suffices to show that, as operators on $V \otimes V$, we have an equality
    \[
        T \Delta(u_{ij}) = \Delta(u_{ij}) T
        \qquad \text{for all } i,j \in \tI.
    \]
    Composing on the left with $\flip$, it suffices to show that
    \[
        \Theta \Delta(u_{ij}) = \Delta^\op(u_{ij}) \Theta
        \qquad \text{for all } i,j \in \tI.
    \]
    This is equivalent to showing that
    \[
        \sum_{i,j \in \tI} \Theta \Delta(u_{ij}) \otimes E_{ij}
        = \sum_{i,j \in \tI} \Delta^\op(u_{ij}) \Theta \otimes E_{ij}.
    \]
    Since $u_{ij}$ acts on $V$ as $\Theta_{ij}$, this is equivalent, using \cref{dive,gummy}, to
    \[
        \Theta^{12} \Theta^{13} \Theta^{23} = \Theta^{23} \Theta^{13} \Theta^{12}.
    \]
    But this is precisely the Yang--Baxter equation \cref{YBE}.
\end{proof}

\begin{rem}
    The map $T$ is a special case of a map $T_{MV}$ to be introduced in \cref{burrito}, where $M=V$.  Then \cref{coffee} will be a special case of \cref{coin}.
\end{rem}

For the computations to follow, it is useful to note that, for $i,j \in \tI$, $i<j$, we have
\begin{align} \label{ladder+}
    z \sum_{\substack{k \in \tI \\ i < k < j}} (-1)^{p(k)} q^{2|k|}
    &= q^{2|j|-\sgn(j)} - q^{2|i|+\sgn(i)},
    \\ \label{ladder-}
    z \sum_{\substack{k \in \tI \\ i < k < j}} (-1)^{p(k)} q^{-2|k|}
    &= q^{-2|i|-\sgn(i)} - q^{-2|j|+\sgn(j)}.
\end{align}
\details{
    We have
    \begin{align*}
        z \sum_{\substack{k \in \tI \\ i < k < j}} (-1)^{p(k)} q^{2|k|}
        &=
        \begin{cases}
            q^{2|j|-1} - q^{2|i|+1} & \text{if } 1 \le i \le j, \\
            q^{2|j|-1} - q & \text{if } i=-1,\, j>1, \\
            0 & \text{if } i=-1,\, j=1, \\
            q^{2|j|+1} - q^{2|i|-1} & \text{if } i < j \le -1, \\
            q-q^{2|i|-1} & \text{if } i<-1,\ j=1, \\
            q^{2|j|-1} - q^{2|i|-1} & \text{if } i<-1, j>1,
        \end{cases}
        \\
        &= q^{2|j|-\sgn(j)} - q^{2|i|+\sgn(i)}.
    \end{align*}
    Replacing $q$ by $q^{-1}$ in \cref{ladder+}, and noting that this sends $z$ to $-z$, we obtain \cref{ladder-}.
}

\begin{theo} \label{bread}
    For each $n \in \N$, there exists a unique monoidal superfunctor $\bF_n \colon \Qcat(z) \to U_q\smod$ such that
    \begin{gather*}
        \bF_n(\uparrow) = V,\qquad \qquad
        \bF_n(\downarrow) = V^*,
        \\
        \bF_n(\posupcross) = T,\qquad
        \bF_n(\leftcap) = \ev,\qquad
        \bF_n(\tokup) = J.
    \end{gather*}
    Furthermore, $\bF_n(\negupcross) = T^{-1}$, $\bF_n(\leftcup) = \coev$, and
    \begin{equation} \label{Fnegleft}
        \begin{multlined}
            \bF_n(\negrightcross) \colon v_i \otimes v_j^* \mapsto
            (-1)^{p(i)p(j)} q^{-\varphi(i,j)} v_j^* \otimes v_i
            \\
            - z \delta_{ij} \sum_{k>i} (-1)^{p(i,k)} q^{2|i|-2|k|} v_k^* \otimes v_k
            - z \delta_{i,-j} \sum_{k>j} (-1)^{p(k)} q^{2|i|-2|k|} v_k^* \otimes v_{-k}.
        \end{multlined}
    \end{equation}
\end{theo}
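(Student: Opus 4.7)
The plan is to define $\bF_n$ on objects by $\uparrow \mapsto V$, $\downarrow \mapsto V^*$ and on all six generating morphisms of $\Qcat(z)$: the three assignments listed in the statement, together with $\bF_n(\negupcross) := T^{-1}$, $\bF_n(\leftcup) := \coev$, and $\bF_n(\negrightcross)$ defined by \cref{Fnegleft}. Uniqueness is a consequence of the presentation: given the assignments on $\posupcross$, $\leftcap$, $\tokup$, the image of $\negupcross$ is forced by the skein relation \cref{skein} together with \cref{dream}, the image of $\leftcup$ is forced by the snake relations \cref{leftadj} and the non-degeneracy of $\ev$, and the image of $\negrightcross$ is forced by the mixed R2 relation in \cref{braid}. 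For existence, it remains to check that the defining relations \cref{braid}--\cref{leftadj} all hold under these assignments.

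The braid relations \cref{braid} split into R2 relations and an R3 relation. The R2 relations for up-up crossings reduce to $T \circ T^{-1} = T^{-1} \circ T = 1_{V \otimes V}$, and their mixed analogues reduce to the inverse property built into the formula \cref{Fnegleft}. The R3 relation becomes the Yang--Baxter equation $T^{12} T^{23} T^{12} = T^{23} T^{12} T^{23}$ on $V^{\otimes 3}$, which follows from \cref{YBE} and the identity $T = \flip_{V,V} \circ \Theta$; the transpositions introduced by $\flip_{V,V}$ propagate through without additional signs since $\Theta$ is even. The skein relation \cref{skein} is exactly \cref{dream}.

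For the Clifford token relations \cref{tokrel}, the squaring relation follows from $J^2 = -1_V$. The token-sliding relation over a positive crossing becomes $T \circ (J \otimes 1_V) = (1_V \otimes J) \circ T$; this I would deduce from \cref{cardinal} combined with the super-sign identity $\flip_{V,V} \circ (1_V \otimes J) = (J \otimes 1_V) \circ \flip_{V,V}$. The two right-bubble relations reduce to vanishing of the superdimension of $V$ (which is $n - n = 0$) and of the supertrace $\str(J) = 0$, the latter transparent since $J$ has no diagonal entries in the standard basis. The snake relations \cref{leftadj} are the standard rigidity relations between $\ev$ and $\coev$ for the dual pair $(V, V^*)$ and are checked by direct computation on the dual basis. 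The assertions $\bF_n(\negupcross) = T^{-1}$ and $\bF_n(\leftcup) = \coev$ then fall out of the skein and snake relations respectively, and the formula \cref{Fnegleft} for $\bF_n(\negrightcross)$ is obtained by computing the pivotal rotation of $T^{-1}$ on $v_i \otimes v_j^*$: the factors $q^{2|i|-2|k|}$ come from the antipode square \cref{Sinv}, and the resulting sums are collected using the telescoping identities \cref{ladder+}, \cref{ladder-}.

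The main obstacle I expect is the bookkeeping in the derivation of \cref{Fnegleft}: the pivotal rotation of $T^{-1}$ involves both super-signs and $q$-powers that must be aggregated carefully, and matching the closed form requires invoking \cref{ladder+}, \cref{ladder-} to absorb geometric sums in the intermediate expressions. The remaining verifications, though requiring attention to super-signs from the interchange law, are essentially captured by the Hopf-algebraic identities (\cref{YBE}, \cref{cardinal}, \cref{dream}) already established earlier in the paper.
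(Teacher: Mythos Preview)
Your outline follows essentially the same route as the paper: posit the values on all six generators (including $\bF_n(\negupcross)=T^{-1}$, $\bF_n(\leftcup)=\coev$, and the formula \cref{Fnegleft}), verify the defining relations one by one, and deduce uniqueness from the fact that $\negupcross$, $\leftcup$, $\negrightcross$ are forced by inverting or mating. The Yang--Baxter, skein, Clifford-square, and token-sliding verifications you sketch are exactly the ones carried out in the paper.

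There is one point where your stated reasoning would not literally go through. The right cap $\rightcap$ is \emph{defined} in \cref{lego} as a composite involving $\negrightcross$, so $\bF_n(\rightcap)$ is not the naive pairing $v_i\otimes v_j^*\mapsto \delta_{ij}(-1)^{p(i)}$: one must first compute it (using \cref{Fnegleft} and \cref{ladder-}) and obtain $\delta_{ij}(-1)^{p(i)}q^{2|i|-2n-1}$ as in \cref{bowser}. Consequently the clockwise bubble is not the ordinary superdimension of $V$ but the $q$-weighted sum $\sum_{i\in\tI}(-1)^{p(i)}q^{2|i|-2n-1}$; this still vanishes by the $i\leftrightarrow -i$ symmetry, but you cannot simply invoke ``superdimension $=n-n=0$''. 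The token bubble is handled similarly. Relatedly, describing \cref{Fnegleft} as the ``pivotal rotation of $T^{-1}$'' risks circularity, since the right duality in $\Qcat(z)$ is itself built from $\negrightcross$; the paper avoids this by positing \cref{Fnegleft} and then verifying directly (via the explicit formula \cref{Fposleft} for $\bF_n(\posleftcross)$ and the telescoping identities \cref{ladder-}) that the mixed R2 relations hold. That direct computation is indeed the main labour, as you anticipate.
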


We call $\bF_n$ the \emph{incarnation superfunctor}.  Before giving the proof of \cref{bread}, we compute, using the definitions \cref{lego,wolverine,ldskein}, the images under $\bF$ of the leftward and downward crossings:
\begin{align} \label{Fposleft}
    \begin{split}
        \bF_n(\posleftcross) \colon v_i^* \otimes v_j
        \mapsto{}& (-1)^{p(i)p(j)} q^{\varphi(j,i)} v_j \otimes v_i^* \\
        &\qquad \qquad + z \delta_{ij} \sum_{k > i} v_k \otimes v_k^* + z \delta_{i,-j} \sum_{k>i} (-1)^{p(k)} v_{-k} \otimes v_k^*,
    \end{split}
    \\
    \begin{split}
        \bF_n(\negleftcross) \colon v_i^* \otimes v_j
        \mapsto{}& (-1)^{p(i)p(j)} q^{-\varphi(i,j)} v_j \otimes v_i^* \\
        &\qquad \qquad - z \delta_{ij} \sum_{k<i} v_k \otimes v_k^* + z \delta_{i,-j} \sum_{k>i} (-1)^{p(k)} v_{-k} \otimes v_k^*,
    \end{split}
    \\
    \begin{split}
        \bF_n(\posdowncross) \colon v_i^* \otimes v_j^*
        \mapsto{}& (-1)^{p(i)p(j)} q^{\varphi(i,j)} v_j^* \otimes v_i^* \\
        &\qquad \qquad + z \delta_{i>j} v_i^* \otimes v_j^* - z \delta_{i+j<0} (-1)^{p(i)} v_{-i}^* \otimes v_{-j}^*,
    \end{split}
    \\
    \begin{split}
        \bF_n(\negdowncross) \colon v_i^* \otimes v_j^*
        \mapsto{}& (-1)^{p(i)p(j)} q^{-\varphi(j,i)} v_j^* \otimes v_i^* \\
        &\qquad \qquad - z \delta_{i<j} v_i^* \otimes v_j^* - z \delta_{i+j<0} (-1)^{p(i)} v_{-i}^* \otimes v_{-j}^*,
    \end{split}
\end{align}
the right cup and cap
\begin{align} \label{bowser}
    \bF_n(\rightcap) \colon v_i \otimes v_j^*
    &\mapsto \delta_{ij} (-1)^{p(i)} q^{2|i|-2n-1},&
    \bF_n(\rightcup) \colon 1
    &\mapsto \sum_{i \in \tI} (-1)^{p(i)} q^{2n-2|i|+1} v_i^* \otimes v_i,
\end{align}
and the positive right crossing
\begin{multline*}
    \bF_n(\posrightcross) \colon v_i \otimes v_j^*
    \mapsto (-1)^{p(i)p(j)} q^{\varphi(j,i)} v_j^* \otimes v_i
    \\
    + z \delta_{ij} \sum_{k < i} (-1)^{p(i,k)} q^{2|i|-2|k|} v_k^* \otimes v_k
    - z \delta_{i,-j} \sum_{k>j} (-1)^{p(k)} q^{2|i|-2|k|} v_k^* \otimes v_{-k}.
\end{multline*}
(See \cref{word} for another description of the images under $\bF_n$ of the various crossings.)
\details{
    In this details environment, we will denote the image of a diagram under $\bF_n$ by the diagram itself.  For $\bF_n \left( \posleftcross \right)$, we compute
    \begin{align*}
        v_i^* \otimes v_j
        &\xmapsto{\downstrand \otimes \upstrand \otimes \leftcup} \sum_k v_i^* \otimes v_j \otimes v_k \otimes v_k^*
        \\
        &\xmapsto{\downstrand \otimes \posupcross \otimes \downstrand} \sum_k (-1)^{p(j)p(k)} q^{\varphi(j,k)} v_i^* \otimes v_k \otimes v_j \otimes v_k^* \\
        &\qquad \qquad \qquad \qquad + z \sum_{k>j} v_i^* \otimes v_j \otimes v_k \otimes v_k^* + z \sum_{k>-j} (-1)^{p(k)} v_i^* \otimes v_{-j} \otimes v_{-k} \otimes v_k^*
        \\
        &\xmapsto{\leftcap \otimes \upstrand \otimes \downstrand} (-1)^{p(i)p(j)} q^{\varphi(j,i)} v_j \otimes v_i^* + z \delta_{ij} \sum_{k > i} v_k \otimes v_k^* + z \delta_{i,-j} \sum_{k>i} (-1)^{p(k)} v_{-k} \otimes v_k^*.
    \end{align*}
    For $\bF_n \left( \negleftcross \right)$, we compute
    \begin{align*}
        v_i^* \otimes v_j
        &\xmapsto{\downstrand \otimes \upstrand \otimes \leftcup} \sum_k v_i^* \otimes v_j \otimes v_k \otimes v_k^*
        \\
        &\xmapsto{\downstrand \otimes \negupcross \otimes \downstrand} \sum_k (-1)^{p(j)p(k)} q^{-\varphi(k,j)} v_i^* \otimes v_k \otimes v_j \otimes v_k^* \\
        &\qquad \qquad \qquad \qquad - z \sum_{k<j} v_i^* \otimes v_j \otimes v_k \otimes v_k^* + z \sum_{k>-j} (-1)^{p(k)} v_i^* \otimes v_{-j} \otimes v_{-k} \otimes v_k^*
        \\
        &\xmapsto{\leftcap \otimes \upstrand \otimes \downstrand} (-1)^{p(i)p(j)} q^{-\varphi(i,j)} v_j \otimes v_i^* - z \delta_{ij} \sum_{k<i} v_k \otimes v_k^* + z \delta_{i,-j} \sum_{k>i} (-1)^{p(k)} v_{-k} \otimes v_k^*.
    \end{align*}
    For $\bF_n \left( \posdowncross \right)$, we compute
    \begin{align*}
        v_i^* \otimes v_j^*
        &\xmapsto{\downstrand \otimes \downstrand \otimes \leftcup} \sum_k v_i^* \otimes v_j^* \otimes v_k \otimes v_k^*
        \\
        &\xmapsto{\downstrand \otimes \posleftcross \otimes \downstrand} \sum_k (-1)^{p(j)p(k)} q^{\varphi(k,j)} v_i^* \otimes v_k \otimes v_j^* \otimes v_k^* \\
        &\qquad \qquad \qquad \qquad + z \sum_k \sum_{l>j} \left( \delta_{jk} v_i^* \otimes v_l \otimes v_l^* \otimes v_k^* + \delta_{j,-k} (-1)^{p(l)} v_i^* \otimes v_{-l} \otimes v_l^* \otimes v_k^* \right)
        \\
        &\xmapsto{\leftcap \otimes \downstrand \otimes \downstrand} (-1)^{p(i)p(j)} q^{\varphi(i,j)} v_j^* \otimes v_i + z \delta_{i>j} v_i^* \otimes v_j^* - z \delta_{i+j<0} (-1)^{p(i)} v_{-i}^* \otimes v_{-j}^*.
    \end{align*}
    For $\bF_n \left( \negdowncross \right)$, we compute
    \begin{align*}
        v_i^* \otimes v_j^*
        &\xmapsto{\downstrand \otimes \downstrand \otimes \leftcup} \sum_k v_i^* \otimes v_j^* \otimes v_k \otimes v_k^*
        \\
        &\xmapsto{\downstrand \otimes \negleftcross \otimes \downstrand} \sum_k (-1)^{p(j)p(k)} q^{-\varphi(j,k)} v_i^* \otimes v_k \otimes v_j^* \otimes v_k^* \\
        &\qquad \qquad \qquad \qquad - z \sum_k \sum_{l<j} \delta_{jk} v_i^* \otimes v_l \otimes v_l^* \otimes v_k^* + z \sum_k \sum_{l>j} \delta_{j,-k} (-1)^{p(l)} v_i^* \otimes v_{-l} \otimes v_l^* \otimes v_k^*
        \\
        &\xmapsto{\leftcap \otimes \downstrand \otimes \downstrand} (-1)^{p(i)p(j)} q^{-\varphi(j,i)} v_j^* \otimes v_i^* - z \delta_{i<j} v_i^* \otimes v_j^* - z \delta_{i+j<0} (-1)^{p(i)} v_{-i}^* \otimes v_{-j}^*.
    \end{align*}
    For $\bF_n \left( \rightcap \right)$, we compute, using \cref{lego},
    \begin{align*}
        v_i \otimes v_j^*
        &\xmapsto{\negrightcross} (-1)^{p(i)p(j)} q^{-\varphi(i,j)} v_j^* \otimes v_i
        \\
        &\qquad \qquad - z \delta_{ij} \sum_{k>i} (-1)^{p(i,k)} q^{2|i|-2|k|} v_k^* \otimes v_k
        - z \delta_{i,-j} \sum_{k>j} (-1)^{p(k)} q^{2|i|-2|k|} v_k^* \otimes v_{-k}
        \\
        &\xmapsto{\leftcap} \delta_{ij} (-1)^{p(i)} q^{-\sgn(i)} - \delta_{ij} (-1)^{p(i)} q^{2|i|} z \sum_{k>i} (-1)^{p(k)} q^{-2|k|}
        \\
        &\overset{\mathclap{\cref{ladder-}}}{=}\ \delta_{ij} (-1)^{p(i)} q^{-\sgn(i)} - \delta_{ij} (-1)^{p(i)} q^{2|i|} \left( q^{-2|i|-\sgn(i)} - q^{-2n-1} \right)
        \\
        &= \delta_{ij} (-1)^{p(i)} q^{2|i|-2n-1}.
    \end{align*}
    For $\bF_n \left( \rightcup \right)$, we compute, using \cref{wolverine},
    \begin{align*}
        1
        &\xmapsto{\leftcup} \sum_{i \in \tI} v_i \otimes v_i^*
        \\
        &\xmapsto{\negrightcross} \sum_{i \in \tI} \left( (-1)^{p(i)} q^{-\sgn(i)} v_i^* \otimes v_i - z \sum_{k>i} (-1)^{p(i,k)} q^{2|i|-2|k|} v_k^* \otimes v_k \right) \\
        &= \sum_i \left( (-1)^{p(i)} q^{-\sgn(i)} - (-1)^{p(i)} q^{-2|i|} z \sum_{k=-n}^{i-1} (-1)^{p(k)} q^{2|k|} \right) v_i^* \otimes v_i
        \\
        &\overset{\mathclap{\cref{ladder+}}}{=}\ \sum_{i \in \tI} \left( (-1)^{p(i)} q^{-\sgn(i)} - (-1)^{p(i)} q^{-2|i|} \left( q^{2|i|-\sgn(i)} - q^{2n+1} \right) \right) v_i^* \otimes v_i
        \\
        &= \sum_{i \in \tI} (-1)^{p(i)} q^{2n-2|i|+1} v_i^* \otimes v_i.
    \end{align*}
    Next we compute
    \begin{multline*}
        \bF_n
        \left(
            \begin{tikzpicture}[centerzero]
                \draw[->] (-0.15,0.25) -- (-0.15,0.2) arc(180:360:0.15) -- (0.15,0.25);
                \draw[->] (-0.15,-0.25) -- (-0.15,-0.2) arc(180:0:0.15) -- (0.15,-0.25);
            \end{tikzpicture}
        \right)
        \colon
        v_i \otimes v_j^*
        \mapsto \delta_{ij} (-1)^{p(i)} q^{2|i|-2n-1} \sum_{k \in \tI} (-1)^{p(k)} q^{2n-2|k|+1} v_k^* \otimes v_k
        \\
        = \delta_{ij} \sum_{k \in \tI} (-1)^{p(i,k)} q^{2|i|-2|k|} v_k^* \otimes v_k.
    \end{multline*}
    Then, for $\bF_n \left( \posrightcross \right)$, we compute, using \cref{rskein},
    \begin{align*}
        v_i \otimes v_j^*
        &\xmapsto{\posrightcross}
        (-1)^{p(i)p(j)} q^{-\varphi(i,j)} v_j^* \otimes v_i
        \\
        &\qquad \qquad - z \delta_{ij} \sum_{k>i} (-1)^{p(i,k)} q^{2|i|-2|k|} v_k^* \otimes v_k
        - z \delta_{i,-j} \sum_{k>j} (-1)^{p(k)} q^{2|i|-2|k|} v_k^* \otimes v_{-k}
        \\
        &\qquad \qquad + z \delta_{ij} \sum_{k \in \tI} (-1)^{p(i,k)} q^{2|i|-2|k|} v_k^* \otimes v_k
        \\
        &= (-1)^{p(i)p(j)} q^{-\varphi(i,j)} v_j^* \otimes v_i
        \\
        &\qquad \qquad + z \delta_{ij} \sum_{k \le i} (-1)^{p(i,k)} q^{2|i|-2|k|} v_k^* \otimes v_k
        - z \delta_{i,-j} \sum_{k>j} (-1)^{p(k)} q^{2|i|-2|k|} v_k^* \otimes v_{-k}
        \\
        &= (-1)^{p(i)p(j)} q^{\varphi(j,i)} v_j^* \otimes v_i
        \\
        &\qquad \qquad + z \delta_{ij} \sum_{k < i} (-1)^{p(i,k)} q^{2|i|-2|k|} v_k^* \otimes v_k
        - z \delta_{i,-j} \sum_{k>j} (-1)^{p(k)} q^{2|i|-2|k|} v_k^* \otimes v_{-k}.
    \end{align*}
}

\details{
    As a consistency check, note that $\bF_n \left( \posleftcross \right) - \bF_n \left( \negleftcross \right)$ is the map
    \[
        v_i^* \otimes v_j
        \mapsto (-1)^{p(i)p(j)} \left( q^{\varphi(j,i)} - q^{-\varphi(i,j)} \right) v_j \otimes v_i^* + z \delta_{ij} \sum_{k \ne i} v_k \otimes v_k^*
        = z \delta_{ij} \sum_{k \in \tI} v_k \otimes v_k^*,
    \]
    which is equal to the map $z \bF_n
    \left(
        \begin{tikzpicture}[centerzero]
            \draw[<-] (-0.15,0.25) -- (-0.15,0.2) arc(180:360:0.15) -- (0.15,0.25);
            \draw[<-] (-0.15,-0.25) -- (-0.15,-0.2) arc(180:0:0.15) -- (0.15,-0.25);
        \end{tikzpicture}
    \right)$.  Similarly, $\bF_n \left( \posdowncross \right) - \bF_n \left( \negdowncross \right)$ is the map
    \begin{align*}
        v_i^* \otimes v_j^*
        &\mapsto (-1)^{p(i)p(j)} \left( q^{\varphi(i,j)} - q^{-\varphi(j,i)} \right) v_j^* \otimes v_i^* + z \delta_{i \ne j} v_i^* \otimes v_j^*
        = z v_i^* \otimes v_j^*,
    \end{align*}
    which is equal to the map $z \bF_n (\downstrand\, \downstrand)$.
}

\details{
    As another consistency check, we show that $\negleftcross$ and $\posrightcross$ are inverse.  We compute
    \begin{align*}
        v_i &\otimes v_i^*
        \xmapsto{\bF_n \left( \posrightcross \right)}
        (-1)^{p(i)} q^{\sgn(i)} v_i^* \otimes v_i + z \sum_{k<i} (-1)^{p(i,k)} q^{2|i|-2|k|} v_k^* \otimes v_k
        \\
        &\xmapsto{\bF_n \left( \negleftcross \right)}
        v_i \otimes v_i^* - (-1)^{p(i)} q^{\sgn(i)} z \sum_{l<i} v_l \otimes v_l^*
        \\ &\qquad \qquad
        + z \sum_{l<i} (-1)^{p(i)} q^{2|i|-2|l|-\sgn(l)} v_l \otimes v_l^*
        - z^2 \sum_{l<k<i} (-1)^{p(i,k)} q^{2|i|-2|k|} v_l \otimes v_l^*
        \\
        &= v_i \otimes v_i^* - (-1)^{p(i)} q^{2|i|} z \sum_{l<i} \left( q^{-2|i|+\sgn(i)} - q^{-2|l|-\sgn(l)} + z \sum_{k=l+1}^{i-1} (-1)^{p(k)} q^{-2|k|} \right) v_l \otimes v_l*
        \\
        &\overset{\mathclap{\cref{ladder-}}}{=}\ v_i \otimes v_i^*
    \end{align*}
    and
    \begin{align*}
        v_i &\otimes v_{-i}^*
        \xmapsto{\bF_n \left( \posrightcross \right)}
        q^{\sgn(i)} v_{-i}^* \otimes v_i - z \sum_{k>-i} (-1)^{p(k)} q^{2|i|-2|k|} v_k^* \otimes v_{-k}
        \\
        &\xmapsto{\bF_n \left( \negleftcross \right)}
        v_i \otimes v_{-i}^* + q^{\sgn(i)} z \sum_{l>-i} (-1)^{p(l)} v_{-l} \otimes v_l^*
        \\ &\qquad \qquad
        - z \sum_{l>-i} (-1)^{p(l)} q^{2|i|-2|l|+\sgn(l)} v_{-l} \otimes v_l^*
        - z^2 \sum_{l>k>-i} (-1)^{p(k,l)} q^{2|i|-2|k|} v_{-l} \otimes v_l^*
        \\
        &= v_i \otimes v_{-i}^* + q^{2|i|} z \sum_{l>-i} (-1)^{p(l)} \left( q^{-2|i|+\sgn(i)} - q^{-2|l|+\sgn(l)} - z \sum_{k=-i+1}^{l-1} (-1)^{p(k)} q^{-2|k|} \right) v_{-l} \otimes v_l^*
        \\
        &\overset{\mathclap{\cref{ladder-}}}{=}\ v_i \otimes v_{-i}^*.
    \end{align*}
    The case of $v_i^* \otimes v_j$ for $i \ne \pm j$ is straightforward.
}

\begin{proof}[Proof of \cref{bread}]
    We first show existence, taking $\bF_n (\negupcross) = T^{-1}$, $\bF_n (\leftcup) = \coev$, and $\bF_n(\negleftcross)$ as in \cref{Fnegleft}.  We must show that $\bF_n$ respects the relations in \cref{Qdef}.

    The first two relations in \cref{braid} are clear.  To verify the third relation in \cref{braid}, we compute
    \begin{align*}
        v_i &\otimes v_i^*
        \xmapsto{\bF_n\left(\negrightcross\right)} (-1)^{p(i)} q^{-\sgn(i)} v_i^* \otimes v_i
        - z \sum_{k>i} (-1)^{p(i,k)} q^{2|i|-2|k|} v_k^* \otimes v_k
        \\
        &\xmapsto{\bF_n\left(\posleftcross\right)} v_i \otimes v_i^* + z \sum_{l>i} q^{-\sgn(i)} (-1)^{p(i)} v_l \otimes v_l^* \\
        &\qquad \qquad
        - z \sum_{l>i} (-1)^{p(i)} q^{2|i|-2|l|+\sgn(l)} v_l \otimes v_l^*
        - z^2 \sum_{l>k>i} (-1)^{p(i,k)} q^{2|i|-2|k|} v_l \otimes v_l^*
        \\
        &= v_i \otimes v_i^* + (-1)^{p(i)} z q^{2|i|} \sum_{l>i} \left( q^{-2|i|-\sgn(i)} - q^{-2|l|+\sgn(l)} - z \sum_{k : i<k<l} (-1)^{p(k)} q^{-2|k|} \right) v_l \otimes v_l^*
        \\
        &\overset{\mathclap{\cref{ladder-}}}{=}\ v_i \otimes v_i^*,
    \end{align*}
    and
    \begin{align*}
        v_i &\otimes v_{-i}^*
        \xmapsto{\bF_n\left(\negrightcross\right)} q^{\sgn(i)} v_{-i}^* \otimes v_i
        - z \sum_{k>-i} (-1)^{p(k)} q^{2|i|-2|k|} v_k^* \otimes v_{-k}
        \\
        &\xmapsto{\bF_n\left(\posleftcross\right)} v_i \otimes v_{-i}^* + z \sum_{l>-i} q^{\sgn(i)} (-1)^{p(l)} v_{-l} \otimes v_l^* \\
        &\qquad \qquad
        - z \sum_{l>-i} (-1)^{p(l)} q^{2|i|-2|l|+\sgn(l)} v_{-l} \otimes v_l^* - z^2 \sum_{l>k>-i} q^{2|i|-2|k|} (-1)^{p(k,l)} v_{-l} \otimes v_l^*
        \\
        &= v_i \otimes v_{-i}^* + z q^{2|i|} \sum_{l>-i} (-1)^{p(l)} \left( q^{-2|i|+\sgn(i)} - q^{-2|l|+\sgn(l)} - z \sum_{k : -i<k<l} (-1)^{p(k)} q^{-2|k|} \right) v_{-l} \otimes v_l^*
        \\
        &\overset{\mathclap{\cref{ladder-}}}{=}\ v_i \otimes v_{-i}^*,
    \end{align*}
    and, for $i \ne \pm j$,
    \[
        v_i \otimes v_j^*
        \xmapsto{\bF_n\left(\negrightcross\right)} (-1)^{p(i)p(j)} v_j \otimes v_i^*
        \xmapsto{\bF_n\left(\posleftcross\right)} v_i \otimes v_j^*.
    \]
    Thus
    \[
        \bF_n
        \left(
            \begin{tikzpicture}[centerzero]
                \draw[<-] (0.2,-0.4) to[out=135,in=down] (-0.15,0) to[out=up,in=225] (0.2,0.4);
                \draw[wipe] (-0.2,-0.4) to[out=45,in=down] (0.15,0) to[out=up,in=-45] (-0.2,0.4);
                \draw[->] (-0.2,-0.4) to[out=45,in=down] (0.15,0) to[out=up,in=-45] (-0.2,0.4);
            \end{tikzpicture}
        \right)
        =
        \bF_n \left( \posleftcross \right) \circ \left( \negrightcross \right)
        = \bF_n
        \left(
            \begin{tikzpicture}[centerzero]
                \draw[->] (-0.2,-0.4) -- (-0.2,0.4);
                \draw[<-] (0.2,-0.4) -- (0.2,0.4);
            \end{tikzpicture}
        \right).
    \]
    So $\bF_n$ respects the third relation in \cref{braid}.  Since $V \otimes V^*$ is finite dimensional, it follows that we also have
    \[
        \bF_n
        \left(
            \begin{tikzpicture}[centerzero]
                \draw[<-] (-0.2,-0.4) to[out=45,in=down] (0.15,0) to[out=up,in=-45] (-0.2,0.4);
                \draw[wipe] (0.2,-0.4) to[out=135,in=down] (-0.15,0) to[out=up,in=225] (0.2,0.4);
                \draw[->] (0.2,-0.4) to[out=135,in=down] (-0.15,0) to[out=up,in=225] (0.2,0.4);
            \end{tikzpicture}
        \right)
        =
        \bF_n \left( \negrightcross \right) \circ \left( \posleftcross \right)
        = \bF_n
        \left(
            \begin{tikzpicture}[centerzero]
                \draw[<-] (-0.2,-0.4) -- (-0.2,0.4);
                \draw[->] (0.2,-0.4) -- (0.2,0.4);
            \end{tikzpicture}
        \right).
    \]
    Hence $\bF_n$ also respects the fourth relation in \cref{braid}.
    \details{
        We can also verify the fourth relation in \cref{braid} directly.  We have
        \begin{align*}
            v_i^* &\otimes v_i
            \xmapsto{\bF_n\left(\posleftcross\right)} (-1)^{p(i)} q^{\sgn(i)} v_i \otimes v_i^* + z \sum_{k > i} v_k \otimes v_k^*
            \\
            &\xmapsto{\bF_n\left(\negrightcross\right)} v_i^* \otimes v_i - q^{\sgn(i)} z \sum_{l>i} (-1)^{p(l)} q^{2|i|-2|l|} v_l^* \otimes v_l \\
            &\qquad \qquad + z \sum_{l>i} q^{-\sgn(l)} (-1)^{p(l)} v_l^* \otimes v_l - z^2 \sum_{l > k > i} (-1)^{p(k,l)} q^{2|k|-2|l|} v_l^* \otimes v_l
            \\
            &= v_i^* \otimes v_i - z \sum_{l > i} (-1)^{p(l)} q^{-2|l|} \left( q^{2|i|+\sgn(i)} - q^{2|l|-\sgn(l)} + z \sum_{k: i<k<l} (-1)^{p(k)} q^{2|k|} \right) v_l^* \otimes v_l
            \\
            &\overset{\mathclap{\cref{ladder+}}}{=}\ v_i^* \otimes v_i,
        \end{align*}
        and
        \begin{align*}
            v_i^* &\otimes v_{-i}
            \xmapsto{\bF_n\left(\posleftcross\right)} q^{\sgn(i)} v_{-i} \otimes v_i^* + z \sum_{k > i} (-1)^{p(k)} v_{-k} \otimes v_k^*
            \\
            &\xmapsto{\bF_n\left(\negrightcross\right)} v_i^* \otimes v_{-i} - z q^{\sgn(i)} \sum_{l>i} (-1)^{p(l)} q^{2|i|-2|l|} v_l^* \otimes v_{-l} \\
            &\qquad \qquad + z \sum_{l > i} (-1)^{p(l)} q^{-\sgn(l)} v_l^* \otimes v_{-l}
            - z^2 \sum_{l>k>i} (-1)^{p(k,l)} q^{2|k|-2|l|} v_l^* \otimes v_{-l}
            \\
            &= v_i^* \otimes v_{-i} - z \sum_{l>i} (-1)^{p(l)} q^{-2|l|} \left( q^{2|i|+\sgn(i)} - q^{2|l|-\sgn(l)} + z \sum_{k: i<k<l} (-1)^{p(k)} q^{2|k|} \right) v_l^* \otimes v_{-l}
            \\
            &\overset{\mathclap{\cref{ladder-}}}{=}\ v_i^* \otimes v_{-i}.
        \end{align*}
        The case of $v_i^* \otimes v_j$ for $i \ne \pm j$ is straightforward.
    }

    Next we verify the braid relation (the last relation in \cref{braid}).  The left-hand side is mapped by $\bF_n$ to the composite
    \[
        (T \otimes 1_V) (1_V \circ T) (T \otimes 1_V)
        = \flip^{12} \Theta^{12} \flip^{23} \Theta^{23} \flip^{12} \Theta^{12}
        = \flip^{12} \flip^{23} \flip^{12} \Theta^{23} \Theta^{13} \Theta^{12}.
    \]
    Similarly, the right-hand side is mapped by $\bF_n$ to the composite
    \[
        (1_V \otimes T) (T \otimes 1_V) (1_V \otimes T)
        = \flip^{23} \Theta^{23} \flip^{12} \Theta^{12} \flip^{23} \Theta^{23}
        = \flip^{23} \flip^{12} \flip^{23} \Theta^{12} \Theta^{13} \Theta^{23}.
    \]
    Since
    \[
        \flip^{12} \flip^{23} \flip^{12}
        = \flip^{23} \flip^{12} \flip^{23}
        \colon u \otimes v \otimes w \mapsto (-1)^{\bar{u}\bar{v} + \bar{u}\bar{w} + \bar{v}\bar{w}} w \otimes v \otimes u,
    \]
    and $\Theta^{23} \Theta^{13} \Theta^{12} = \Theta^{12} \Theta^{13} \Theta^{23}$ by \cref{YBE}, we see that $\bF_n$ respects the braid relation.

    Since
    \[
        \bF_n \left( \posupcross \right) - \bF_n \left( \negupcross \right)
        = T - T^{-1}
        \overset{\cref{dream}}{=} z 1_{V \otimes V},
    \]
    the superfunctor $\bF_n$ respects the skein relation \cref{skein}.  We also have
    \[
        \bF_n
        \left(
            \begin{tikzpicture}[centerzero]
                \draw[->] (0,-0.3) -- (0,0.3);
                \token{0,-0.1};
                \token{0,0.1};
            \end{tikzpicture}
        \right)
        = J^2
        = - 1_V,
    \]
    and so $\bF_n$ respects the first relation in \cref{tokrel}.  Next, we compute
    \[
        \bF_n
        \left(
            \begin{tikzpicture}[centerzero]
                \draw[->] (0.3,-0.3) -- (-0.3,0.3);
                \draw[wipe] (-0.3,-0.3) -- (0.3,0.3);
                \draw[->] (-0.3,-0.3) -- (0.3,0.3);
                \token{-0.15,-0.15};
            \end{tikzpicture}
        \right)
        =
        \flip \circ \Theta \circ (J \otimes 1)
        \overset{\cref{cardinal}}{=}
        \flip \circ (J \otimes 1) \circ \Theta
        = (1 \otimes J) \circ \flip \circ \Theta
        = \bF_n
        \left(
            \begin{tikzpicture}[centerzero]
                \draw[->] (0.3,-0.3) -- (-0.3,0.3);
                \draw[wipe] (-0.3,-0.3) -- (0.3,0.3);
                \draw[->] (-0.3,-0.3) -- (0.3,0.3);
                \token{0.15,0.15};
            \end{tikzpicture}
        \right).
    \]
    Thus $\bF_n$ preserves the second relation in \cref{tokrel}.  For the third equality in \cref{tokrel}, we compute
    \[
        \bF_n
        \left(
            \begin{tikzpicture}[centerzero]
                \bubright{0,0};
                \token{-0.2,0};
            \end{tikzpicture}
        \right)
        \colon 1
        \xmapsto{\bF_n \left( \leftcup \right)}
        \sum_{k \in \tI} v_k \otimes v_k^*
        \xmapsto{J \otimes 1}
        \sum_{k \in \tI} (-1)^{p(k)} v_{-k} \otimes v_k^*
        \xmapsto{\bF_n \left( \rightcap \right)}
        0.
    \]
    For the last equality in \cref{tokrel}, we compute
    \[
        \bF_n
        \left(
            \begin{tikzpicture}[centerzero]
                \bubright{0,0};
            \end{tikzpicture}
        \right)
        \colon 1
        \xmapsto{\bF_n \left( \leftcup \right)}
        \sum_{k \in \tI} v_k \otimes v_k^*
        \xmapsto{\bF_n \left( \rightcap \right)}
        \sum_{k \in \tI} (-1)^{p(k)} q^{2|k|-2n-1}
        = 0.
    \]
    Finally, the relations \cref{leftadj} are straightforward to verify.

    It remains to prove uniqueness.  Suppose $\bF_n$ is a monoidal superfunctor as described in the first sentence of the statement of the theorem.  Then $\bF_n(\negupcross)$ and $\bF_n(\negrightcross)$ are uniquely determined by the fact that they must be inverse to $\bF_n(\posupcross)$ and $\bF_n(\posleftcross)$, respectively.  Next, suppose that
    \[
        \bF_n(\leftcup) \colon 1 \mapsto \sum_{i,j} a_{ij} v_i \otimes v_j^*,\qquad
        a_{ij} \in \C(q).
    \]
    Then, for all $k \in \tI$,
    \[
        v_k =
        \bF_n
        \left(
            \begin{tikzpicture}[centerzero]
                \draw[->] (0,-0.4) -- (0,0.4);
            \end{tikzpicture}
        \right)
        (v_k)
        =
        \bF_n
        \left(
            \begin{tikzpicture}[centerzero]
                \draw[<-] (-0.3,0.4) -- (-0.3,0) arc(180:360:0.15) arc(180:0:0.15) -- (0.3,-0.4);
            \end{tikzpicture}
        \right)
        (v_k)
        \xmapsto{\bF_n \left( \leftcup\, \otimes \upstrand \right)}
        \sum_{i,j \in \tI} a_{ij} v_i \otimes v_j^* \otimes v_k
        \xmapsto{\bF_n \left( \upstrand \otimes \leftcap \right)}
        \sum_{i \in \tI} a_{ik} v_i.
    \]
    It follows that $a_{ij} = \delta_{ij}$ for all $i,j \in \tI$, and so $\bF_n(\leftcup) = \coev$.
\end{proof}

To simplify notation, we will start writing objects of $\Qcat(z)$ as sequences of $\uparrow$'s and $\downarrow$'s, omitting the $\otimes$ symbol.  For such an object $X$, we define $V^X := \bF_n(X)$, and we let $\# X$ denote the length of the sequence.

\begin{theo} \label{full}
    The superfunctor $\bF_n$ is full for all $n \in \N$.  Furthermore, the induced map
    \begin{equation} \label{revolt}
        \bF_n \colon \Hom_{\Qcat(z)}(X,Y) \to \Hom_{U_q}(V^X,V^Y)
    \end{equation}
    is an isomorphism when $\# X + \# Y \le 2n$.
\end{theo}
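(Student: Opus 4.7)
The plan is to reduce the claim to a Sergeev-type duality statement and then count dimensions.

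First I would exploit the pivotal structure on $\Qcat(z)$ from \cref{swivel} in tandem with the rigid duality $V \dashv V^*$ in $U_q\smod$. Since $\bF_n$ sends the generating cups and caps to the (co)evaluation morphisms of this duality, the standard bending trick gives, for each pair of objects $X,Y$ of the same weight $\#_\uparrow - \#_\downarrow$, an isomorphism $\Hom_{\Qcat(z)}(X,Y) \cong \End_{\Qcat(z)}(\uparrow^{\otimes r})$ with $r = (\#X+\#Y)/2$, that is intertwined by $\bF_n$ with the analogous isomorphism on the target. When the weights of $X$ and $Y$ disagree, both Hom-spaces vanish (on the $\Qcat(z)$ side by \cref{nonaffinespan}, since $B_\bullet(X,Y)$ is empty, and on the $U_q$-side by the $\Z$-grading on $V^X$ by weight). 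The theorem thus reduces to showing that
\[
    \bF_n \colon \End_{\Qcat(z)}(\uparrow^{\otimes r}) \to \End_{U_q}(V^{\otimes r})
\]
is surjective for every $r$, and bijective whenever $r \le n$.

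Next I would invoke \cref{walled} with $s=0$ to obtain a surjection $\HC_r(z) \twoheadrightarrow \End_{\Qcat(z)}(\uparrow^{\otimes r})$. Composing with $\bF_n$ produces a homomorphism
\[
    \HC_r(z) \twoheadrightarrow \End_{\Qcat(z)}(\uparrow^{\otimes r}) \xrightarrow{\bF_n} \End_{U_q}(V^{\otimes r}),
\]
and inspection of the images of the generators ($t_i$ goes to $T$ acting on strands $i, i{+}1$ and $\cg_i$ goes to $J$ on the $i$-th factor) identifies this composite with the quantum Sergeev duality homomorphism of \cite{Ols92}. That work shows the composite is surjective in general and bijective when $n \ge r$, from which fullness of $\bF_n$ in this case, and hence in general after the reduction above, is immediate.

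For the isomorphism statement, \cref{nonaffinespan} together with the description of $B_\bullet(\uparrow^{\otimes r}, \uparrow^{\otimes r})$ (parametrized by a permutation of $r$ letters and an independent choice of at most one Clifford token on each strand) gives the upper bound
\[
    \dim_\kk \End_{\Qcat(z)}(\uparrow^{\otimes r}) \le r! \cdot 2^r = \dim_\kk \HC_r(z).
\]
When $n \ge r$, the composite of the previous paragraph is bijective, forcing both the surjection from $\HC_r(z)$ and $\bF_n$ itself to be isomorphisms on this endomorphism space. The principal obstacle is to execute the pivotal reduction carefully, tracking the signs produced when odd Clifford tokens pass through the $180\degree$ rotation of \cref{swivel}, and to verify that the composite map above genuinely coincides with Olshanski's action; once these are in hand, the theorem is a dimension count.
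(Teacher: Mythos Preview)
Your proposal is correct and follows essentially the same route as the paper: reduce via duality/bending to $\End_{\Qcat(z)}(\uparrow^{\otimes r})$, then compose the surjection of \cref{walled} with $\bF_n$ to obtain Olshanski's quantum Sergeev map, and finish by a dimension count. The only substantive differences are that the paper handles the unequal-weight case explicitly via the central element of \cref{donut}\cref{donut2} (your ``$\Z$-grading by weight'' is the same idea), and it attributes the precise bijectivity statement for $r \le n$ to \cite[Th.~3.28]{BGJKW16} rather than to \cite{Ols92}, where surjectivity is stated without proof.
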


\begin{proof}
    Our proof is similar to that of \cite[Th~4.1]{BCK19}, which treats the case $z=0$.  We need to show that, for all objects $X$ and $Y$ in $\Qcat(z)$, the map \cref{revolt} is surjective, and that it is also injective when $\# X + \# Y \le 2n$.  Suppose that $X$ (respectively, $Y$) is a tensor product of $r_X$ (respectively, $r_Y$) copies of $\uparrow$ and $s_X$ (respectively, $s_Y$) copies of $\downarrow$.  Consider the following commutative diagram:
    \[
        \begin{tikzcd}
            \Hom_{\Qcat(z)}(X,Y) \arrow[r, "\cong"] \arrow[d, "\bF_n"]
            & \Hom_{\Qcat(z)}(\downarrow^{s_X} \uparrow^{r_X}, \uparrow^{r_Y} \downarrow^{s_Y}) \arrow[d, "\bF_n"] \arrow[r, "\cong"]
            & \Hom_{\Qcat(z)}(\uparrow^{r_X+s_Y}, \uparrow^{r_Y+s_X}) \arrow[d, "\bF_n"]
            \\
            \Hom_{U_q} \big( V^X,V^Y \big) \arrow[r, "\cong"]
            & \Hom_{U_q} \big( V^{\downarrow^{s_X} \uparrow^{r_X}}, V^{\uparrow^{r_Y} \downarrow^{s_Y}} \big) \arrow[r, "\cong"]
            & \Hom_{U_q} \big( V^{\otimes (r_X+s_Y)}, V^{\otimes (r_Y+s_X)} \big)
        \end{tikzcd}
    \]
    The top-left horizontal map is given by composing on the top and bottom of diagrams with $\negleftcross$ to move $\uparrow$'s on the top to the left and $\uparrow$'s on the bottom to the right.  The bottom-left horizontal map is given analogously, using $\bF_n(\negleftcross)$.  The right horizontal maps are the usual isomorphisms that hold in any rigid monoidal supercategory.  In particular, the top-right horizontal map is the $\C(q)$-linear isomorphism given on diagrams by
    \[
        \begin{tikzpicture}[centerzero]
            \draw (-1,-0.2) rectangle (1,0.2);
            \draw[->] (-0.8,0.2) -- (-0.8,1);
            \node at (-0.48,0.6) {$\cdots$};
            \draw[->] (-0.2,0.2) -- (-0.2,1);
            \draw[<-] (0.2,0.2) -- (0.2,1);
            \draw[<-] (0.8,0.2) -- (0.8,1);
            \node at (0.52,0.6) {$\cdots$};
            \draw[->] (-0.8,-0.2) -- (-0.8,-1);
            \draw[->] (-0.2,-0.2) -- (-0.2,-1);
            \node at (-0.48,-0.6) {$\cdots$};
            \draw[<-] (0.2,-0.2) -- (0.2,-1);
            \draw[<-] (0.8,-0.2) -- (0.8,-1);
            \node at (0.52,-0.6) {$\cdots$};
        \end{tikzpicture}
        \mapsto
        \begin{tikzpicture}[centerzero]
            \draw (-1,-0.2) rectangle (1,0.2);
            \draw[->] (-0.8,0.2) -- (-0.8,1);
            \node at (-0.48,0.6) {$\cdots$};
            \draw[->] (-0.2,0.2) -- (-0.2,1);
            \draw[<-] (0.2,0.2) arc(180:0:0.8) -- (1.8,-1);
            \draw[<-] (0.8,0.2) arc(180:0:0.2) -- (1.2,-1);
            \node at (1,0.8) {$\vdots$};
            \draw[->] (-0.8,-0.2) arc(360:180:0.2) -- (-1.2,1);
            \draw[->] (-0.2,-0.2) arc(360:180:0.8) -- (-1.8,1);
            \node at (-1.45,0.6) {$\cdots$};
            \node at (-1,-0.6) {$\vdots$};
            \draw[<-] (0.2,-0.2) -- (0.2,-1);
            \draw[<-] (0.8,-0.2) -- (0.8,-1);
            \node at (0.52,-0.6) {$\cdots$};
            \node at (1.53,-0.6) {$\cdots$};
        \end{tikzpicture}
    \]
    with inverse
    \[
        \begin{tikzpicture}[centerzero]
            \draw (-1,-0.2) rectangle (1,0.2);
            \draw[->] (-0.8,0.2) -- (-0.8,1);
            \node at (-0.48,0.6) {$\cdots$};
            \draw[->] (-0.2,0.2) -- (-0.2,1);
            \draw[->] (0.2,0.2) -- (0.2,1);
            \draw[->] (0.8,0.2) -- (0.8,1);
            \node at (0.52,0.6) {$\cdots$};
            \draw[<-] (-0.8,-0.2) -- (-0.8,-1);
            \draw[<-] (-0.2,-0.2) -- (-0.2,-1);
            \node at (-0.48,-0.6) {$\cdots$};
            \draw[<-] (0.2,-0.2) -- (0.2,-1);
            \draw[<-] (0.8,-0.2) -- (0.8,-1);
            \node at (0.52,-0.6) {$\cdots$};
        \end{tikzpicture}
        \mapsto
        \begin{tikzpicture}[centerzero]
            \draw (-1,-0.2) rectangle (1,0.2);
            \draw[->] (-0.8,0.2) arc(0:180:0.2) -- (-1.2,-1);
            \node at (-1,0.8) {$\vdots$};
            \draw[->] (-0.2,0.2) arc(0:180:0.8) -- (-1.8,-1);
            \draw[->] (0.2,0.2) -- (0.2,1);
            \draw[->] (0.8,0.2) -- (0.8,1);
            \node at (0.52,0.6) {$\cdots$};
            \draw[<-] (-0.8,-0.2) -- (-0.8,-1);
            \draw[<-] (-0.2,-0.2) -- (-0.2,-1);
            \node at (-0.48,-0.6) {$\cdots$};
            \node at (-1.45,-0.6) {$\cdots$};
            \draw[<-] (0.2,-0.2) arc(180:360:0.8) -- (1.8,1);
            \draw[<-] (0.8,-0.2) arc(180:360:0.2) -- (1.2,1);
            \node at (1,-0.6) {$\vdots$};
            \node at (1.53,0.6) {$\cdots$};
        \end{tikzpicture}
    \]
    where the rectangle denotes some diagram.

    Since all the horizontal maps are isomorphisms, it suffices to show that the rightmost vertical map has the desired properties.  Thus, we must show that the map
    \begin{equation} \label{boing}
        \bF_n \colon \Hom_{\Qcat(z)}(\uparrow^r, \uparrow^s)
        \to \Hom_{U_q}(V^{\otimes r}, V^{\otimes s})
    \end{equation}
    is surjective for all $r,s \in \N$, and that it is injective when $r+s \le 2n$.  We first consider the case where $r \ne s$.  Let $x = u_{11} u_{22} \dotsm u_{nn}$ be the central element of \cref{donut}\cref{donut2}.  Since $u_{ii}$ acts on $V$ by $\Theta_{ii}$, it follows from \cref{tavern} that $x$ acts on $V$ as multiplication by $q$.  By \cref{snooze}, we have $\Delta(x) = x \otimes x$.  Thus $x$ acts on $V^{\otimes r}$ as multiplication by $q^r$.  Since $x$ is central, this implies that $\Hom_{U_q}(V^{\otimes r}, V^{\otimes s}) = 0$.  Since we also have $\Hom_{\Qcat(z)}(\uparrow^r, \uparrow^s) = 0$ in this case, by \cref{nonaffinespan}, the map \cref{boing} is an isomorphism when $r \ne s$.

    Now suppose $r=s$, and consider the composite
    \begin{equation} \label{hungry}
        \HC_r(z) \xrightarrowdbl{\varphi} \End_{\Qcat(z)}(\uparrow^r) \xrightarrow{\bF_n} \End_{U_q}(V^{\otimes r}),
    \end{equation}
    where $\varphi$ is the surjective homomorphism of \cref{walled} with $s=0$.  This composite is precisely the map of \cite[Th.~5.2]{Ols92}.  Surjectivity is asserted, without proof, in \cite[Th.~5.3]{Ols92}.  For the more precise statement, with proof, that this map is also an isomorphism when $\# X + \# Y = 2r \le 2n$, see \cite[Th.~3.28]{BGJKW16}.  It follows that $\bF_n \colon \End_{\Qcat(z)}(\uparrow^r) \to \End_{U_q}(V^{\otimes r})$ is always surjective, and that it is an isomorphism when $r \le n$, as desired.
\end{proof}

Note that $\# X + \# Y$ is twice the number of strands in any string diagram representing a morphism in $\Qcat(z)$ from $X$ to $Y$.  Thus, \cref{full} asserts that $\bF_n$ induces an isomorphism on morphism spaces whenever the number of strands is less than or equal to $n$.

We now loosen our assumption on the ground field.  For the remainder of this section
\begin{center}
    \emph{$\kk$ is an arbitrary commutative ring of characteristic not equal to two, and $z \in \kk$.}
\end{center}
We can now improve \cref{nonaffinespan}.

\begin{theo} \label{basisthm}
    For any objects $X,Y$ of $\Qcat(z)$, the $\kk$-supermodule $\Hom_{\Qcat(z)}(X,Y)$ is free with basis $B_\bullet(X,Y)$.
\end{theo}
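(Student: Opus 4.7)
The plan is to combine Proposition \ref{nonaffinespan} (which already establishes spanning) with a dimension count obtained via the incarnation superfunctor. The key observation is that $|B_\bullet(X,Y)| = (r_X+s_Y)! \cdot 2^{r_X+s_Y}$ when $r_X - s_X = r_Y - s_Y$ (and is empty otherwise), where $r_X,s_X$ count the $\uparrow$'s and $\downarrow$'s in $X$ (similarly for $Y$). This matches the well-known dimension $r! \cdot 2^r$ of the Hecke--Clifford superalgebra $\HC_r(z)$ with $r = r_X + s_Y$.

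First I would reduce to the endomorphism case $X = Y = \uparrow^{\otimes r}$. Using the rigid structure of $\Qcat(z)$, the cup/cap isomorphism from the proof of Theorem \ref{full} gives a $\kk$-linear isomorphism $\Hom_{\Qcat(z)}(X,Y) \cong \End_{\Qcat(z)}(\uparrow^{\otimes r})$ whenever the orientation counts balance (and both sides vanish, as does $B_\bullet(X,Y)$, otherwise). Since $|B_\bullet(X,Y)| = |B_\bullet(\uparrow^{\otimes r}, \uparrow^{\otimes r})| = r! \cdot 2^r$, combined with spanning, it suffices to show that $\End_{\Qcat(z)}(\uparrow^{\otimes r})$ is a free $\kk$-module of rank $r! \cdot 2^r$.

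Next I would establish this dimension count over $\kk = \C(q)$ with $z = q - q^{-1}$. Choosing $n \ge r$, Theorem \ref{full} gives an isomorphism $\End_{\Qcat(z)}(\uparrow^{\otimes r}) \xrightarrow{\cong} \End_{U_q(\fq_n)}(V^{\otimes r})$. The composite $\HC_r(z) \twoheadrightarrow \End_{\Qcat(z)}(\uparrow^{\otimes r}) \xrightarrow{\cong} \End_{U_q}(V^{\otimes r})$ is the quantum Sergeev duality map, which is an isomorphism by \cite[Th.~3.28]{BGJKW16}. Hence the surjection from $\HC_r(z)$ is also an isomorphism, so $\dim_{\C(q)} \End_{\Qcat(q-q^{-1})}(\uparrow^{\otimes r}) = r! \cdot 2^r$. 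Combined with Proposition \ref{nonaffinespan}, $B_\bullet$ is a $\C(q)$-basis in this case.

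Finally, to pass to arbitrary $\kk$, I would introduce the universal base ring $R := \Z[\tfrac{1}{2}][t]$ and the corresponding category $\Qcat_R(t)$ with $t$ an indeterminate. All defining relations have coefficients in $\Z[\tfrac{1}{2}][z]$, so there is a canonical identification $\Hom_{\Qcat_\kk(z)}(X,Y) \cong \Hom_{\Qcat_R(t)}(X,Y) \otimes_R \kk$ via $t \mapsto z$. Embedding $R \hookrightarrow \C(q)$ by $t \mapsto q - q^{-1}$ (valid since $q - q^{-1}$ is transcendental over $\Q$), the previous step shows $B_\bullet$ is $\C(q)$-linearly independent after base change, hence $R$-linearly independent; combined with spanning over $R$, it is an $R$-basis, and base-changing then yields the theorem for any $(\kk,z)$. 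The main obstacle is the black-box input: the fact that $\HC_r(z) \to \End_{U_q(\fq_n)}(V^{\otimes r})$ is an isomorphism for $r \le n$ over $\C(q)$, which underlies the whole argument; the universal base-change step is routine but requires care in setting up the canonical identification.
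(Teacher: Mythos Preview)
Your proposal is correct and follows essentially the same route as the paper: reduce to $\End_{\Qcat(z)}(\uparrow^{\otimes r})$ via cups and caps, use \cite[Th.~3.28]{BGJKW16} to match its dimension with that of $\HC_r(z)$ over $\C(q)$, and then descend to the universal base ring $\Z[z]$ (or your $\Z[\tfrac12][t]$) before extending scalars. One caution: you invoke Theorem~\ref{full} for the isomorphism $\End_{\Qcat(z)}(\uparrow^{\otimes r})\cong\End_{U_q}(V^{\otimes r})$, but the paper's proof of Theorem~\ref{full} forward-references Theorem~\ref{basisthm} (for the $r\ne s$ case), so citing it here risks circularity; the paper avoids this by appealing directly to the composite $\HC_r(z)\twoheadrightarrow\End_{\Qcat(z)}(\uparrow^{\otimes r})\to\End_{U_q}(V^{\otimes r})$ and \cite[Th.~3.28]{BGJKW16}, bypassing Theorem~\ref{full} entirely---you should do the same.
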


\begin{proof}
    In light of \cref{nonaffinespan}, it remains to prove that the elements of $B_\bullet(X,Y)$ are linearly independent.  We first prove this when $\kk = \C(z)$.  Consider the superalgebra homomorphisms \cref{hungry}.  By \cite[Th.~3.28]{BGJKW16}, the composite $\bF_n \circ \varphi$, which is the map denoted $\rho_{n,q}^{r,0}$ there, is an isomorphism for $n \ge r$.  Since the map $\varphi$ is independent of $n$, it follows that $\varphi$ is injective, and hence an isomorphism.  Thus,
    \[
        \dim_\kk \End_{\Qcat(z)}(\uparrow^r) = \dim_\kk \HC_r(z) = r! 2^r,
    \]
    where the last equality is \cite[Prop.~2.1]{JN99}.  (The statement in \cite[Prop.~2.1]{JN99} is over the field $\C(q)$, with $z=q-q^{-1}$, but the proof is the same over $\C(z)$.)  Now suppose that $X$ (respectively, $Y$) is a tensor product of $r_X$ (respectively, $r_Y$) copies of $\uparrow$ and $s_X$ (respectively, $s_Y$) copies of $\downarrow$.  As in the proof of \cref{full}, we have a linear isomorphism
    \[
        \Hom_{\Qcat(z)}(X,Y) \cong \Hom_{\Qcat(z)}(\uparrow^{r_X+s_Y}, \uparrow^{r_Y + s_X}).
    \]
    Thus
    \[
        \dim_\kk \End_{\Qcat(z)}(X,Y) =
        \begin{cases}
            k!2^k & \text{if } k := r_X + s_Y = r_Y + s_X, \\
            0 & \text{otherwise}.
        \end{cases}
    \]
    This dimension is equal to the number of elements of $B_\bullet(X,Y)$.  Indeed, there are $k!$ $(X,Y)$-matchings and $2^k$ ways of adding Clifford tokens to the strings in a positive reduced lift.  It follows that $B_\bullet(X,Y)$ is a basis for $\End_{\Qcat(z)}(\uparrow^r)$.  This completes the proof of \cref{basisthm} for $\kk = \C(z)$.

    To complete the proof over more general base rings, note that $\C(q)$ is a free $\Z[z]$-module, with $z$ acting as $q-q^{-1}$.  Thus, any linear dependence relation over $\Z[z]$ yields a linear dependence relation over $\C(q)$ after extending scalars.  Therefore, it follows from the above that the elements of $B_\bullet(\uparrow^r, \uparrow^r)$ are a basis over $\Z[z]$ and hence, by extension of scalars, over any commutative ring $\kk$ of characteristic not equal to two and $z \in \kk$.
\end{proof}

\begin{rem}
    Taking $z=0$ in \cref{basisthm} recovers the basis theorem \cite[Th~3.4]{BCK19} for the oriented Brauer--Clifford supercategory; see \cref{degenerate,imagine}.
\end{rem}

\begin{cor} \label{dimension}
    Let $X = X_1 \otimes \dotsb \otimes X_r$ and $Y = Y_1 \otimes \dotsb \otimes Y_s$ be objects of $\Qcat(z)$ for $X_i,Y_j \in \{\uparrow, \downarrow\}$.  Then $\Hom_{\Qcat(z)}(X,Y) = 0$ if the cardinalities of the sets \cref{chain} are not equal.  If they are equal (which implies that $r+s$ is even), then $\Hom_{\Qcat(z)}(X,Y)$ is a free $\kk$-supermodule with even and odd parts each of rank $k!2^{k-1}$, where $k=\frac{r+s}{2}$ is the number of strings in the elements of $B(X,Y)$.
\end{cor}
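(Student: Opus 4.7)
The plan is to deduce this almost immediately from the basis theorem (\cref{basisthm}), so the proof is essentially a careful count of the elements of $B_\bullet(X,Y)$ together with a parity bookkeeping argument.

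First I would observe that an $(X,Y)$-matching is by definition a bijection between the two sets in \cref{chain}, so such a matching exists if and only if those two sets have equal cardinality. When the cardinalities differ, $B(X,Y)$ and hence $B_\bullet(X,Y)$ are empty, and \cref{basisthm} gives $\Hom_{\Qcat(z)}(X,Y)=0$, establishing the first claim. Note that equality of cardinalities forces $r+s$ to be even, since both sets partition $\{1,\ldots,r\}\sqcup\{1,\ldots,s\}$.

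Next, suppose the cardinalities are equal to $k = (r+s)/2$. The number of bijections between two $k$-element sets is $k!$, so there are $k!$ matchings, and each matching has a unique positive reduced lift up to equality in $\Qcat(z)$ (as remarked immediately after the definition of positive reduced lift, using \cref{venom2}). Hence $|B(X,Y)|=k!$. By construction, $B_\bullet(X,Y)$ is obtained from $B(X,Y)$ by choosing, for each of the $k$ strings, whether or not to place a single Clifford token near its terminus (with the height conventions specified before \cref{nonaffinespan}). This gives $|B_\bullet(X,Y)| = k!\cdot 2^k$, and \cref{basisthm} then gives the total rank.

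Finally, to separate even and odd parts, I would use that the Clifford token is the unique odd generator appearing, so the parity of a basis element of $B_\bullet(X,Y)$ equals the parity of the total number of Clifford tokens it carries. For a fixed positive reduced lift in $B(X,Y)$, the number of token placements using an even (respectively odd) number of tokens equals $\sum_{j\text{ even}}\binom{k}{j}=2^{k-1}$ (respectively $\sum_{j\text{ odd}}\binom{k}{j}=2^{k-1}$), valid for $k\ge 1$. Summing over the $k!$ elements of $B(X,Y)$ yields $k!\cdot 2^{k-1}$ even and $k!\cdot 2^{k-1}$ odd basis elements. There is no real obstacle here; the content is entirely packaged in the basis theorem, and the remainder is combinatorics.
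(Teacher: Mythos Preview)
Your proposal is correct and follows essentially the same approach as the paper: invoke \cref{basisthm}, count $(X,Y)$-matchings as $k!$, count the $2^k$ token placements, and observe that exactly half of these placements have an even number of tokens. Your write-up is just a bit more explicit about the combinatorics (e.g.\ the binomial identity and the $k\ge 1$ caveat) than the paper's one-line proof.
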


\begin{proof}
    This follows immediately from \cref{basisthm}.  If the sets \cref{chain} have the same cardinality, then the number of $(X,Y)$-matchings is $k!$, and there are $2^k$ ways of adding Clifford tokens to the strings, half of which yield even string diagrams.
\end{proof}

\begin{cor} \label{walleder}
    The homomorphism of \cref{walled} is an isomorphism of associative superalgebras
    \[
        \BC_{r,s}(z) \xrightarrow{\cong} \End_{\Qcat(z)}(\uparrow^r \downarrow^s).
    \]
\end{cor}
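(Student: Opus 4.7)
Proposition \ref{walled} already provides a surjective homomorphism $\varphi \colon \BC_{r,s}(z) \twoheadrightarrow \End_{\Qcat(z)}(\uparrow^r \downarrow^s)$, so the task reduces to injectivity. By Corollary \ref{dimension}, the target is a free $\kk$-supermodule whose total rank is $(r+s)!\,2^{r+s}$, with distinguished basis $B_\bullet(\uparrow^r \downarrow^s, \uparrow^r \downarrow^s)$ from Theorem \ref{basisthm}. The plan is to exhibit a spanning set of $\BC_{r,s}(z)$ of the same cardinality whose image under $\varphi$ is exactly this basis; linear independence of the image then forces the spanning set to be a $\kk$-basis, and hence $\varphi$ is an isomorphism.

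The spanning set would be constructed by reducing every element of $\BC_{r,s}(z)$ to a normal form
\[
    \cg_1^{a_1} \cdots \cg_r^{a_r}\, (\cg_1^*)^{b_1} \cdots (\cg_s^*)^{b_s}\, w,
\]
where $a_i, b_j \in \{0,1\}$ and $w$ is a representative of one of the $(r+s)!$ walled Brauer diagrams on $r+s$ strands. The relations $\cg_i^2 = -1$, $(\cg_j^*)^2 = 1$, together with the pairwise supercommutation of the $\cg_i$ and of the $\cg_j^*$, collapse the exponents to $\{0,1\}$. The relations $t_i \cg_j = \cg_j t_i$ for $j \ne i, i+1$, $t_i \cg_i = \cg_{i+1} t_i$, their starred analogues, and $\cg_j e = e \cg_j$ for $j \ne r$, $\cg_r e = \cg_1^* e$, together with $t_i \cg_j^* = \cg_j^* t_i$, permit one to migrate all Clifford generators to the left of any word in the even generators $t_i, t_j^*, e$. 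The remaining word is then straightened using the braid and Hecke relations satisfied by the $t_i$ and $t_j^*$, and the walled relations $e^2 = 0$, $e t_{r-1} e = e$, $e t_1^* e = e$, yielding one of $(r+s)!$ representatives.

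Under $\varphi$, the bead word $w$ is sent to a positive reduced lift of an $(\uparrow^r\downarrow^s, \uparrow^r\downarrow^s)$-matching, while the leading Clifford monomial is sent to a choice of Clifford tokens near the termini of those strands, matched to the conventions in \cref{tokrel} and \cref{torch}. This produces a bijection between the spanning set and $B_\bullet(\uparrow^r \downarrow^s, \uparrow^r \downarrow^s)$, completing the argument.

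The main obstacle is the normal-form reduction itself: one must verify that the relations involving $e$ together with the ``boundary'' Clifford generators $\cg_r$ and $\cg_1^*$ (in particular $\cg_r e = \cg_1^* e$, $e\cg_r = e \cg_1^*$, and $e \cg_r e = 0$) do not force any unexpected additional identifications that would reduce the rank below $(r+s)!\,2^{r+s}$. This requires a careful accounting inside $\BC_{r,s}(z)$ analogous to the straightening arguments used for the non-quantum walled Brauer--Clifford superalgebra in \cite{BCK19} and for the walled quantum analogue in \cite{BGJKW16}.
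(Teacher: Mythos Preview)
Your approach is essentially identical to the paper's: use surjectivity from \cref{walled}, then produce a spanning set of $\BC_{r,s}(z)$ mapping onto the basis $B_\bullet(\uparrow^r\downarrow^s,\uparrow^r\downarrow^s)$ to force injectivity. The paper likewise defers the straightening argument to the literature (citing Step~1 of \cite[Th.~5.1]{JK14}) and additionally records the easier dimension count over $\C(q)$ via \cite[Cor.~3.25]{BGJKW16}.
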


\begin{proof}
    By \cref{walled}, the map is surjective.  When $\kk = \C(q)$, one can then conclude that it is an isomorphism by comparing dimensions.  Indeed, by \cref{dimension} and \cite[Cor.~3.25]{BGJKW16}, we have
    \[
        \dim_{\C(q)} \BC_{r,s}(z)
        = (r+s)! 2^{r+s}
        = \dim_{\C(q)} \End_{\Qcat(z)}(\uparrow^r \downarrow^s).
    \]
    More generally, one can argue as in Step 1 of the proof of \cite[Th.~5.1]{JK14} to show that $\BC_{r,s}(z)$ has a spanning set that maps to the basis $B_\bullet(\uparrow^r \downarrow^s,\uparrow^r \downarrow^s)$ of $\End_{\Qcat(z)}(\uparrow^r \downarrow^s)$.  It follows that this spanning set is linearly independent, hence a basis of $\BC_{r,s}(z)$.
\end{proof}

\begin{cor} \label{window}
    Let $X = X_1 \otimes \dotsb \otimes X_m$ be an object of $\Qcat(z)$ for $X_i \in \{\uparrow,\downarrow\}$.  If $r$ is the number of $i \in \{1,\dotsc,m\}$ such that $X_i =\, \uparrow$, then $\End_{\Qcat(z)}(X) \cong \BC_{r,m-r}(z)$ as associative superalgebras.
\end{cor}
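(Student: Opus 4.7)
The plan is to reduce the general case to the special case already handled in \cref{walleder} by showing that $X$ is isomorphic (as an object of $\Qcat(z)$) to the standard object $\uparrow^{\otimes r} \otimes \downarrow^{\otimes (m-r)}$. The key observation is that the relations in \cref{braid} state that all four of the generating crossings (and hence, using \cref{lego,wolverine}, all eight positive and negative crossings of all four orientations) are invertible morphisms. In particular, for any two adjacent objects $A, B \in \{\uparrow, \downarrow\}$, we have an explicit isomorphism $A \otimes B \xrightarrow{\cong} B \otimes A$ in $\Qcat(z)$ given by an appropriate crossing.

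First I would use these adjacent swap isomorphisms as in a bubble sort: whenever $X$ contains an adjacent subword of the form $\downarrow \otimes \uparrow$, apply $\mathrm{id} \otimes \posleftcross \otimes \mathrm{id}$ (tensored with identities on the other strands) to swap them into $\uparrow \otimes \downarrow$. After finitely many such steps, we obtain an invertible morphism $f \colon X \xrightarrow{\cong} \uparrow^{\otimes r} \otimes \downarrow^{\otimes (m-r)}$, where $r$ is the number of upward strands in $X$ (which is preserved by each swap). Conjugation by $f$ then yields an isomorphism of associative superalgebras
\[
    \End_{\Qcat(z)}(X) \xrightarrow{\cong} \End_{\Qcat(z)}\bigl(\uparrow^{\otimes r} \otimes \downarrow^{\otimes (m-r)}\bigr), \qquad g \mapsto f \circ g \circ f^{-1}.
\]
Composing with the isomorphism $\End_{\Qcat(z)}(\uparrow^{\otimes r} \otimes \downarrow^{\otimes (m-r)}) \cong \BC_{r,m-r}(z)$ from \cref{walleder} finishes the proof.

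There is no real obstacle here; the argument is a routine transport of structure. The only minor point to verify is that the adjacent swap morphisms are indeed invertible, but this is immediate from the second, third, and fourth equalities in \cref{braid} (for the $\uparrow\downarrow \leftrightarrow \downarrow\uparrow$ swaps) together with \cref{lego,wolverine} giving consistent definitions of leftward and downward crossings. Since the argument only uses the existence of an isomorphism $X \cong \uparrow^{\otimes r} \otimes \downarrow^{\otimes (m-r)}$ and not any particular choice of such an isomorphism, no additional coherence check is needed.
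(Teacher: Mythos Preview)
Your proposal is correct and follows essentially the same approach as the paper: the paper also observes (from the third and fourth equalities in \cref{braid}) that $\negrightcross$ is an isomorphism with inverse $\posleftcross$, hence $X \cong \uparrow^{\otimes r} \otimes \downarrow^{\otimes (m-r)}$, and then invokes \cref{walleder}. Your version is slightly more explicit about the bubble-sort procedure and the conjugation step, but the argument is the same.
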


\begin{proof}
    It follows from the third and fourth equalities in \cref{braid} that $\negrightcross \colon \uparrow \otimes \downarrow\ \to\ \downarrow \otimes \uparrow$ is an isomorphism with inverse $\posleftcross$.  Hence $X \cong\, \uparrow^{\otimes r} \otimes \downarrow^{\otimes (m-r)}$, and so the result follows from \cref{walleder}.
\end{proof}

As a special case of \Cref{window}, we have an isomorphism of associative superalgebras $\BC_{r,s}(z) \cong \End_{\Qcat(z)}(\downarrow^s \uparrow^r)$.  This recovers \cite[Th.~4.19]{BGJKW16}, which describes the walled Brauer--Clifford superalgebras in terms of \emph{bead tangle superalgebras}.  When converting string diagrams representing endomorphisms in $\Qcat(z)$ to the bead tangle diagrams of \cite[\S4]{BGJKW16}, one should forget the orientations of strings, and then rotate diagrams by $180\degree$.  This transformation is needed since the convention in \cite[\S4]{BGJKW16} for composing diagrams is the opposite of ours.

\begin{rem} \label{dungeon}
    The full monoidal sub-supercategory of $U_q\smod$ generated by $V$ and $V^*$ is not semisimple.  Indeed, it follows from \cref{full} that, for $n \ge 2$, $\End_{U_q}(V \otimes V^*)$ is isomorphic to $\End_{\Qcat(z)}(\uparrow \downarrow)$, which, by \cref{basisthm}, has basis
    \[
        \begin{tikzpicture}[centerzero]
            \draw[->] (0,-0.4) -- (0,0.4);
            \draw[<-] (0.3,-0.4) -- (0.3,0.4);
        \end{tikzpicture}
        \ ,\quad
        \begin{tikzpicture}[centerzero]
            \draw[->] (0,-0.4) -- (0,0.4);
            \draw[<-] (0.3,-0.4) -- (0.3,0.4);
            \token{0,0};
            \token{0.3,0};
        \end{tikzpicture}
        \ ,\quad
        \begin{tikzpicture}[centerzero]
            \draw[->] (0,-0.4) -- (0,0.4);
            \draw[<-] (0.3,-0.4) -- (0.3,0.4);
            \token{0,0};
        \end{tikzpicture}
        \ ,\quad
        \begin{tikzpicture}[centerzero]
            \draw[->] (0,-0.4) -- (0,0.4);
            \draw[<-] (0.3,-0.4) -- (0.3,0.4);
            \token{0.3,0};
        \end{tikzpicture}
        \ ,\quad
        \begin{tikzpicture}[centerzero]
            \draw[->] (0,-0.4) -- (0,-0.2) arc(180:0:0.15) -- (0.3,-0.4);
            \draw[<-] (0,0.4) -- (0,0.2) arc(180:360:0.15) -- (0.3,0.4);
        \end{tikzpicture}
        \ ,\quad
        \begin{tikzpicture}[centerzero]
            \draw[->] (0,-0.4) -- (0,-0.2) arc(180:0:0.15) -- (0.3,-0.4);
            \draw[<-] (0,0.4) -- (0,0.2) arc(180:360:0.15) -- (0.3,0.4);
            \token{0.3,-0.23};
            \token{0,0.23};
        \end{tikzpicture}
        \ ,\quad
        \begin{tikzpicture}[centerzero]
            \draw[->] (0,-0.4) -- (0,-0.2) arc(180:0:0.15) -- (0.3,-0.4);
            \draw[<-] (0,0.4) -- (0,0.2) arc(180:360:0.15) -- (0.3,0.4);
            \token{0.3,-0.23};
        \end{tikzpicture}
        \ ,\quad
        \begin{tikzpicture}[centerzero]
            \draw[->] (0,-0.4) -- (0,-0.2) arc(180:0:0.15) -- (0.3,-0.4);
            \draw[<-] (0,0.4) -- (0,0.2) arc(180:360:0.15) -- (0.3,0.4);
            \token{0,0.23};
        \end{tikzpicture}
        \ .
    \]
    By the last equalities in \cref{venom3,iron}, the span of the last four diagrams above is a nilpotent ideal.  Thus, $\End_{U_q}(V \otimes V^*)$ is not semisimple.  Note, however, that the full monoidal sub-supercategory of $U_q\smod$ generated by $V$ \emph{is} semisimple; see \cite[Th.~6.5]{GJKK10}.
\end{rem}

\section{The chiral braiding\label{sec:chiral}}

This section is the start of the second part of the current paper.  Our goal is to define and study an affine version of the quantum isomeric supercategory.  For braided monoidal supercategories, there is a general affinization procedure; see \cite{MS21}.  However, the supercategory $\Qcat(z)$ is \emph{not} braided since the Clifford dots do not slide through crossings both ways.  This corresponds, under the incarnation superfunctor, to the fact that $U_q$ is not a quasitriangular Hopf superalgebra.  In this section we discuss a \emph{chiral braiding}, which is like a braiding but only natural in one argument.  We begin this section with the assumption that $\kk$ is an arbitrary commutative ring of characteristic not equal to two, and $z \in \kk$.

\begin{defin}
    Let $\Wcat(z)$ be the strict monoidal supercategory obtained from $\Qcat(z)$ by adjoining an additional generating object $\redobject$ and two additional even morphisms
    \[
        \posupredcross \colon \redobject\ \otimes \uparrow\ \to\ \uparrow \otimes\ \redobject\, ,\qquad
        \negupredcross \colon \uparrow \otimes\ \redobject\ \to\ \redobject\ \otimes \uparrow,
    \]
    subject to the relations
    \begin{equation} \label{rouge}
        \begin{tikzpicture}[centerzero]
            \draw[->] (0.2,-0.4) to[out=135,in=down] (-0.15,0) to[out=up,in=225] (0.2,0.4);
            \draw[wipe] (-0.2,-0.4) to[out=45,in=down] (0.15,0) to[out=up,in=-45] (-0.2,0.4);
            \draw[thick,red] (-0.2,-0.4) to[out=45,in=down] (0.15,0) to[out=up,in=-45] (-0.2,0.4);
        \end{tikzpicture}
        \ =\
        \begin{tikzpicture}[centerzero]
            \draw[thick,red] (-0.2,-0.4) -- (-0.2,0.4);
            \draw[->] (0.2,-0.4) -- (0.2,0.4);
        \end{tikzpicture}
        \ ,\qquad
        \begin{tikzpicture}[centerzero]
            \draw[->] (-0.2,-0.4) to[out=45,in=down] (0.15,0) to[out=up,in=-45] (-0.2,0.4);
            \draw[wipe] (0.2,-0.4) to[out=135,in=down] (-0.15,0) to[out=up,in=225] (0.2,0.4);
            \draw[thick,red] (0.2,-0.4) to[out=135,in=down] (-0.15,0) to[out=up,in=225] (0.2,0.4);
        \end{tikzpicture}
        \ =\
        \begin{tikzpicture}[centerzero]
            \draw[->] (-0.2,-0.4) -- (-0.2,0.4);
            \draw[thick,red] (0.2,-0.4) -- (0.2,0.4);
        \end{tikzpicture}
        \ ,\qquad
        \begin{tikzpicture}[centerzero]
            \draw[<-] (0.2,-0.4) to[out=135,in=down] (-0.15,0) to[out=up,in=225] (0.2,0.4);
            \draw[wipe] (-0.2,-0.4) to[out=45,in=down] (0.15,0) to[out=up,in=-45] (-0.2,0.4);
            \draw[thick,red] (-0.2,-0.4) to[out=45,in=down] (0.15,0) to[out=up,in=-45] (-0.2,0.4);
        \end{tikzpicture}
        \ =\
        \begin{tikzpicture}[centerzero]
            \draw[thick,red] (-0.2,-0.4) -- (-0.2,0.4);
            \draw[<-] (0.2,-0.4) -- (0.2,0.4);
        \end{tikzpicture}
        \ ,\qquad
        \begin{tikzpicture}[centerzero]
            \draw[<-] (-0.2,-0.4) to[out=45,in=down] (0.15,0) to[out=up,in=-45] (-0.2,0.4);
            \draw[wipe] (0.2,-0.4) to[out=135,in=down] (-0.15,0) to[out=up,in=225] (0.2,0.4);
            \draw[thick,red] (0.2,-0.4) to[out=135,in=down] (-0.15,0) to[out=up,in=225] (0.2,0.4);
        \end{tikzpicture}
        \ =\
        \begin{tikzpicture}[centerzero]
            \draw[<-] (-0.2,-0.4) -- (-0.2,0.4);
            \draw[thick,red] (0.2,-0.4) -- (0.2,0.4);
        \end{tikzpicture}
        \ ,\qquad
        \begin{tikzpicture}[centerzero]
            \draw[->] (0.4,-0.4) -- (-0.4,0.4);
            \draw[wipe] (0,-0.4) to[out=135,in=down] (-0.32,0) to[out=up,in=225] (0,0.4);
            \draw[->] (0,-0.4) to[out=135,in=down] (-0.32,0) to[out=up,in=225] (0,0.4);
            \draw[wipe] (-0.4,-0.4) -- (0.4,0.4);
            \draw[thick,red] (-0.4,-0.4) -- (0.4,0.4);
        \end{tikzpicture}
        =
        \begin{tikzpicture}[centerzero]
            \draw[->] (0.4,-0.4) -- (-0.4,0.4);
            \draw[wipe] (0,-0.4) to[out=45,in=down] (0.32,0) to[out=up,in=-45] (0,0.4);
            \draw[->] (0,-0.4) to[out=45,in=down] (0.32,0) to[out=up,in=-45] (0,0.4);
            \draw[wipe] (-0.4,-0.4) -- (0.4,0.4);
            \draw[thick,red] (-0.4,-0.4) -- (0.4,0.4);
        \end{tikzpicture}
        \ ,
    \end{equation}
    where
    \begin{equation} \label{reddown}
        \posdownredcross
        :=
        \begin{tikzpicture}[centerzero]
            \draw[->] (-0.4,0.3) -- (-0.4,0.1) to[out=down,in=left] (-0.2,-0.2) to[out=right,in=left] (0.2,0.2) to[out=right,in=up] (0.4,-0.1) -- (0.4,-0.3);
            \draw[wipe] (0.2,-0.3) \braidup (-0.2,0.3);
            \draw[thick,red] (0.2,-0.3) \braidup (-0.2,0.3);
        \end{tikzpicture}
        \qquad \text{and} \qquad
        \negdownredcross
        :=
        \begin{tikzpicture}[centerzero]
            \draw[->] (0.4,0.3) -- (0.4,0.1) to[out=down,in=right] (0.2,-0.2) to[out=left,in=right] (-0.2,0.2) to[out=left,in=up] (-0.4,-0.1) -- (-0.4,-0.3);
            \draw[wipe] (-0.2,-0.3) \braidup (0.2,0.3);
            \draw[thick,red] (-0.2,-0.3) \braidup (0.2,0.3);
        \end{tikzpicture}
        \ .
    \end{equation}
\end{defin}

Note that we do \emph{not} have morphisms corresponding to a red strand passing \emph{under} a black strand.  We also do not have red cups or caps.

\begin{lem}
    The following relations hold in $\Wcat(z)$:
    \begin{equation} \label{redslip}
        \begin{tikzpicture}[centerzero]
            \draw[<-] (-0.2,-0.3) -- (-0.2,-0.1) arc(180:0:0.2) -- (0.2,-0.3);
            \draw[wipe] (-0.3,0.3) \braiddown (0,-0.3);
            \draw[thick,red] (-0.3,0.3) \braiddown (0,-0.3);
        \end{tikzpicture}
        =
        \begin{tikzpicture}[centerzero]
            \draw[<-] (-0.2,-0.3) -- (-0.2,-0.1) arc(180:0:0.2) -- (0.2,-0.3);
            \draw[wipe] (0.3,0.3) \braiddown (0,-0.3);
            \draw[thick,red] (0.3,0.3) \braiddown (0,-0.3);
        \end{tikzpicture}
        \ ,\qquad
        \begin{tikzpicture}[centerzero]
            \draw[<-] (-0.2,0.3) -- (-0.2,0.1) arc(180:360:0.2) -- (0.2,0.3);
            \draw[wipe] (-0.3,-0.3) \braidup (0,0.3);
            \draw[thick,red] (-0.3,-0.3) \braidup (0,0.3);
        \end{tikzpicture}
        =
        \begin{tikzpicture}[centerzero]
            \draw[<-] (-0.2,0.3) -- (-0.2,0.1) arc(180:360:0.2) -- (0.2,0.3);
            \draw[wipe] (0.3,-0.3) \braidup (0,0.3);
            \draw[thick,red] (0.3,-0.3) \braidup (0,0.3);
        \end{tikzpicture}
        \ ,\qquad
        \begin{tikzpicture}[centerzero]
            \draw[->] (-0.2,-0.3) -- (-0.2,-0.1) arc(180:0:0.2) -- (0.2,-0.3);
            \draw[wipe] (-0.3,0.3) \braiddown (0,-0.3);
            \draw[thick,red] (-0.3,0.3) \braiddown (0,-0.3);
        \end{tikzpicture}
        =
        \begin{tikzpicture}[centerzero]
            \draw[->] (-0.2,-0.3) -- (-0.2,-0.1) arc(180:0:0.2) -- (0.2,-0.3);
            \draw[wipe] (0.3,0.3) \braiddown (0,-0.3);
            \draw[thick,red] (0.3,0.3) \braiddown (0,-0.3);
        \end{tikzpicture}
        \ ,\qquad
        \begin{tikzpicture}[centerzero]
            \draw[->] (-0.2,0.3) -- (-0.2,0.1) arc(180:360:0.2) -- (0.2,0.3);
            \draw[wipe] (-0.3,-0.3) \braidup (0,0.3);
            \draw[thick,red] (-0.3,-0.3) \braidup (0,0.3);
        \end{tikzpicture}
        =
        \begin{tikzpicture}[centerzero]
            \draw[->] (-0.2,0.3) -- (-0.2,0.1) arc(180:360:0.2) -- (0.2,0.3);
            \draw[wipe] (0.3,-0.3) \braidup (0,0.3);
            \draw[thick,red] (0.3,-0.3) \braidup (0,0.3);
        \end{tikzpicture}
        \ .
    \end{equation}
\end{lem}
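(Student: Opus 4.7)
The four relations in \cref{redslip} all express the topological fact that a straight red strand can be isotoped across a black cup or cap, passing over it. The plan is to prove each of them by direct manipulation, using the relations \cref{rouge} (which give the R2 and R3 moves for the red strand passing over black strands in various orientations) combined with the snake identities for the black strand, namely \cref{leftadj} and its analogues for all orientations contained in \cref{venom1}.

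For the first relation, we start with the left-hand side, which features the cap $\leftcap$ with the red strand crossing over its left arm. Using the snake identity, we insert a cancelling cup-cap pair along the right arm of the cap, producing a diagram with an auxiliary loop whose strands are straight and correctly oriented for the application of \cref{rouge}. We then slide the red-black crossing across the top of the arc using the R2 relations (the third and fourth equations in \cref{rouge}) and the R3 braid relation (the last equation in \cref{rouge}), moving it onto what was the right arm of the original cap. Finally, we undo the auxiliary cup-cap pair via the snake identity to obtain the right-hand side.

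The remaining three relations in \cref{redslip}, which deal with $\leftcup$, $\rightcap$, and $\rightcup$, follow by entirely analogous arguments, with the snake identity and the relations \cref{rouge} applied in the orientations appropriate to each case. The main obstacle is the orientation bookkeeping, since each of the four cases requires matching the snake identity and the appropriate version of \cref{rouge} to the orientation of the black strand at the crossing. Because $\Qcat(z)$ is pivotal (as established via \cref{storm} and \cref{thor}) and the snake identities of \cref{venom1} hold for all orientations, the argument goes through uniformly in all four cases without the need for additional relations beyond those defining $\Wcat(z)$.
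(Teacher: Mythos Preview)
Your argument invokes the braid relation (the last equation in \cref{rouge}) to slide the red strand across the black cap, but that R3 move is stated only with both black strands oriented upward. A left cap has one arm oriented down, so the move you actually need is an R3 with mixed orientations, which is not among the defining relations of $\Wcat(z)$ and has not yet been derived. Inserting an auxiliary cup--cap pair on the other arm does not obviously produce a configuration where the available up--up R3 applies, and in fact the mixed-orientation R3 moves are only established later (in the proof of \cref{scurry}) \emph{using} \cref{redslip}, so your approach as written is circular.

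The paper's proof is much shorter and avoids R3 entirely. The point you are missing is that the red-over-downward crossings are not primitive in $\Wcat(z)$: by \cref{reddown} they are \emph{defined} as composites of a black cup, a primitive red-over-upward crossing, and a black cap. For the first relation in \cref{redslip}, the left-hand side equals $(1_{\redobject} \otimes \leftcap) \circ (\negdownredcross \otimes 1_\uparrow)$; substituting the definition of $\negdownredcross$ from \cref{reddown} turns this into a diagram with a single $\posupredcross$ and a black zigzag, after which one application of the snake identity from \cref{venom1} straightens the black strand and yields the right-hand side directly. The remaining three relations follow in the same way.
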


\begin{proof}
    We compute
    \[
        \begin{tikzpicture}[centerzero]
            \draw[<-] (-0.2,-0.3) -- (-0.2,-0.1) arc(180:0:0.2) -- (0.2,-0.3);
            \draw[wipe] (-0.3,0.3) \braiddown (0,-0.3);
            \draw[thick,red] (-0.3,0.3) \braiddown (0,-0.3);
        \end{tikzpicture}
        \overset{\cref{reddown}}{=}
        \begin{tikzpicture}[centerzero]
            \draw[->] (0.8,-0.3) -- (0.8,-0.1) to[out=up,in=right] (0.6,0.2) to[out=left,in=right] (0.2,-0.2) to[out=left,in=right] (-0.2,0.2) to[out=left,in=up] (-0.4,-0.1) -- (-0.4,-0.3);
            \draw[wipe] (-0.2,-0.3) \braidup (0.2,0.3);
            \draw[thick,red] (-0.2,-0.3) \braidup (0.2,0.3);
        \end{tikzpicture}
        \overset{\cref{venom1}}{=}
        \begin{tikzpicture}[centerzero]
            \draw[<-] (-0.2,-0.3) -- (-0.2,-0.1) arc(180:0:0.2) -- (0.2,-0.3);
            \draw[wipe] (0.3,0.3) \braiddown (0,-0.3);
            \draw[thick,red] (0.3,0.3) \braiddown (0,-0.3);
        \end{tikzpicture}
        \ .
    \]
    The proofs of the remaining equalities are analogous.
\end{proof}

\begin{prop} \label{scurry}
    In $\Wcat(z)$, we have
    \begin{equation} \label{boxslide}
        \begin{tikzpicture}[anchorbase]
            \draw (0.3,0) -- (0.3,-0.8) \braiddown (0.6,-1.6);
            \draw (-0.3,0) -- (-0.3,-0.8) \braiddown (0,-1.6);
            \filldraw[draw=black,fill=white] (0.4,-0.4) rectangle (-0.4,-0.8);
            \node at (0,-0.6) {\strandlabel{f}};
            \node at (0.05,-0.2) {$\cdots$};
            \node at (0.05,-1) {$\cdots$};
            \draw[wipe] (0.6,0) -- (0.6,-0.8) \braiddown (-0.3,-1.6);
            \draw[thick,red] (0.6,0) -- (0.6,-0.8) \braiddown (-0.3,-1.6);
        \end{tikzpicture}
        =
        \begin{tikzpicture}[anchorbase]
            \draw (-0.3,0) -- (-0.3,0.8) \braidup (-0.6,1.6);
            \draw (0.3,0) -- (0.3,0.8) \braidup (0,1.6);
            \filldraw[draw=black,fill=white] (-0.4,0.4) rectangle (0.4,0.8);
            \node at (0,0.6) {\strandlabel{f}};
            \node at (0.05,0.2) {$\cdots$};
            \node at (0,1) {$\cdots$};
            \draw[wipe] (-0.6,0) -- (-0.6,0.8) \braidup (0.3,1.6);
            \draw[thick,red] (-0.6,0) -- (-0.6,0.8) \braidup (0.3,1.6);
        \end{tikzpicture}
        \qquad \text{and} \qquad
        \begin{tikzpicture}[anchorbase]
            \draw (0.3,0) -- (0.3,0.8) \braidup (0.6,1.6);
            \draw (-0.3,0) -- (-0.3,0.8) \braidup (0,1.6);
            \filldraw[draw=black,fill=white] (0.4,0.4) rectangle (-0.4,0.8);
            \node at (0,0.6) {\strandlabel{f}};
            \node at (0,0.2) {$\cdots$};
            \node at (0.05,1) {$\cdots$};
            \draw[wipe] (0.6,0) -- (0.6,0.8) \braidup (-0.3,1.6);
            \draw[thick,red] (0.6,0) -- (0.6,0.8) \braidup (-0.3,1.6);
        \end{tikzpicture}
        =
        \begin{tikzpicture}[anchorbase]
            \draw (-0.3,0) -- (-0.3,-0.8) \braiddown (-0.6,-1.6);
            \draw (0.3,0) -- (0.3,-0.8) \braiddown (0,-1.6);
            \filldraw[draw=black,fill=white] (-0.4,-0.4) rectangle (0.4,-0.8);
            \node at (0,-0.6) {\strandlabel{f}};
            \node at (0.05,-0.2) {$\cdots$};
            \node at (0.05,-1) {$\cdots$};
            \draw[wipe] (-0.6,0) -- (-0.6,-0.8) \braiddown (0.3,-1.6);
            \draw[thick,red] (-0.6,0) -- (-0.6,-0.8) \braiddown (0.3,-1.6);
        \end{tikzpicture}
        \ ,
    \end{equation}
    where $f$ is any string diagram in $\Qcat(z)$ not containing Clifford tokens.
\end{prop}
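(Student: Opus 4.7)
The plan is to prove \cref{boxslide} by induction on the number of generators appearing in a diagrammatic presentation of $f$. Both equalities in \cref{boxslide} are $\kk$-linear in $f$ and are preserved under horizontal and vertical composition: if $g$ and $h$ each admit such a slide past a red strand, then so do $g \otimes h$ and $g \circ h$, by applying the slide to $g$ first and then to $h$ (or vice versa). Since $f$ contains no Clifford tokens, by the construction of $\Qcat(z)$ in \cref{Qdef} it is a $\kk$-linear combination of diagrams built from the crossings $\posupcross$, $\negupcross$, the cup $\leftcup$, the cap $\leftcap$, and identity strands, so it suffices to verify the slide relations for each such generator.

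For the positive crossing $\posupcross$ with both black strands oriented upward, the slide is exactly the last equality in \cref{rouge}. For the other orientations of the two black strands, one attaches cups and caps to reorient, reducing to the all-up case; the resulting identities are consequences of the first four (Reidemeister~II-like) relations in \cref{rouge} together with the definitions \cref{reddown} of the downward red crossings. For the negative crossing $\negupcross$, I would apply the skein relation \cref{skein} to write $\negupcross = \posupcross - z \cdot 1_{\uparrow \uparrow}$; then the slide for $\negupcross$ follows from the one for $\posupcross$ and the trivial slide past the identity. For the cup $\leftcup$ and cap $\leftcap$, together with their right-oriented counterparts $\rightcup$ and $\rightcap$ from \cref{lego,wolverine}, the required slide relations are precisely the content of \cref{redslip}.

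The main technical obstacle, which I do not expect to be deep, is the book-keeping of the various orientations of the black strands entering and leaving $f$, since the statement of \cref{scurry} is uniform in these orientations. Each such case reduces, via the rotation symmetries encoded in \cref{venom1,venom2}, to one of the four generator cases handled above. With this verified on generators, the $\kk$-linearity and the compositional closure noted in the first paragraph assemble the inductive step, yielding \cref{boxslide} for an arbitrary token-free morphism $f$ of $\Qcat(z)$.
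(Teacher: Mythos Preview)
Your approach is essentially the same as the paper's: reduce to generators and check each case. Two small points are worth noting.

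First, you omit $\negrightcross$ from your list of generators, though it is one of the defining generators of $\Qcat(z)$ in \cref{Qdef}. Your remark about ``other orientations of the two black strands'' suggests you intend to absorb it, but that phrasing is imprecise: $\negrightcross$ is not simply $\posupcross$ with a reoriented strand. The paper handles this by observing that $\negrightcross$ can be rewritten using $\negupcross$, $\leftcup$, and $\rightcap$, and then checks the slide for the enlarged generating set $\{\posupcross, \negupcross, \leftcap, \leftcup, \rightcap, \rightcup\}$. Your plan already includes the right-oriented cups and caps via \cref{redslip}, so the fix is simply to make this reduction explicit.

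Second, for $\negupcross$ you use the skein relation \cref{skein}, whereas the paper instead conjugates the last equality of \cref{rouge} by $\negupcross$ on the black strands. Both work; your argument is marginally simpler. The paper also notes at the outset that the second equality in \cref{boxslide} follows from the first by pre- and post-composing with the invertible red-black crossings, which saves checking both separately; you could do the same.
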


\begin{proof}
    First note that the second equality in \cref{boxslide} follows from the first after composing on the top and bottom with the appropriate red-black crossings and using the first four relations in \cref{rouge}.  Therefore, we prove only the first equality.  It suffices to prove it for $f$ equal to each of the generating morphisms $\posupcross$, $\negupcross$, $\negrightcross$, $\leftcap$, $\leftcup$.  Since
    \[
        \negrightcross =
        \begin{tikzpicture}[centerzero]
            \draw[<-] (0.4,-0.3) -- (0.4,-0.1) to[out=up,in=right] (0.2,0.2) to[out=left,in=right] (-0.2,-0.2) to[out=left,in=down] (-0.4,0.1) -- (-0.4,0.3);
            \draw[wipe] (0.2,-0.3) \braidup (-0.2,0.3);
            \draw[->] (0.2,-0.3) \braidup (-0.2,0.3);
        \end{tikzpicture}
        \ ,
    \]
    it is also enough to show it holds for $f \in \{\posupcross, \negupcross, \leftcap, \leftcup, \rightcap, \rightcup\}$.

    For $f = \posupcross$, the first equality in \cref{boxslide} follows from the last relation in \cref{rouge}.  Composing both sides of the last relation in \cref{rouge} on the top with $\negupcross\, \upstrand$ and on the bottom with $\upstrand\, \negupcross$ shows that the first equality in \cref{boxslide} also holds with $f = \negupcross$.

    To prove that the first equality in \cref{boxslide} holds with $f = \leftcap$ and $f = \leftcup$, we must show that
    \begin{equation} \label{camel}
        \begin{tikzpicture}[anchorbase]
            \draw[->] (0.3,-0.4) \braidup (0,0) arc(0:180:0.15) \braiddown (0,-0.4);
            \draw[wipe] (-0.3,-0.4) \braidup (0.3,0.4);
            \draw[thick,red] (-0.3,-0.4) \braidup (0.3,0.4);
        \end{tikzpicture}
        =
        \begin{tikzpicture}[anchorbase]
            \draw[->] (0.3,-0.4) -- (0.3,-0.2) arc(0:180:0.15) -- (0,-0.4);
            \draw[thick,red] (-0.3,-0.4) -- (-0.3,0.4);
        \end{tikzpicture}
        \qquad \text{and} \qquad
        \begin{tikzpicture}[anchorbase,rotate=180]
            \draw[<-] (0.3,-0.4) \braidup (0,0) arc(0:180:0.15) \braiddown (0,-0.4);
            \draw[wipe] (-0.3,-0.4) \braidup (0.3,0.4);
            \draw[thick,red] (-0.3,-0.4) \braidup (0.3,0.4);
        \end{tikzpicture}
        =
        \begin{tikzpicture}[anchorbase,rotate=180]
            \draw[<-] (0.3,-0.4) -- (0.3,-0.2) arc(0:180:0.15) -- (0,-0.4);
            \draw[thick,red] (-0.3,-0.4) -- (-0.3,0.4);
        \end{tikzpicture}
        \ .
    \end{equation}
    The first relation in \cref{camel} follows from the first relation in \cref{redslip} after composing on the bottom with $\posdownredcross\, \upstrand$ and using third relation in \cref{rouge}.  Similarly the second relation in \cref{camel} follows from the second relation in \cref{redslip} after composing on the top with $\upstrand\, \posdownredcross$ and using the fourth relation in \cref{rouge}.  The proofs for $f = \rightcap$ and $f = \rightcup$ are analogous, using the last two equalities in \cref{redslip}.
\end{proof}

In the remainder of this section we will be discussing connections to $U_q\smod$.  Thus, we now begin supposing that $\kk = \C(q)$ and $z = q-q^{-1}$.  Recall the definition of $L$ from \cref{Ldef}.  For a finite-dimensional $U_q$-supermodule $M$, we let $\rho_M \colon U_q \to \End_\kk(M)$ denote the corresponding representation.  We then define
\[
    L_M
    = (\rho_M \otimes 1_V)(L)
    = \sum_{\substack{i, j \in \tI \\ i \le j}} \rho_M(u_{ij}) \otimes E_{ij}
    \in \End_\kk(M) \otimes \End_\kk(V).
\]
In particular, we have $L_V = \Theta$; see \cref{rho}.

\begin{prop} \label{coin}
    For any $M \in U_q\smod$, the map
    \begin{equation} \label{burrito}
        T_{MV} := \flip \circ L_M \colon M \otimes V \to V \otimes M
    \end{equation}
    is an isomorphism of $U_q$-supermodules.  Furthermore, for all $f \in \Hom_{U_q}(M,N)$, we have
    \begin{equation} \label{gringo}
        T_{NV} \circ (f \otimes 1_V)
        = (1_V \otimes f) \circ T_{MV}.
    \end{equation}
\end{prop}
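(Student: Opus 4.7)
The plan is to establish $U_q$-linearity of $T_{MV}$ by reducing the intertwining condition to the defining $RLL$-relation \cref{QQdef}, to obtain invertibility from the triangularity of $L$, and to derive the naturality identity \cref{gringo} from a super-commutativity of $L$ with $f \otimes 1_V$.

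For $U_q$-linearity, since $\{u_{ij} : i \le j\}$ generates $U_q$ as a superalgebra, it suffices to show $T_{MV} \circ \Delta(u_{ij}) = \Delta(u_{ij}) \circ T_{MV}$ on $M \otimes V$ for each such generator. Composing on the left with $\flip^{-1}$ and using $\flip^{-1} \circ \Delta(u_{ij}) \circ \flip = \Delta^\op(u_{ij})$ as operators on $M \otimes V$, this reduces to $L \circ \Delta(u_{ij}) = \Delta^\op(u_{ij}) \circ L$. Tensoring with $E_{ij}$ on an auxiliary $V$-slot and summing over $i \le j$, the desired equality becomes, as operators on $M \otimes V \otimes V$,
\[
    L^{12} \cdot \Delta(L) = \Delta^\op(L) \cdot L^{12},
\]
where $L^{12}$ acts on the first two factors via the $U_q$-action on $M$ and matrices on the middle $V$. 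Since the action of $u_{kl}$ on $V$ is $\Theta_{kl}$ by \cref{rho}, the formulas $\Delta(L) = L^{13} L^{23}$ and $\Delta^\op(L) = L^{23} L^{13}$ of \cref{dive,gummy} translate, on $M \otimes V \otimes V$, into $\Delta(L) = L^{13}\, \Theta^{23}$ and $\Delta^\op(L) = \Theta^{23}\, L^{13}$. The required identity thus collapses to
\[
    L^{12} L^{13} \Theta^{23} = \Theta^{23} L^{13} L^{12},
\]
which is exactly the defining relation \cref{QQdef} of $U_q$.

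Invertibility is immediate: as noted just after \cref{antipode}, the element $L$ viewed in $\Mat_{n|n}(U_q)$ is triangular with invertible diagonal entries $u_{ii}$, so $L$ is invertible; combined with invertibility of $\flip$, this makes $T_{MV}$ an isomorphism.

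For \cref{gringo}, the key observation is that $L$ commutes with $f \otimes 1_V$ as operators $M \otimes V \to N \otimes V$ for every homogeneous $f \in \Hom_{U_q}(M,N)$: within each summand $u_{ij} \otimes E_{ij}$ of $L$ the two tensor factors share parity $p(i,j)$, so the super-interchange sign arising from pushing $f \otimes 1_V$ past $u_{ij} \otimes E_{ij}$ cancels exactly against the sign produced by the $U_q$-linearity of $f$ on the first factor. Post-composing with $\flip$ and using the standard identity $\flip_{N,V} \circ (f \otimes 1_V) = (1_V \otimes f) \circ \flip_{M,V}$ then yields \cref{gringo}. The only real obstacle throughout is careful sign bookkeeping under the super-interchange law, together with the correct identification of $\Theta^{23}$ on $M \otimes V \otimes V$ with the operator realizing the middle $U_q$-factor of $\Delta(L)$ through $\rho$; once these are clarified, the entire statement follows directly from the defining $RLL$-relation of $U_q$.
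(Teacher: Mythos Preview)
Your proposal is correct and follows essentially the same route as the paper. The paper reduces $U_q$-linearity to the identity $L^{12}L^{13}L^{23}=L^{23}L^{13}L^{12}$ on $M\otimes V\otimes V$ by moving $\flip^{12}$ through, then invokes $\rho(u_{ij})=\Theta_{ij}$ to identify this with the defining relation \cref{QQdef}; your use of $\Delta^\op$ and \cref{gummy} is the same manoeuvre in slightly different packaging. For \cref{gringo} the paper spells out the element-wise computation, which is exactly your parity-cancellation argument unwound.
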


\begin{proof}
    It is clear that $T_{MV}$ is invertible, with
    \begin{equation}
        T_{MV}^{-1} = (\rho_M \otimes 1_V)(L^{-1}) \circ \flip
        \overset{\cref{antipode}}{=} \left( \sum_{i \le j} \rho_M(S(u_{ij})) \otimes E_{ij} \right) \circ \flip.
    \end{equation}
    To show that $T_{MV}$ is a homomorphism of $U_q$-supermodules, it suffices to prove that it commutes with the action of $u_{kl}$, $k,l \in \tI$, $k \le l$.  By \cref{dive}, it is enough to show that
    \begin{equation} \label{tadpole}
        \flip^{12} L_M^{12} L_M^{13} L_V^{23}
        = L_V^{13} L_M^{23} \flip^{12} L_M^{12}
        \quad \text{as maps } M \otimes V \otimes V \to V \otimes M \otimes V.
    \end{equation}
    Composing both sides of \cref{tadpole} on the left with the invertible map $\flip^{12}$, and using the fact that $L_V = \Theta$, we see that \cref{tadpole} is equivalent to
    \begin{equation} \label{frog}
        L_M^{12} L_M^{13} \Theta^{23}
        = \Theta^{23} L_M^{13} L_M^{12}
        \quad \text{as maps } M \otimes V \otimes V \to M \otimes V \otimes V,
    \end{equation}
    which follows from the last equality in \cref{QQdef}.

    It remains to prove \cref{gringo}.  For $f \in \Hom_{U_q}(M,N)$, $m \in M$, and $v \in V$, we have
    \begin{align*}
        T_{NV} \circ (f \otimes 1_V)(m \otimes v)
        &= \flip \circ \left( \sum_{i \le j} (-1)^{(\bar{f}+\bar{m})p(i,j)} u_{ij} f(m) \otimes E_{ij} v \right) \\
        &= \flip \circ \left( \sum_{i \le j} (-1)^{\bar{m}p(i,j)} f(u_{ij} m) \otimes E_{ij} v \right)  \\
        &= \flip \circ (f \otimes 1_V) \circ \left( \sum_{i \le j} u_{ij} \otimes E_{ij} \right) (m \otimes v) \\
        &= (1_V \otimes f) \circ T_{MV} (m \otimes v).
        \qedhere
    \end{align*}
\end{proof}

Note that $T_{VV} = T$ (see \cref{Tdef}), and so \cref{coin} is a generalization of \cref{coffee}.  Now, for $i,j \in \tI$, define
\[
    E_{ij}^* \colon V^* \to V^*, \qquad
    f \mapsto (-1)^{p(i,j)\bar{f}} f \circ E_{ij}.
\]
It follows that
\begin{equation} \label{flower}
    E_{ij}^* v_k^* = \delta_{ik} (-1)^{p(i)+p(i)p(j)} v_j^*
    \qquad \text{and} \qquad
    E_{ij}^* E_{kl}^* = \delta_{il} (-1)^{p(i,j)p(k,l)} E_{kj}^*,
\end{equation}
for all $i,j,k,l \in \tI$.

\begin{theo}
    For each $U_q$-supermodule $M$, the superfunctor $\bF_n$ of \cref{bread} extends to a unique monoidal superfunctor $\bF_n^M \colon \Wcat(z) \to U_q\smod$ such that
    \[
        \bF_n^M(\, \redobject\, ) = M
        \qquad \text{and} \qquad
        \bF_n^M(\posupredcross) = T_{MV}.
    \]
    Furthermore, $\bF_n^M(\negupredcross) = T_{MV}^{-1}$,
    \begin{equation} \label{rojo}
        \bF_n^M(\posdownredcross) = \flip \circ \left( \sum_{i \le j} S^{-1}(u_{ij}) \otimes E_{ij}^* \right)
        ,\quad \text{and} \quad
        \bF_n^M(\negdownredcross) = \left( \sum_{i \le j} u_{ij} \otimes E_{ij}^* \right) \circ \flip.
    \end{equation}
\end{theo}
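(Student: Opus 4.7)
The plan is to extend $\bF_n$ to a monoidal superfunctor on $\Wcat(z)$ by specifying its values on the three new generators and then checking that each of the five defining relations in \cref{rouge} is respected; uniqueness is immediate because $\Wcat(z)$ is presented from $\Qcat(z)$ by adjoining exactly these generators. I would set $\bF_n^M(\redobject) := M$, $\bF_n^M(\posupredcross) := T_{MV}$, and $\bF_n^M(\negupredcross) := T_{MV}^{-1}$. The first two relations in \cref{rouge} are Reidemeister-II identities, so they hold because $T_{MV}$ is invertible by \cref{coin}.

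The fifth relation in \cref{rouge} is a red-black-black braid identity that becomes, under $\bF_n^M$, an equality of two morphisms $M \otimes V \otimes V \to V \otimes V \otimes M$. Writing $\bF_n^M(\posupredcross) = \flip \circ L$ and $\bF_n(\posupcross) = \flip \circ \Theta$ and commuting all flip maps past $L$ and $\Theta$, both sides reduce, up to a common outer permutation of factors, to $L^{12} L^{13} \Theta^{23}$ and $\Theta^{23} L^{13} L^{12}$ acting on $M \otimes V \otimes V$. These agree by the defining relation \cref{QQdef} of $U_q$, which is essentially the same manipulation used to establish \cref{frog} inside the proof of \cref{coin}.

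The third and fourth relations in \cref{rouge} involve a downward black strand and so must be interpreted through the definitions \cref{reddown} of the downward red-black crossings. I would first derive the explicit formulas \cref{rojo} for $\bF_n^M(\posdownredcross)$ and $\bF_n^M(\negdownredcross)$ by expanding \cref{reddown} using $\bF_n(\leftcup) = \coev$, $\bF_n(\leftcap) = \ev$, the right cup/cap formulas \cref{bowser}, the naturality \cref{gringo} of $T_{MV}$, and the antipode \cref{antipode}. With these formulas in hand, the third and fourth relations reduce to the assertion that the compositions in \cref{rojo} are mutually inverse to the corresponding upward crossings (after introducing caps and cups), which follows from the antipode axiom $\sum S(u_{ik}) u_{kj} = \delta_{ij} = \sum u_{ik} S(u_{kj})$, together with \cref{Sinv} to switch between $S$ and $S^{-1}$ and the multiplication rule for $E_{ij}^*$ from \cref{flower}.

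The main obstacle is the derivation of \cref{rojo} from \cref{reddown}. That computation is technically delicate, as one must track signs from the super interchange law, parities of the $E_{ij}^*$, and the $q$-weights $q^{\pm(2|i|-2n-1)}$ arising from the right cup and cap in \cref{bowser}; conceptually, however, it introduces nothing new beyond what is already contained in \cref{coin,bread} and the Hopf superalgebra axioms for $U_q$.
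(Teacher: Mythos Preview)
Your proposal is correct and follows essentially the same approach as the paper: first two relations by invertibility of $T_{MV}$, the fifth relation reduced via flip manipulations to the defining relation $L^{12}L^{13}\Theta^{23} = \Theta^{23}L^{13}L^{12}$ of $U_q$, and the third and fourth relations handled by first deriving the explicit formulas \cref{rojo} from \cref{reddown} and then invoking the antipode axiom together with the multiplication rule \cref{flower} for the $E_{ij}^*$. The only difference is cosmetic ordering (the paper treats relations three and four before five).
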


\begin{proof}
    By \cref{bread}, to show that $\bF_n^M$ is well defined, it suffices to show that $\bF_n^M$ respects the relations \cref{rouge}.  First of all, uniqueness of the inverse implies that $\bF_n^M(\negupredcross) = T_{MV}^{-1}$, and then the first two relations in \cref{rouge} are satisfied.

    Next we show, using \cref{reddown}, that the equalities \cref{rojo} must hold.  For $m \in M$ and $k \in \tI$, we have
    \begin{align*}
        \bF_n^M(\posdownredcross) : m \otimes v_k^*
        &\xmapsto{\bF_n(\rightcup) \otimes 1_M \otimes 1_{V^*}}
        \sum_l (-1)^{p(l)} q^{2n-2|l|+1} v_l^* \otimes v_l \otimes m \otimes v_k^*
        \\
        &\xmapsto{1_{V^*} \otimes T_{MV}^{-1} \otimes 1_{V^*}}
        \sum_l \sum_{i \le j} (-1)^{\bar{m}(p(i,j)+p(l))+p(l)} q^{2n-2|l|+1} v_l^* \otimes S(u_{ij}) m \otimes E_{ij} v_l \otimes v_k^*
        \\
        &= \sum_{i \le j} (-1)^{\bar{m}p(i)+p(j)} q^{2n-2|j|+1} v_j^* \otimes S(u_{ij}) m \otimes v_i \otimes v_k^*
        \\
        &\xmapsto{1_{V^*} \otimes 1_M \otimes \bF_n(\rightcap)}
        \sum_{j : j \ge k} (-1)^{\bar{m}p(k)+p(j,k)} q^{2|k|-2|j|} v_j^* \otimes S(u_{kj}) m
        \\
        &\overset{\mathclap{\cref{flower}}}{\underset{\mathclap{\cref{Sinv}}}{=}}\
        \flip \circ \left( \sum_{i \le j} S^{-1}(u_{ij}) \otimes E_{ij}^* \right) (m \otimes v_k^*).
    \end{align*}
    Next, we compute
    \begin{align*}
        \bF_n^M(\negdownredcross) : v_k^* \otimes m
        &\xmapsto{1_{V^*} \otimes 1_M \otimes \coev} \sum_l v_k^* \otimes m \otimes v_l \otimes v_l^*
        \\
        &\xmapsto{1_{V^*} \otimes T_{MV} \otimes 1_{V^*}} \sum_l \sum_{i \le j} (-1)^{\bar{m} p(i,j)} v_k^* \otimes \flip (u_{ij} m \otimes E_{ij} v_l) \otimes v_l^*
        \\
        &= \sum_{i \le j} (-1)^{\bar{m} p(i,j)} v_k^* \otimes \flip (u_{ij} m \otimes v_i) \otimes v_j^*
        \\
        &= \sum_{i \le j} (-1)^{\bar{m}p(j)+p(i)+p(i)p(j)} v_k^* \otimes v_i \otimes u_{ij} m \otimes v_j^*
        \\
        &\xmapsto{\ev \otimes 1_M \otimes 1_{V^*}} \sum_{j : j \ge k} (-1)^{\bar{m}p(j)+p(k)+p(k)p(j)} u_{kj} m \otimes v_j^*
        \\
        &\overset{\mathclap{\cref{flower}}}{=}\ \left( \sum_{i \le j} u_{ij} \otimes E_{ij}^* \right) \circ \flip(v_k^* \otimes m).
    \end{align*}

    Now, for the third relation in \cref{rouge}, we compute
    \begin{multline*}
        \bF_n^M
        \left(
            \begin{tikzpicture}[centerzero]
                \draw[<-] (0.2,-0.4) to[out=135,in=down] (-0.15,0) to[out=up,in=225] (0.2,0.4);
                \draw[wipe] (-0.2,-0.4) to[out=45,in=down] (0.15,0) to[out=up,in=-45] (-0.2,0.4);
                \draw[thick,red] (-0.2,-0.4) to[out=45,in=down] (0.15,0) to[out=up,in=-45] (-0.2,0.4);
            \end{tikzpicture}
        \right)
        = \sum_{i \le j} \sum_{k \le l} (-1)^{p(i,j)p(k,l)} u_{ij} S^{-1}(u_{kl}) \otimes E_{ij}^* E_{kl}^*
        \overset{\cref{flower}}{=} \sum_{k \le i \le j} u_{ij} S^{-1}(u_{ki}) \otimes E_{kj}^*
        \\
        = \sum_{k \le j} S^{-1} \left( \sum_{i : k \le i \le j} u_{ki} S(u_{ij}) \right) \otimes E_{kj}^*
        = \sum_{k \le j} S^{-1}(\varepsilon(u_{kj})) \otimes E_{kj}^*
        \overset{\cref{counit}}{=} \sum_{k \le j} \delta_{kj} \otimes E_{kj}^*
        = 1_M \otimes 1_V.
    \end{multline*}
    \details{
        In the third equality above we use
        \[
            \sum_{k : k \le i \le j} u_{ij} S^{-1}(u_{ki})
            = S^{-1} \left( \sum_{i : k \le i \le j} (-1)^{p(i,j) p(i,k)} u_{ki} S(u_{ij}) \right)
            = S^{-1} \left( \sum_{i : k \le i \le j} u_{ki} S(u_{ij}) \right),
        \]
        since $i$ must either be of the same parity as $j$ or of the same parity as $k$.  In the fourth equality we use the property of a Hopf superalgebra that $\nabla \circ (1 \otimes S) = \varepsilon$, where $\nabla \colon U_q \otimes U_q \to U_q$ is the multiplication map.
    }
    The proof of the fourth relation in \cref{rouge} is analogous.
    \details{
        We compute
        \begin{multline*}
            \flip \circ
            \begin{tikzpicture}[centerzero]
                \draw[<-,black] (-0.2,-0.4) to[out=45,in=down] (0.15,0) to[out=up,in=-45] (-0.2,0.4);
                \draw[wipe] (0.2,-0.4) to[out=135,in=down] (-0.15,0) to[out=up,in=225] (0.2,0.4);
                \draw[thick,red] (0.2,-0.4) to[out=135,in=down] (-0.15,0) to[out=up,in=225] (0.2,0.4);
            \end{tikzpicture}
            \circ \flip
            = \sum_{i \le j} \sum_{k \le l} (-1)^{p(i,j)p(k,l)} S^{-1}(u_{ij}) u_{kl} \otimes E_{ij}^* E_{kl}^*
            \overset{\cref{flower}}{=} \sum_{k \le i \le j} S^{-1}(u_{ij}) u_{ki} \otimes E_{kj}^*
            \\
            = \sum_{k \le j} S^{-1} \left( \sum_{i : k \le i \le j} S(u_{ki}) u_{ij} \right) \otimes E_{kj}^*
            = \sum_{k \le j} S^{-1}(\varepsilon(u_{kj})) \otimes E_{kj}^*
            \overset{\cref{counit}}{=} \sum_{k \le j} \delta_{kj} \otimes E_{kj}^*
            = 1_M \otimes 1_V.
        \end{multline*}
    }

    For the last relation in \cref{rouge}, we compute that $\bF_n^M$ sends the left-hand side to the map $M \otimes V^{\otimes 2} \to V^{\otimes 2} \otimes M$ given by
    \[
       T_{VV}^{12} T_{MV}^{23} T_{MV}^{12}
       = \flip^{12} \Theta^{12} \flip^{23} L^{23} \flip^{12} L^{12} \\
       = \flip^{12} \flip^{23} \flip^{12} \Theta^{23} L^{13} L^{12}.
    \]
    On the other hand, $\bF_n^M$ sends the right-hand side to the map
    \[
        T_{MV}^{23} T_{MV}^{12} T_{VV}^{23}
        = \flip^{23} L^{23} \flip^{12} L^{12} \flip^{23} \Theta^{23}
        = \flip^{23} \flip^{12} \flip^{23} L^{12} L^{13} \Theta^{23}.
    \]
    Then the last relation in \cref{rouge} follows from \cref{QQdef} and the fact that $\flip^{12} \flip^{23} \flip^{12} = \flip^{23} \flip^{12} \flip^{23}$.
\end{proof}

\begin{rem} \label{word}
    There are natural superfunctors $\bF^\uparrow, \bF^\downarrow \colon \Wcat(z) \to \Qcat(z)$ sending $\,\redobject\,$ to $\uparrow$ and $\downarrow$, respectively.  It is then straightforward to very that following diagrams commute:
    \[
        \begin{tikzcd}
            \Wcat(z) \arrow[d, "\bF^\uparrow"'] \arrow[dr, "\bF_n^V"] & \\
            \Qcat(z) \arrow[r, "\bF_n"] & U_q\smod
        \end{tikzcd}
        \qquad \qquad \qquad
        \begin{tikzcd}
            \Wcat(z) \arrow[d, "\bF^\downarrow"'] \arrow[dr, "\bF_n^{V^*}"] & \\
            \Qcat(z) \arrow[r, "\bF_n"] & U_q\smod
        \end{tikzcd}
    \]
    In particular, we have
    \begin{gather*}
        \bF_n(\negrightcross) = T_{VV^*},\quad
        \bF_n(\posleftcross) = T_{VV^*}^{-1},\quad
        \bF_n(\negleftcross) = T_{V^*V},\quad
        \bF_n(\posrightcross) = T_{V^*V}^{-1},\quad
        \\
        \bF_n(\posdowncross) = T_{V^*V^*},\quad
        \bF_n(\negdowncross) = T_{V^*V^*}^{-1}.
    \end{gather*}
\end{rem}

For $M \in U_q\smod$, we will denote the image of a string diagram in $\Wcat(z)$ under $\bF_n^M$ by labeling the red strands by $M$.  Thus, for example
\[
    \begin{tikzpicture}[centerzero]
        \draw[->] (0.2,-0.2) -- (-0.2,0.2);
        \draw[wipe]  (-0.2,-0.2) -- (0.2,0.2);
        \draw[thick,red] (-0.2,-0.2) node[anchor=north] {\strandlabel{M}} -- (0.2,0.2);
    \end{tikzpicture}
    = \bF_n^M(\posupredcross) = T_{MV},\qquad
    \begin{tikzpicture}[centerzero]
        \draw[->] (-0.2,-0.2) -- (0.2,0.2);
        \draw[wipe]  (0.2,-0.2) -- (-0.2,0.2);
        \draw[thick,red] (0.2,-0.2) node[anchor=north] {\strandlabel{M}} -- (-0.2,0.2);
    \end{tikzpicture}
    = \bF_n^M(\negupredcross) = T_{MV}^{-1}.
\]
Then \cref{gringo} is equivalent to
\begin{equation} \label{rocks}
    \begin{tikzpicture}[centerzero]
        \draw[->] (0.4,-0.4) -- (-0.4,0.4);
        \draw[wipe] (-0.4,-0.4) -- (0.4,0.4);
        \draw[thick,red] (-0.4,-0.4) node[anchor=north] {\strandlabel{M}} -- (0.4,0.4) node[anchor=south] {\strandlabel{N}};
        \coupon{-0.2,-0.2}{f};
    \end{tikzpicture}
    =
    \begin{tikzpicture}[centerzero]
        \draw[->] (0.4,-0.4) -- (-0.4,0.4);
        \draw[wipe] (-0.4,-0.4) -- (0.4,0.4);
        \draw[thick,red] (-0.4,-0.4) node[anchor=north] {\strandlabel{M}} -- (0.4,0.4) node[anchor=south] {\strandlabel{N}};
        \coupon{0.2,0.2}{f};
    \end{tikzpicture}
    \quad \text{for all } f \in \Hom_{U_q}(M,N).
\end{equation}

\begin{lem}
    For any $U_q$-supermodules $M$ and $N$, we have
    \begin{equation} \label{twored}
        \begin{tikzpicture}[centerzero]
            \draw[->] (0.4,-0.4) -- (-0.4,0.4);
            \draw[wipe] (-0.4,-0.4) -- (0.4,0.4);
            \draw[thick,red] (-0.4,-0.4) node[anchor=north] {\strandlabel{M \otimes N}} -- (0.4,0.4);
        \end{tikzpicture}
        =
        \begin{tikzpicture}[centerzero]
            \draw[->] (0.4,-0.4) -- (-0.4,0.4);
            \draw[wipe] (-0.4,-0.4) -- (0,0.4);
            \draw[thick,red] (-0.4,-0.4) node[anchor=north] {\strandlabel{M}} -- (0,0.4);
            \draw[wipe] (0,-0.4) -- (0.4,0.4);
            \draw[thick,red] (0,-0.4) node[anchor=north] {\strandlabel{N}} -- (0.4,0.4);
        \end{tikzpicture}
        \ .
    \end{equation}
\end{lem}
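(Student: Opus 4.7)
The plan is to show directly that both sides of \cref{twored} equal the single map $\flip_{M \otimes N, V} \circ L^{13} \circ L^{23}$ acting on $M \otimes N \otimes V$, where we place $M, N, V$ in positions $1, 2, 3$ respectively and $L$ is the operator from \cref{Ldef}. All the content should come from the coproduct formula \cref{dive} together with the super interchange law.

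First I unfold the left-hand side. By \cref{burrito}, $\bF_n^{M \otimes N}(\posupredcross) = T_{M \otimes N, V} = \flip_{M \otimes N, V} \circ L$, where $L$ acts on the pair $(M \otimes N) \otimes V$ with the $U_q$-action on $M \otimes N$ given by $\Delta$. Combining this with the coproduct identity $\Delta(L) = L^{13} L^{23}$ from \cref{dive} gives
\[
T_{M \otimes N, V} = \flip_{M \otimes N, V} \circ L^{13} \circ L^{23}.
\]

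Next I unfold the right-hand side. From the two invocations of \cref{burrito} I get $1_M \otimes T_{NV} = (1_M \otimes \flip_{N,V}) \circ L^{23}$ on $M \otimes N \otimes V$ and $T_{MV} \otimes 1_N = (\flip_{M,V} \otimes 1_N) \circ L^{12}_{M,V,N}$ on $M \otimes V \otimes N$. The key commutation that connects them,
\[
L^{12}_{M,V,N} \circ (1_M \otimes \flip_{N,V}) = (1_M \otimes \flip_{N,V}) \circ L^{13}_{M,N,V},
\]
is a direct verification using only the super interchange law \cref{interchange}: in both compositions $L$ acts as $u_{ij}$ on $M$ in position $1$ and as $E_{ij}$ on $V$ regardless of its tensor position, and the Koszul signs accumulated when $N$ is slid past the $E_{ij}$-factor match on the two sides. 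This reduces the right-hand side to $(\flip_{M,V} \otimes 1_N) \circ (1_M \otimes \flip_{N,V}) \circ L^{13} \circ L^{23}$.

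Finally I identify the composition of flips with the single flip $\flip_{M \otimes N, V}$, since sending $v$ past $n$ and then past $m$ produces the same Koszul sign $(-1)^{\bar v(\bar m + \bar n)}$ as sending $v$ past $m \otimes n$ in one step. Combining the three displayed equalities proves \cref{twored}. The only nontrivial aspect is the sign bookkeeping in the middle step, but no new structural input beyond the Hopf superalgebra relations of $U_q$ is needed; everything else is formal rewriting of the definition \cref{burrito} of $T_{-,V}$.
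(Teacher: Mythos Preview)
Your proof is correct and follows essentially the same approach as the paper's: both use the coproduct identity $\Delta(L)=L^{13}L^{23}$ from \cref{dive}, the factorization $\flip_{M\otimes N,V}=(\flip_{MV}\otimes 1_N)\circ(1_M\otimes\flip_{NV})$, and the commutation of $1_M\otimes\flip_{NV}$ with $L^{13}$ (turning it into $L^{12}$). The paper presents this as a single chain of equalities starting from $T_{M\otimes N,V}$, while you unfold the right-hand side separately and then match the two expressions, but the underlying computation is identical.
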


\begin{proof}
    We have
    \begin{multline*}
        T_{M \otimes N,V}
        \overset{\cref{dive}}{=} \flip_{M \otimes N,V} \circ L^{13} L^{23}
        = (\flip_{MV} \otimes 1_N) \circ (1_M \otimes \flip_{NV}) \circ L^{13} L^{23}
        \\
        = (\flip_{MV} \otimes 1_N) \circ L^{12} \circ (1_M \otimes \flip_{NV}) \circ L^{23}
        = (T_M \otimes 1_N) \circ (1_M \otimes T_N).
        \qedhere
    \end{multline*}
\end{proof}

The relations \cref{rouge,boxslide,rocks,twored} show that $\posupredcross$, $\negupredcross$, $\posdownredcross$, and $\negdownredcross$, together with the crossings in $\Qcat(z)$, almost endow $\Wcat(z)$ with the structure of a braided monoidal category.  However, we do not truly have a braiding since, for example, we do not have a morphism corresponding to a red strand passing \emph{under} a black strand.  Furthermore, closed Clifford tokens do not pass under crossings.  In general, we can define a crossing for any sequence of strands in $\Wcat(z)$ passing over any sequence of strands in $\Qcat(z)$.  All morphisms in $\Wcat(z)$ pass over such crossings, but only \emph{some} morphisms in $\Qcat(z)$ pass under them.  In other words, the crossings are only natural in one argument.  Because of this asymmetry, we refer to this structure as a \emph{chiral braiding}.

We now restrict our attention to diagrams with a single red strand.  Let $\Wcat_1(z)$ denote the full sub-supercategory of $\Wcat(z)$ on objects that are tensor products of $\uparrow$, $\downarrow$, and $\redobject\,$, with \emph{exactly one} occurrence of $\,\redobject\,$.  Thus, objects of $\Wcat_1(z)$ are of the form $X \otimes \, \redobject\, \otimes Y$ for $X,Y \in \Qcat(z)$.  Note that $\Wcat_1(z)$ is not a \emph{monoidal} supercategory.

\begin{theo} \label{redfunctor}
    There is a unique superfunctor
    \[
        \redFunc_n \colon \Wcat_1(z) \to \SEnd(U_q\smod)
    \]
    defined as follows: On an object $X \in \Wcat_1(z)$, $\redFunc_n(X)$ is the superfunctor
    \[
        \redFunc_n(X) \colon U_q\smod \to U_q\smod,\quad M \mapsto \bF_n^M(X).
    \]
    On a morphism $f \in \Hom_{\Wcat_1(z)}(X,Y)$, $\redFunc_n(f)$ is the natural transformation $\redFunc_n(X) \to \redFunc_n(Y)$ whose $M$-component, for $M \in U_q\smod$, is
    \[
        \redFunc_n(f)_M = \bF_n^M(f).
    \]
\end{theo}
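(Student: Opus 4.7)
The plan is to unpack the definition, verify that $\redFunc_n(X)$ really is a superfunctor for each object $X$ of $\Wcat_1(z)$, verify that $\redFunc_n(f)$ really is a supernatural transformation for each morphism $f$ of $\Wcat_1(z)$, and then check functoriality of $\redFunc_n$.

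First I would write any object of $\Wcat_1(z)$ as $X = A \otimes\, \redobject\, \otimes B$ with $A,B$ objects of $\Qcat(z)$.  Since $\bF_n^M$ is monoidal, its value on $X$ is $\bF_n(A) \otimes M \otimes \bF_n(B)$, so on objects $\redFunc_n(X)$ is the superfunctor $M \mapsto \bF_n(A) \otimes M \otimes \bF_n(B)$.  I need to specify its action on morphisms of $U_q\smod$, and the only possible choice compatible with what follows is $g \mapsto 1_{\bF_n(A)} \otimes g \otimes 1_{\bF_n(B)}$ for $g \in \Hom_{U_q}(M,N)$.  This is a superfunctor by the usual functoriality of the tensor product in $U_q\smod$.

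Next I would check that for each $f \in \Hom_{\Wcat_1(z)}(X,Y)$ the collection of morphisms $\redFunc_n(f)_M := \bF_n^M(f)$ assembles into a supernatural transformation $\redFunc_n(X) \Rightarrow \redFunc_n(Y)$.  This requires verifying that for every homogeneous $g \colon M \to N$ in $U_q\smod$ we have
\[
    \redFunc_n(Y)(g) \circ \bF_n^M(f) = (-1)^{\bar f \bar g}\, \bF_n^N(f) \circ \redFunc_n(X)(g).
\]
By $\kk$-linearity and the fact that both sides of this equation are compatible with composition in $\Wcat_1(z)$ (in the sense that if the equation holds for two composable morphisms $f_1$ and $f_2$, then it holds for $f_1 \circ f_2$, with signs propagating correctly via the super interchange law), it suffices to verify it when $f$ is one of the generating morphisms of $\Wcat_1(z)$.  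For generators of $\Qcat(z)$ (acting by $1$ on the red strand), the identity reduces to the super interchange law \cref{interchange}.  For the generators $\posupredcross$, $\negupredcross$, $\posdownredcross$, $\negdownredcross$ involving the red strand, the content of the identity is precisely \cref{rocks}, which we already established.  The parity factor $(-1)^{\bar f \bar g}$ is automatic since each $\bF_n^M$ is parity-preserving and these generators are even.

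Finally, functoriality of $\redFunc_n$ amounts to checking that $\redFunc_n(1_X) = 1_{\redFunc_n(X)}$ and $\redFunc_n(f' \circ f) = \redFunc_n(f') \circ \redFunc_n(f)$ (with the appropriate sign convention for composition of supernatural transformations).  Componentwise, these are exactly the identities $\bF_n^M(1_X) = 1_{\bF_n^M(X)}$ and $\bF_n^M(f' \circ f) = \bF_n^M(f') \circ \bF_n^M(f)$, which hold because each $\bF_n^M$ is a superfunctor.  Uniqueness is clear since the values of $\redFunc_n$ on objects and morphisms are prescribed by the formulae in the statement.  The main obstacle is the bookkeeping in the second step: one has to reduce the supernaturality condition for arbitrary $f$ to the generating cases, and this uses both the super interchange law and the fact that \cref{rocks} was proved only for single crossings—the reduction to generators is what makes this manageable.
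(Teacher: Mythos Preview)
Your proposal is correct and follows the same approach as the paper, just in considerably more detail: the paper's entire proof is the one-line observation that naturality in $M$ follows from \cref{rocks}. Your reduction to generators and identification of \cref{rocks} as the key naturality statement for the red crossings is exactly this argument spelled out; the other verifications you describe (that $\redFunc_n(X)$ is a superfunctor, and functoriality of $\redFunc_n$) are taken as immediate in the paper.
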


\begin{proof}
    It follows from \cref{rocks} that the given definition is natural in $M$.
\end{proof}

\section{The quantum affine isomeric supercategory}

In this section we introduce an affine version of the quantum isomeric supercategory and examine some of its properties.  Throughout this section, $\kk$ is an arbitrary commutative ring of characteristic not equal to two, and $z \in \kk$.

\begin{defin} \label{AQdef}
    The \emph{quantum affine isomeric supercategory} $\AQcat(z)$ is the strict monoidal supercategory obtained from $\Qcat(z)$ by adjoining an additional odd morphism
    \[
        \dotup \colon \uparrow\ \to\ \uparrow
    \]
    subject to the relations
    \begin{equation} \label{AQrels}
        \begin{tikzpicture}[centerzero]
            \draw[->] (0,-0.3) -- (0,0.3);
            \opendot{0,-0.1};
            \opendot{0,0.1};
        \end{tikzpicture}
        = -
        \begin{tikzpicture}[centerzero]
            \draw[->] (0,-0.3) -- (0,0.3);
        \end{tikzpicture}
        \ ,\quad
        \begin{tikzpicture}[centerzero]
            \draw[->] (0.3,-0.3) -- (-0.3,0.3);
            \draw[wipe] (-0.3,-0.3) -- (0.3,0.3);
            \draw[->] (-0.3,-0.3) -- (0.3,0.3);
            \opendot{0.17,-0.17};
        \end{tikzpicture}
        =
        \begin{tikzpicture}[centerzero]
            \draw[->] (0.3,-0.3) -- (-0.3,0.3);
            \draw[wipe] (-0.3,-0.3) -- (0.3,0.3);
            \draw[->] (-0.3,-0.3) -- (0.3,0.3);
            \opendot{-0.15,0.15};
        \end{tikzpicture}
        \ ,\quad
        \begin{tikzpicture}[centerzero]
            \bubright{0,0};
            \opendot{-0.2,0};
        \end{tikzpicture}
        = 0.
    \end{equation}
    We refer to $\dotup$ as an \emph{open Clifford token}.  To emphasize the difference, we will henceforth refer to $\tokup$ as a \emph{closed Clifford token}.
\end{defin}

It is important to note that we do \emph{not} impose a relation for sliding open Clifford tokens past closed ones.  It follows immediately from the defining relations that we have the following symmetry of $\AQcat(z)$.

\begin{lem} \label{bounce}
    There is a unique isomorphism of monoidal supercategories
    \[
        \AQcat(z) \to \AQcat(-z)
    \]
    determined on objects by $\uparrow\ \mapsto\ \uparrow$, $\downarrow\ \mapsto\ \downarrow$, and sending
    \[
        \tokup \mapsto \dotup,\quad
        \dotup \mapsto \tokup,\quad
        \posupcross \mapsto \negupcross,\quad
        \negupcross \mapsto \posupcross,\quad
        \rightcap \mapsto \rightcap,\quad
        \rightcup \mapsto \rightcup.
    \]
    On arbitrary diagrams, the isomorphism acts by interchanging open and closed Clifford tokens and flipping crossings.
\end{lem}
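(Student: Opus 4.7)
The plan is to define $\Omega \colon \AQcat(z) \to \AQcat(-z)$ on generating objects by $\uparrow \mapsto \uparrow$ and $\downarrow \mapsto \downarrow$, and on generating morphisms by $\posupcross \leftrightarrow \negupcross$, $\negrightcross \mapsto \posrightcross$ (interpreted in $\AQcat(-z)$ via \cref{rskein}), $\leftcap \mapsto \leftcap$, $\leftcup \mapsto \leftcup$, and $\tokup \leftrightarrow \dotup$. The content of the proof is to verify that $\Omega$ respects every defining relation of $\AQcat(z)$. Once this is done, the observation that $\Omega \circ \Omega$ fixes every generator of $\AQcat(z)$ implies $\Omega \circ \Omega = \id$, whence $\Omega$ is an isomorphism of monoidal supercategories.

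Most of the defining relations are preserved by direct inspection. The braid relations \cref{braid} and the zigzag relations \cref{leftadj} involve only crossings, cups, and caps, and are manifestly preserved under flipping crossings (the image of the generator $\negrightcross$ being consistent with these relations by construction). The skein relation $\posupcross - \negupcross = z(\upstrand \otimes \upstrand)$ maps under $\Omega$ to $\negupcross - \posupcross = z(\upstrand \otimes \upstrand)$, which is equivalent to the skein relation of $\AQcat(-z)$. The first relations of \cref{tokrel} and \cref{AQrels}, namely $\tokup \circ \tokup = -1_\uparrow = \dotup \circ \dotup$, are interchanged, as are the third relations (Clifford token on a right bubble vanishes), and the vanishing of the untokened right bubble is preserved trivially.

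The only nontrivial check concerns the second relations of \cref{tokrel} and \cref{AQrels}, which I write as the morphism identities $\posupcross \circ (\tokup \otimes 1_\uparrow) = (1_\uparrow \otimes \tokup) \circ \posupcross$ and $\posupcross \circ (1_\uparrow \otimes \dotup) = (\dotup \otimes 1_\uparrow) \circ \posupcross$. Under $\Omega$, the first of these becomes $\negupcross \circ (\dotup \otimes 1_\uparrow) = (1_\uparrow \otimes \dotup) \circ \negupcross$ in $\AQcat(-z)$, which follows from the second relation of \cref{AQrels} in $\AQcat(-z)$ by pre- and post-composing with $\negupcross$ and applying the Reidemeister-II identities $\negupcross \circ \posupcross = \posupcross \circ \negupcross = 1_{\uparrow \uparrow}$ from \cref{braid}. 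The image of the second relation of \cref{AQrels} is verified by the symmetric manipulation starting from the second of \cref{tokrel}. The main point of care throughout is translating the diagrammatic relations into morphism identities by correctly identifying which strand is the overstrand and which the understrand: under $\Omega$, the overstrand of a positive crossing becomes the understrand of a negative crossing, which is why the image of a natural sliding relation of one token type is not itself a defining relation but an identity obtainable from one via braid relations. No structural input beyond the braid and skein relations is required.
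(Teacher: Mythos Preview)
Your proof is correct and follows the same approach as the paper, which simply states that the result ``follows immediately from the defining relations.'' You have spelled out in detail the checks that the paper leaves implicit, and in particular your handling of the only genuinely nontrivial verifications---the token-sliding relations, where conjugating the defining relation in the target by $\negupcross$ via the Reidemeister-II identities yields the needed image relation---is exactly right.

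One small point worth making explicit: your claim that the bubble relations are ``preserved trivially'' and ``interchanged'' relies on knowing that $\Omega(\rightcap) = \rightcap$ and $\Omega(\rightcup) = \rightcup$ in $\AQcat(-z)$. Since $\rightcap$ and $\rightcup$ are defined in \cref{lego} and \cref{wolverine} as composites involving crossings, flipping those crossings a priori gives different diagrams. That these nonetheless coincide with $\rightcap$ and $\rightcup$ follows from the curl relations in \cref{venom3} (equivalently, from the fact that all HOMFLYPT skein relations hold by \cref{HOMFLYPT}). This is implicit in your phrase ``consistent with these relations by construction,'' but it is the one place where a reader might want a reference.
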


Define
\begin{equation} \label{dotdown}
    \begin{tikzpicture}[centerzero]
        \draw[<-] (0,-0.4) -- (0,0.4);
        \opendot{0,0};
    \end{tikzpicture}
    :=
    \begin{tikzpicture}[centerzero]
        \draw[->] (-0.3,0.4) -- (-0.3,0) arc(180:360:0.15) arc(180:0:0.15) -- (0.3,-0.4);
        \opendot{0,0};
    \end{tikzpicture}
    \ .
\end{equation}
It follows that
\begin{equation}
    \begin{tikzpicture}[centerzero]
        \draw[<-] (0,-0.3) -- (0,0.3);
        \opendot{0,-0.1};
        \opendot{0,0.1};
    \end{tikzpicture}
    =
    \begin{tikzpicture}[centerzero]
        \draw[<-] (0,-0.3) -- (0,0.3);
    \end{tikzpicture}
    \ .
\end{equation}

\begin{lem}
    The following relations hold in $\AQcat(z)$:
    \begin{gather} \label{ironopen}
        \begin{tikzpicture}[centerzero]
            \draw (-0.3,-0.3) -- (0.3,0.3);
            \draw[wipe] (0.3,-0.3) -- (-0.3,0.3);
            \draw (0.3,-0.3) -- (-0.3,0.3);
            \opendot{-0.18,-0.18};
        \end{tikzpicture}
        =
        \begin{tikzpicture}[centerzero]
            \draw (-0.3,-0.3) -- (0.3,0.3);
            \draw[wipe] (0.3,-0.3) -- (-0.3,0.3);
            \draw (0.3,-0.3) -- (-0.3,0.3);
            \opendot{0.18,0.18};
        \end{tikzpicture}
        \ ,\quad
        \begin{tikzpicture}[centerzero]
            \draw (0.3,-0.3) -- (-0.3,0.3);
            \draw[wipe] (-0.3,-0.3) -- (0.3,0.3);
            \draw (-0.3,-0.3) -- (0.3,0.3);
            \opendot{-0.18,0.18};
        \end{tikzpicture}
        =
        \begin{tikzpicture}[centerzero]
            \draw (0.3,-0.3) -- (-0.3,0.3);
            \draw[wipe] (-0.3,-0.3) -- (0.3,0.3);
            \draw (-0.3,-0.3) -- (0.3,0.3);
            \opendot{0.18,-0.18};
        \end{tikzpicture}
        \ ,\quad
        \begin{tikzpicture}[anchorbase]
            \draw (-0.2,0.2) -- (-0.2,0) arc (180:360:0.2) -- (0.2,0.2);
            \opendot{-0.2,0};
        \end{tikzpicture}
        \ =\
        \begin{tikzpicture}[anchorbase]
            \draw (-0.2,0.2) -- (-0.2,0) arc (180:360:0.2) -- (0.2,0.2);
            \opendot{0.2,0};
        \end{tikzpicture}
        \ ,\quad
        \begin{tikzpicture}[anchorbase]
            \draw (-0.2,-0.2) -- (-0.2,0) arc (180:0:0.2) -- (0.2,-0.2);
            \opendot{-0.2,0};
        \end{tikzpicture}
        \ =\
        \begin{tikzpicture}[anchorbase]
            \draw (-0.2,-0.2) -- (-0.2,0) arc (180:0:0.2) -- (0.2,-0.2);
            \opendot{0.2,0};
        \end{tikzpicture}
        \ ,\quad
        \begin{tikzpicture}[centerzero]
            \bubun{0,0};
            \opendot{-0.2,0};
        \end{tikzpicture}
        = 0,
        \\ \label{nickelopen}
        \begin{tikzpicture}[centerzero]
            \draw[->] (0.3,-0.3) -- (-0.3,0.3);
            \draw[wipe] (-0.3,-0.3) -- (0.3,0.3);
            \draw[->] (-0.3,-0.3) -- (0.3,0.3);
            \opendot{-0.15,-0.15};
        \end{tikzpicture}
        =
        \begin{tikzpicture}[centerzero]
            \draw[->] (0.3,-0.3) -- (-0.3,0.3);
            \draw[wipe] (-0.3,-0.3) -- (0.3,0.3);
            \draw[->] (-0.3,-0.3) -- (0.3,0.3);
            \opendot{0.15,0.15};
        \end{tikzpicture}
        + z\
        \left(
            \begin{tikzpicture}[centerzero]
                \draw[->] (-0.2,-0.3) -- (-0.2,0.3);
                \draw[->] (0.2,-0.3) -- (0.2,0.3);
                \opendot{-0.2,0};
            \end{tikzpicture}
            -
            \begin{tikzpicture}[centerzero]
                \draw[->] (-0.2,-0.3) -- (-0.2,0.3);
                \draw[->] (0.2,-0.3) -- (0.2,0.3);
                \opendot{0.2,0};
            \end{tikzpicture}
        \right),
        \qquad
        \begin{tikzpicture}[centerzero]
            \draw[->] (-0.3,-0.3) -- (0.3,0.3);
            \draw[wipe] (0.3,-0.3) -- (-0.3,0.3);
            \draw[->] (0.3,-0.3) -- (-0.3,0.3);
            \opendot{-0.15,0.15};
        \end{tikzpicture}
        =
        \begin{tikzpicture}[centerzero]
            \draw[->] (-0.3,-0.3) -- (0.3,0.3);
            \draw[wipe] (0.3,-0.3) -- (-0.3,0.3);
            \draw[->] (0.3,-0.3) -- (-0.3,0.3);
            \opendot{0.16,-0.16};
        \end{tikzpicture}
        + z\
        \left(
            \begin{tikzpicture}[centerzero]
                \draw[->] (-0.2,-0.3) -- (-0.2,0.3);
                \draw[->] (0.2,-0.3) -- (0.2,0.3);
                \opendot{0.2,0};
            \end{tikzpicture}
            -
            \begin{tikzpicture}[centerzero]
                \draw[->] (-0.2,-0.3) -- (-0.2,0.3);
                \draw[->] (0.2,-0.3) -- (0.2,0.3);
                \opendot{-0.2,0};
            \end{tikzpicture}
        \right),
    \end{gather}
    where, in \cref{ironopen}, the relations hold for all orientations of the strands.
\end{lem}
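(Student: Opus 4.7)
The proof parallels the derivations of \cref{ironfull,nickelfull}, with the roles of over- and under-strands interchanged throughout; indeed, the defining relations \cref{AQrels} for $\dotup$ are the mirror image of those in \cref{tokrel} for $\tokup$ under this interchange. I will first establish the first two relations in \cref{ironopen} with both strands oriented upward. The second of these is the second relation in \cref{AQrels}. For the first, I compose the defining relation $\posupcross \circ (\upstrand \otimes \dotup) = (\dotup \otimes \upstrand) \circ \posupcross$ on top and bottom with $\negupcross$ and use \cref{braid} to identify $\negupcross$ as the two-sided inverse of $\posupcross$ on $\uparrow \otimes \uparrow$; this yields $(\upstrand \otimes \dotup) \circ \negupcross = \negupcross \circ (\dotup \otimes \upstrand)$, the desired identity.

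Next, I extend \cref{ironopen} to all orientations in exact parallel with the proof of \cref{ironfull}. Attaching a left cup to the bottom or a left cap to the top of the definition \cref{dotdown} of $\dotdown$ and simplifying via \cref{leftadj} yields the third and fourth relations in \cref{ironopen} for a leftward-oriented strand. The definitions \cref{lego,wolverine} of the left and down crossings, combined with the already-established up-oriented case, then give the first two relations in \cref{ironopen} for leftward- and downward-oriented strands. A final composition on top and bottom with $\negrightcross$ (respectively $\posrightcross$) transports the first two relations to the rightward case. The third and fourth relations for a rightward strand, together with the bubble identity $\begin{tikzpicture}[centerzero]\bubun{0,0}; \opendot{-0.2,0};\end{tikzpicture} = 0$, follow from the same cup-and-cap manipulations used in \cref{ironfull}, applied to the defining bubble relation $\begin{tikzpicture}[centerzero]\bubright{0,0}; \opendot{-0.2,0};\end{tikzpicture} = 0$ from \cref{AQrels}.

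Finally, I derive \cref{nickelopen} from \cref{ironopen} and the skein relation \cref{skein}, in direct analogy with \cref{nickelfull}. For the first relation in \cref{nickelopen}, I use \cref{skein} to rewrite $\posupcross = \negupcross + z(\upstrand \otimes \upstrand)$, apply the already-established first relation in \cref{ironopen} to slide the open dot along the under strand of $\negupcross$, and then substitute back using \cref{skein}; the two $z$-contributions combine to give the stated correction term. The second relation in \cref{nickelopen} is obtained even more directly by applying \cref{skein} on both sides of the defining relation from \cref{AQrels}. The main subtlety throughout is careful sign bookkeeping arising from the super-interchange law \cref{interchange} and the odd parity of $\dotup$; this parallels the handling of $\tokup$ in the proofs of \cref{ironfull,nickelfull} and so presents no genuinely new obstacle.
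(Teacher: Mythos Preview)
Your argument is correct but takes a different, more laborious route than the paper. The paper's proof is a single sentence: it invokes the isomorphism of monoidal supercategories from \cref{bounce}, which interchanges $\tokup \leftrightarrow \dotup$ and flips all crossings, and observes that under this isomorphism the already-established relations \cref{iron} and \cref{nickel} are carried precisely to \cref{ironopen} and \cref{nickelopen}. Your approach instead reruns the proofs of \cref{ironfull} and \cref{nickelfull} from scratch with the open token; this has the virtue of being self-contained (it does not require first checking \cref{bounce}), but it repeats work that the symmetry makes unnecessary.

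One small slip in your execution: the definition \cref{dotdown} of $\dotdown$ is built from the \emph{right} cup and cap (it is the mirror image of \cref{tokdown}, which uses the left ones). So when you write ``attaching a left cup to the bottom or a left cap to the top of \cref{dotdown} and simplifying via \cref{leftadj},'' the zigzag does not collapse as claimed, because the left zigzag identity does not match the right cup/cap appearing in \cref{dotdown}. You should instead attach a \emph{right} cup or cap and invoke the right zigzag identity from \cref{venom1}; this yields the cup/cap relations in \cref{ironopen} first for the \emph{rightward} orientation, and the rest of your argument then goes through with left and right systematically swapped. This is exactly the reflection one expects from \cref{bounce}, and once corrected your direct proof is complete.
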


\begin{proof}
    This follows immediately from \cref{bounce,ironfull,nickelfull}.
\end{proof}

It follows from the above discussion that the isomorphisms $\Omega_-$, $\Omega_\updownarrow$, and $\Omega_\leftrightarrow$ defined in \cref{sec:Qdef} extend to isomorphisms of monoidal supercategories
\[
    \Omega_- \colon \AQcat(z) \xrightarrow{\cong} \AQcat(-z),\qquad
    \Omega_\updownarrow \colon \AQcat(z) \xrightarrow{\cong} \AQcat(z)^\op,\qquad
    \Omega_\leftrightarrow \colon \AQcat(z) \xrightarrow{\cong} \AQcat(z)^\rev.
\]
These are defined as in \cref{sec:Qdef} for the generators of $\Qcat(z)$ and, on the open Clifford token, are defined by
\[
    \Omega_-(\dotup) = \dotup,\qquad
    \Omega_\updownarrow  (\dotup) = \dotdown,\qquad
    \Omega_\leftrightarrow (\dotup) = \dotup.
\]
Furthermore, $\AQcat(z)$ is strictly pivotal, with duality superfunctor
\[
    \Omega_\leftrightarrow \circ \Omega_\updownarrow \colon \AQcat(z) \xrightarrow{\cong} (\AQcat(z)^\op)^\rev.
\]

\begin{rem}
    Note that, while $\Qcat(0)$ is isomorphic to the oriented Brauer--Clifford supercategory of \cite[Def.~3.2]{BCK19}, as described in \cref{degenerate}, the supercategory $\AQcat(z)$ does \emph{not} reduce to the definition \cite[Def.~3.2]{BCK19} of the degenerate affine oriented Brauer--Clifford supercategory when $z=0$.  This is analogous to the fact that the degenerate affine Hecke algebra of type $A$ is not simply the $q=1$ specialization of the affine Hecke algebra of type $A$.
\end{rem}

Define, for $k \in \Z$,
\begin{equation} \label{zebradef}
    \begin{tikzpicture}[centerzero]
        \draw[->] (0,-0.3) -- (0,0.3);
        \zebras{0,0}{east}{k};
    \end{tikzpicture}
    :=
    \begin{tikzpicture}[centerzero]
        \draw[->] (0,-0.6) -- (0,0.6);
        \opendot{0,-0.45};
        \token{0,-0.25};
        \opendot{0,-0.05};
        \draw (0,0.35) node[anchor=west] {$\vdots$};
    \end{tikzpicture}
    \Bigg\} \text{\scriptsize{$k$ tokens}}
    \quad \text{if } k \ge 0
    \qquad \text{and} \qquad
    \begin{tikzpicture}[centerzero]
        \draw[->] (0,-0.3) -- (0,0.3);
        \zebras{0,0}{east}{k};
    \end{tikzpicture}
    :=
    \begin{tikzpicture}[centerzero]
        \draw[->] (0,-0.6) -- (0,0.6);
        \token{0,-0.45};
        \opendot{0,-0.25};
        \token{0,-0.05};
        \draw (0,0.35) node[anchor=west] {$\vdots$};
    \end{tikzpicture}
    \Bigg\} \text{\scriptsize{$-k$ tokens}}
    \quad \text{if } k < 0.
\end{equation}
Note that both morphisms in \cref{zebradef} are of parity $k \pmod 2$.  We then define, for $k \in \Z$,
\begin{equation} \label{zebradown}
    \begin{tikzpicture}[centerzero]
        \draw[<-] (0,-0.4) -- (0,0.4);
        \zebras{0,0}{east}{k};
    \end{tikzpicture}
    :=
    \begin{tikzpicture}[centerzero]
        \draw[->] (0.3,0.4) -- (0.3,0) arc(360:180:0.15) arc(0:180:0.15) -- (-0.3,-0.4);
        \zebras{0,0}{south}{k};
    \end{tikzpicture}
    \ ,\qquad
    \begin{tikzpicture}[centerzero]
        \draw[->] (0,-0.3) -- (0,0.3);
        \zebra{0,0};
    \end{tikzpicture}
    :=
    \begin{tikzpicture}[centerzero]
        \draw[->] (0,-0.3) -- (0,0.3);
        \zebras{0,0}{west}{2};
    \end{tikzpicture}
    \ ,\qquad
    \begin{tikzpicture}[centerzero]
        \draw[<-] (0,-0.3) -- (0,0.3);
        \zebra{0,0};
    \end{tikzpicture}
    :=
    \begin{tikzpicture}[centerzero]
        \draw[<-] (0,-0.3) -- (0,0.3);
        \zebras{0,0}{west}{2};
    \end{tikzpicture}
    \ .
\end{equation}
We refer to the decorations
$
    \begin{tikzpicture}
        \zebra{0,0};
    \end{tikzpicture}
$
as \emph{zebras}.  We have colored them and their labels mahogany to help distinguish these labels from coefficients in linear combinations of diagrams.  The morphism $\zebraup$ should be thought of as a quantum analogue of the even morphism $\tokup$ of \cite{BCK19}.

Recall our convention
\[
    \begin{tikzpicture}[centerzero]
        \draw[->] (-0.2,-0.3) -- (-0.2,0.3);
        \draw[->] (0.2,-0.3) -- (0.2,0.3);
        \zebras{-0.2,0}{east}{k};
        \zebras{0.2,0}{west}{l};
    \end{tikzpicture}
    =
    \begin{tikzpicture}[centerzero]
        \draw[->] (-0.2,-0.3) -- (-0.2,0.3);
        \draw[->] (0.2,-0.3) -- (0.2,0.3);
        \zebras{-0.2,0.1}{east}{k};
        \zebras{0.2,-0.1}{west}{l};
    \end{tikzpicture}
    \ .
\]
That is, when zebras appear at the same height, the entire zebra on the left should be considered as above the entire zebra on the right.  Note that composition of zebras is a bit subtle, since the labels do not add in general.
We have a homomorphism of superalgebras
\[
    \kk[x^{\pm 1}]\langle y \rangle / (yx=x^{-1}y,\, y^2=-1)
    \to \End_{\Qcat(z)}(\uparrow),
    \qquad
    x^k \mapsto \zebramultup{2k}, \quad
    y x^k \mapsto (-1)^{\delta_{k<0}}\ \zebramultup{2k+1}.
\]
\Cref{affinebasis} below would imply that this map is injective.

\begin{lem}
    The following relations hold in $\AQcat(z)$ for all $k \in \Z$:
    \begin{gather} \label{kimchi}
        \begin{tikzpicture}[anchorbase]
            \draw (-0.2,0.2) -- (-0.2,0) arc (180:360:0.2) -- (0.2,0.2);
            \zebras{-0.2,0}{east}{k};
        \end{tikzpicture}
        \ =\
        \begin{tikzpicture}[anchorbase]
            \draw (-0.2,0.2) -- (-0.2,0) arc (180:360:0.2) -- (0.2,0.2);
            \zebras{0.2,0}{west}{k};
        \end{tikzpicture}
        \ ,\quad
        \begin{tikzpicture}[anchorbase]
            \draw (-0.2,-0.2) -- (-0.2,0) arc (180:0:0.2) -- (0.2,-0.2);
            \zebras{-0.2,0}{east}{k};
        \end{tikzpicture}
        \ =\
        \begin{tikzpicture}[anchorbase]
            \draw (-0.2,-0.2) -- (-0.2,0) arc (180:0:0.2) -- (0.2,-0.2);
            \zebras{0.2,0}{west}{k};
        \end{tikzpicture}
        \ ,
        \\ \label{tapioca}
        \rightzebrabub{2k+1} = 0 = \leftzebrabub{2k+1}, \quad
        \rightzebrabub{2k} = - \left( \rightzebrabub{-2k} \right),\quad
        \leftzebrabub{2k} = - \leftzebrabub{-2k},
    \end{gather}
    where, in \cref{kimchi}, the relations hold for both orientations of the strands.
\end{lem}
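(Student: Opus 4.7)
The plan is to prove \cref{kimchi} first and then use it, together with the pivotal structure, to establish \cref{tapioca}. For \cref{kimchi}, I would exploit the definition \cref{zebradef}: the zebra $\zebramultup{k}$ is simply a stack of $|k|$ alternating closed and open Clifford tokens. Since closed tokens slide through cups and caps by the last two relations in \cref{iron} and open tokens do so by the corresponding parts of \cref{ironopen}, applying these slides one token at a time moves the entire zebra from one leg of a cup or cap to the other. No signs arise, as each individual slide is a plain equality, and the statement follows for both orientations of the strands.

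For the odd case $\rightzebrabub{2k+1} = 0$ of \cref{tapioca}, I would induct on $|2k+1|$. The base cases $\rightzebrabub{\pm 1} = 0$ are the final relations of \cref{AQrels} and \cref{tokrel} respectively. For the inductive step with $|2k+1| \ge 3$, the cyclic sequence of tokens around the bubble loop has one more of one type than the other, so two tokens of the same type must occur cyclically adjacent. Using \cref{kimchi} to slide tokens around the loop, I would bring two such same-type tokens to be adjacent on a single leg of the bubble, then cancel them via $\tokup^2 = -1_\uparrow$ (from \cref{tokrel}), $\dotup^2 = -1_\uparrow$ (from \cref{AQrels}), or their downward analogs $\tokdown^2 = 1_\downarrow$ (from \cref{torch}) and the parallel identity $\dotdown^2 = 1_\downarrow$ derived analogously. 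The resulting bubble carries an odd-length zebra of smaller absolute value, hence vanishes by the inductive hypothesis up to the overall sign picked up in the cancellation.

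For the even case $\rightzebrabub{2k} = -\rightzebrabub{-2k}$, I would use the strictly pivotal structure on $\AQcat(z)$ given by the duality superfunctor $\Omega_\leftrightarrow \circ \Omega_\updownarrow$, which rotates diagrams through $180\degree$ and multiplies by $(-1)^{\binom{y}{2}}$. Since $\rightzebrabub{2k} \in \End_{\AQcat(z)}(\one)$ is a scalar and therefore preserved by the duality, this pivotal identity relates $\rightzebrabub{2k}$ to the $180\degree$-rotated diagram up to the Koszul sign. Direct inspection of the rotation, combined with \cref{kimchi} to bring the rotated zebra from the right downward strand to the left upward strand, shows that the rotated bubble equals $\rightzebrabub{-2k}$, yielding the stated relation. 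The $\leftzebrabub{k}$ statements follow by applying the $\Omega_\leftrightarrow$ symmetry, or by running analogous arguments on counterclockwise bubbles.

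The main obstacle will be the sign bookkeeping in the even case: correctly identifying the $180\degree$-rotation of $\zebramultup{2k}$ on the left upward strand as $\zebramultup{-2k}$ (rather than $\zebramultup{2k}$) on the right downward strand, and then verifying that the super-pivotal sign combines with the kimchi-based identification to yield exactly $-1$ as claimed.
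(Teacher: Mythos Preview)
Your argument for \cref{kimchi} has a real gap. The claim that ``no signs arise, as each individual slide is a plain equality'' is false: each time you slide one odd token through the cup, the already-slid tokens sit on the \emph{other} leg, and the super interchange law \cref{intlaw} forces a sign every time the next token is brought down past them. Sliding all $|k|$ tokens in this way produces an overall sign $(-1)^{\binom{|k|}{2}}$ and reverses their order. The relation still holds, but only because the downward zebra \cref{zebradown} is \emph{defined} as the mate of the upward zebra, which absorbs exactly this sign and reversal; the paper makes this explicit by first computing the downward zebra as $(-1)^{\binom{|k|}{2}}$ times a specific stack of downward tokens, and only then invoking \cref{iron,ironopen}. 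Your sketch never engages with what the zebra on a \emph{downward} strand actually is.

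Your pivotal-structure approach for the even part of \cref{tapioca} does not work. Rotating $\rightzebrabub{2k}$ by $180^\circ$ keeps the bubble clockwise and places the token stack on the right (downward) leg with reversed order; identifying that stack via the explicit form of the downward zebra and then applying \cref{kimchi} returns you to $(-1)^{\binom{2k}{2}} \cdot (-1)^{\binom{2k}{2}} \rightzebrabub{2k} = \rightzebrabub{2k}$, a tautology, not a relation with $\rightzebrabub{-2k}$. The paper's argument is the same token-sliding trick as in the odd case: peel off the top $\token$ from $\zebramultup{2k} = \token \circ \zebramultup{2k-1}$, slide it over the cap to the right leg (no sign), super-interchange it past the \emph{odd} morphism $\zebramultup{2k-1}$ (one sign $-1$), slide it through the cup back to the bottom of the left leg, and observe that $\zebramultup{2k-1} \circ \token = \zebramultup{-2k}$. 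Your odd-bubble argument is essentially this same manoeuvre and is fine; you should use it for the even case as well.
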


\begin{proof}
    It follows from \cref{zebradown,iron,ironopen,leftadj} that
    \[
        \begin{tikzpicture}[centerzero]
            \draw[<-] (0,-0.4) -- (0,0.4);
            \zebras{0,0}{east}{k};
        \end{tikzpicture}
        = (-1)^{\binom{k}{2}}\,
        \begin{tikzpicture}[centerzero]
            \draw[<-] (0,-0.6) -- (0,0.6);
            \opendot{0,0.45};
            \token{0,0.25};
            \opendot{0,0.05};
            \draw (0,-0.2) node[anchor=west] {$\vdots$};
        \end{tikzpicture}
        \Bigg\} \text{\scriptsize{$k$ tokens}}
        \quad \text{if } k \ge 0
        \qquad \text{and} \qquad
        \begin{tikzpicture}[centerzero]
            \draw[<-] (0,-0.3) -- (0,0.3);
            \zebras{0,0}{east}{k};
        \end{tikzpicture}
        = (-1)^{\binom{k}{2}}\,
        \begin{tikzpicture}[centerzero]
            \draw[<-] (0,-0.6) -- (0,0.6);
            \token{0,0.45};
            \opendot{0,0.25};
            \token{0,0.05};
            \draw (0,-0.2) node[anchor=west] {$\vdots$};
        \end{tikzpicture}
        \Bigg\} \text{\scriptsize{$-k$ tokens}}
        \quad \text{if } k < 0.
    \]
    Then relations \cref{kimchi} follow from \cref{iron,ironopen}.

    For $k \ge 0$, we have
    \[
        \rightzebrabub{2k+1}
        =
        \begin{tikzpicture}[centerzero]
            \draw[->] (0,0.3) arc(90:0:0.2) -- (0.2,-0.1) arc(360:180:0.2) -- (-0.2,0.1) arc(180:90:0.2);
            \zebras{-0.2,-0.1}{east}{2k};
            \opendot{-0.2,0.1};
        \end{tikzpicture}
        \overset{\cref{ironopen}}{=}
        \begin{tikzpicture}[centerzero]
            \draw[->] (0,0.3) arc(90:0:0.2) -- (0.2,-0.1) arc(360:180:0.2) -- (-0.2,0.1) arc(180:90:0.2);
            \zebras{-0.2,-0.1}{east}{2k};
            \opendot{0.2,0.1};
        \end{tikzpicture}
        =
        \begin{tikzpicture}[centerzero]
            \draw[->] (0,0.3) arc(90:0:0.2) -- (0.2,-0.1) arc(360:180:0.2) -- (-0.2,0.1) arc(180:90:0.2);
            \zebras{-0.2,0.1}{east}{2k};
            \opendot{0.2,-0.1};
        \end{tikzpicture}
        \overset{\cref{ironopen}}{=}
        \begin{tikzpicture}[centerzero]
            \draw[->] (0,0.3) arc(90:0:0.2) -- (0.2,-0.1) arc(360:180:0.2) -- (-0.2,0.1) arc(180:90:0.2);
            \zebras{-0.2,0.1}{east}{2k};
            \opendot{-0.2,-0.1};
        \end{tikzpicture}
        = -
        \rightzebrabub{1-2k}
        =
        \dotsb
        = \pm \rightzebrabub{\pm 1}
        \overset{\cref{tokrel}}{\underset{\cref{ironopen}}{=}} 0.
    \]
    The case $k<0$, as well as the proof of the second equality in \cref{tapioca}, are analogous.

    For the third equality in \cref{tapioca}, it suffices to consider the case $k>0$.  In this case, we have
    \[
        \rightzebrabub{2k}
        =
        \begin{tikzpicture}[centerzero]
            \draw[->] (0,0.3) arc(90:0:0.2) -- (0.2,-0.1) arc(360:180:0.2) -- (-0.2,0.1) arc(180:90:0.2);
            \zebras{-0.2,-0.1}{east}{2k-1};
            \token{-0.2,0.1};
        \end{tikzpicture}
        \overset{\cref{ironopen}}{=}
        \begin{tikzpicture}[centerzero]
            \draw[->] (0,0.3) arc(90:0:0.2) -- (0.2,-0.1) arc(360:180:0.2) -- (-0.2,0.1) arc(180:90:0.2);
            \zebras{-0.2,-0.1}{east}{2k-1};
            \token{0.2,0.1};
        \end{tikzpicture}
        = -
        \begin{tikzpicture}[centerzero]
            \draw[->] (0,0.3) arc(90:0:0.2) -- (0.2,-0.1) arc(360:180:0.2) -- (-0.2,0.1) arc(180:90:0.2);
            \zebras{-0.2,0.1}{east}{2k-1};
            \token{0.2,-0.1};
        \end{tikzpicture}
        \overset{\cref{ironopen}}{=} -
        \begin{tikzpicture}[centerzero]
            \draw[->] (0,0.3) arc(90:0:0.2) -- (0.2,-0.1) arc(360:180:0.2) -- (-0.2,0.1) arc(180:90:0.2);
            \zebras{-0.2,0.1}{east}{2k-1};
            \token{-0.2,-0.1};
        \end{tikzpicture}
        =
        - \left( \rightzebrabub{-2k} \right).
    \]
    The proof of the last equality in \cref{tapioca} is similar.
\end{proof}

\begin{lem}
    The following relations hold in $\AQcat(z)$:
    \begin{equation} \label{zebracrossing}
        \begin{tikzpicture}[centerzero]
            \draw[->] (0.3,-0.3) -- (-0.3,0.3);
            \draw[wipe] (-0.3,-0.3) -- (0.3,0.3);
            \draw[->] (-0.3,-0.3) -- (0.3,0.3);
            \zebra{-0.16,-0.16};
        \end{tikzpicture}
        =
        \begin{tikzpicture}[centerzero]
            \draw[->] (-0.3,-0.3) -- (0.3,0.3);
            \draw[wipe] (0.3,-0.3) -- (-0.3,0.3);
            \draw[->] (0.3,-0.3) -- (-0.3,0.3);
            \zebra{0.15,0.15};
        \end{tikzpicture}
        - z\
        \begin{tikzpicture}[centerzero]
            \draw[->] (-0.2,-0.3) -- (-0.2,0.3);
            \draw[->] (0.2,-0.3) -- (0.2,0.3);
            \opendot{-0.2,0};
            \token{0.2,0};
        \end{tikzpicture}
        ,
        \qquad
        \begin{tikzpicture}[centerzero]
            \draw[->] (-0.3,-0.3) -- (0.3,0.3);
            \draw[wipe] (0.3,-0.3) -- (-0.3,0.3);
            \draw[->] (0.3,-0.3) -- (-0.3,0.3);
            \zebra{0.16,-0.16};
        \end{tikzpicture}
        =
        \begin{tikzpicture}[centerzero]
            \draw[->] (0.3,-0.3) -- (-0.3,0.3);
            \draw[wipe] (-0.3,-0.3) -- (0.3,0.3);
            \draw[->] (-0.3,-0.3) -- (0.3,0.3);
            \zebra{-0.15,0.15};
        \end{tikzpicture}
        - z\
        \begin{tikzpicture}[centerzero]
            \draw[->] (-0.2,-0.3) -- (-0.2,0.3);
            \draw[->] (0.2,-0.3) -- (0.2,0.3);
            \token{-0.2,0};
            \opendot{0.2,0};
        \end{tikzpicture}
        .
    \end{equation}
\end{lem}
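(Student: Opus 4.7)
The plan is to prove both equations of \cref{zebracrossing} by decomposing the zebra as $\zebraup = \tokup \circ \dotup$ per \cref{zebradef}, sliding each of the two constituent Clifford tokens through the crossing in turn, and then cleaning up via the super interchange law \cref{interchange} and the skein relation \cref{skein}. The closed token will slide freely over the over-strand of either crossing, while the open token will slide with a $z$-correction supplied by \cref{nickelopen}.

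For the first equation of \cref{zebracrossing}, I would write the LHS as $\posupcross \circ (\tokup \otimes 1_\uparrow) \circ (\dotup \otimes 1_\uparrow)$, slide $\tokup$ to the top-right using the second equality of \cref{tokrel}, and then slide $\dotup$ past $\posupcross$ using the first equation of \cref{nickelopen}. This produces a main term $(1_\uparrow \otimes \zebraup) \circ \posupcross$ together with two $z$-corrections, $z(1_\uparrow \otimes \tokup) \circ (\dotup \otimes 1_\uparrow)$ and $-z(1_\uparrow \otimes \zebraup)$. The super interchange law \cref{interchange} converts the first of these to $-z(\dotup \otimes \tokup)$, the sign $(-1)^{\bar\dotup \bar\tokup} = -1$ arising because $\dotup$ and $\tokup$ sit in the lower-left and upper-right positions and are both odd. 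A final use of the skein relation \cref{skein} replaces $\posupcross$ by $\negupcross + z \cdot 1_{\uparrow\uparrow}$; the extra $z$-term thereby produced cancels against $-z(1_\uparrow \otimes \zebraup)$, yielding exactly the desired RHS.

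The second equation is handled by the symmetric argument applied to $\negupcross$. Now $\tokup$ slides through the over-strand of $\negupcross$ via the second equality of \cref{iron} (in its up-up orientation), and $\dotup$ slides through via the second equation of \cref{nickelopen}. The relevant super interchange is $(\tokup \otimes 1_\uparrow) \circ (1_\uparrow \otimes \dotup) = \tokup \otimes \dotup$ with no sign this time, because in this configuration the lower-left and upper-right positions are both occupied by identities. This one sign difference is precisely what produces the asymmetric correction terms $-z(\dotup \otimes \tokup)$ versus $-z(\tokup \otimes \dotup)$ in the two equations. Converting $\negupcross$ back into $\posupcross$ via \cref{skein} absorbs the surplus $z$-term and completes the proof.

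There is no real obstacle: every step is an application of a relation already established in the section, and once the sign from \cref{interchange} is correctly computed the bookkeeping is entirely mechanical. The only point requiring care is that the two $z$-corrections produced by \cref{nickelopen} do not individually match the RHS — it is only after \cref{skein} is applied to trade one crossing type for the other that the accounting balances.
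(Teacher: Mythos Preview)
Your proof is correct and follows essentially the same route as the paper's: decompose the zebra as $\tokup \circ \dotup$, slide the closed token freely over the crossing via \cref{tokrel} (resp.\ \cref{iron}), slide the open token with a correction via \cref{nickelopen}, and then trade the crossing type using \cref{skein} so that the surplus $z$-term cancels. Your explicit tracking of the super interchange sign is a helpful elaboration of what the paper handles implicitly through diagram heights, but the argument is the same.
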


\begin{proof}
    For the first relation, we have
    \[
        \begin{tikzpicture}[centerzero]
            \draw[->] (0.3,-0.3) -- (-0.3,0.3);
            \draw[wipe] (-0.3,-0.3) -- (0.3,0.3);
            \draw[->] (-0.3,-0.3) -- (0.3,0.3);
            \zebra{-0.16,-0.16};
        \end{tikzpicture}
        \overset{\cref{tokrel}}{=}
        \begin{tikzpicture}[centerzero]
            \draw[->] (0.3,-0.3) -- (-0.3,0.3);
            \draw[wipe] (-0.3,-0.3) -- (0.3,0.3);
            \draw[->] (-0.3,-0.3) -- (0.3,0.3);
            \opendot{-0.15,-0.15};
            \token{0.15,0.15};
        \end{tikzpicture}
        \overset{\cref{nickelopen}}{=}
        \begin{tikzpicture}[centerzero]
            \draw[->] (0.3,-0.3) -- (-0.3,0.3);
            \draw[wipe] (-0.3,-0.3) -- (0.3,0.3);
            \draw[->] (-0.3,-0.3) -- (0.3,0.3);
            \zebra{0.15,0.15};
        \end{tikzpicture}
        + z
        \left(
            \begin{tikzpicture}[centerzero]
                \draw[->] (-0.2,-0.3) -- (-0.2,0.3);
                \draw[->] (0.2,-0.3) -- (0.2,0.3);
                \opendot{-0.2,-0.1};
                \token{0.2,0.1};
            \end{tikzpicture}
            -
            \begin{tikzpicture}[centerzero]
                \draw[->] (-0.2,-0.3) -- (-0.2,0.3);
                \draw[->] (0.2,-0.3) -- (0.2,0.3);
                \zebra{0.2,0};
            \end{tikzpicture}
        \right)
        \overset{\cref{skein}}{=}
        \begin{tikzpicture}[centerzero]
            \draw[->] (-0.3,-0.3) -- (0.3,0.3);
            \draw[wipe] (0.3,-0.3) -- (-0.3,0.3);
            \draw[->] (0.3,-0.3) -- (-0.3,0.3);
            \zebra{0.15,0.15};
        \end{tikzpicture}
        - z\
        \begin{tikzpicture}[centerzero]
            \draw[->] (-0.2,-0.3) -- (-0.2,0.3);
            \draw[->] (0.2,-0.3) -- (0.2,0.3);
            \opendot{-0.2,0};
            \token{0.2,0};
        \end{tikzpicture}
        .
    \]
    The proof of the second relation is analogous.
\end{proof}

\begin{cor}
    For all $k \in \Z_{>0}$, the following relations hold in $\AQcat(z)$:
    \begin{align} \label{flipflop}
        \begin{tikzpicture}[centerzero]
            \draw[->] (0.4,-0.4) -- (-0.4,0.4);
            \draw[wipe] (-0.4,-0.4) -- (0.4,0.4);
            \draw[->] (-0.4,-0.4) -- (0.4,0.4);
            \zebras{-0.16,-0.16}{east}{2k};
        \end{tikzpicture}
        &=
        \begin{tikzpicture}[centerzero]
            \draw[->] (-0.4,-0.4) -- (0.4,0.4);
            \draw[wipe] (0.4,-0.4) -- (-0.4,0.4);
            \draw[->] (0.4,-0.4) -- (-0.4,0.4);
            \zebras{0.18,0.18}{west}{2k};
        \end{tikzpicture}
        - z \sum_{r=1}^{k-1}
        \begin{tikzpicture}[centerzero]
            \draw[->] (-0.2,-0.3) -- (-0.2,0.3);
            \draw[->] (0.2,-0.3) -- (0.2,0.3);
            \zebras{-0.2,0}{east}{2k-2r};
            \zebras{0.2,0}{west}{2r};
        \end{tikzpicture}
        - z \sum_{r=1}^k
        \begin{tikzpicture}[centerzero]
            \draw[->] (-0.2,-0.3) -- (-0.2,0.3);
            \draw[->] (0.2,-0.3) -- (0.2,0.3);
            \zebras{-0.2,0}{east}{2k-2r+1};
            \zebras{0.2,0}{west}{1-2r};
        \end{tikzpicture}
        ,
        \\ \label{flipflop2}
        \begin{tikzpicture}[centerzero]
            \draw[->] (-0.4,-0.4) -- (0.4,0.4);
            \draw[wipe] (0.4,-0.4) -- (-0.4,0.4);
            \draw[->] (0.4,-0.4) -- (-0.4,0.4);
            \zebras{-0.18,-0.18}{east}{-2k};
        \end{tikzpicture}
        &=
        \begin{tikzpicture}[centerzero]
            \draw[->] (0.4,-0.4) -- (-0.4,0.4);
            \draw[wipe] (-0.4,-0.4) -- (0.4,0.4);
            \draw[->] (-0.4,-0.4) -- (0.4,0.4);
            \zebras{0.16,0.16}{west}{-2k};
        \end{tikzpicture}
        + z \sum_{r=1}^{k-1}
        \begin{tikzpicture}[centerzero]
            \draw[->] (-0.2,-0.3) -- (-0.2,0.3);
            \draw[->] (0.2,-0.3) -- (0.2,0.3);
            \zebras{-0.2,0}{east}{-2r};
            \zebras{0.2,0}{west}{2r-2k};
        \end{tikzpicture}
        + z \sum_{r=1}^k
        \begin{tikzpicture}[centerzero]
            \draw[->] (-0.2,-0.3) -- (-0.2,0.3);
            \draw[->] (0.2,-0.3) -- (0.2,0.3);
            \zebras{-0.2,0}{east}{1-2r};
            \zebras{0.2,0}{west}{2k-2r+1};
        \end{tikzpicture}
        .
    \end{align}
\end{cor}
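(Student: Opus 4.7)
The plan is to prove \cref{flipflop,flipflop2} simultaneously by induction on $k \ge 1$, using the two relations of \cref{zebracrossing} as the basic sliding moves. Throughout, I write $z_m := \zebramultup{m}$ for $m \in \Z$, so that $z_0 = 1_\uparrow$, $z_1 = \dotup$, and $z_{-1} = \tokup$.

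For \cref{flipflop}, the base case $k=1$ follows from the first relation of \cref{zebracrossing}, once $\zebraup$ is identified with $z_2$ via \cref{zebradown}. For the inductive step, I decompose $z_{2(k+1)} = z_2 \circ z_{2k}$ as a single-strand composition, obtaining
\[
    \posupcross \circ (z_{2(k+1)} \otimes 1_\uparrow) = \posupcross \circ (z_2 \otimes 1_\uparrow) \circ (z_{2k} \otimes 1_\uparrow).
\]
Applying the base case to the first two factors rewrites $\posupcross \circ (z_2 \otimes 1_\uparrow)$ as $(1_\uparrow \otimes z_2) \circ \negupcross - z (\dotup \otimes \tokup)$. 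Post-composing the correction with $(z_{2k} \otimes 1_\uparrow)$ and simplifying $\dotup \circ z_{2k} = z_{2k+1}$ yields $-z (z_{2k+1} \otimes z_{-1})$, which is precisely the $r=1$ summand of the second sum on the right-hand side for $k+1$. For the principal piece $(1_\uparrow \otimes z_2) \circ \negupcross \circ (z_{2k} \otimes 1_\uparrow)$, I use the skein relation \cref{skein} to replace $\negupcross \circ (z_{2k} \otimes 1_\uparrow)$ by $\posupcross \circ (z_{2k} \otimes 1_\uparrow) - z (z_{2k} \otimes 1_\uparrow)$, then apply the inductive hypothesis to the positive-crossing summand. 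Post-composing each resulting term on the right strand with $z_2$ uses the composition rules $z_2 \circ z_{2r} = z_{2r+2}$ and $z_2 \circ z_{1-2r} = z_{-(2r+1)}$. After the reindexing $r \mapsto r+1$ in the two residual sums, the missing $r=1$ boundary terms of the target are supplied (respectively) by the skein residue $-z (z_{2k} \otimes z_2)$ and the base-case correction $-z (z_{2k+1} \otimes z_{-1})$ already computed, so everything combines exactly into the right-hand side of \cref{flipflop} for $k+1$.

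The base case $k=1$ of \cref{flipflop2} does not appear verbatim in \cref{zebracrossing}, so I derive it by sliding the two tokens of $z_{-2} = \dotup \circ \tokup$ through the negative crossing one at a time. The top token $\dotup$ slides freely on the under-strand (first relation of \cref{ironopen}), giving $\negupcross \circ (\dotup \otimes 1_\uparrow) = (1_\uparrow \otimes \dotup) \circ \negupcross$. The bottom token $\tokup$ then slides by the first relation of \cref{nickel}, picking up corrections $z(1_\uparrow \otimes \tokup) - z(\tokup \otimes 1_\uparrow)$. Recombining $\dotup \circ \tokup = z_{-2}$ in the principal term, applying the super interchange $(1_\uparrow \otimes \dotup) \circ (\tokup \otimes 1_\uparrow) = -(\tokup \otimes \dotup)$ (both tokens are odd), and using the skein relation to cancel a residual $z (1_\uparrow \otimes z_{-2})$ term yields the claim. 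The inductive step for \cref{flipflop2} is directly analogous to that for \cref{flipflop}, using the decomposition $z_{-2(k+1)} = z_{-2} \circ z_{-2k}$ and the base case just derived.

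The principal obstacle is the careful bookkeeping: tracking super interchange signs arising from compositions of odd tokens on adjacent strands, verifying the single-strand zebra composition rules $z_a \circ z_b$ (which are computed by concatenating the alternating token sequences and applying $\dotup^2 = \tokup^2 = -1_\uparrow$ at any double-token seam), and confirming that the $r \mapsto r+1$ reindexing together with the two boundary contributions from the skein and base-case corrections reproduce the stated sums exactly.
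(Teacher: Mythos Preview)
Your argument is correct. For \cref{flipflop} it is essentially the paper's proof: an induction on $k$ using \cref{zebracrossing} as the base case, with \cref{skein} and the inductive hypothesis combined in the step. The only difference is the order in which you apply the base case, the skein relation, and the inductive hypothesis, but the bookkeeping you outline (the reindexing $r \mapsto r+1$ together with the two boundary contributions) is exactly what makes the induction close.

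For \cref{flipflop2} your route is genuinely different and more laborious. You run a second induction, first deriving the $k=1$ case by sliding the two tokens of $z_{-2}$ through $\negupcross$ one at a time via \cref{ironopen} and \cref{nickel}. The paper instead obtains \cref{flipflop2} in one stroke from \cref{flipflop}: compose \cref{flipflop} on the bottom with $\zebramultup{-2k}\ \upstrand$ and on the top with $\upstrand\ \zebramultup{-2k}$, using that $z_{2k} \circ z_{-2k} = z_{-2k} \circ z_{2k} = 1_\uparrow$. This immediately swaps the roles of the two crossings and converts the zebra labels to their negatives, yielding \cref{flipflop2} after rearranging. Your approach has the minor advantage of being self-contained (it does not rely on computing the inverse relation $z_{2k}\circ z_{-2k}=1_\uparrow$), but the paper's conjugation trick avoids repeating the entire induction and all the attendant sign-tracking for odd tokens.
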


\begin{proof}
    We prove \cref{flipflop} by induction on $k$.  The case $k=1$ is \cref{zebracrossing}.  Then, for $k \ge 2$, we have
    \begin{align*}
        \begin{tikzpicture}[centerzero]
            \draw[->] (0.4,-0.4) -- (-0.4,0.4);
            \draw[wipe] (-0.4,-0.4) -- (0.4,0.4);
            \draw[->] (-0.4,-0.4) -- (0.4,0.4);
            \zebras{-0.16,-0.16}{east}{2k};
        \end{tikzpicture}
        &=
        \begin{tikzpicture}[centerzero]
            \draw[->] (-0.4,-0.4) -- (0.4,0.4);
            \draw[wipe] (0.4,-0.4) -- (-0.4,0.4);
            \draw[->] (0.4,-0.4) -- (-0.4,0.4);
            \zebras{0.18,0.18}{west}{2k-2};
            \zebra{-0.16,-0.16};
        \end{tikzpicture}
        - z \sum_{r=1}^{k-2}
        \begin{tikzpicture}[centerzero]
            \draw[->] (-0.2,-0.3) -- (-0.2,0.3);
            \draw[->] (0.2,-0.3) -- (0.2,0.3);
            \zebras{-0.2,0}{east}{2k-2r};
            \zebras{0.2,0}{west}{2r};
        \end{tikzpicture}
        - z \sum_{r=1}^{k-1}
        \begin{tikzpicture}[centerzero]
            \draw[->] (-0.2,-0.3) -- (-0.2,0.3);
            \draw[->] (0.2,-0.3) -- (0.2,0.3);
            \zebras{-0.2,0}{east}{2k-2r+1};
            \zebras{0.2,0}{west}{1-2r};
        \end{tikzpicture}
        \\
        &\overset{\mathclap{\cref{skein}}}{=}\
        \begin{tikzpicture}[centerzero]
            \draw[->] (0.4,-0.4) -- (-0.4,0.4);
            \draw[wipe] (-0.4,-0.4) -- (0.4,0.4);
            \draw[->] (-0.4,-0.4) -- (0.4,0.4);
            \zebras{0.18,0.18}{west}{2k-2};
            \zebra{-0.16,-0.16};
        \end{tikzpicture}
        - z \sum_{r=1}^{k-1}
        \begin{tikzpicture}[centerzero]
            \draw[->] (-0.2,-0.3) -- (-0.2,0.3);
            \draw[->] (0.2,-0.3) -- (0.2,0.3);
            \zebras{-0.2,0}{east}{2k-2r};
            \zebras{0.2,0}{west}{2r};
        \end{tikzpicture}
        - z \sum_{r=1}^{k-1}
        \begin{tikzpicture}[centerzero]
            \draw[->] (-0.2,-0.3) -- (-0.2,0.3);
            \draw[->] (0.2,-0.3) -- (0.2,0.3);
            \zebras{-0.2,0}{east}{2k-2r+1};
            \zebras{0.2,0}{west}{1-2r};
        \end{tikzpicture}
        \\
        &\overset{\mathclap{\cref{zebracrossing}}}{=}\
        \begin{tikzpicture}[centerzero]
            \draw[->] (-0.4,-0.4) -- (0.4,0.4);
            \draw[wipe] (0.4,-0.4) -- (-0.4,0.4);
            \draw[->] (0.4,-0.4) -- (-0.4,0.4);
            \zebras{0.18,0.18}{west}{2k};
        \end{tikzpicture}
        - z \sum_{r=1}^{k-1}
        \begin{tikzpicture}[centerzero]
            \draw[->] (-0.2,-0.3) -- (-0.2,0.3);
            \draw[->] (0.2,-0.3) -- (0.2,0.3);
            \zebras{-0.2,0}{east}{2k-2r};
            \zebras{0.2,0}{west}{2r};
        \end{tikzpicture}
        - z \sum_{r=1}^k
        \begin{tikzpicture}[centerzero]
            \draw[->] (-0.2,-0.3) -- (-0.2,0.3);
            \draw[->] (0.2,-0.3) -- (0.2,0.3);
            \zebras{-0.2,0}{east}{2k-2r+1};
            \zebras{0.2,0}{west}{1-2r};
        \end{tikzpicture}
        ,
    \end{align*}
    where we used the induction hypothesis in the first equality.  Relation \cref{flipflop2} then follows by composing \cref{flipflop} on the bottom with
    $
        \begin{tikzpicture}[centerzero]
            \draw[->] (0,-0.2) -- (0,0.2);
            \zebras{0,-0.02}{east}{-2k};
        \end{tikzpicture}
        \ \upstrand
    $ and on the top with $\upstrand\ \zebramultup{-2k}$.
\end{proof}

\begin{lem}
    For all $k > 0$, we have
    \begin{equation} \label{eyez}
        \rightzebrabub{2k} - \leftzebrabub{2k}
        = z \sum_{r=1}^{k-1} \rightzebrabub{2r}\, \leftzebrabub{2k-2r}.
    \end{equation}
\end{lem}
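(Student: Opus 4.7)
The plan is to derive the identity by closing up the flipflop relation \cref{flipflop} into a scalar-valued identity in $\End_{\AQcat(z)}(\one)$.

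First, I would rewrite \cref{flipflop} as a morphism equation
\[
    \posupcross \circ (\zebramultup{2k} \otimes 1_\uparrow)
    = (1_\uparrow \otimes \zebramultup{2k}) \circ \negupcross
    - z \sum_{r=1}^{k-1} \zebramultup{2k-2r} \otimes \zebramultup{2r}
    - z \sum_{r=1}^{k} \zebramultup{2k-2r+1} \otimes \zebramultup{1-2r}
\]
in $\End_{\AQcat(z)}(\uparrow^{\otimes 2})$. I would then apply a ``trace closure'' to both sides by pre- and post-composing with a suitable morphism built from cups, caps, and the pivotal structure. The closure is chosen asymmetrically so that the left strand gets wrapped into a clockwise loop while the right strand gets wrapped into a counterclockwise loop (this can be realized, for instance, by attaching a $\leftcup$ followed by a $\rightcup$ at the bottom and the dual arrangement at the top, with the crossing sitting between them).

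Second, I would evaluate the closure of each term. On the LHS, the positive crossing combined with the orientation-inverting cups and caps produces a single closed loop; by the curl-trivialization identities derived from \cref{venom1,venom2,venom3} (recall that $\Qcat(z)$ contains $\OS(z,1)$, where curls are trivial), this loop simplifies. Combined with the zebra stack and the sliding relations \cref{iron,ironopen,kimchi} that allow one to transport the tokens around a bubble, the result is $\rightzebrabub{2k}$. Similarly, the first RHS term (the negative crossing with the zebra now on the top-right) closes to $\leftzebrabub{2k}$, while each even-labelled correction term $\zebramultup{2k-2r} \otimes \zebramultup{2r}$ closes into the product of two disjoint bubbles $\rightzebrabub{2k-2r} \cdot \leftzebrabub{2r}$ (one clockwise, one counterclockwise, from the asymmetric closure).

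Third, the odd-labelled correction terms in the second sum contribute zero, since their closure produces bubbles with an odd number of Clifford tokens, and these vanish by \cref{tapioca}. Collecting terms and reindexing the sum via $s = k - r$ yields exactly \cref{eyez}, with the factor $z$ matching because the bubble order $\rightzebrabub{\cdot}\cdot \leftzebrabub{\cdot}$ equals $\leftzebrabub{\cdot}\cdot\rightzebrabub{\cdot}$ (commutativity in $\End(\one)$).

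The main obstacle will be the orientation bookkeeping of the closure: verifying that the positive and negative crossings close to \emph{distinct} bubbles (clockwise vs.\ counterclockwise) rather than the same one, and that the zebras, after sliding around the resulting loops via \cref{iron,ironopen}, end up on the correct sides to produce $\rightzebrabub{2k}$ and $\leftzebrabub{2k}$ as defined. This is where careful use of the sliding relations and the curl-trivialization identities is essential.
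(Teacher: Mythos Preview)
Your proposal is correct and takes essentially the same approach as the paper: close up the relation \cref{flipflop} into a scalar identity, using curl-trivialization to collapse the crossing terms to single zebra bubbles and \cref{tapioca} to kill the odd-labelled corrections. One small bookkeeping correction: with the natural closure (left arc on the left, right arc on the right, so that the parallel-strand terms split into two disjoint bubbles), it is the positive-crossing side that closes to $\leftzebrabub{2k}$ and the negative-crossing side to $\rightzebrabub{2k}$, while the correction term $\zebramultup{2k-2r}\otimes\zebramultup{2r}$ closes to $\leftzebrabub{2k-2r}\,\rightzebrabub{2r}$; no reindexing is then needed, and this is exactly how the paper runs the computation, starting from $\leftzebrabub{2k}$.
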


\begin{proof}
    We have
    \begin{multline*}
        \leftzebrabub{2k}
        =
        \begin{tikzpicture}[centerzero]
            \draw (0.7,0) arc(0:-90:0.3) to[out=180,in=0] (-0.4,0.3);
            \draw[wipe] (-0.4,-0.3) to[out=0,in=180] (0.4,0.3);
            \draw[->] (-0.4,0.3) arc(90:270:0.3) to[out=0,in=180] (0.4,0.3) arc(90:0:0.3);
            \zebras{-0.16,-0.2}{north}{2k};
        \end{tikzpicture}
        \overset{\cref{flipflop}}{=}
        \begin{tikzpicture}[centerzero]
            \draw[->] (-0.4,0.3) arc(90:270:0.3) to[out=0,in=180] (0.4,0.3) arc(90:0:0.3);
            \draw[wipe] (0.7,0) arc(0:-90:0.3) to[out=180,in=0] (-0.4,0.3);
            \draw (0.7,0) arc(0:-90:0.3) to[out=180,in=0] (-0.4,0.3);
            \zebras{0.16,0.2}{south}{2k};
        \end{tikzpicture}
        - z \sum_{r=1}^{k-1} \rightzebrabub{2r}\, \leftzebrabub{2k-2r}
        - z \sum_{r=1}^k \rightzebrabub{1-2r}\, \leftzebrabub{2k-2r+1}
        \\
        \overset{\cref{tapioca}}{=} \rightzebrabub{2k} - z \sum_{r=1}^{k-1} \rightzebrabub{2r}\, \leftzebrabub{2k-2r}.
        \qedhere
    \end{multline*}
\end{proof}

Let $\Sym$ denote the $\kk$-algebra of symmetric functions over $\kk$.  For $r \ge 0$, let $e_r$ and $h_r$ denote the degree $r$ elementary and complete homogeneous symmetric functions, respectively, with the convention that $e_0 = h_0 = 1$.

\begin{prop}
    We have a homomorphism of rings
    \[
        \beta \colon \Sym \to \End_{\AQcat(z)}(\one),\quad
        e_r \mapsto (-1)^{r-1} z\, \leftzebrabub{2r},\quad
        h_r \mapsto  z \rightzebrabub{2r},\quad r \ge 1.
    \]
\end{prop}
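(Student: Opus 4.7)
The plan is to build $\beta$ using the presentation $\Sym = \kk[e_1, e_2, \ldots]$ as a polynomial ring, and then to verify the $h_r$-formula by induction, with \cref{eyez} doing the essential work.

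First I would observe that $\End_{\AQcat(z)}(\one)$ is a commutative $\kk$-algebra. In any monoidal supercategory $\End(\one)$ is supercommutative (by the usual Eckmann--Hilton argument with signs), and each bubble $\rightzebrabub{2k}$ or $\leftzebrabub{2k}$ carries an even number of Clifford tokens and is therefore of even parity. Hence any two such bubbles commute in $\End_{\AQcat(z)}(\one)$, and arbitrary products of them are well defined without any sign concerns.

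Since $\Sym = \kk[e_1, e_2, \ldots]$ is a polynomial ring and the target is commutative, the assignment $e_r \mapsto (-1)^{r-1} z\, \leftzebrabub{2r}$ extends uniquely to a $\kk$-algebra homomorphism $\beta \colon \Sym \to \End_{\AQcat(z)}(\one)$. What remains is to verify that $\beta(h_r) = z\, \rightzebrabub{2r}$ for all $r \ge 1$. Using the Newton identity $\sum_{r=0}^n (-1)^r e_r h_{n-r} = 0$ (valid in $\Sym$ for all $n \ge 1$), the elements $h_n$ are determined recursively from the $e_r$ via
\[
    h_n = \sum_{r=1}^n (-1)^{r-1} e_r\, h_{n-r}, \qquad h_0 = 1.
\]
Proceeding by induction on $n$, and using the inductive hypothesis $\beta(h_{n-r}) = z\, \rightzebrabub{2(n-r)}$ for $1 \le r < n$ together with $\beta(h_0) = 1$, the desired equality $\beta(h_n) = z\, \rightzebrabub{2n}$ reduces (after the signs $(-1)^{r-1}(-1)^{r-1} = 1$ cancel) to
\[
    z\, \rightzebrabub{2n} - z\, \leftzebrabub{2n} = z^2 \sum_{r=1}^{n-1} \leftzebrabub{2r}\, \rightzebrabub{2(n-r)}.
\]
Reindexing via $s = n - r$ and using commutativity of $\End_{\AQcat(z)}(\one)$ to rewrite the right-hand side as $z^2 \sum_{s=1}^{n-1} \rightzebrabub{2s}\, \leftzebrabub{2(n-s)}$, this is precisely \cref{eyez}. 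The base case $n = 1$ is the empty-sum instance of \cref{eyez}, which asserts $\rightzebrabub{2} = \leftzebrabub{2}$.

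The main obstacle is purely bookkeeping: keeping track of the signs in the Newton identity and matching them against the $(-1)^{r-1}$ appearing in the definition of $\beta(e_r)$, and then reindexing the convolution to line up with \cref{eyez}. Everything else follows directly from the supercommutativity of $\End_{\AQcat(z)}(\one)$ and the already-established identity \cref{eyez}.
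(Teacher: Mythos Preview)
Your proof is correct and essentially the same as the paper's. The paper uses the presentation of $\Sym$ generated by both $e_r$ and $h_r$ modulo the Newton relations $\sum_{r=0}^k (-1)^r e_{k-r} h_r = 0$ and checks directly that $\beta$ annihilates each relation via \cref{eyez}; you instead use the polynomial presentation $\Sym = \kk[e_1,e_2,\dotsc]$, define $\beta$ freely on the $e_r$, and then verify the $h_r$-formula by induction, which unwinds to the same use of \cref{eyez}. Your additional remark on supercommutativity of $\End_{\AQcat(z)}(\one)$ and the even parity of the bubbles is a useful clarification that the paper leaves implicit.
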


\begin{proof}
    The $\kk$-algebra $\Sym$ is generated by $e_r,h_r$, $r>0$, modulo the identities
    \[
        \sum_{r=0}^k (-1)^r e_{k-r} h_r = 0,\quad k > 0,
    \]
    where $h_0=e_0=1$.  The map $\beta$ sends the left-hand side of this identity to
    \[
        (-1)^{k-1} z\, \leftzebrabub{2k}
        + (-1)^{k-1} z^2 \sum_{r=1}^{k-1} \rightzebrabub{2r}\, \leftzebrabub{2k-2r}
        + (-1)^k z \rightzebrabub{2k},
    \]
    which is equal to zero by \cref{eyez}.
\end{proof}

\Cref{affinespan} below implies that the map $\beta$ is surjective, while \Cref{affinebasis} would imply it is an isomorphism.  We next deduce a bubble slide relation.

\begin{lem}
    For all $k \ge 0$, we have
    \[
        \leftzebrabub{2k}
        \begin{tikzpicture}[centerzero]
            \draw[->] (0,-0.35) -- (0,0.35);
        \end{tikzpicture}
        =
        \begin{tikzpicture}[centerzero]
            \draw[->] (0,-0.35) -- (0,0.35);
        \end{tikzpicture}
        \leftzebrabub{2k}
        - kz
        \left(
            \begin{tikzpicture}[centerzero]
                \draw[->] (0,-0.35) -- (0,0.35);
                \zebras{0,0}{west}{2k};
            \end{tikzpicture}
            +
            \begin{tikzpicture}[centerzero]
                \draw[->] (0,-0.35) -- (0,0.35);
                \zebras{0,0}{west}{-2k};
            \end{tikzpicture}
        \!\!\right)
        + z^2 \sum_{r=1}^{k-1} r\ \leftzebrabub{2k-2r}
        \left(
            \begin{tikzpicture}[centerzero]
                \draw[->] (0,-0.35) -- (0,0.35);
                \zebras{0,0}{west}{2r};
            \end{tikzpicture}
            +
            \begin{tikzpicture}[centerzero]
                \draw[->] (0,-0.35) -- (0,0.35);
                \zebras{0,0}{west}{-2r};
            \end{tikzpicture}
        \!\!\right).
    \]
\end{lem}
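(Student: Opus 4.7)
The case $k=0$ is trivial, since $\leftzebrabub{0} = \leftbub = 0$ by \cref{venom3} (the empty zebra decoration is no decoration). So assume $k \ge 1$. The plan is to rewrite $\leftzebrabub{2k}\, \upstrand$ by stretching the bubble around the vertical strand, and then converting the resulting crossings by means of \cref{flipflop} and \cref{flipflop2}.

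\textbf{Step 1 (stretching the bubble).} Using the pivotal structure of $\AQcat(z)$ (the extension of \cref{swivel} discussed after \cref{bounce}), isotope the left-hand bubble so that its right edge is pulled across the vertical strand. Because all zebras involved have even total parity, no signs are introduced by the isotopy. The result is a diagram in which the $2k$-decorated arc of the former bubble crosses the vertical strand twice, once at the top with a positive upward crossing and once at the bottom with a positive upward crossing. The $2k$ zebras sit on the arc of the bubble that has crossed over. Symmetrically, one may prefer to work with a $-2k$-labelled zebra on the other arc via \cref{tapioca}, which gives the mirror configuration needed to produce both terms in the $-kz$ correction.

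\textbf{Step 2 (pushing zebras through the crossing).} Apply \cref{flipflop} (for the piece of the zebra decoration with positive label $2k$) and \cref{flipflop2} (for its mirror, with label $-2k$, coming from the pivotal normalisation \cref{tapioca}) to the upper crossing. This converts that crossing to the opposite one with the zebras on the opposite strand, at the cost of the two sums appearing on the right of \cref{flipflop} and \cref{flipflop2}. After doing this, the former bubble arc has been moved entirely to the right of the vertical strand, so the main term becomes $\upstrand\, \leftzebrabub{2k}$.

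\textbf{Step 3 (closing off the error terms).} Each error term coming from \cref{flipflop} consists of two parallel vertical strands, each carrying a zebra; one of these strands is a piece of the original bubble and must therefore be closed off using the remaining crossing and a cup/cap pair. Closing off the bubble piece carrying a zebra of label $2r$ (resp.\ $1-2r$) produces a closed bubble $\leftzebrabub{2r}$ (resp.\ $\leftzebrabub{1-2r}$), while the remaining zebra of label $2k-2r$ (resp.\ $2k-2r+1$) stays on the vertical strand. The odd-indexed terms therefore close to bubbles of odd zebra-weight, which vanish by \cref{tapioca}. The even-indexed terms leave behind products $\leftzebrabub{2r}\cdot \zebramultup{2k-2r}$ and their mirror images $\leftzebrabub{-2r}\cdot \zebramultup{2r-2k}$.

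\textbf{Step 4 (collecting coefficients).} The sums in \cref{flipflop} run over $r = 1,\dots,k-1$ for the even piece, so the combined even error terms have the right shape but are expressed in terms of $\leftzebrabub{2r}$ rather than $\leftzebrabub{2k-2r}$. I would reindex $r \mapsto k-r$, and then use \cref{eyez} recursively to convert the \emph{right}-bubbles that appear (from closing the second arc on the opposite side) into left-bubbles. Each such conversion replaces $\rightzebrabub{2m}$ by $\leftzebrabub{2m}$ plus lower-weight products of bubbles; iterating this replacement across all $r$ is exactly what produces the integer coefficient $r$ in front of each summand and the coefficient $k$ in front of the boundary term. Matching the boundary term ($r=0$ in the reindexed sum, coming from the closure whose remaining decoration is empty) produces the $-kz\bigl(\zebramultup{2k}+\zebramultup{-2k}\bigr)$ term.

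\textbf{Main obstacle.} Steps 1--3 are structurally clear; the real bookkeeping lies in Step 4. Carrying out Step 1 on \emph{both} halves of the bubble (via the identification $\leftzebrabub{2k} = -\leftzebrabub{-2k}$ from \cref{tapioca}) is what forces the symmetric appearance of $\zebramultup{2r}+\zebramultup{-2r}$, and the iterative conversion of right-bubbles to left-bubbles via \cref{eyez} is what generates the integer coefficient $r$. An alternative, possibly cleaner, implementation of Step 4 is induction on $k$: use \cref{flipflop} with the explicit case $k=1$ (i.e.\ \cref{zebracrossing} combined with closing off one pair of strands) as a base case, and then appeal to \cref{eyez} to bootstrap from $k-1$ to $k$, where the coefficient $k$ appears naturally from the telescoping sum. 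In either implementation, the delicate accounting that produces the $r$-coefficient is the crux of the argument.
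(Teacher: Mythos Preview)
Your Steps 1--2 match the paper's approach: stretch the strand through the bubble using \cref{braid}, then apply \cref{flipflop} to one crossing. After that, your plan diverges from what actually happens and contains a real error.

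In Step 3 you claim the odd-indexed error terms from \cref{flipflop} ``close to bubbles of odd zebra-weight, which vanish by \cref{tapioca}.'' This is not what happens. After \cref{flipflop}, each error term is \emph{not} a bubble tensored with a strand; the two ``parallel'' pieces reconnect through the \emph{remaining} crossing and the cup/cap of the original bubble, producing a single open strand carrying two zebras and one crossing. The paper treats each such term by first applying \cref{skein} to that remaining crossing (which splits off the genuine bubble-times-strand contribution) and then applying \cref{flipflop} again to the flipped crossing. Doing this for the even family yields $\zebramultup{2k}$ minus a sum of left-bubble-times-$\zebramultup{2s}$ terms, and for the odd family yields $\zebramultup{-2k}$ minus the analogous sum with $\zebramultup{-2s}$. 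So the odd family does not vanish; it is precisely the source of the $\zebramultup{-2k}$ and $\zebramultup{-2r}$ contributions in the final formula. Your proposal to recover these via a separate ``mirror'' computation with \cref{flipflop2} and \cref{tapioca} is unnecessary and would double-count.

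Step 4 is also off track: no right-bubbles appear, so \cref{eyez} is never invoked, and no induction on $k$ is needed. The integer coefficient $r$ arises purely combinatorially, from reindexing the double sum
\[
    \sum_{r=1}^{k-1}\sum_{s=0}^{k-r-1}\ \leftzebrabub{2k-2r-2s}\ \zebramultup{\pm(2r+2s)}
    \ =\ \sum_{r=1}^{k-1} r\ \leftzebrabub{2k-2r}\ \zebramultup{\pm 2r}.
\]
That reindexing, together with the main term (handled by one more \cref{skein}), gives the statement directly.
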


\begin{proof}
    The case $k=0$ follows immediately from \cref{braid}.  Thus, we suppose $k>0$.  We first compute
    \[
        \leftzebrabub{2k}
        \begin{tikzpicture}[centerzero]
            \draw[->] (0,-0.5) -- (0,0.5);
        \end{tikzpicture}
        \overset{\cref{braid}}{=}
        \begin{tikzpicture}[centerzero]
            \draw[->] (0.2,-0.5) to[out=110,in=down] (-0.1,0) to[out=up,in=250] (0.2,0.5);
            \draw[wipe] (-0.6,0) to[out=down,in=left] (-0.4,-0.4) to[out=right,in=225] (-0.2,-0.3) to[out=45,in=down] (0.2,0) to[out=up,in=right] (-0.3,0.4) to[out=left,in=up] (-0.6,0);
            \draw[->] (-0.6,0) to[out=down,in=left] (-0.4,-0.4) to[out=right,in=225] (-0.2,-0.3) to[out=45,in=down] (0.2,0) to[out=up,in=right] (-0.3,0.4) to[out=left,in=up] (-0.6,0);
            \zebras{-0.2,-0.3}{north}{2k};
        \end{tikzpicture}
        \overset{\cref{flipflop}}{=}
        \begin{tikzpicture}[centerzero]
            \draw (-0.3,0) arc(180:360:0.3);
            \draw[wipe] (0.3,-0.5) to[out=110,in=down] (0,0) to[out=up,in=250] (0.3,0.5);
            \draw[->] (0.3,-0.5) to[out=110,in=down] (0,0) to[out=up,in=250] (0.3,0.5);
            \draw[wipe] (0.3,0) arc(0:180:0.3);
            \draw[->] (0.3,0) arc(0:180:0.3);
            \zebras{0.3,0}{west}{2k};
        \end{tikzpicture}
        - z \sum_{r=1}^{k-1}
        \begin{tikzpicture}[centerzero]
            \draw[->] (-0.6,0.2) -- (-0.6,-0.2) arc(180:360:0.2) \braidup (0.2,0.2) -- (0.2,0.5);
            \draw[wipe] (0.2,-0.5) -- (0.2,-0.2) \braidup (-0.2,0.2) arc(0:180:0.2);
            \draw (0.2,-0.5) -- (0.2,-0.2) \braidup (-0.2,0.2) arc(0:180:0.2);
            \zebra{-0.2,-0.2};
            \node at (-0.25,-0.6) {\strandlabel{\color{Mahogany} 2k-2r}};
            \zebras{0.2,-0.2}{west}{2r};
        \end{tikzpicture}
        -  z\sum_{r=1}^k
        \begin{tikzpicture}[centerzero]
            \draw[->] (-0.6,0.2) -- (-0.6,-0.2) arc(180:360:0.2) \braidup (0.2,0.2) -- (0.2,0.5);
            \draw[wipe] (0.2,-0.5) -- (0.2,-0.2) \braidup (-0.2,0.2) arc(0:180:0.2);
            \draw (0.2,-0.5) -- (0.2,-0.2) \braidup (-0.2,0.2) arc(0:180:0.2);
            \zebra{-0.2,-0.2};
            \node at (-0.25,-0.6) {\strandlabel{\color{Mahogany} 2k-2r+1}};
            \zebras{0.2,-0.2}{west}{1-2r};
        \end{tikzpicture} .
    \]
    Next, note that, for $1 \le r \le k-1$,
    \[
        \begin{tikzpicture}[centerzero]
            \draw[->] (-0.6,0.2) -- (-0.6,-0.2) arc(180:360:0.2) \braidup (0.2,0.2) -- (0.2,0.5);
            \draw[wipe] (0.2,-0.5) -- (0.2,-0.2) \braidup (-0.2,0.2) arc(0:180:0.2);
            \draw (0.2,-0.5) -- (0.2,-0.2) \braidup (-0.2,0.2) arc(0:180:0.2);
            \zebra{-0.2,-0.2};
            \node at (-0.25,-0.6) {\strandlabel{\color{Mahogany} 2k-2r}};
            \zebras{0.2,-0.2}{west}{2r};
        \end{tikzpicture}
        \overset{\cref{skein}}{=}
        \begin{tikzpicture}[centerzero]
            \draw (0.2,-0.5) -- (0.2,-0.2) \braidup (-0.2,0.2) arc(0:180:0.2);
            \draw[wipe] (-0.6,0.2) -- (-0.6,-0.2) arc(180:360:0.2) \braidup (0.2,0.2) -- (0.2,0.5);
            \draw[->] (-0.6,0.2) -- (-0.6,-0.2) arc(180:360:0.2) \braidup (0.2,0.2) -- (0.2,0.5);
            \zebra{-0.2,-0.2};
            \node at (-0.25,-0.6) {\strandlabel{\color{Mahogany} 2k-2r}};
            \zebras{0.2,-0.2}{west}{2r};
        \end{tikzpicture}
        - z\ \leftzebrabub{2k-2r}
        \begin{tikzpicture}[centerzero]
            \draw[->] (0,-0.5) -- (0,0.5);
            \zebras{0,0}{west}{2r};
        \end{tikzpicture}
        \overset{\cref{flipflop}}{\underset{\cref{tapioca}}{=}}
        \begin{tikzpicture}[centerzero]
            \draw[->] (0,-0.5) -- (0,0.5);
            \zebras{0,0}{west}{2k};
        \end{tikzpicture}
        - z \sum_{s=0}^{k-r-1} \leftzebrabub{2k-2r-2s}
        \begin{tikzpicture}[centerzero]
            \draw[->] (0,-0.5) -- (0,0.5);
            \zebras{0,0}{west}{2r+2s};
        \end{tikzpicture} .
    \]
    Similarly, for $1 \le r \le k$,
    \[
        \begin{tikzpicture}[centerzero]
            \draw[->] (-0.6,0.2) -- (-0.6,-0.2) arc(180:360:0.2) \braidup (0.2,0.2) -- (0.2,0.5);
            \draw[wipe] (0.2,-0.5) -- (0.2,-0.2) \braidup (-0.2,0.2) arc(0:180:0.2);
            \draw (0.2,-0.5) -- (0.2,-0.2) \braidup (-0.2,0.2) arc(0:180:0.2);
            \zebra{-0.2,-0.2};
            \node at (-0.25,-0.6) {\strandlabel{\color{Mahogany} 2k-2r+1}};
            \zebras{0.2,-0.2}{west}{1-2r};
        \end{tikzpicture}
        =
        \begin{tikzpicture}[centerzero]
            \draw[->] (-0.6,0.2) -- (-0.6,-0.2) arc(180:360:0.2) \braidup (0.2,0.2) -- (0.2,0.5);
            \draw[wipe] (0.2,-0.5) -- (0.2,-0.2) \braidup (-0.2,0.2) arc(0:180:0.2);
            \draw (0.2,-0.6) -- (0.2,-0.2) \braidup (-0.2,0.2) arc(0:180:0.2);
            \zebra{-0.2,-0.2};
            \node at (-0.25,-0.6) {\strandlabel{\color{Mahogany} 2k-2r}};
            \zebras{0.2,-0.2}{west}{2r-2};
            \opendot{0.2,0.3};
            \token{0.2,-0.4};
        \end{tikzpicture}
        =
        \begin{tikzpicture}[centerzero]
            \draw[->] (0,-0.5) -- (0,0.5);
            \zebras{0,0}{west}{-2k};
        \end{tikzpicture}
        - z \sum_{s=0}^{k-r-1} \leftzebrabub{2k-2r-2s}
        \begin{tikzpicture}[centerzero]
            \draw[->] (0,-0.5) -- (0,0.5);
            \zebras{0,0}{west}{-2r-2s};
        \end{tikzpicture} ,
    \]
    where the last sum is zero when $r=k$.  We also have
    \[
        \begin{tikzpicture}[centerzero]
            \draw (-0.3,0) arc(180:360:0.3);
            \draw[wipe] (0.3,-0.5) to[out=110,in=down] (0,0) to[out=up,in=250] (0.3,0.5);
            \draw[->] (0.3,-0.5) to[out=110,in=down] (0,0) to[out=up,in=250] (0.3,0.5);
            \draw[wipe] (0.3,0) arc(0:180:0.3);
            \draw[->] (0.3,0) arc(0:180:0.3);
            \zebras{0.3,0}{west}{2k};
        \end{tikzpicture}
        \overset{\cref{skein}}{=}
        \begin{tikzpicture}[centerzero]
            \draw[->] (0.3,-0.5) to[out=110,in=down] (0,0) to[out=up,in=250] (0.3,0.5);
            \draw[wipe] (-0.3,0) arc(-180:180:0.3);
            \draw[->] (-0.3,0) arc(-180:180:0.3);
            \zebras{0.3,0}{west}{2k};
        \end{tikzpicture}
        - z\
        \begin{tikzpicture}[anchorbase]
            \draw[->] (-0.4,0) to[out=down,in=left] (-0.25,-0.15) to[out=right,in=down] (0,0.4);
            \draw[wipe] (0,-0.4) to[out=up,in=right] (-0.25,0.15);
            \draw (0,-0.6) -- (0,-0.4) to[out=up,in=right] (-0.25,0.15) to[out=left,in=up] (-0.4,0);
            \zebras{0,-0.4}{west}{2k};
        \end{tikzpicture}
        =
        \begin{tikzpicture}[centerzero]
            \draw[->] (0,-0.5) -- (0,0.5);
        \end{tikzpicture}
        \leftzebrabub{2k}
        - z\
        \begin{tikzpicture}[centerzero]
            \draw[->] (0,-0.5) -- (0,0.5);
            \zebras{0,0}{west}{2k};
        \end{tikzpicture}
        .
    \]
    Therefore,
    \begin{align*}
        \leftzebrabub{2k}
        \begin{tikzpicture}[centerzero]
            \draw[->] (0,-0.5) -- (0,0.5);
        \end{tikzpicture}
        &=
        \begin{tikzpicture}[centerzero]
            \draw[->] (0,-0.5) -- (0,0.5);
        \end{tikzpicture}
        \leftzebrabub{2k}        - k z
        \left(
            \begin{tikzpicture}[centerzero]
                \draw[->] (0,-0.5) -- (0,0.5);
                \zebras{0,0}{west}{2k};
            \end{tikzpicture}
            +
            \begin{tikzpicture}[centerzero]
                \draw[->] (0,-0.5) -- (0,0.5);
                \zebras{0,0}{west}{-2k};
            \end{tikzpicture}
        \!\!\right)
        + z^2 \sum_{r=1}^{k-1} \sum_{s=0}^{k-r-1}  \leftzebrabub{2k-2r-2s}
        \left(
            \begin{tikzpicture}[centerzero]
                \draw[->] (0,-0.5) -- (0,0.5);
                \zebras{0,0}{west}{2r+2s};
            \end{tikzpicture}
            +
            \begin{tikzpicture}[centerzero]
                \draw[->] (0,-0.5) -- (0,0.5);
                \zebras{0,0}{west}{-2r-2s};
            \end{tikzpicture}
        \!\!\right)
        \\
        &=
        \begin{tikzpicture}[centerzero]
            \draw[->] (0,-0.35) -- (0,0.35);
        \end{tikzpicture}
        \leftzebrabub{2k}
        - kz
        \left(
            \begin{tikzpicture}[centerzero]
                \draw[->] (0,-0.35) -- (0,0.35);
                \zebras{0,0}{west}{2k};
            \end{tikzpicture}
            +
            \begin{tikzpicture}[centerzero]
                \draw[->] (0,-0.35) -- (0,0.35);
                \zebras{0,0}{west}{-2k};
            \end{tikzpicture}
        \!\!\right)
        + z^2 \sum_{r=1}^{k-1} r\ \leftzebrabub{2k-2r}
        \left(
            \begin{tikzpicture}[centerzero]
                \draw[->] (0,-0.35) -- (0,0.35);
                \zebras{0,0}{west}{2r};
            \end{tikzpicture}
            +
            \begin{tikzpicture}[centerzero]
                \draw[->] (0,-0.35) -- (0,0.35);
                \zebras{0,0}{west}{-2r};
            \end{tikzpicture}
        \!\!\right).
        \qedhere
    \end{align*}
\end{proof}

For any two objects $X,Y \in \AQcat(z)$, the morphism space $\Hom_{\AQcat(z)}(X,Y)$ is a right $\Sym$-supermodule with action given by
\[
    f \cdot a := f \otimes \beta(a),\qquad
    f \in \Hom_{\AQcat(z)}(X,Y),\quad a \in \Sym.
\]
As in \cref{sec:Qdef}, for each $(X,Y)$, fix a set $B(X,Y)$ consisting of a choice of positive reduced lift for each $(X,Y)$-matching.  Then let $B_{\zebralone}(X,Y)$ denote the set of all morphisms that can be obtained from the elements of $B(X,Y)$ by adding a zebra, labelled by some integer (possibly zero) near the terminus of each string.  We require that all zebras occurring on strands whose terminus is at the top of the diagram to be at the same height; similarly we require that all zebras occurring on strands whose terminus is at the bottom of the diagram to be at the same height, and below those zebras on strands whose terminus is at the top of the diagram.

\begin{prop} \label{affinespan}
    For any objects $X,Y$ of $\AQcat(z)$, the set $B_{\zebralone}(X,Y)$ spans the morphism space $\Hom_{\AQcat(z)}(X,Y)$ as a right $\Sym$-supermodule.
\end{prop}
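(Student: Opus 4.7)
The plan is to adapt the straightening algorithm used in the proof of \cref{nonaffinespan} to the affine setting, taking into account both the open Clifford tokens and the fact that closed components are no longer zero. Starting from an arbitrary diagram $D$ representing a morphism $X \to Y$ in $\AQcat(z)$, I will rewrite $D$ as a $\Sym$-linear combination of elements of $B_{\zebralone}(X,Y)$ in four stages: move every Clifford token near a terminus, collapse the resulting column of tokens on each strand to a zebra, extract closed components as elements of $\beta(\Sym)$, and then straighten the black skeleton.

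The first step is to push all tokens (open and closed) toward the ends of their strands. Tokens slide freely over crossings by \cref{iron,ironopen} and can also be moved across cups and caps by the same relations; passing under a crossing is governed by \cref{nickel,nickelopen}, which produce two error terms in which the strand has been resolved into two vertical strands each carrying a single token at the same height. These error terms have strictly fewer crossings separating each token from its target terminus, so an induction on the total number of such (token, crossing) obstructions terminates. Once every token sits near a terminus, we apply $\tokup\tokup = -\,\upstrand$ from \cref{tokrel} and $\dotup\dotup = -\,\upstrand$ from \cref{AQrels} to collapse consecutive same-type tokens, leaving an alternating sequence of open and closed tokens which is precisely a zebra by \cref{zebradef}.

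Next, I address closed components of the diagram. Any crossings inside a closed component can be removed using \cref{venom2} together with the skein relations \cref{skein,ldskein,rskein}, reducing each closed component to a disjoint union of bubbles carrying zebras; by \cref{kimchi} such bubbles reduce to $\rightzebrabub{k}$ or $\leftzebrabub{k}$. By \cref{tapioca} the odd-labeled bubbles vanish and the even-labeled bubbles lie in $\beta(\Sym) \subseteq \End_{\AQcat(z)}(\one)$. Since bubbles are even endomorphisms of $\one$, the super interchange law lets us freely move them past the rest of the diagram and collect them as a single $\Sym$-factor acting on the right of what remains. After this extraction the diagram contains no closed components.

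It remains to straighten the black skeleton, namely the diagram obtained by deleting the zebras near the termini; this skeleton contains no Clifford tokens and no closed components, so by \cref{HOMFLYPT} the HOMFLYPT straightening algorithm expresses it as a $\kk$-linear combination of positive reduced lifts in $B(X,Y)$. Reattaching the zebras to the termini yields elements of $B_{\zebralone}(X,Y)$, and the extracted bubble factors provide the $\Sym$-coefficients. The main obstacle is bookkeeping in the first step: the relations \cref{nickel,nickelopen} can increase the number of tokens, so the induction must be carefully arranged (first on how many crossings each token must pass to reach its terminus, then on the token count of the resulting error terms, whose tokens lie on strands already adjacent to a terminus), and one must also verify that the sliding procedure is compatible with the relocation of tokens across cups, caps, and already-placed zebras.
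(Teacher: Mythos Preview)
Your proposal is correct and follows essentially the same straightening strategy as the paper, which only sketches the argument by citing the Reidemeister, skein, and bubble/zebra sliding relations; you have supplied the details of how tokens are pushed to termini, collapsed into zebras, and how closed components are extracted as $\Sym$-coefficients. One small point worth tightening: your appeal to the super interchange law to move bubbles past the rest of the diagram only applies once the bubble is already unlinked from all open strands, so you should make explicit that the skein resolutions in your step~3 (or equivalently the bubble slide relation derived just before \cref{affinespan}) are what achieve this disentangling.
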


\begin{proof}
    Since this type of argument is standard in categorical representation theory, we only give a sketch of the proof.  (See also the proof of \cref{nonaffinespan}.)  We have the Reidemeister relations, a skein relation, and bubble and zebra sliding relations.  These allow diagrams for morphisms in $\AQcat(z)$ to be transformed in a way similar to the way oriented tangles are simplified in skein categories.  Hence, there is a straightening algorithm to rewrite any diagram representing a morphism $X \to Y$ as a linear combination of the ones in $B_{\zebralone}(X,Y)$.
\end{proof}

\begin{conj} \label{affinebasis}
    For any objects $X,Y$ of $\AQcat(z)$, the morphism space $\Hom_{\AQcat(z)}(X,Y)$ is a free right $\Sym$-supermodule with basis $B_{\zebralone}(X,Y)$.
\end{conj}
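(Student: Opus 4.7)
The plan is to adapt the categorical comultiplication technique of \cite{BSW-qheis}. By \cref{affinespan}, the set $B_{\zebralone}(X,Y)$ already spans $\Hom_{\AQcat(z)}(X,Y)$ as a right $\Sym$-supermodule, so the remaining task is to prove linear independence over $\Sym$.

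First I would reduce to the case $X = Y =\, \uparrow^{\otimes r}$, following the strategy of \cref{basisthm}. The pivotal structure of $\AQcat(z)$ furnished by $\Omega_\leftrightarrow \circ \Omega_\updownarrow$, together with the invertibility of $\negrightcross$, produces a $\Sym$-equivariant linear isomorphism $\Hom_{\AQcat(z)}(X,Y) \cong \End_{\AQcat(z)}(\uparrow^{\otimes k})$ whenever the two sets in \cref{chain} share a common cardinality $k$; otherwise the Hom space vanishes already because $B(X,Y)$ is empty. This isomorphism identifies $B_{\zebralone}(X,Y)$ with $B_{\zebralone}(\uparrow^{\otimes k}, \uparrow^{\otimes k})$ up to signs, and it is $\Sym$-equivariant since the bubbles live in $\End_{\AQcat(z)}(\one)$ and slide freely through the cup-cap manipulations.

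For the remaining endomorphism case, my strategy is to construct, for each integer level $\ell \ge 0$ and each admissible choice of scalar parameters, an action of a cyclotomic quotient of $\AQcat(z)$ on the supermodule category over a cyclotomic affine Hecke--Clifford superalgebra, in parallel with \cite[\S9]{BSW-qheis}. The categorical comultiplication machinery then yields a triangular decomposition
\[
    \End_{\AQcat(z)}(\uparrow^{\otimes r}) \cong \beta(\Sym) \otimes_\kk K_r
\]
as a left $\beta(\Sym)$-module, where $K_r$ is the $\kk$-span of those elements of $B_{\zebralone}(\uparrow^{\otimes r}, \uparrow^{\otimes r})$ containing no closed bubbles. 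Linear independence of $B_{\zebralone}(\uparrow^{\otimes r}, \uparrow^{\otimes r})$ over $\Sym$ then splits into two assertions: the reduced piece $K_r$ is detected by the known PBW-type bases of the cyclotomic affine Hecke--Clifford superalgebras of \cite{JN99,BK01}, while the injectivity of $\beta \colon \Sym \to \End_{\AQcat(z)}(\one)$ is detected by letting $\ell$ and the parameters vary and recording the scalars by which the bubbles $\leftzebrabub{2k}$ act in the different cyclotomic quotients.

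The hard part will be establishing this triangular decomposition, which is essentially equivalent to proving injectivity of $\beta$ together with the correct interaction of bubbles with the rest of the category. This requires upgrading the single bubble-slide identity \cref{eyez} to a full generating-function identity for sliding arbitrary zebra-decorated bubbles past strands, and then constructing a faithful enough family of cyclotomic actions. A further subtlety specific to the isomeric setting is the coexistence of open and closed Clifford tokens with no direct sliding relation between them; the straightening algorithm implicit in \cref{affinespan} must therefore be organised so that this failure of commutativity is absorbed into the zebra labels in a controlled way, rather than producing uncontrolled error terms that could spoil the counting of basis elements.
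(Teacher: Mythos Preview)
The paper does not prove this statement: it is labelled \cref{affinebasis} as a \emph{Conjecture}, and immediately afterwards the author writes only that ``we expect that \cref{affinebasis} could be proved using the categorical comultiplication technique of \cite{BSW-qheis}, after introducing the more general \emph{quantum isomeric Heisenberg supercategory}.'' There is no proof in the paper to compare against.

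Your proposal is therefore not competing with an existing argument; it is a sketch of exactly the approach the author conjectures would work. The reduction to $\End_{\AQcat(z)}(\uparrow^{\otimes r})$ via the pivotal structure is fine and parallels the proof of \cref{basisthm}. However, the heart of your proposal is not a proof but a programme: you would need to (i) actually define the quantum isomeric Heisenberg supercategory at arbitrary central charge (the paper does not do this; it lists it under ``Further directions''), (ii) construct the cyclotomic quotients and the actions on cyclotomic affine Hecke--Clifford supermodule categories, and (iii) carry out the categorical comultiplication argument in this setting, including the generating-function bubble-slide identities. You yourself flag the triangular decomposition and the injectivity of $\beta$ as ``the hard part'' and leave them unproved. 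These are not technicalities; they are the entire content of the conjecture, and the isomeric-specific complication you mention (open and closed Clifford tokens do not commute, and $\Qcat(z)$ is not braided) means the \cite{BSW-qheis} machinery cannot simply be quoted but must be genuinely redeveloped.

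In short: your plan matches the author's expectation for how a proof should eventually go, but it remains a plan, and the paper itself offers no proof for you to have matched or diverged from.
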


As noted in the introduction, we expect that \cref{affinebasis} could be proved using the categorical comultiplication technique of \cite{BSW-qheis}, after introducing the more general \emph{quantum isomeric Heisenberg supercategory}.

\begin{prop} \label{collapse}
    There is a unique monoidal superfunctor
    \[
        \bC \colon \AQcat(z) \to \SEnd(\Qcat(z))
    \]
    defined as follows.  On objects $X \in \AQcat(z)$ and morphisms $f \in \{\posupcross,\negupcross,\posrightcross,\leftcap,\leftcup,\tokup\}$,
    \[
        \bC(X) = X \otimes -
        \qquad \text{and} \qquad
        \bC(f) = f \otimes -.
    \]
    In addition, $\bC(\dotup)$ is the natural transformation $\uparrow \otimes - \to\ \uparrow \otimes -$ whose $X$-component, $X \in \Qcat(z)$, is
    \begin{equation} \label{cbatch}
        \bC(\dotup)_X :=
        \begin{tikzpicture}[centerzero]
            \draw[->] (-0.2,-0.4) \braidup (0.2,0) \braidup (-0.2,0.4);
            \draw[wipe] (0.2,-0.4) \braidup (-0.2,0) \braidup (0.2,0.4);
            \draw[thick] (0.2,-0.4) node[anchor=north] {\strandlabel{X}} \braidup (-0.2,0) \braidup (0.2,0.4);
            \token{0.2,0};
        \end{tikzpicture},
    \end{equation}
    where the thick strand labelled $X$ is the identity morphism $1_X$ of $X$.
\end{prop}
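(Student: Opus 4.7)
The plan is to verify three things in succession: (i) the restriction of $\bC$ to the sub-supercategory $\Qcat(z) \subset \AQcat(z)$ is a well-defined monoidal superfunctor; (ii) the formula \cref{cbatch} defines a genuine supernatural transformation of $\uparrow \otimes -$ of odd parity; and (iii) the additional defining relations \cref{AQrels} of $\AQcat(z)$ are respected. To interpret \cref{cbatch} for a general object $X = X_1 \otimes \dotsb \otimes X_r$, the thick strand should be read as $1_X$ and the two black--thick crossings expand into iterated positive/negative crossings, one per factor $X_i$, with the closed Clifford token placed at the rightmost point.

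For (i), any morphism $f \colon A \to B$ in $\Qcat(z)$ yields a supernatural transformation with $X$-component $f \otimes 1_X$; naturality with the correct sign $(-1)^{\bar f \bar g}$ is exactly the super interchange law \cref{interchange}, and all defining relations of $\Qcat(z)$ are automatically preserved because $\bC$ acts on $\Qcat(z)$ by right-tensoring with identities. For (ii), since naturality is closed under composition and tensor product, it suffices to check it against the generators $\posupcross, \negupcross, \negrightcross, \leftcap, \leftcup, \tokup$ of $\Qcat(z)$. For the even generators, the verification reduces to a local diagrammatic identity in $\Qcat(z)$: using the braid relations in \cref{braid} together with \cref{venom1,venom2,venom3}, one slides $g$ from below the wrapping to above it, and the Clifford token is not disturbed because it sits on a strand that does not interact with $g$. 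For $g = \tokup$, the required sign $(-1)^{\bar g} = -1$ is precisely the super interchange law sign accrued when the two odd Clifford tokens pass each other; here the sliding relation \cref{iron} is crucial in positioning the wrapping's Clifford token so that it meets the one from $g$ on a common vertical segment.

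For (iii), the squaring relation $\dotup \circ \dotup = -1_\uparrow$ becomes, upon stacking two copies of \cref{cbatch}, a pair of wrappings around $X$ whose inner pair of adjacent inverse crossings cancel iteratively via the Reidemeister II relations in \cref{braid}, leaving a single $\uparrow$ strand with two consecutive Clifford tokens that collapse to $-1_{\uparrow \otimes X}$ via the first relation of \cref{tokrel}. The crossing relation in \cref{AQrels} becomes, component-wise at $X$, an equality of two wrapping diagrams that differ only by braid moves from \cref{braid} together with a single token slide governed by \cref{iron}. The bubble relation reduces component-wise to a closed $\uparrow$-loop carrying a single closed Clifford token, tensored with $1_X$; this vanishes by the last equation of \cref{tokrel}.

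The primary technical obstacle will be the bookkeeping in (ii), together with the crossing relation in (iii): although each check is a finite local computation, one must be vigilant about whether the wrapping $\uparrow$ strand passes over or under the strands of $g$, since \cref{iron} allows Clifford tokens to slide over crossings freely whereas sliding under a crossing produces the correction terms of \cref{nickel}. The strictly pivotal structure from \cref{swivel} will streamline these verifications by permitting free planar isotopy modulo the sign bookkeeping intrinsic to odd morphisms.
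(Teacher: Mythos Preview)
Your proposal is correct and follows essentially the same approach as the paper's proof: verify naturality of $\bC(\dotup)$ by checking that the generators of $\Qcat(z)$ slide over the crossings in the wrapping diagram, then check the three relations \cref{AQrels} by direct diagrammatic computation using the braid and Reidemeister-type moves already available in $\Qcat(z)$. The paper is terser---it dispatches naturality in one sentence (``the generating morphisms slide over crossings'') and writes out the crossing and bubble checks as short chains of pictures---while you spell out more of the bookkeeping, including the super interchange sign in the $\tokup$ case; but the underlying argument is the same. One small clarification: in \cref{cbatch} the thick $X$-strand is the \emph{over}-strand, so for the bubble relation the reduction to a detached Clifford bubble requires first sliding the thick strand out of the closed loop via the cup/cap-sliding relations of \cref{venom1} (the thick-strand analogue of \cref{redslip}) before invoking \cref{tokrel}; your phrase ``reduces component-wise'' compresses this step.
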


\begin{proof}
    Naturality of $\bC(f)$ is clear for $f \in \{\posupcross,\negupcross,\posrightcross,\leftcap,\leftcup,\tokup\}$.  For $f = \dotup$, it follows from the fact that the generating morphisms \cref{Qgen1,Qgen2} slide over crossings.

    All the relations appearing in \cref{Qdef} are clearly respected by $\bC$.  It remains to verify the relations \cref{AQrels}.  The first relation is straightforward.  For the second relation, we compute (dropping the label $X$ on the thick strand)
    \[
        \bC
        \left(
            \begin{tikzpicture}[centerzero]
                \draw[->] (0.3,-0.3) -- (-0.3,0.3);
                \draw[wipe] (-0.3,-0.3) -- (0.3,0.3);
                \draw[->] (-0.3,-0.3) -- (0.3,0.3);
                \opendot{0.17,-0.17};
            \end{tikzpicture}
        \right)
        = \bC
        \left(
            \begin{tikzpicture}[anchorbase]
                \draw[->] (0,0) \braidup (0.3,0.4) \braidup (-0.3,1.2);
                \draw[wipe] (-0.3,0) \braidup (0,1.2);
                \draw[->] (-0.3,0) \braidup (0,1.2);
                \draw[wipe] (0.3,0) \braidup (0,0.4) \braidup (0.3,1) -- (0.3,1.2);
                \draw[thick] (0.3,0) \braidup (0,0.4) \braidup (0.3,1) -- (0.3,1.2);
                \token{0.3,0.4};
            \end{tikzpicture}
        \right)
        = \bC
        \left(
            \begin{tikzpicture}[anchorbase]
                \draw[->] (0,0) \braidup (-0.3,0.4) \braidup (0.3,0.8) \braidup (-0.3,1.2);
                \draw[wipe] (-0.3,0) \braidup (0,0.4) \braidup (-0.3,0.8) \braidup (0,1.2);
                \draw[->] (-0.3,0) \braidup (0,0.4) \braidup (-0.3,0.8) \braidup (0,1.2);
                \draw[wipe] (0.3,0) -- (0.3,0.4) \braidup (0.1,0.8) \braidup (0.3,1.2);
                \draw[thick] (0.3,0) -- (0.3,0.4) \braidup (0.1,0.8) \braidup (0.3,1.2);
                \token{0.3,0.8};
            \end{tikzpicture}
        \right)
        =
        \bC
        \left(
            \begin{tikzpicture}[centerzero]
                \draw[->] (0.3,-0.3) -- (-0.3,0.3);
                \draw[wipe] (-0.3,-0.3) -- (0.3,0.3);
                \draw[->] (-0.3,-0.3) -- (0.3,0.3);
                \opendot{-0.15,0.15};
            \end{tikzpicture}
        \right).
    \]
    Finally, for the last relation in \cref{AQrels}, we compute
    \[
        \bC
        \left(
            \begin{tikzpicture}[centerzero]
                \bubright{0,0};
                \opendot{-0.2,0};
            \end{tikzpicture}
        \right)
        =
        \bC
        \left(
            \begin{tikzpicture}[anchorbase]
                \draw (0,0.4) arc(360:180:0.15) \braidup (0.3,0.8) \braidup (-0.3,1.2) arc(180:0:0.15);
                \draw[wipe] (0,0.4) \braidup (-0.3,0.8) \braidup (0,1.2);
                \draw[->] (0,1.2) \braiddown (-0.3,0.8);
                \draw (-0.3,0.8) \braiddown (0,0.4);
                \draw[wipe] (0.3,0.1) \braidup (0.1,0.8) \braidup (0.3,1.5);
                \draw[thick] (0.3,0.1) \braidup (0.1,0.8) \braidup (0.3,1.5);
                \token{0.3,0.8};
            \end{tikzpicture}
        \right)
        =
        \bC
        \left(
            \begin{tikzpicture}[centerzero]
                \draw[->] (-0.3,0) arc(-180:180:0.3);
                \draw[wipe] (0.3,-0.4) \braidup (0.1,0) \braidup (0.3,0.4);
                \draw[thick] (0.3,-0.7) -- (0.3,-0.4) \braidup (0.1,0) \braidup (0.3,0.4) -- (0.3,0.7);
                \token{0.3,0};
            \end{tikzpicture}
        \right)
        \overset{\cref{redslip}}{=}
        \bC
        \left(
            \begin{tikzpicture}[centerzero]
                \draw[->] (0,0.3) arc(90:450:0.3);
                \draw[wipe] (-0.3,-0.4) \braidup (-0.1,0) \braidup (-0.3,0.4);
                \draw[thick] (-0.3,-0.7) -- (-0.3,-0.4) \braidup (-0.1,0) \braidup (-0.3,0.4) -- (-0.3,0.7);
                \token{0.3,0};
            \end{tikzpicture}
        \right)
        \overset{\cref{rouge}}{=}
        \bC
        \left(\,
            \begin{tikzpicture}[centerzero]
                \draw[->] (0,0.3) arc(90:450:0.3);
                \draw[thick] (-0.4,-0.7) -- (-0.4,0.7);
                \token{0.3,0};
            \end{tikzpicture}
        \right)
        \overset{\cref{iron}}{=} 0.
        \qedhere
    \]
\end{proof}

The superfunctor $\bC$, which we will call the \emph{collapsing superfunctor}, should viewed as an odd analogue of the one appearing in \cite[Th.~3.2]{MS21}, which describes actions of the \emph{affinization} of a braided monoidal category.  In that setting, the analogue of the open Clifford token is the affine dot, which acts as
\[
    \begin{tikzpicture}[centerzero]
        \draw[thick] (0,-0.6) -- (0,0);
        \draw (0.75,0) to[out=-90,in=0] (0.65,-0.1) to[out=180,in=-90] (0.5,0.1);
        \draw[wipe] (-0.5,-0.6) to[out=45,in=down] (0.5,-0.1) to[out=90,in=180] (0.65,0.1) to[out=0,in=90] (0.75,0);
        \draw (-0.5,-0.6) to[out=45,in=down] (0.5,-0.1) to[out=90,in=180] (0.65,0.1) to[out=0,in=90] (0.75,0);
        \draw[->] (0.5,0.1) to[out=up,in=-45] (-0.5,0.6);
        \draw[wipe] (0,0) -- (0,0.6);
        \draw[thick] (0,0) -- (0,0.6);
    \end{tikzpicture}
    \ .
\]
See \cref{oddaffinization} for additional discussion.

\section{Affine endomorphism superalgebras\label{sec:affineEnd}}

In this section we describe the relationship between the endomorphism superalgebras in the quantum affine isomeric supercategory and affine Hecke--Clifford superalgebras.  We also use the collapsing superfunctor of \cref{collapse} to explain how the Jucys--Murphy elements in the Hecke--Clifford superalgebra arise naturally in this context.  Throughout this section, $\kk$ is an arbitrary commutative ring of characteristic not equal to two, and $z \in \kk$.

\begin{defin} \label{AHCdef}
    For $r \in \Z_{>0}$ and $z \in \kk$, let $\AHC_r(z)$ denote the associative superalgebra generated by
    \begin{center}
        even elements $t_1,\dotsc,t_{r-1}$ and odd elements $\cg_1,\dotsc,\cg_r,\varpi_1,\dotsc,\varpi_r$,
    \end{center}
    satisfying the following relations (for $i,j$ in the allowable range):
    \begin{align}
        t_i^2 &= z t_i + 1,
        \\
        t_i t_{i+1} t_i &= t_{i+1} t_i t_{i+1},&
        t_i t_j &= t_j t_i,& |i-j| > 1,
        \\
        \cg_i^2 &= -1,&
        \cg_i \cg_j &= - \cg_j \cg_i,& i \ne j,
        \\ \label{toad}
        \varpi_i^2 &= -1,&
        \varpi_i \varpi_j &= - \varpi_j \varpi_i,& i \ne j,
        \\ \label{bike}
        t_i \cg_i &= \cg_{i+1} t_i,&
        t_i \cg_j &= \cg_j t_i,& j \ne i,i+1,
        \\ \label{stool}
        t_i \varpi_{i+1} &= \varpi_i t_i,&
        t_i \varpi_j &= \varpi_j t_i,& j \ne i,i+1.
    \end{align}
    Equivalently, $\AHC_r(z)$ is the associative superalgebra generated by $\HC_r(z)$, together with odd elements $\varpi_1,\dotsc,\varpi_r$, subject to the relations \cref{toad,stool}.
\end{defin}

Multiplying both sides of the first relation in \cref{stool} on the left and right by $t_i^{-1} = t_i-z$ gives
\begin{equation} \label{mushy}
    t_i \varpi_i = \varpi_{i+1} t_i + z(\varpi_i - \varpi_{i+1}).
\end{equation}
The next result shows that $\AHC_r(z)$ is isomorphic to the \emph{affine Hecke--Clifford superalgebra}, which was first introduced in \cite[\S3]{JN99}, where it was called the \emph{affine Sergeev algebra}.

\begin{lem} \label{milk}
    For $r \in \Z_{>0}$ and $z \in \kk$, $\AHC_r(z)$ is isomorphic to the associative superalgebra  $\AHC_r'(z)$ generated by $\HC_r(z)$, together with pairwise-commuting invertible even elements $x_1,\dotsc,x_r$, subject to the following relations (for $i,j$ in the allowable range):
    \begin{align*}
        t_i x_i &= x_{i+1} t_i - z (x_{i+1} - \cg_i \cg_{i+1} x_i), \\
        t_i x_{i+1} &= x_i t_i + z(1 + \cg_i \cg_{i+1}) x_{i+1}, \\
        t_i x_j &= x_j t_i,& j \ne i,i+1, \\
        \cg_i x_i &= x_i^{-1} \cg_i, \\
        x_i x_j &= x_j x_i,& j \ne i.
    \end{align*}
    The isomorphism is given by
    \begin{equation} \label{thwomp}
        \AHC'_r(z) \xrightarrow{\cong} \AHC_r(z), \qquad
        t_i \mapsto t_i,\quad
        \cg_i \mapsto \cg_i,\quad
        x_i \mapsto \cg_i \varpi_i.
    \end{equation}
\end{lem}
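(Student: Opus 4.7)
The plan is to exhibit mutually inverse superalgebra homomorphisms. Define $\phi \colon \AHC'_r(z) \to \AHC_r(z)$ by \cref{thwomp} and, as a candidate inverse, define $\psi \colon \AHC_r(z) \to \AHC'_r(z)$ by $t_i \mapsto t_i$, $\cg_i \mapsto \cg_i$, and $\varpi_i \mapsto -\cg_i x_i$. Once both maps are shown to be well-defined, checking $\psi \phi = \id$ and $\phi \psi = \id$ on generators is immediate: for example, $\psi \phi(x_i) = \psi(\cg_i \varpi_i) = \cg_i(-\cg_i x_i) = -\cg_i^2 x_i = x_i$, and likewise $\phi \psi(\varpi_i) = \phi(-\cg_i x_i) = -\cg_i \cdot \cg_i \varpi_i = \varpi_i$.

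The bulk of the work is verifying that $\phi$ respects each defining relation of $\AHC'_r(z)$; the verification for $\psi$ is dual and may also be viewed as an instance of the symmetry in \cref{bounce}. I will begin by computing $(\cg_i \varpi_i)(\varpi_i \cg_i) = \cg_i \varpi_i^2 \cg_i = -\cg_i^2 = 1$ (and similarly on the other side), so that $\cg_i \varpi_i$ is invertible with inverse $\varpi_i \cg_i$. The relation $\cg_i x_i = x_i^{-1} \cg_i$ then reduces to $\cg_i(\cg_i \varpi_i) = -\varpi_i = \varpi_i \cg_i^2 = (\cg_i \varpi_i)^{-1} \cg_i$. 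Pairwise commutativity of the images of the $x_i$'s reduces to a short sign count using $\cg_i \cg_j = -\cg_j \cg_i$, $\varpi_i \varpi_j = -\varpi_j \varpi_i$, and the super anticommutation of $\cg_i$ with $\varpi_j$ for $i \ne j$. The relations $t_i x_j = x_j t_i$ for $j \ne i, i+1$ follow at once from the corresponding relations of $t_i$ with $\cg_j$ and $\varpi_j$.

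The most involved check is $t_i x_i = x_{i+1} t_i - z(x_{i+1} - \cg_i \cg_{i+1} x_i)$. The idea is to expand $\phi(t_i x_i) = t_i \cg_i \varpi_i$, apply $t_i \cg_i = \cg_{i+1} t_i$ to get $\cg_{i+1} t_i \varpi_i$, and then apply the identity $t_i \varpi_i = \varpi_{i+1} t_i + z(\varpi_i - \varpi_{i+1})$ from \cref{mushy}. This yields
\[
    \phi(t_i x_i) = \cg_{i+1} \varpi_{i+1} t_i + z \cg_{i+1}(\varpi_i - \varpi_{i+1}).
\]
Expanding $\phi$ on the right-hand side and using $\cg_i \cg_{i+1} \cg_i = \cg_{i+1}$ (a consequence of $\cg_i^2 = -1$ and $\cg_i \cg_{i+1} = -\cg_{i+1} \cg_i$) produces exactly the same expression. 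The companion relation $t_i x_{i+1} = x_i t_i + z(1 + \cg_i \cg_{i+1}) x_{i+1}$ is handled identically, this time starting from $t_i \cg_{i+1} = \cg_i t_i + z(\cg_{i+1} - \cg_i)$ (derived from $t_i \cg_i = \cg_{i+1} t_i$ and $t_i^{-1} = t_i - z$) together with $t_i \varpi_{i+1} = \varpi_i t_i$.

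The main (minor) obstacle is the sign bookkeeping through these four-term identities, but every step reduces to a direct application of the Hecke and Clifford relations already in force, so no new ideas are required.
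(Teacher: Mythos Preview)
Your approach matches the paper's exactly: verify that the map $\phi$ of \cref{thwomp} respects the defining relations of $\AHC'_r(z)$, then exhibit the inverse $\psi(\varpi_i) = -\cg_i x_i$ and check the composites on generators.

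One step deserves care. In checking that $\phi$ preserves $x_i x_j = x_j x_i$ you invoke ``the super anticommutation of $\cg_i$ with $\varpi_j$ for $i \ne j$,'' but \cref{AHCdef} imposes no relation whatsoever between any $\cg_k$ and any $\varpi_l$; with only the listed relations, the identity $\cg_i \varpi_i \cg_j \varpi_j = \cg_j \varpi_j \cg_i \varpi_i$ is not derivable, and your sign count cannot be completed. The dual problem affects $\psi$: verifying $\psi(\varpi_i)\psi(\varpi_j) = -\psi(\varpi_j)\psi(\varpi_i)$ would need $\cg_i x_j = x_j \cg_i$ for $i \ne j$, which is likewise absent from the stated presentation of $\AHC'_r(z)$. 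The paper's own proof leaves these checks implicit, and the standard Jones--Nazarov definition does include $\cg_i x_j = x_j \cg_i$, so this is almost certainly an omission in the presentations rather than a flaw in your strategy; but as the definitions are written, this step does not go through. (The appeal to \cref{bounce} for $\psi$ is also a little loose: that symmetry is stated for the supercategory $\AQcat(z)$, not directly for these algebra presentations.)
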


\begin{proof}
    It is a straightforward exercise to verify that \cref{thwomp} respects the defining relations of $\AHC'_r(z)$.
    \details{
        Let $\tilde{x}_i = \cg_i \varpi_i$, so that $\tilde{x}_i^{-1} = \varpi_i \cg_i$.  We have
        \begin{gather*}
            t_i \tilde{x}_i
            = \cg_{i+1} t_i \varpi_i
            \overset{\cref{mushy}}{=}  \tilde{x}_{i+1} t_i + z \cg_{i+1}(\varpi_i-\varpi_{i+1})
            = \tilde{x}_{i+1} t_i - z (\tilde{x}_{i+1} - \cg_i \cg_{i+1} \tilde{x}_i)
            \\
            t_i \tilde{x}_{i+1}
            \overset{\cref{upside}}{=} \cg_i t_i \varpi_{i+1} + z(\cg_{i+1} - \cg_i) \varpi_{i+1}
            \overset{\cref{stool}}{=} \tilde{x}_i t_i + z(1+\cg_i\cg_{i+1}) \tilde{x}_{i+1},
            \\
            \cg_i \tilde{x}_i
            = - \varpi_i
            = \tilde{x}_i^{-1} \cg.
        \end{gather*}
        The remaining relations are easy to check.
    }
    Thus the map \cref{thwomp} is well-defined homomorphism of superalgebras.  It is invertible, with inverse
    \[
        \AHC_r(z) \to \AHC'_r(z),\qquad
        t_i \mapsto t_i,\quad
        \cg_i \mapsto \cg_i,\quad
        \varpi_i \mapsto - \cg_i x_i.
        \qedhere
    \]
\end{proof}

In light of \cref{milk}, we will simply refer to $\AHC_r(z)$ as the affine Hecke--Clifford superalgebra.

\begin{prop} \label{crossbow}
    For $r \in \N$, we have a homomorphism of associative superalgebras
    \[
        \AHC_r(z) \to \End_{\AQcat(z)}(\uparrow^{\otimes r})
    \]
    given by
    \begin{align*}
        t_i &\mapsto\, \uparrow^{\otimes (i-1)} \otimes \posupcross \otimes \uparrow^{\otimes (r-i-1)},& 1 \le i \le r-1,
        \\
        \cg_i &\mapsto\, \uparrow^{\otimes (i-1)} \otimes\, \tokup\, \otimes \uparrow^{\otimes (r-i)},& 1 \le i \le r,
        \\
        \varpi_i &\mapsto\, \uparrow^{\otimes (i-1)} \otimes\, \dotup\, \otimes \uparrow^{\otimes (r-i)},& 1 \le i \le r.
    \end{align*}
\end{prop}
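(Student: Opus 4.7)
The plan is to verify that the proposed map respects each defining relation of $\AHC_r(z)$, splitting the task into relations internal to the Hecke--Clifford subalgebra $\HC_r(z)$ and relations involving the new generators $\varpi_i$.

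First, I would observe that the restriction of the claimed map to the subalgebra $\HC_r(z) \subseteq \AHC_r(z)$ generated by $t_1,\dotsc,t_{r-1}$ and $\cg_1,\dotsc,\cg_r$ is already a well-defined homomorphism. Indeed, applying \cref{walled} with $s=0$ yields a homomorphism $\HC_r(z) = \BC_{r,0}(z) \to \End_{\Qcat(z)}(\uparrow^{\otimes r})$ that sends the generators precisely to the images described here. Composing with the canonical superfunctor $\Qcat(z) \to \AQcat(z)$ (which is the identity on the generators of $\Qcat(z)$) then produces a homomorphism $\HC_r(z) \to \End_{\AQcat(z)}(\uparrow^{\otimes r})$. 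This disposes of every relation of $\AHC_r(z)$ involving only the $t_i$'s and $\cg_i$'s without further computation.

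Second, I would check the new relations \cref{toad} and \cref{stool}. The relation $\varpi_i^2 = -1$ is immediate from the first equality in \cref{AQrels}. For $\varpi_i \varpi_j = -\varpi_j \varpi_i$ with $i \ne j$, the two open Clifford tokens live on disjoint strands and are both odd, so the super interchange law \cref{interchange} produces the sign $(-1)^{1 \cdot 1} = -1$ upon exchanging their vertical heights. Similarly, for $j \ne i, i+1$, the morphism $t_i$ is even and acts on strands disjoint from $\varpi_j$, so the super interchange law again applies and yields $t_i \varpi_j = \varpi_j t_i$ with no sign.

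Finally, the remaining relation $t_i \varpi_{i+1} = \varpi_i t_i$ is exactly the second equality in \cref{AQrels}, which expresses that an open Clifford token slides along its strand across a positive crossing. I do not anticipate any serious obstacle; the only mild point to watch is the correct tracking of parities when invoking the super interchange law, but because $t_i$ is even, no hidden signs appear in \cref{stool}.
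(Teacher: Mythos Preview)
Your proposal is correct and takes essentially the same approach as the paper, which simply states that verifying the defining relations is a straightforward computation. Your organization is slightly more efficient: by invoking \cref{walled} with $s=0$ you avoid rechecking the $\HC_r(z)$ relations (including the braid and far-commutation relations for the $t_i$'s) and reduce the task to the $\varpi$-relations \cref{toad} and \cref{stool}, which follow exactly as you say from the first two equalities in \cref{AQrels} and the super interchange law.
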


\begin{proof}
    It is a straightforward computation to verify that the given map is well-defined, i.e.\ that it respects the relations in \cref{AHCdef}.
\end{proof}

Note that, under the homomorphism of \cref{crossbow}, we have
\[
    x_i \mapsto\ \uparrow^{\otimes (i-1)} \otimes\, \zebraup\, \otimes \uparrow^{\otimes (r-i)},\qquad 1 \le i \le r.
\]
The difference between the new presentation of the affine Hecke--Clifford superalgebra given in \cref{AHCdef} and the one in \cref{milk} that has appeared previously in the literature is that the former presentation involves the odd generators $\varpi_i$, whereas the latter involves the even generator $x_i = \cg_i \varpi_i$.  We prefer the presentation of \cref{AHCdef} since the relations are simpler and a natural symmetry of $\AHC_r(z)$ becomes apparent.  In particular, we have an automorphism of $\AHC_r(z)$ given by
\begin{equation} \label{yinyang}
    \cg_i \mapsto \varpi_i,\qquad
    \varpi_i \mapsto \cg_i,\qquad
    t_j \mapsto - t_j^{-1},\qquad
    1 \le i \le r,\ 1 \le j \le r-1.
\end{equation}

For the remainder of this section, we reverse our numbering convention for strands in diagrams; see \cref{numbering}.  The composite of the map of \cref{crossbow} with the automorphism of $\End_{\AQcat(z)}(\uparrow^{\otimes r})$ induced by the superfunctor $\Omega_\leftrightarrow$ yields a homomorphism of associative superalgebras
\[
    \imath_r \colon \AHC_r(z) \to \End_{\AQcat(z)}(\uparrow^{\otimes r})
\]
given by
\begin{align*}
    t_i &\mapsto\, -\uparrow^{\otimes (r-i-1)} \otimes \negupcross \otimes \uparrow^{\otimes (i-1)},& 1 \le i \le r-1,
    \\
    \cg_i &\mapsto\, \uparrow^{\otimes (r-i)} \otimes\, \tokup\, \otimes \uparrow^{\otimes (i-1)},& 1 \le i \le r,
    \\
    \varpi_i &\mapsto\, \uparrow^{\otimes (r-i)} \otimes\, \dotup\, \otimes \uparrow^{\otimes (i-1)},& 1 \le i \le r.
\end{align*}
This is also equal to the automorphism \cref{yinyang} followed by the map of \cref{crossbow}.  We have
\[
    \imath_r(x_i) =\ \uparrow^{\otimes (r-i)} \otimes\, \zebraup\, \otimes \uparrow^{\otimes (i-1)},\qquad 1 \le i \le r.
\]

The \emph{Jucys--Murphy elements} $J_1,\dotsc,J_r \in \HC_r(z)$ were defined recursively in \cite[(3.10)]{JN99} by
\[
    J_i :=
    \begin{cases}
        1 & \text{for } i=1, \\
        (t_{i-1} - z \cg_{i-1} \cg_i) J_{i-1} t_{i-1} & \text{for } i=2,\dotsc,n.
    \end{cases}
\]
For $1 \le i \le r$, define the \emph{odd Jucys--Murphy elements}
\begin{equation} \label{JModd}
    J_i^\odd = t_{i-1}^{-1} \dotsm t_2^{-1} t_1^{-1} \cg_1 t_1 t_2 \dotsm t_{i-1} \in \HC_r(z),
\end{equation}
where, by convention, we have $J_1^\odd = \cg_1$.  The following result gives a direct (i.e.\ non-recursive) expression for the even Jucys--Murphy elements.

\begin{lem}
    For all $1 \le i \le r$, we have $J_i = -\cg_i J_i^\odd$.
\end{lem}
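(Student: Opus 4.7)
The plan is to prove the identity $J_i = -\cg_i J_i^\odd$ by induction on $i$. For the base case $i = 1$, we have $J_1 = 1$ by definition and $-\cg_1 J_1^\odd = -\cg_1^2 = 1$ using the relation $\cg_1^2 = -1$.

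For the inductive step, I would first record the useful recursion $J_i^\odd = t_{i-1}^{-1} J_{i-1}^\odd t_{i-1}$, which is immediate from the definition \cref{JModd}. Next, I would establish two commutation/anticommutation facts. First, since $J_{i-1}^\odd$ involves only $\cg_1$ and $t_1,\dotsc,t_{i-2}$, and since $\cg_i$ commutes with each of $t_1,\dotsc,t_{i-2}$ (by \cref{bike}) while $\cg_i \cg_1 = -\cg_1 \cg_i$, a direct check gives $\cg_i J_{i-1}^\odd = - J_{i-1}^\odd \cg_i$ (consistent with both elements being odd). Second, $J_{i-1}$ is even (by the recursive definition, since $t_{i-1} - z\cg_{i-1}\cg_i$ is even), it only involves generators indexed by $1,\dotsc,i-1$, so $\cg_i$ moves past each term of $J_{i-1}$ picking up an even number of signs, whence $\cg_i J_{i-1} = J_{i-1} \cg_i$.

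Starting from $\cg_i J_i^\odd = \cg_i t_{i-1}^{-1} J_{i-1}^\odd t_{i-1}$, the next step is to push $\cg_i$ past $t_{i-1}^{-1}$ using $t_{i-1}^{-1} = t_{i-1} - z$ and $t_{i-1}\cg_{i-1} = \cg_i t_{i-1}$, producing
\[
\cg_i J_i^\odd = t_{i-1} \cg_{i-1} J_{i-1}^\odd t_{i-1} - z \cg_i J_{i-1}^\odd t_{i-1}.
\]
The induction hypothesis gives $\cg_{i-1} J_{i-1}^\odd = -\cg_{i-1}^2 \cdot (\cg_{i-1}^{-1}J_{i-1}^\odd)$, more cleanly rewritten as $J_{i-1}^\odd = \cg_{i-1} J_{i-1}$ and $\cg_{i-1} J_{i-1}^\odd = -J_{i-1}$. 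Substituting these, and using the anticommutation $\cg_i J_{i-1}^\odd = -J_{i-1}^\odd \cg_i$ followed by $J_{i-1}^\odd = \cg_{i-1} J_{i-1}$ and the commutation $\cg_i J_{i-1} = J_{i-1} \cg_i$, and $\cg_i t_{i-1} = t_{i-1} \cg_{i-1}$, collapses the expression to $-(t_{i-1} - z\cg_{i-1}\cg_i) J_{i-1} t_{i-1} = -J_i$.

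There is no real obstacle; this is a routine bookkeeping exercise once the correct parity observations above are in place. The only potential pitfall is sign-tracking when moving $\cg_i$ through the odd element $J_{i-1}^\odd$ versus the even element $J_{i-1}$, which is why I would isolate those two parity statements as separate lemmas at the start of the inductive step.
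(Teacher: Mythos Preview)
Your proposal is correct and follows essentially the same inductive argument as the paper. The paper is slightly more economical: after writing $\cg_i t_{i-1}^{-1} = t_{i-1}\cg_{i-1} - z\cg_i$, it immediately substitutes $J_{i-1}^\odd = \cg_{i-1} J_{i-1}$ and uses only the single anticommutation $\cg_i\cg_{i-1} = -\cg_{i-1}\cg_i$, so the separate lemmas $\cg_i J_{i-1}^\odd = -J_{i-1}^\odd \cg_i$ and $\cg_i J_{i-1} = J_{i-1}\cg_i$ are not needed.
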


\begin{proof}
    We prove the result by induction on $i$.  Since $- \cg_1 J_1^\odd = -\cg_1^2 = 1 = J_1$, the result holds for $i=1$.  Now suppose that $i>1$ and that $J_{i-1} = - \cg_{i-1} J_{i-1}^\odd$.  First note that
    \[
        \cg_i t_{i-1}^{-1}
        = \cg_i (t_{i-1}-z)
        \overset{\cref{bike}}{=} t_{i-1} \cg_{i-1} - z \cg_i.
    \]
    Thus, we have
    \[
        - \cg_i J_i^\odd
        = - \cg_i t_{i-1}^{-1} J_{i-1}^\odd t_{i-1}
        = - (t_{i-1} \cg_{i-1} - z \cg_i) J_{i-1}^\odd t_{i-1}
        = (t_{i-1} - z \cg_{i-1} \cg_i) J_{i-1} t_{i-1}
        = J_i.
        \qedhere
    \]
\end{proof}

Evaluation on the unit object $\one$ yields a superfunctor
\[
    \Ev_\one \colon \SEnd(\Qcat(z)) \to \Qcat(z).
\]
Note that this is \emph{not} a monoidal superfunctor.  Recall the collapsing superfunctor $\bC$ of \cref{collapse}.

\begin{prop} \label{Guiness}
    For $1 \le i \le r$, we have
    \begin{align*}
        \Ev_\one \circ \bC \left( \uparrow^{\otimes (r-i)} \otimes\, \dotup\, \otimes \uparrow^{\otimes (i-1)} \right)
        &= \imath_r \left( J_i^\odd \right),
        \\
        \Ev_\one \circ \bC \left( \uparrow^{\otimes (r-i)} \otimes\, \zebraup\, \otimes \uparrow^{\otimes (i-1)} \right)
        &= - \imath_r \left( J_i \right).
    \end{align*}
\end{prop}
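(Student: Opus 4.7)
The plan is to unpack the definition of the collapsing superfunctor $\bC$ applied to $\uparrow^{\otimes(r-i)} \otimes \dotup \otimes \uparrow^{\otimes(i-1)}$, read off the resulting string diagram in $\Qcat(z)$ after evaluating at the unit object $\one$, and then recognize it as an explicit product of $\imath_r$-images that assembles to $J_i^\odd$.

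First, using that $\bC$ is a monoidal superfunctor together with the formula for horizontal composition of natural transformations in $\SEnd(\Qcat(z))$, one computes
\[
    \bC\!\left(\uparrow^{\otimes(r-i)} \otimes \dotup \otimes \uparrow^{\otimes(i-1)}\right)_Y
    = 1_{\uparrow^{\otimes(r-i)}} \otimes \bC(\dotup)_{\uparrow^{\otimes(i-1)} \otimes Y}
\]
for every $Y \in \Qcat(z)$. Specializing to $Y = \one$ and expanding the formula \cref{cbatch} for $\bC(\dotup)$ with $X = \uparrow^{\otimes(i-1)}$, the morphism $\Ev_\one \circ \bC(\uparrow^{\otimes(r-i)} \otimes \dotup \otimes \uparrow^{\otimes(i-1)})$ is represented by a string diagram in which the $i$-th strand from the right is moved to position $1$ via a sequence of $i-1$ crossings, decorated with a closed Clifford token at the rightmost position, and returned to position $i$ via another $i-1$ crossings. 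Because the thick $X$-strand in \cref{cbatch} is drawn on top, all crossings in the bottom block are $\negupcross$ and all crossings in the top block are $\posupcross$.

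Next, identifying each crossing with an image under $\imath_r$ via $\imath_r(t_j) = -\negupcross$ and $\imath_r(t_j^{-1}) = -\posupcross$ at strands $j, j+1$ from the right, and reading vertical composition in the diagram as multiplication in the algebra with the top of the diagram on the left, the bottom block contributes $\imath_r\!\left((-1)^{i-1} t_1 t_2 \cdots t_{i-1}\right)$, the middle token contributes $\imath_r(\cg_1)$, and the top block contributes $\imath_r\!\left((-1)^{i-1} t_{i-1}^{-1} \cdots t_1^{-1}\right)$. The signs cancel, and the product is exactly $\imath_r(J_i^\odd)$ by definition \cref{JModd}. For the second equality, writing $\zebraup = \tokup \circ \dotup$ from \cref{zebradef} and using that $\bC(\tokup)_Y = \tokup \otimes 1_Y$, the same unpacking yields
\[
    \Ev_\one \circ \bC\!\left(\uparrow^{\otimes(r-i)} \otimes \zebraup \otimes \uparrow^{\otimes(i-1)}\right)
    = \imath_r(\cg_i) \cdot \imath_r(J_i^\odd)
    = \imath_r(\cg_i J_i^\odd)
    = -\imath_r(J_i),
\]
where the final step uses the lemma $J_i = -\cg_i J_i^\odd$ proved immediately before the proposition.

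The main obstacle is careful bookkeeping of conventions: correctly determining which strand is over which at each crossing of \cref{cbatch}, tracking the minus signs in $\imath_r(t_j)$ and $\imath_r(t_j^{-1})$, and recalling that vertical composition corresponds to ordinary multiplication in the algebra with the top on the left. Once these conventions are pinned down, the identification with the Jucys--Murphy elements is forced by their definitions.
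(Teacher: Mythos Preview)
Your proof is correct and follows essentially the same approach as the paper. The paper simply draws the string diagram obtained from \cref{cbatch} with $X=\uparrow^{\otimes(i-1)}$ and asserts it equals $\imath_r(J_i^\odd)$, then remarks that the second identity follows by placing a closed Clifford token on top; you have carefully unpacked the crossing types and sign bookkeeping that the paper leaves implicit, arriving at the same identification.
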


\begin{proof}
    We have
    \[
        \Ev_\one \circ \bC \left( \uparrow^{\otimes (r-i)} \otimes\, \dotup\, \otimes \uparrow^{\otimes (i-1)} \right)
        \overset{\cref{cbatch}}{=}
        \begin{tikzpicture}[centerzero]
            \draw[->] (0,-0.5) \braidup (1.3,0) \braidup (0,0.5);
            \token{1.3,0};
            \node at (0.67,0) {$\cdots$};
            \draw[wipe] (0.3,-0.5) -- (0.3,0.5);
            \draw[->] (0.3,-0.5) -- (0.3,0.5);
            \draw[wipe] (1,-0.5) -- (1,0.5);
            \draw[->] (1,-0.5) -- (1,0.5);
            \draw[->] (-0.3,-0.5) -- (-0.3,0.5);
            \draw[->] (-1,-0.5) -- (-1,0.5);
            \node at (-0.65,0) {$\cdots$};
        \end{tikzpicture}
        = \imath_r(J_i^\odd),
    \]
    where it is the $i$-th strand from the right that passes under other strands.  The proof of the second equality in the statement of the proposition follows after adding a closed Clifford token to the top of the $i$-th strand from the right.
\end{proof}

\begin{rem} \label{oddaffinization}
    Recall that the $i$-th Jucys--Murphy element in the Iwahori--Hecke algebra of type $A$ is given, in terms of string diagrams, by
    \[
        \begin{tikzpicture}[centerzero]
            \draw[->] (1.3,0) \braidup (0,0.5);
            \node at (0.67,0) {$\cdots$};
            \draw[wipe] (0.3,-0.5) -- (0.3,0.5);
            \draw[->] (0.3,-0.5) -- (0.3,0.5);
            \draw[wipe] (1,-0.5) -- (1,0.5);
            \draw[->] (1,-0.5) -- (1,0.5);
            \draw[->] (-0.3,-0.5) -- (-0.3,0.5);
            \draw[->] (-1,-0.5) -- (-1,0.5);
            \node at (-0.65,0) {$\cdots$};
            \draw[wipe] (0,-0.5) \braidup (1.3,0);
            \draw (0,-0.5) \braidup (1.3,0);
        \end{tikzpicture}
        \ ,
    \]
    where it is the $i$-th strand from the right that loops around other strands.  See \cite[\S 6]{MS21} for a discussion of Jucys--Murphy elements in a more general setting, related to the \emph{affinization} of braided monoidal categories.  The above discussion suggests there may be a general notion of \emph{odd affinization}, where the above diagram is replaced by the one appearing in the proof of \cref{Guiness}.
\end{rem}

The next result shows that we can naturally view $\Qcat(z)$ as a subcategory of $\AQcat(z)$.

\begin{prop} \label{feline}
    The superfunctor $\Qcat(z) \to \AQcat(z)$ that is the identity on objects and sends each generating morphism in $\Qcat(z)$ to the morphism in $\AQcat(z)$ depicted by the same string diagram is faithful.
\end{prop}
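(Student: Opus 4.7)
The plan is to exhibit an explicit one-sided inverse to the superfunctor $\iota \colon \Qcat(z) \to \AQcat(z)$ at the level of morphism spaces. Let
\[
    \Phi := \Ev_\one \circ \bC \colon \AQcat(z) \to \Qcat(z),
\]
where $\bC$ is the collapsing superfunctor of \cref{collapse} and $\Ev_\one \colon \SEnd(\Qcat(z)) \to \Qcat(z)$ denotes evaluation at the unit object.  Note that $\Phi$ is a superfunctor, though not a monoidal one (since $\Ev_\one$ is not monoidal).

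The heart of the argument is to verify that $\Phi \circ \iota = \id_{\Qcat(z)}$.  On objects this is immediate: $\Phi(X) = \bC(X)(\one) = (X \otimes -)(\one) = X$.  For morphisms, I will show that $\bC(\iota(f)) = f \otimes -$ as a supernatural transformation for \emph{every} morphism $f$ of $\Qcat(z)$.  By \cref{collapse}, this holds when $f$ is one of the generators $\posupcross,\negupcross,\posrightcross,\leftcap,\leftcup,\tokup$ of $\Qcat(z)$, and it extends to every morphism of $\Qcat(z)$ because $\bC \circ \iota$ and the assignment $f \mapsto f \otimes -$ are two monoidal superfunctors $\Qcat(z) \to \SEnd(\Qcat(z))$ that agree on generators.  (The supernaturality of $f \otimes -$ is just an instance of the super interchange law \cref{interchange}.)  Applying $\Ev_\one$ then gives $\Phi(\iota(f)) = (f \otimes -)_\one = f \otimes 1_\one = f$.

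Since $\Phi \circ \iota = \id_{\Qcat(z)}$, the induced map $\Hom_{\Qcat(z)}(X,Y) \to \Hom_{\AQcat(z)}(X,Y)$ is injective for every pair of objects $X,Y$, which is precisely faithfulness.  There is no substantive obstacle in this plan: the only piece of content beyond formal nonsense is the existence and monoidality of $\bC$ itself, which is already established in \cref{collapse}.
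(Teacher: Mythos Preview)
Your proposal is correct and is precisely the approach the paper takes: the paper's proof consists of the single sentence that $\Ev_\one \circ \bC$ is a left inverse to the inclusion, and your argument spells out exactly why this holds. The only minor quibble is that the list $\{\posupcross,\negupcross,\posrightcross,\leftcap,\leftcup,\tokup\}$ from \cref{collapse} is not literally the generating set of \cref{Qdef} (which uses $\negrightcross$ rather than $\posrightcross$), but since $\negrightcross$ is the inverse of $\posleftcross$, and the latter is built from $\posupcross,\leftcup,\leftcap$, any monoidal superfunctor agreeing on that list agrees on $\negrightcross$ as well, so your argument goes through unchanged.
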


\begin{proof}
    It is straightforward to verify that $\Ev_\one \circ \bC$ is left inverse to the superfunctor in the statement of the proposition.
\end{proof}

\section{The affine action superfunctor}

In this final section, we define an action of $\AQcat(z)$ on the category of $U_q$-supermodules.  We then use this action to define a sequence of elements in the center of $U_q$.  Throughout this section we assume that $\kk = \C(q)$ and $z=q-q^{-1}$.

The image of $\uparrow \redobject\,$ under the superfunctor $\redFunc_n$ of \cref{redfunctor} is the superfunctor
\[
    \redFunc_n(\uparrow \redobject\,) = V \otimes - \colon U_q\smod \to U_q\smod
\]
of tensoring on the left with $V$.  Define the natural transformation
\begin{equation} \label{carrot}
    K := \redFunc_n
    \left(
        \begin{tikzpicture}[centerzero]
            \draw[->] (-0.2,-0.4) \braidup (0.2,0) \braidup (-0.2,0.4);
            \draw[wipe] (0.2,-0.4) \braidup (-0.2,0) \braidup (0.2,0.4);
            \draw[thick,red] (0.2,-0.4) \braidup (-0.2,0) \braidup (0.2,0.4);
            \token{0.2,0};
        \end{tikzpicture}
    \right)
    \colon V \otimes - \to V \otimes -.
\end{equation}
Thus the $M$-component of $K$, for $M \in U_q\smod$, is the $U_q$-supermodule homomorphism
\[
    K_M =
    \begin{tikzpicture}[centerzero]
        \draw[->] (-0.2,-0.4) \braidup (0.2,0) \braidup (-0.2,0.4);
        \draw[wipe] (0.2,-0.4) \braidup (-0.2,0) \braidup (0.2,0.4);
        \draw[thick,red] (0.2,-0.4) node[anchor=north] {\strandlabel{M}} \braidup (-0.2,0) \braidup (0.2,0.4);
        \token{0.2,0};
    \end{tikzpicture}
    = T_{MV} \circ (1 \otimes J) \circ T_{MV}^{-1}
    \colon V \otimes M \to V \otimes M.
\]
It is straightforward to verify that $K^2=-\id$, where $\id$ is the identity natural transformation.

\begin{theo} \label{butter}
    There is a unique monoidal superfunctor
    \[
        \bFh_n \colon \AQcat(z) \to \SEnd(U_q\smod),
    \]
    such that
    \[
        \bFh_n|_{\Qcat(z)} = \redFunc_n(- \otimes \, \redobject\,)
        \qquad \text{and} \qquad
        \bFh_n(\dotup) = \redFunc_n
        \left(
            \begin{tikzpicture}[centerzero]
                \draw[->] (-0.2,-0.4) \braidup (0.2,0) \braidup (-0.2,0.4);
                \draw[wipe] (0.2,-0.4) \braidup (-0.2,0) \braidup (0.2,0.4);
                \draw[thick,red] (0.2,-0.4) \braidup (-0.2,0) \braidup (0.2,0.4);
                \token{0.2,0};
            \end{tikzpicture}
        \right)
        \overset{\cref{carrot}}{=} K.
    \]
\end{theo}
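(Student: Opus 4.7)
Uniqueness is immediate, since the given data specifies $\bFh_n$ on all generating morphisms of $\AQcat(z)$, which consist of those of $\Qcat(z)$ together with $\dotup$. For existence, the restriction $\bFh_n|_{\Qcat(z)} = \redFunc_n(-\otimes\redobject\,)$ is a well-defined monoidal superfunctor into $\SEnd(U_q\smod)$: on objects one has $\bFh_n(X)(M) = \bF_n(X)\otimes M$, on morphisms $\bFh_n(f)_M = \bF_n(f)\otimes 1_M$, and monoidality reduces to the evident identity $\bF_n(X\otimes Y)\otimes M = \bF_n(X)\otimes(\bF_n(Y)\otimes M)$. Consequently every defining relation of $\Qcat(z)$ is automatically respected, and it remains to verify that the three relations in \cref{AQrels} are preserved when $\dotup$ is sent to $K$.

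The first relation, $(\dotup)^2 = -1_\uparrow$, holds on $M$-components because $K_M\circ K_M = T_{MV}(1_M\otimes J^2)T_{MV}^{-1} = -1_{V\otimes M}$, using $J^2 = -1$. For the second (slide) relation, the two sides evaluate at $M$ to $(T_{VV}\otimes 1_M)\circ(1_V\otimes K_M)$ and $K_{V\otimes M}\circ(T_{VV}\otimes 1_M)$ respectively. The key input is \cref{twored}, which gives $T_{V\otimes M,V} = (T_{VV}\otimes 1_M)\circ(1_V\otimes T_{MV})$; substituting this into $K_{V\otimes M} = T_{V\otimes M,V}(1\otimes J)T_{V\otimes M,V}^{-1}$ yields $K_{V\otimes M} = (T_{VV}\otimes 1_M)(1_V\otimes K_M)(T_{VV}^{-1}\otimes 1_M)$, which rearranges to the desired equality.

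Relation (iii), the vanishing of the image of the bubble with open dot, is the main obstacle and demands the most care, because $K$ does not factor as an operator on $V$ tensored with $1_M$; the vanishing cannot be seen strandwise. The plan is to mimic the diagrammatic computation carried out in the proof of \cref{collapse}. By construction, $K$ is the $\redFunc_n$-image of the $\Wcat_1(z)$-diagram showing a closed Clifford token on a black strand that wraps around a red strand via a pair of red--black crossings. Consequently, the bubble with open dot at $M$ is the $\redFunc_n$-image of the closed $\Wcat_1(z)$-diagram consisting of a black bubble inside which a closed Clifford token wraps around the red $M$-strand. Applying \cref{redslip} and \cref{rouge} to slide the red strand out of the interior of the bubble, followed by \cref{iron} to move the closed token onto the black loop itself, rewrites the diagram as the bubble with closed Clifford token in $\Qcat(z)$ tensored with the identity on the $M$-strand. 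The former is zero by the third relation in \cref{tokrel}, so applying $\redFunc_n$ gives the zero natural transformation, completing the verification.
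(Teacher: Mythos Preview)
Your proof is correct and follows essentially the same approach as the paper's. The paper's own proof simply says ``repeat the proof of \cref{collapse}, replacing the thick black strand by a red strand,'' and your argument does exactly this: the relations of $\Qcat(z)$ are automatic, the first relation in \cref{AQrels} follows from $J^2=-1$, and for the remaining two relations you carry out the same diagrammatic manipulations in $\Wcat_1(z)$ (via \cref{twored}, \cref{redslip}, \cref{rouge}, \cref{iron}) that the paper performs in \cref{collapse} with the thick black strand. Your treatment of the slide relation via the identity $K_{V\otimes M}=(T\otimes 1_M)(1_V\otimes K_M)(T^{-1}\otimes 1_M)$ is just the algebraic unpacking of the braid-relation step in that proof.
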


\begin{proof}
    The proof is almost identical to that of \cref{collapse}; one merely replaces the thick black strand there (representing the identity morphism $1_X$) with a thick red strand.
    \details{
        All the relations appearing in \cref{Qdef} are respected by $\bFh_n$ since they are respected by $\bF_n$, hence by $\redFunc_n$.  It remains to verify the relations \cref{AQrels}.  The first relation holds since $K^2=-\id$.  For the second relation in \cref{AQrels}, we compute
        \[
            \bFh_n
            \left(
                \begin{tikzpicture}[centerzero]
                    \draw[->] (0.3,-0.3) -- (-0.3,0.3);
                    \draw[wipe] (-0.3,-0.3) -- (0.3,0.3);
                    \draw[->] (-0.3,-0.3) -- (0.3,0.3);
                    \opendot{0.17,-0.17};
                \end{tikzpicture}
            \right)
            = \redFunc_n
            \left(
                \begin{tikzpicture}[anchorbase]
                    \draw[->] (0,0) \braidup (0.3,0.4) \braidup (-0.3,1.2);
                    \draw[wipe] (-0.3,0) \braidup (0,1.2);
                    \draw[->] (-0.3,0) \braidup (0,1.2);
                    \draw[wipe] (0.3,0) \braidup (0,0.4) \braidup (0.3,1) -- (0.3,1.2);
                    \draw[thick,red] (0.3,0) \braidup (0,0.4) \braidup (0.3,1) -- (0.3,1.2);
                    \token{0.3,0.4};
                \end{tikzpicture}
            \right)
            = \redFunc_n
            \left(
                \begin{tikzpicture}[anchorbase]
                    \draw[->] (0,0) \braidup (-0.3,0.4) \braidup (0.3,0.8) \braidup (-0.3,1.2);
                    \draw[wipe] (-0.3,0) \braidup (0,0.4) \braidup (-0.3,0.8) \braidup (0,1.2);
                    \draw[->] (-0.3,0) \braidup (0,0.4) \braidup (-0.3,0.8) \braidup (0,1.2);
                    \draw[wipe] (0.3,0) -- (0.3,0.4) \braidup (0.1,0.8) \braidup (0.3,1.2);
                    \draw[thick,red] (0.3,0) -- (0.3,0.4) \braidup (0.1,0.8) \braidup (0.3,1.2);
                    \token{0.3,0.8};
                \end{tikzpicture}
            \right)
            =
            \bFh_n
            \left(
                \begin{tikzpicture}[centerzero]
                    \draw[->] (0.3,-0.3) -- (-0.3,0.3);
                    \draw[wipe] (-0.3,-0.3) -- (0.3,0.3);
                    \draw[->] (-0.3,-0.3) -- (0.3,0.3);
                    \opendot{-0.15,0.15};
                \end{tikzpicture}
            \right).
        \]
        Finally, for the last relation in \cref{AQrels}, we compute
        \[
            \bFh_n
            \left(
                \begin{tikzpicture}[centerzero]
                    \bubright{0,0};
                    \opendot{-0.2,0};
                \end{tikzpicture}
            \right)
            =
            \redFunc_n
            \left(
                \begin{tikzpicture}[anchorbase]
                    \draw (0,0.4) arc(360:180:0.15) \braidup (0.3,0.8) \braidup (-0.3,1.2) arc(180:0:0.15);
                    \draw[wipe] (0,0.4) \braidup (-0.3,0.8) \braidup (0,1.2);
                    \draw[->] (0,1.2) \braiddown (-0.3,0.8);
                    \draw (-0.3,0.8) \braiddown (0,0.4);
                    \draw[wipe] (0.3,0.1) \braidup (0.1,0.8) \braidup (0.3,1.5);
                    \draw[thick,red] (0.3,0.1) \braidup (0.1,0.8) \braidup (0.3,1.5);
                    \token{0.3,0.8};
                \end{tikzpicture}
            \right)
            =
            \redFunc_n
            \left(
                \begin{tikzpicture}[centerzero]
                    \draw[->] (-0.3,0) arc(-180:180:0.3);
                    \draw[wipe] (0.3,-0.4) \braidup (0.1,0) \braidup (0.3,0.4);
                    \draw[thick,red] (0.3,-0.7) -- (0.3,-0.4) \braidup (0.1,0) \braidup (0.3,0.4) -- (0.3,0.7);
                    \token{0.3,0};
                \end{tikzpicture}
            \right)
            \overset{\cref{redslip}}{=}
            \redFunc_n
            \left(
                \begin{tikzpicture}[centerzero]
                    \draw[->] (0,0.3) arc(90:450:0.3);
                    \draw[wipe] (-0.3,-0.4) \braidup (-0.1,0) \braidup (-0.3,0.4);
                    \draw[thick,red] (-0.3,-0.7) -- (-0.3,-0.4) \braidup (-0.1,0) \braidup (-0.3,0.4) -- (-0.3,0.7);
                    \token{0.3,0};
                \end{tikzpicture}
            \right)
            \overset{\cref{rouge}}{=}
            \redFunc_n
            \left(\,
                \begin{tikzpicture}[centerzero]
                    \draw[->] (0,0.3) arc(90:450:0.3);
                    \draw[thick,red] (-0.4,-0.7) -- (-0.4,0.7);
                    \token{0.3,0};
                \end{tikzpicture}
            \right)
            \overset{\cref{iron}}{=} 0.
        \]
    }
\end{proof}

We call the superfunctor $\bFh_n$ the \emph{affine action superfunctor}.  It endows $U_q\smod$ with the structure of an $\AQcat(z)$-supermodule category. Note that
\[
    \bFh_n(\uparrow) = V \otimes -
    \qquad \text{and} \qquad
    \bFh_n(\downarrow) = V^* \otimes -
\]
are the translation endosuperfunctors of $U_q\smod$ given by tensoring on the left with $V$ and $V^*$, respectively.  Thus, combining \cref{crossbow,butter}, we have a homomorphism of associative superalgebras
\[
    \AHC_r(z) \to \End_{U_q}(V^{\otimes r} \otimes M)
\]
for any $U_q$-supermodule $M$ and $r,s \in \N$.  This is a quantum analogue of \cite[Th.~7.4.1]{HKS11}.

Let
\[
    Z_q := \{x \in U_q : xy = (-1)^{\bar{x}\bar{y}} yx \text{ for all } y \in U_q\}
\]
be the center of $U_q$. Evaluation on the identity element of the regular representation defines a canonical superalgebra isomorphism
\[
    \End(\id_{U_q\smod}) \xrightarrow{\cong} Z_q,
\]
where $\id_\cC$ denotes the identity endosuperfunctor of a supercategory $\cC$.  Consider the composite superalgebra homomorphism
\begin{equation} \label{canon}
    \End_{\AQcat(z)}(\one) \xrightarrow{\bFh_n} \End(\id_{U_q\smod}) \xrightarrow{\cong} Z_q.
\end{equation}
Our goal is now to compute the image of this homomorphism.  By \cref{affinespan}, it suffices to compute the image of the zebra bubbles $\leftzebrabub{2k}$, $k > 0$.

We begin with a simplifying computation.  Using \cref{boxslide} and the relations in $\Qcat(z)$ we have, for $k > 0$,
\begin{equation} \label{goomba}
    \bFh_n \left( \leftzebrabub{2k} \right)
    =
    \begin{tikzpicture}[anchorbase]
        \draw (-1,1.5) to[out=down,in=left] (-0.5,0.2) to[out=right,in=down] (0.3,0.6) \braidup (-0.3,1) \braidup (0.3,1.4);
        \node at (0.3,1.8) {$\vdots$};
        \draw[->] (0.3,2) -- (0.3,2.2) \braidup (-0.3,2.6) to[out=up,in=right] (-0.5,2.8) to[out=left,in=up] (-1,1.5);
        \token{0.3,0.6};
        \token{-0.3,1};
        \token{0.3,2.2};
        \token{-0.3,2.6};
        \draw[wipe] (0,0) to (0,3);
        \draw[thick,red] (0,0) to (0,3);
    \end{tikzpicture}
    =
    \begin{tikzpicture}[anchorbase]
        \draw (0.3,0.6) \braidup (-0.3,1) \braidup (0.3,1.4);
        \node at (0.3,1.8) {$\vdots$};
        \draw[->] (-0.5,3.2) to[out=left,in=up] (-0.65,3) to[out=down,in=up] (0.7,2.6) -- (0.7,1.55);
        \draw[wipe] (-0.3,2.6) to[out=up,in=right] (-0.5,3.2);
        \draw (0.3,2) -- (0.3,2.2) \braidup (-0.3,2.6) to[out=up,in=right] (-0.5,3.2);
        \draw (-0.65,0.1) to[out=up,in=down] (0.7,0.1) -- (0.7,1.55);
        \draw[wipe] (0.3,0.6) \braiddown (-0.3,0.1);
        \draw (0.3,0.6) \braiddown (-0.3,0.1) to[out=down,in=right] (-0.5,-0.1) to[out=left,in=down] (-0.65,0.1);
        \token{0.3,0.6};
        \token{-0.3,1};
        \token{0.3,2.2};
        \token{-0.3,2.6};
        \draw[wipe] (0,-0.3) to (0,3.4);
        \draw[thick,red] (0,-0.3) to (0,3.4);
    \end{tikzpicture}
    =
    \begin{tikzpicture}[anchorbase]
        \draw (0.6,1.5) -- (0.6,0.6) to[out=down,in=down,looseness=1.5] (0.3,0.6) \braidup (-0.3,1) \braidup (0.3,1.4);
        \node at (0.3,1.8) {$\vdots$};
        \draw[->] (0.3,2) -- (0.3,2.2) \braidup (-0.3,2.6) to[out=up,in=up] (0.6,2.6) -- (0.6,1.5);
        \token{0.3,0.6};
        \token{-0.3,1};
        \token{0.3,2.2};
        \token{-0.3,2.6};
        \draw[wipe] (0,0.3) to (0,3.1);
        \draw[thick,red] (0,0.3) to (0,3.1);
    \end{tikzpicture}
    \ ,
\end{equation}
where there are a total of $2k$ closed Clifford tokens on alternating sides of the red strand.

For $i,j \in \tI$, define
\begin{equation}
    y_{ij} := - \sum_{k = \max(i,j)}^n (-1)^{p(i,k)p(j,k)} S(u_{ik}) u_{-k,-j} \in U_q.
\end{equation}
Note that $y_{ij}$ is of parity $p(i,j)$.  Next, for $i,j \in \tI$, and $m > 0$, define
\[
    y_{ij}^{(m)} := (-1)^{p(i)p(j)} \sum_{i=i_0,i_1,\dotsc,i_{m-1},i_m=j} (-1)^{\sum_{k=1}^{m-1} p(i_k) + \sum_{k=0}^{m-1} p(i_k)p(i_{k+1})} y_{i_0,i_1} \dotsm y_{i_{m-1},i_m}.
\]
These are isomeric analogues of the elements defined in \cite[(5.15)]{BSW-qheis}.

\begin{lem}
    We have
    \begin{equation} \label{yoshi}
        \redFunc_n
        \left(
            \begin{tikzpicture}[anchorbase]
                \draw[->] (0.2,-0.2) -- (0.2,0) \braidup (-0.2,0.4) \braidup (0.2,0.8);
                \draw[wipe] (-0.2,-0.2) -- (-0.2,0) \braidup (0.2,0.4) \braidup (-0.2,0.8);
                \draw[thick,red] (-0.2,-0.2) -- (-0.2,0) \braidup (0.2,0.4) \braidup (-0.2,0.8);
                \token{0.2,0};
                \token{-0.2,0.4};
            \end{tikzpicture}
        \right)
        = \sum_{i,j \in \tI} y_{ij} \otimes E_{ij}.
    \end{equation}
    where we interpret the right-hand side as a natural transformation whose $M$-component, for a $U_q$-supermodule $M$ with corresponding representation $\rho_M$, is $\sum_{i,k \in I} \rho_M (y_{ij}) \otimes E_{ij}$.
\end{lem}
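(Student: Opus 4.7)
The plan is a direct computation, guided by a key structural simplification. First, I would interpret the diagram as an endomorphism of $M \otimes V$: reading from bottom to top, the two closed Clifford tokens and two red--black crossings correspond respectively to $1_M \otimes J$ (token on $\uparrow$ while it sits to the right of $M$), $T_{MV}$ (the crossing $\posupredcross$), $J \otimes 1_M$ (token while $\uparrow$ sits to the left of $M$), and $T_{MV}^{-1}$ (the crossing $\negupredcross$). Thus the $M$-component of the natural transformation in question is
\[
    f := T_{MV}^{-1} \circ (J \otimes 1_M) \circ T_{MV} \circ (1_M \otimes J).
\]

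The key simplification is the identity $\flip \circ (J \otimes 1_M) \circ \flip = 1_M \otimes J$, which follows directly from the Koszul sign rule together with the involutivity of $\flip$. Combined with $T_{MV} = \flip \circ L$ and $T_{MV}^{-1} = L^{-1} \circ \flip$ from \cref{burrito}, this rewrites $f$ as
\[
    f = L^{-1} \circ (1_M \otimes J) \circ L \circ (1_M \otimes J),
\]
eliminating the $\flip$ maps entirely and making the ensuing sign bookkeeping tractable.

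Next I would evaluate $f(m \otimes v_k)$ one operator at a time, using the explicit formulas $L = \sum_{i \le j} u_{ij} \otimes E_{ij}$ and $L^{-1} = \sum_{i \le j} S(u_{ij}) \otimes E_{ij}$ from \cref{antipode}, the identity $J v_l = (-1)^{p(l)} v_{-l}$, and the Koszul sign rule $(f' \otimes g)(x \otimes y) = (-1)^{\bar g \bar x} f'(x) \otimes g(y)$. After collecting terms, I re-index the intermediate summation variable $i$ via $\ell := -i$ and simplify the accumulated signs using $p(-x) = 1-p(x)$ and $p(x,y) = p(x)+p(y) \pmod 2$. The output should take the form
\[
    f(m \otimes v_k) = -\sum_{\ell \ge \max(i,k)} (-1)^{\bar m\, p(i,k) + p(i,\ell)\, p(k,\ell)}\, S(u_{i\ell})\, u_{-\ell,-k}\, m \otimes v_i.
\]

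Finally, comparing with the expansion $\bigl( \sum_{i,j} y_{ij} \otimes E_{ij} \bigr)(m \otimes v_k) = \sum_i (-1)^{p(i,k)\bar m} y_{ik}\, m \otimes v_i$ and using the symmetry $p(k,\ell) = p(\ell,k)$ reduces the identity to the very definition of $y_{ik}$ given just before the lemma. The main obstacle is purely bookkeeping: there is no conceptual difficulty once the structural simplification $f = L^{-1}(1_M \otimes J) L (1_M \otimes J)$ is in hand, but extreme care is required to propagate Koszul signs correctly through the four steps of the composition and across the re-indexing.
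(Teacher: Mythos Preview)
Your approach is correct and essentially identical to the paper's: both identify the diagram as $T_{MV}^{-1} \circ (J \otimes 1_M) \circ T_{MV} \circ (1_M \otimes J)$, use the simplification $\flip \circ (J \otimes 1_M) \circ \flip = 1_M \otimes J$ to rewrite this as $L^{-1}(1_M \otimes J)L(1_M \otimes J)$, and then expand using $L = \sum u_{ij} \otimes E_{ij}$ and $L^{-1} = \sum S(u_{ij}) \otimes E_{ij}$. The only cosmetic difference is that the paper carries out the final computation at the operator level in $U_q \otimes \End_\kk(V)$ rather than by evaluating on a basis element $m \otimes v_k$ as you do.
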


\begin{proof}
    We have
    \begin{align*}
        \redFunc_n
        \left(
            \begin{tikzpicture}[anchorbase]
                \draw[->] (0.2,-0.2) -- (0.2,0) \braidup (-0.2,0.4) \braidup (0.2,0.8);
                \draw[wipe] (-0.2,-0.2) -- (-0.2,0) \braidup (0.2,0.4) \braidup (-0.2,0.8);
                \draw[thick,red] (-0.2,-0.2) -- (-0.2,0) \braidup (0.2,0.4) \braidup (-0.2,0.8);
                \token{0.2,0};
                \token{-0.2,0.4};
            \end{tikzpicture}
        \right)
        &= L^{-1} \circ \flip \circ (J \otimes 1) \circ \flip \circ L \circ (1 \otimes J)
        \\
        &= \left( \sum_{i \le k} S(u_{ik}) \otimes E_{ik} \right) (1 \otimes J) \left( \sum_{j \le l} u_{jl} \otimes E_{jl} \right) (1 \otimes J)
        \\
        &= - \left( \sum_{i \le k} S(u_{ik}) \otimes E_{ik} \right) \left( \sum_{j \le l} u_{jl} \otimes E_{-j,-l} \right)
        \\
        &= - \sum_{i \le k \ge -l} (-1)^{p(i,k)p(-k,l)} S(u_{ik}) u_{-k,l} \otimes E_{i,-l}.
    \end{align*}
    Replacing $l$ by $-j$ yields \cref{yoshi}.
\end{proof}

\begin{cor}
    We have
    \begin{equation} \label{koopa}
        \redFunc_n
        \left( \left(
            \begin{tikzpicture}[anchorbase]
                \draw[->] (0.2,-0.2) -- (0.2,0) \braidup (-0.2,0.4) \braidup (0.2,0.8);
                \draw[wipe] (-0.2,-0.2) -- (-0.2,0) \braidup (0.2,0.4) \braidup (-0.2,0.8);
                \draw[thick,red] (-0.2,-0.2) -- (-0.2,0) \braidup (0.2,0.4) \braidup (-0.2,0.8);
                \token{0.2,0};
                \token{-0.2,0.4};
            \end{tikzpicture}
        \right)^{\circ m} \right)
        = \sum_{i,j \in \tI} y_{ij}^{(m)} \otimes E_{ij}.
    \end{equation}
\end{cor}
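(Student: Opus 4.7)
The proof is by induction on $m$, with the base case $m=1$ being exactly the content of the preceding lemma \cref{yoshi}. The key observation is that the natural transformation in question is an endomorphism of $\redFunc_n(\,\redobject\, \uparrow) = - \otimes V$, and so its $M$-component can be encoded as an element of $U_q \otimes \End_\kk(V)$ (with $\rho_M$ applied to the first tensor factor). The $m$-fold composition on the diagrammatic side translates to the $m$-th power of such an element in the superalgebra $U_q \otimes \End_\kk(V)$.

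Using the super-multiplication rule $(a \otimes b)(a' \otimes b') = (-1)^{\bar b \bar{a'}} a a' \otimes b b'$, together with $E_{ij} E_{kl} = \delta_{jk} E_{il}$ and the fact that $y_{ij}$ and $E_{ij}$ both have parity $p(i,j)$, I obtain
\[
    \left( \sum_{i,j} y_{ij} \otimes E_{ij} \right)
    \left( \sum_{k,l} y_{kl} \otimes E_{kl} \right)
    = \sum_{i,j,l} (-1)^{p(i,j) p(j,l)} y_{ij} y_{jl} \otimes E_{il}.
\]
Setting $Y^{(m)}_{il}$ to be the coefficient of $E_{il}$ in the $m$-th power, this gives the recursion $Y^{(m)}_{il} = \sum_j (-1)^{p(i,j) p(j,l)} Y^{(m-1)}_{ij} y_{jl}$ with $Y^{(1)}_{ij} = y_{ij}$.

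The remaining task is purely combinatorial: expand the recursion to obtain a closed form as a sum over sequences $i = i_0, i_1, \dotsc, i_m = j$, collect the accumulated signs, and verify that they coincide with the prefactor $(-1)^{p(i)p(j)} (-1)^{\sum_{k=1}^{m-1} p(i_k) + \sum_{k=0}^{m-1} p(i_k) p(i_{k+1})}$ appearing in the definition of $y_{ij}^{(m)}$. This is straightforward using the identity $p(a,b) p(b,c) \equiv p(b) + p(a)p(b) + p(b)p(c) + p(a)p(c) \pmod 2$ (noting $p(b)^2 \equiv p(b)$), which, when telescoped along the sequence, produces exactly the claimed sign. There is no essential obstacle; the only mild subtlety is bookkeeping the quadratic-in-parities signs, which I would verify directly at $m=2$ and then check that each additional composition contributes the single extra factor needed to advance the exponent.
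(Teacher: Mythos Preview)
Your proposal is correct and is exactly the argument the paper has in mind; the paper states this as a corollary without proof, so you have simply supplied the omitted verification. Your sign bookkeeping is right: unwinding the recursion $Y^{(m)}_{i_0,i_m} = \sum_{i_{m-1}} (-1)^{p(i_0,i_{m-1})p(i_{m-1},i_m)} Y^{(m-1)}_{i_0,i_{m-1}} y_{i_{m-1},i_m}$ and expanding each factor $p(i_0,i_k)p(i_k,i_{k+1})$ via your identity, the cross terms $p(i_0)p(i_k)$ telescope to leave precisely $p(i_0)p(i_m) + \sum_{k=1}^{m-1} p(i_k) + \sum_{k=0}^{m-1} p(i_k)p(i_{k+1})$, matching the definition of $y_{ij}^{(m)}$.
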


\begin{prop} \label{shadow}
    For $m>0$, the image of $\leftzebrabub{2m}$ under \cref{canon} is
    \begin{equation} \label{dark}
        \sum_{i \in \tI} (-1)^{p(i)} q^{2|i|-2n-1} y_{ii}^{(m)}.
    \end{equation}
\end{prop}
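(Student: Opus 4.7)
The plan is to compute the natural transformation $\bFh_n(\leftzebrabub{2m})$ at an arbitrary $U_q$-supermodule $M$, observe that at every $M$ it acts as multiplication by a single element of $U_q$, and identify that element with \cref{dark}.

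The starting point is \cref{goomba}, which already rewrites $\bFh_n(\leftzebrabub{2m})$ as a closed black loop encircling the red $M$-strand with $2m$ closed Clifford tokens at the alternating turning points of its zigzag.  I will unfold this loop as a super-quantum trace over $V$.  Reading the diagram in \cref{goomba}, the loop decomposes into (i) a left cup $\bF_n(\leftcup) = \coev$ placed below the $M$-strand, (ii) a zigzagging middle piece which is exactly the $m$-fold vertical composition of the two-crossing diagram of \cref{yoshi}, and (iii) a right cap at the top which, after pulling it past the $M$-strand using the chiral-braiding relations (\cref{boxslide,rocks}) and the pivotal snake identities (\cref{venom1,venom2,venom3}), becomes $\bF_n(\rightcap)$ with $M$ merely a bystander.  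This gives
\[
    \bFh_n(\leftzebrabub{2m})_M
    \;=\;
    (1_M \otimes \bF_n(\rightcap)) \circ (\Phi_m \otimes 1_{V^*}) \circ (1_M \otimes \bF_n(\leftcup)),
\]
where $\Phi_m \colon M \otimes V \to M \otimes V$ is the $m$-fold vertical composition of the diagram of \cref{yoshi}.

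Applying \cref{koopa} gives $\Phi_m = \sum_{i,j \in \tI} y_{ij}^{(m)} \otimes E_{ij}$, and using the explicit formula $\bF_n(\rightcap) \colon v_i \otimes v_j^* \mapsto \delta_{ij} (-1)^{p(i)} q^{2|i|-2n-1}$ from \cref{bowser} I will evaluate step by step on $m' \in M$: the coevaluation produces $\sum_i m' \otimes v_i \otimes v_i^*$; applying $\Phi_m \otimes 1_{V^*}$ yields $\sum_{i,j} (-1)^{p(i,j)\bar{m'}}\, y_{ji}^{(m)} m' \otimes v_j \otimes v_i^*$; and the right cap forces $i=j$ and inserts the factor $(-1)^{p(i)} q^{2|i|-2n-1}$.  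Because $p(i,i)=0$, all Koszul signs disappear, leaving $(\sum_i (-1)^{p(i)} q^{2|i|-2n-1} y_{ii}^{(m)}) \cdot m'$.  Since this holds for every $M$ and every $m'$, the image of $\leftzebrabub{2m}$ under \cref{canon} equals \cref{dark}.

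The main obstacle will be the first step, namely justifying the clean factorisation of the bubble as a super-quantum trace.  The cap at the top topologically arcs over the red $M$-strand, so to reduce it to the standard $\bF_n(\rightcap)$ with $M$ untouched I need to use the chiral-braiding naturality of \cref{boxslide,rocks} to slide the cap past the red strand without incurring extra terms.  Once the factorisation is established, the remaining computation is a direct substitution of \cref{koopa,bowser}, and the identification of the central element drops out.
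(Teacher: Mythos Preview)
Your proposal is correct and follows essentially the same route as the paper: both start from \cref{goomba}, read the resulting diagram as $(1_M \otimes \bF_n(\rightcap)) \circ (\Phi_m \otimes 1_{V^*}) \circ (1_M \otimes \coev)$, plug in \cref{koopa} for $\Phi_m$, and close up using the explicit formula \cref{bowser} for the right cap. The paper streamlines the final step by evaluating only at the element $1$ of the regular representation $U_q$ (which is exactly how the isomorphism $\End(\id_{U_q\smod}) \cong Z_q$ is defined), whereas you evaluate at an arbitrary $m' \in M$; this saves you the need to argue separately that the resulting scalar is independent of $M$, but otherwise the computations are identical. Your concern about sliding the cap past the red strand is already handled by the manipulations in \cref{goomba} itself (which invoke \cref{boxslide} and the relations of $\Qcat(z)$), so no additional justification is needed there.
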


\begin{proof}
    It suffices to compute the action of $\bFh_n(\leftzebrabub{2k})$ on the element $1$ of the regular representation.  Using \cref{goomba}, this is given by
    \[
        1
        \xmapsto{\bFh_n(\leftcup)} \sum_{k \in \tI} 1 \otimes v_k \otimes v_k^*
        \xmapsto{\cref{koopa}} \sum_{i,k \in \tI} y_{ik}^{(m)} \otimes v_i \otimes v_k^*
        \mapsto{\bF_n(\rightcap)} \sum_{i \in \tI} (-1)^{p(i)} q^{2|i|-2n-1} y_{ii}^{(m)}.
        \qedhere
    \]
\end{proof}

\Cref{shadow} is an isomeric analogue of \cite[(5.29)]{BSW-qheis}, giving the image of the analogous diagrams for the affine HOMFLYPT skein category, which is the $U_q(\mathfrak{gl_n})$-analogue of $\Qcat(z)$.  On the other hand, \Cref{shadow} can also be viewed as a quantum analogue of \cite[Th.~4.5]{BCK19}, which treats the degenerate (i.e.\ non-quantum) case.  In particular, the elements \cref{dark} are quantum analogues of central elements in $U(\fq_n)$ introduced by Sergeev in \cite{Ser83}; see \cite[Prop.~4.6]{BCK19}.  In the degenerate case, these elements generate the center of $U(\fq_n)$; see \cite[Prop.~1.1]{NS06}.  It seems likely that the elements \cref{dark} do not quite generate the center of $U_q(\fq_n)$, by analogy with the case of $U_q(\mathfrak{gl}_n)$, where one needs to add one additional generator; see \cite[Cor.~5.11]{BSW-qheis} and \cref{donut}\cref{donut2}.


\bibliographystyle{alphaurl}
\bibliography{QuantumIsomeric}

\end{document}